\documentclass[11pt]{amsart}

\usepackage{amsmath,amssymb,amsthm}
\usepackage{thmtools,enumerate}
\usepackage{mathtools}
\usepackage{array}
\usepackage{mathrsfs}

\usepackage[german,english]{babel}
\usepackage[autostyle]{csquotes}

\usepackage[all]{xy}
\usepackage{pstricks}

\usepackage{tikz-cd}
\usetikzlibrary{babel}

\usepackage{hyperref}
\hypersetup{
	colorlinks=true,
	linkcolor=red,
	citecolor=blue}

\theoremstyle{plain}

\newtheorem{thm}{Theorem}[section]
\newtheorem{prop}[thm]{Proposition}
\newtheorem{lem}[thm]{Lemma}
\newtheorem{cor}[thm]{Corollary}

\newtheorem{thmA}{Theorem}

\newtheorem*{thm*}{Theorem}

\theoremstyle{definition}

\newtheorem{dfn}[thm]{Definition}
\newtheorem{dfn-prop}[thm]{Definition-Proposition}

\newtheorem{notation}[thm]{Notation}
\newtheorem{setup}[thm]{Setup}

\newtheorem{setupI}{Setup}

\newcommand{\Z}{\mathbb{Z}}
\newcommand{\N}{\mathbb{N}}
\newcommand{\Q}{\mathbb{Q}}
\newcommand{\R}{\mathbb{R}}
\newcommand{\C}{\mathbb{C}}
\newcommand{\pr}{\mathbb{P}}

\newcommand{\cal}[1]{\mathcal{#1}}
\newcommand{\smvee}{{\scriptscriptstyle{\vee}}}
\newcommand{\smperp}{{\scriptscriptstyle{\perp}}}
\newcommand{\smone}{{\scriptscriptstyle{(1)}}}

\newcommand{\smonestar}{{\scriptscriptstyle{(1)*}}}
\newcommand{\smonestarstar}{{\scriptscriptstyle{(1)**}}}

\newcommand{\re}{\mathrm{re}}
\newcommand{\im}{\mathrm{im}}

\DeclareMathOperator{\Eff}{Eff}

\DeclareMathOperator{\Nef}{Nef}
\DeclareMathOperator{\Amp}{Amp}

\DeclareMathOperator{\Comp}{Comp}
\DeclareMathOperator{\Cr}{Cr}
\DeclareMathOperator{\GL}{GL}
\DeclareMathOperator{\Aut}{Aut}
\DeclareMathOperator{\Int}{Int}
\DeclareMathOperator{\Supp}{Supp}
\DeclareMathOperator{\rk}{rk}

\DeclareMathOperator{\id}{Id}
\DeclareMathOperator{\nod}{nod}

\DeclareMathOperator{\irr}{irr}
\DeclareMathOperator{\diag}{diag}

\begin{document}
	\title[Generalised Cone Conjecture, II: Generic Sakai surfaces]{Generalised Cone Conjecture, II:\\ Generic Sakai surfaces}
 
	\author[V.\ Lazi\'c]{Vladimir Lazi\'c}
	\address{Fachrichtung Mathematik, Campus, Geb\"aude E2.4, Universit\"at des Saarlandes, 66123 Saarbr\"ucken, Germany}
	\email{lazic@math.uni-sb.de}

        \author[I.\ Stenger]{Isabel Stenger}
        \address{Institute of Algebraic Geometry, Leibniz University Hannover, Welfengarten 1, 30167 Hannover, Germany}
        \email{stenger@math.uni-hannover.de}
		
	\author[Z.\ Xie]{Zhixin Xie}
	\address{Institut \'Elie Cartan de Lorraine, Universit\'e de Lorraine, 54506 Nancy, France}
	\email{zhixin.xie@univ-lorraine.fr}
	
	\thanks{2020 \emph{Mathematics Subject Classification}: 14J26, 17B22, 14E07, 14C20, 20F55. \newline
		\indent \emph{Keywords}: Cone Conjecture, rational surfaces, root systems, Weyl groups, Cremona isometries, generalised pairs.
	}
	
	\begin{abstract}
		In this paper we prove that the Generalised Cone Conjecture holds for generic Sakai surfaces, which completes the proof of the Generalised Cone Conjecture in dimension two. Furthermore, we show that there exists a non-generic Sakai surface for which no meaningful variant of the conjecture holds. 
	\end{abstract}

	\maketitle
	
	%Table of Contents
	\begingroup
		\hypersetup{linkcolor=black}
		\setcounter{tocdepth}{1}
		\tableofcontents
	\endgroup
	
\section{Introduction}

This paper completes the proof of the following theorem from \cite{LSX26}.

\begin{thmA}\label{thm:main0}
Let $(X,B+M)$ be a very general klt Calabi--Yau generalised pair of dimension $2$. Then there exists a rational polyhedral fundamental domain for the action of the group of Cremona isometries $\Cr(X)$ on $\Nef^e(X)$.
\end{thmA}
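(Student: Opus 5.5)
The plan is to reduce to what \cite{LSX26} already provides and to handle the remaining case, generic Sakai surfaces, using root systems and Weyl groups. Part I runs a case analysis on very general klt Calabi--Yau generalised surface pairs $(X,B+M)$. It proves the theorem in every case except the one where $X$ is a rational surface with $-K_X\equiv B+M$ and $B+M$ numerically behaves like an anticanonical cycle of a Sakai surface. We may also take $X$ generic in its deformation family, since the pair is very general. So the first step is to invoke the reduction of \cite{LSX26}. After it we may assume that $X$ is the blow-up of $\pr^2$ in nine points in general position on a (possibly reducible) anticanonical curve $D=\sum D_i$ of Sakai type. Moreover, $\Nef^e(X)$ is the relevant effective nef cone, and $\Cr(X)$ is the group of Cremona isometries of $\mathrm{Pic}(X)$. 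These are the isometries preserving $K_X$, the effective cone and the classes of the components $D_i$.

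Next I would describe $\Cr(X)$ lattice-theoretically. Let $Q=\langle D_i\rangle^{\perp}\subset K_X^\perp\cong E_8^{(1)}$. Sakai's theory says that $Q$ is an affine root lattice and that, for generic parameters, there are no effective $(-2)$-curves beyond the $D_i$. The Cremona isometry group then contains, with finite index, the affine Weyl group $W(Q)$, extended by a finite group of diagram automorphisms. The translation part of $W(Q)$ is a lattice $\Lambda\cong\Z^r$ acting on $K_X^\perp$ by Kac/Eichler--Siegel transvections $x\mapsto x+(x\cdot K_X)\lambda-\bigl((x\cdot\lambda)+\tfrac12\lambda^2 (x\cdot K_X)\bigr)K_X$. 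Genericity is used here: because there are no extra nodal curves, $\Nef^e(X)$ is cut out by the $(-1)$-curves and the $D_i$, and $\Cr(X)$ acts on the $(-1)$-curves with finitely many orbits.

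The fundamental domain would be built in the Looijenga style. The cone $\Nef^e(X)$ is the union of the $\Cr(X)$-translates of the cone $\Pi$ of classes in $\Nef^e(X)$ lying in the Weyl chamber for the finite root subsystem. Equivalently, $\Pi$ is the set of nef classes satisfying finitely many inequalities $x\cdot\alpha\ge0$ with $\alpha$ simple roots. I would show that $\Pi$ is rational polyhedral by checking that only finitely many $(-1)$-curves $E$ give facets of $\Pi$. The key estimate is that $E\cdot x\ge0$ is implied by the chamber inequalities together with $x\cdot D_i\ge0$ for all but finitely many $E$. I would prove this by writing $E=w(E_0)$ with $w\in W(Q)$ and $E_0$ from a finite set of orbit representatives, and then expanding the translation formula quadratically in $\lambda$. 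The final step is the standard argument that $\Pi$ meets each $\Cr(X)$-orbit in $\Nef^e(X)$, together with interior disjointness of distinct translates. Together these make $\Pi$ (after quotienting by the finite stabiliser) a rational polyhedral fundamental domain.

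I expect the main obstacle to be the boundary behaviour near $K_X$. The class $-K_X$ is isotropic and lies on $\partial\Nef^e(X)$, fixed by the translation lattice, so the polyhedrality near that ray is delicate. First I would handle the rays of $\Nef^e(X)$ through $-K_X$ separately. These amount to a finite-volume cusp of the affine Weyl group action. For this I would use the Looijenga criterion: it suffices to show that $\Nef^e(X)$ is the convex hull of its rational points and that a rational polyhedral cone $\Pi$ satisfies $\Cr(X)\cdot\Pi\supseteq\Nef^e(X)^\circ$.
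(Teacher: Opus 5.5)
Your outline agrees in substance with the paper for all types other than $A_6^\smone$, $A_7^\smone$, $D_7^\smone$. After the reduction of \cite[Theorem B]{LSX26} one works with $\Cr(X,D)$ on a generic Sakai surface. (The group you describe is $\Cr(X,D)$, not $\Cr(X)$; passing between them is part of \cite[Theorem B(c)]{LSX26}.) There $\Cr(X,D)=W(R^\smperp)$ by \cite[Theorem 26(a)]{Sak01}, and the domain is $\overline{\mathcal F(R^\smperp)}\cap\Nef(X)$. This must be the chamber of the affine root data $R^\smperp$, including $\alpha_0$; a chamber of the finite Weyl group contains infinitely many affine chambers and is not a fundamental domain.

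Two steps you only assert. First, finiteness of $\mathcal N_X\cap\overline{\mathcal F(R^\smperp)}$, and hence of the orbits on $(-1)$-curves, does not follow from genericity. The paper proves it by writing every $(-1)$-class as $t_\alpha(\mathcal E_9)$ with $\alpha\in Q(E_8)$. It then shows that the chamber inequalities and $t_\alpha(\mathcal E_9)\cdot\mathcal D_i\ge0$ confine $\mathbf v(\alpha)$ to a finite set (Proposition~\ref{prop:finmanytranslat}). Second, for the facets the paper uses Lemma~\ref{lem:actionW} rather than a quadratic expansion: for $x$ in the chamber, $w^{-1}(x)-x$ is a nonnegative combination of simple roots, so $x\cdot w(\zeta)\ge x\cdot\zeta$ for $\zeta$ in the chamber. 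Your quadratic route can also be made to work, since $0\le x\cdot\alpha_i\le C\,x\cdot\delta$ on the affine chamber. However, the uniform estimate near $\R_{\ge0}\delta$ has to be written out. Your cusp worry is moot: $\Nef(X)=\Nef^e(X)=\Nef^+(X)$ and $\R_{\ge0}\delta\subseteq\mathcal T(R^\smperp)$.

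The genuine gap is in types $A_6^\smone$, $A_7^\smone$, $D_7^\smone$, where your claim that $\Cr$ contains the affine Weyl group with finite index is false. There the simple roots $\alpha_0,\alpha_1$ of the $A_1^\smone$-factor $\mathscr D$ have $\alpha_i^2=-2m$ with $m=7,4,2$. So $r_{\alpha_i}(x)=x+\frac1m(x\cdot\alpha_i)\alpha_i$ preserves neither $N^1(X)$ nor $\mathcal N_X$. The group $\Cr(X,D)=\langle(r_0r_1)^m\rangle$ (times $W_{\mathscr D'}$ for $A_6^\smone$) has index $2m$ in $W(R^\smperp)$ and contains no reflection of the factor $\mathscr D$. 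Hence the $\Cr(X,D)$-translates of your $\Pi$ do not cover $\Nef(X)$. Your step ``$E=w(E_0)$ with $w\in W(Q)$'' also breaks down, since $W(R^\smperp)$-orbits of $(-1)$-classes leave $N^1(X)$. The correct domain is the union of $2m$ chambers $\Omega_R=\overline{\mathcal F_{\mathscr D_{m,m-1}}}$ (intersected with $\overline{\mathcal F_{\mathscr D'}}$ for $A_6^\smone$), and one must prove it is a fundamental domain (Proposition~\ref{prop:fund_domain_G}).

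Because $\Cr(X,D)$ is not a reflection group here, the paper cannot apply Lemma~\ref{lem:actionW} to it. It introduces the reflection subgroups $W_{\mathscr D_{k,m-k-1}}\supseteq\Cr(X,D)$ and the decomposition $\mathcal N_X=\bigcup_k\widehat W_{\mathscr D_{k,m-k-1}}\cdot\mathcal R_k$ (Theorem~\ref{thm:mainthmsection8}(c)). That decomposition rests on a lattice computation: each $w(\overline{\mathcal F(R^\smperp)})$ contains exactly one $(-1)$-class on each of its two faces (Corollary~\ref{cor:finiteness}). Since $(r_0r_1)^m$ is the translation $t_{\alpha_1}$, your quadratic estimate might bypass this machinery. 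That would require the correct group, the correct domain $\Omega_R$, and finiteness of $\Omega_R\cap\mathcal N_X$, none of which is in your proposal.
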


This proves the Generalised Cone Conjecture from \cite{LSX26} in dimension two.

Recall from \cite{LSX26} that \emph{Sakai surfaces} are particular blowups of $\mathbb{P}^2$ at $9$ points (possibly infinitely near), see Definition \ref{dfn:goodHalphen}. Every Sakai surface has a unique divisor $D\in |{-}K_X|$ and the intersection matrix of $D$ determines a root system of affine type. \emph{Generic Sakai surfaces} are those which do not contain $(-2)$-curves, apart from the components of $D$. By \cite[Proposition B.4]{LSX26}, generic Sakai surfaces are very general Sakai surfaces in the sense of moduli theory. Note that for most types of Sakai surfaces, the nef cone is not rational polyhedral.

We briefly recall the strategy of the proof of Theorem \ref{thm:main0} from \cite{LSX26}. Let $(X,B+M)$ be a very general klt Calabi--Yau generalised pair of dimension two. It was proved in \cite[Theorem B]{LSX26} that either the Cone Conjecture holds on $X$, or $\kappa(X,{-}K_X)=0$ and there exists a birational map $X\dashrightarrow X_0$ to a generic Sakai surface $X_0$. Let $D_0\in|{-}K_{X_0}|$. Then, to complete the proof of Theorem \ref{thm:main0}, by \cite[Theorem B(c)]{LSX26} it suffices to show that there exists a rational polyhedral fundamental domain for the action of the group $\Cr(X_0,D_0)$ on $\Nef^e(X_0)$.
 
Exploiting the rich theory of Weyl groups and root systems, we prove the first main result of this paper. As explained above, Theorem \ref{thm:main2} completes the proof of Theorem \ref{thm:main0}.\footnote{For the sake of consistency with our paper \cite{LSX26}, we use the same enumeration of our main results as in op.\ cit.}

\setcounter{thmA}{2}

\begin{thmA}\label{thm:main2}
Let $X$ be a  generic Sakai surface, and let $D$ be the unique effective divisor in $|{-}K_X|$. Then there exists a rational polyhedral fundamental domain for the action of the group $\Cr(X,D)$ on $\Nef^e(X)$.
\end{thmA}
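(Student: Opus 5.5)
The proof will pass to the lattice $\operatorname{Pic}(X)\cong \Z^{1,9}$ with $K_X$ primitive isotropic, and split according to the components $D_1,\dots,D_r$ of $D$. Let $Q=\langle D_i\rangle$ be the affine root lattice of type $R$ they span, and $Q^\perp=\{x : x\cdot D_i=0 \text{ for all } i\}$. Then $Q^\perp$ is a lattice of affine type $R^\perp$, containing $K_X$ in its radical, and its finite part is the orthogonal complement in $E_8$ of the finite part of $R$. This is Sakai's classification.

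First I will describe $\Nef^e(X)$ for a generic Sakai surface $X$. There are no $(-2)$-curves except the $D_i$, so the negative curves are the $D_i$ and the $(-1)$-curves, and the $(-1)$-curves are exactly the classes $e$ with $e^2=e\cdot K_X=-1$ that are effective. Hence $\Nef(X)$ is cut out by $D_i\ge0$ together with the $(-1)$-classes. Since $-K_X$ is nef and isotropic, $\Nef^e(X)$ is the convex hull of the rational points of $\Nef(X)$.

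Second I will identify $\Cr(X,D)$. I expect it to contain the affine Weyl group $W(R^\perp)$, which acts by Cremona isometries fixing each $D_i$ because genericity prevents any $(-2)$-class in $Q^\perp$ from being effective. Up to finite index it should equal $W(R^\perp)\rtimes G$, with $G$ a finite group of diagram automorphisms preserving $D$. In the affine case $W(R^\perp)$ is a finite Weyl group extended by translations $t_\lambda$, where $\lambda$ runs over a lattice $L\subset Q^\perp/\Z K_X$ of rank $8-\operatorname{rk}(\text{finite part of } R)$. These translations are the Kac-type formulas $x\mapsto x+(x\cdot K_X)\lambda-\bigl((x\cdot\lambda)+\tfrac12\lambda^2(x\cdot K_X)\bigr)K_X$.

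Third I will build a fundamental domain. I will project along $K_X$. Fix the hyperplane section $H=\{x : x\cdot(-K_X)=1\}$; the nef cone minus the ray $\R_{\ge0}(-K_X)$ is the cone over $\Nef(X)\cap H$. The translations act on $H/\R K_X$ as lattice translations by $L$, and the finite Weyl group acts linearly, so the affine Weyl group acts as a crystallographic group on an affine space. Its fundamental alcove $A$ is a bounded rational simplex. I will then take $\Pi$ to be the cone spanned by $-K_X$ and the finitely many rational vertices of the preimage of $A$ in $\Nef(X)\cap H$, together with the part of $\Nef(X)$ lying over the directions of $Q$. Those directions carry no translations, and there the cone is given by finitely many $(-1)$-conditions because only finitely many $(-1)$-curves meet the relevant region. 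The cone $\Pi$ is then rational polyhedral. The remaining checks are that its translates cover $\Nef^e(X)$, which follows from the alcove covering the affine space and from the description of the rational boundary, and that interiors are disjoint, which follows from the simple transitivity of the affine Weyl group on alcoves; quotienting by $G$ costs only a finite subdivision. If some $\Nef^e(X)$ is itself rational polyhedral, that case is trivial.

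The main obstacle is the third step, specifically showing that the region over one alcove is polyhedral. I must check that only finitely many $(-1)$-curves give facets over a compact set, and handle the rays of $\Nef^e(X)$ near the isotropic ray $-K_X$, together with the boundary coming from $Q$ in the non-maximal-rank cases. I would first try a uniform argument. The $(-1)$-classes correspond to a finite orbit union under the finite Weyl group, shifted by $L$, so over a bounded set of $H$ only finitely many inequalities are active. If that fails, I would treat the root system types one at a time.
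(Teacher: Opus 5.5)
\textbf{Verdict: there is a genuine gap in your second and third steps, for the three types $R=A_6^\smone$, $A_7^\smone$, $D_7^\smone$.} For all other types your picture matches the paper's: $\Cr(X,D)=W(R^\smperp)$ by \cite[Theorem 26(a)]{Sak01}, your $G$ is trivial, and your alcove cone is $\overline{\mathcal F(R^\smperp)}$.

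In the three exceptional types, $L=Q^\perp/\Z K_X$ is not generated by $(-2)$-classes. For $A_7^\smone$ and $D_7^\smone$ it is $\Z\alpha_1$ with $\alpha_1^2=-8$, resp.\ $-4$, so $Q^\perp$ contains no $(-2)$-class at all. For $A_6^\smone$ the roots $\alpha_0,\alpha_1$ have square $-14$. The reflections $r_{\alpha_i}(x)=x+\tfrac1m(x\cdot\alpha_i)\alpha_i$ are therefore not integral; for example $r_{\alpha_1}(\mathcal E_1)=\mathcal E_1-\tfrac14\alpha_1$ for $A_7^\smone$. So they are not Cremona isometries, and your genericity argument does not apply to them. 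Consequently $\Cr(X,D)$ does not contain $W(R^\smperp)$. Nor is it a finite extension of the group generated by $(-2)$-reflections in $Q^\perp$, which is trivial for $A_7^\smone$ and $D_7^\smone$. In those two cases $\Cr(X,D)=\langle(r_0r_1)^m\rangle=\{t_{n\alpha_1}\mid n\in\Z\}$. This has index $2m$ in $W(R^\smperp)$ and contains no reflection.

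The consequences for your third step are:
\begin{itemize}
\item the $\Cr(X,D)$-translates of one alcove cover only one alcove in every $2m$;
\item there is no simple transitivity on alcoves to give disjointness of interiors;
\item ``quotienting by $G$'' goes the wrong way: you need a convex union of several alcoves, not a subdivision of one.
\end{itemize}
This is precisely the hard part of the paper. There $\Omega_R$ is a union of $2m$ chambers, identified with the chamber $\overline{\mathcal F_{\mathscr D_{m,m-1}}}$ of an auxiliary subroot datum and shown to be a fundamental domain for $\langle(r_0r_1)^m\rangle$ (Propositions \ref{prop:WDkl} and \ref{prop:fund_domain_G}). Because $\Cr(X,D)$ is not a reflection group, Lemma \ref{lem:actionW} cannot be applied to it directly. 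That is why the paper needs the decomposition of $\mathcal N_X$ into $\widehat W_{\mathscr D_{k,m-k-1}}$-orbits (Theorem \ref{thm:mainthmsection8}).

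What survives in all types is your translation lattice. For $\lambda\in L$, the map $t_\lambda$ is an integral isometry fixing $K_X$ and every $\mathcal D_i$. It therefore preserves $\mathcal N_X$, and hence $\Eff(X)$ on a generic surface. So $T_L\subseteq\Cr(X,D)$ with finite index. Moreover $\Cr(X,D)$ acts on $\{x\cdot\delta=1\}/\R\delta$ trivially in the $\mathcal D_i$-directions and crystallographically on $L_\R$.

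A repair along your lines would go as follows. Replace the alcove by a compact rational fundamental polytope $P$ of this action, for instance a Dirichlet domain with rational centre. Your finiteness claim can then be made precise uniformly:
\begin{itemize}
\item For $x=\mathcal E_9-\alpha+s\delta$ and $x_\beta=\mathcal E_9-\beta-\tfrac12\beta^2\delta$ with $\beta\in Q(E_8)$, one has $x\cdot x_\beta=s-1-\tfrac12\beta^2+\alpha\cdot\beta$.
\item Since $x_0=\mathcal E_9\in\mathcal N_X$, the inequality for $\beta$ is implied by the one for $0$ unless $\|\beta-\alpha\|<\|\alpha\|$, where $\|y\|^2=-y^2$.
\item $\alpha$ stays bounded over $P$, because $x\cdot\mathcal D_i\ge0$ bounds its $\mathcal D_i$-component.
\end{itemize}
This would be a genuinely simpler route than the paper's, avoiding Sakai's Theorem 26, Lemma \ref{lem:actionW} and Appendix \ref{sec:appendix_computations}. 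But it is not what you proposed. You also still need $\Nef^e(X)=\Nef(X)$ (Lemma \ref{lem:effconeHalpensurface1}(b)); this does not follow from $-K_X$ being nef and isotropic.

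Incidentally, the lattice viewpoint exposes a discrepancy in the paper's $A_6^\smone$ case. The class $\lambda=\tfrac12(\alpha_1+\alpha_1')=\mathcal E_0+\mathcal E_1-\mathcal E_2-\mathcal E_3-\mathcal E_5-\mathcal E_6$ lies in $L$. On $\{x\cdot\delta=1\}$, $t_\lambda\in\Cr(X,D)$ changes $x\cdot\alpha_1$ by $7$. By contrast, $(r_0r_1)^7=t_{\alpha_1}$ changes it by $14$, and $W_{\mathscr D'}$ fixes it. So the group in Setup \ref{setup:allcases}(b) has index $2$ in $\Cr(X,D)$. Then $r_{\alpha_0'}t_\lambda$ maps the interior of $\Omega_R\cap\Nef(X)$ onto a set meeting it. Halving $\Omega_R$ in the $\alpha_1$-direction would fix this; the theorem itself is unaffected.
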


Here, $\Cr(X,D)$ is the subgroup of the group of Cremona isometries on $X$ which preserve the numerical classes of the components of $D$, see Definition \ref{dfn:cremona} for details.

One of the main points in the proof of Theorem \ref{thm:main2} is that the Tits cone of a root system has a rational polyhedral fundamental domain under the action of the Weyl group. The hard part is to show that the intersection of this fundamental domain with the nef cone of $X$ is also rational polyhedral, which we prove in Proposition \ref{prop:ratpoly}. The crucial ingredient in the proof of this proposition is that $\Cr(X,D)$ acts on the set of classes of $(-1)$-curves with finitely many orbits, which we show in Theorems \ref{thm:mainofsection7} and \ref{thm:mainthmsection8}. More details are given at the beginning of Part \ref{part:2}.

We cannot emphasise enough that our whole paper rests crucially on the techniques developed in Sakai's fundamental paper \cite{Sak01}.

In light of Theorem \ref{thm:main2}, it seems natural to expect a similar result for non-generic Sakai surfaces. Very surprisingly, however, all the results break down in the non-generic case: we construct an example of a  non-generic Sakai surface for which $\Cr(X,D)$ is trivial, but $\Nef(X)$ is not rational polyhedral. Thus, the assumption on \emph{generic} Sakai surfaces in Theorem \ref{thm:main2} is indeed necessary. More strongly, we prove that for \textbf{any} group $G$ satisfying necessary geometric assumptions, there cannot exist a rational polyhedral fundamental domain for the action of $G$ on $\Nef^e(X)$. This is the second main result of this paper. 

\begin{thmA}\label{thm:mainB}
There exists a non-generic Sakai surface $X$ such that the following holds. Let $G\subseteq\GL\big(N^1(X)_\R\big)$ be any group which preserves the intersection form on $N^1(X)_\R$ and the cone $\Eff(X)$. Then there does not exist a fundamental domain for the action of $G$ on $\Nef^e(X)$.
\end{thmA}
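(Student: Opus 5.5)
The plan is to build one explicit non-generic Sakai surface $X$ and show that every admissible group $G$ is forced to be finite, while $\Nef(X)$ is still not rational polyhedral. Once both facts are in place, the conclusion follows from a general principle. If a finite group $G$ acted on a convex cone $C$ with a fundamental domain $\Pi$, then $C=\bigcup_{g\in G}g\Pi$ would be a finite union of rational polyhedral cones. Its closure would then be rational polyhedral, so $\Nef(X)$, which is the closure of $\Nef^e(X)$, would be rational polyhedral as well. That is a contradiction.

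\emph{Construction.} I would blow up $\pr^2$ at nine points on a cuspidal or nodal cubic $C_0$ (or a degenerate configuration of lines), arranged in a highly special position. The choice should make many additional $(-2)$-curves appear besides the components of $D$. Concretely, I would place the points so that they are the base points of a pencil of cubics, infinitely near in part. Then $X$ carries an elliptic or quasi-elliptic type structure with reducible fibres, or alternatively $\kappa(X,-K_X)=0$ with several extra nodal classes. The aim is for the $(-2)$-curves and $(-1)$-curves to generate a cone whose extremal rays accumulate, say along the isotropic ray of $-K_X$. Such an accumulation shows that $\Eff(X)$, and dually $\Nef(X)$, is not rational polyhedral. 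I would exhibit the infinitely many $(-1)$-curves explicitly as images of a fixed one under a sequence of reflection-type isometries, which are Cremona isometries that need not be realised by automorphisms.

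\emph{Rigidity of $G$.} Any $G$ preserving the intersection form and $\Eff(X)$ permutes the extremal rays of $\Eff(X)$. These rays are the classes of negative curves: the $(-1)$-curves and the $(-2)$-curves, including the components of $D$. The group $G$ also fixes $K_X$, up to the argument that $K_X$ is determined by the form together with the negative curves. I would then show that the configuration of $(-2)$-curves, which is a finite set, has small symmetry, and that any isometry stabilising $\Eff(X)$ is determined by its action on this finite set. This makes $G$ finite. The key point is that the many extra $(-2)$-curves kill the reflections $s_\alpha$ that would otherwise lie in the Weyl group. An isometry preserving $\Eff$ must send every effective $(-2)$-class to an effective one, so it normalises a root subsystem whose Weyl group is essentially trivial in the remaining directions. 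If possible, I would arrange for $G$ to be trivial, matching the claim that $\Cr(X,D)$ is trivial.

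\emph{Main obstacle.} The hardest step is making these two requirements compatible. I need enough special $(-2)$-curves that $G$ becomes finite, yet $\Nef(X)$ must remain non-polyhedral, so infinitely many $(-1)$-curves have to survive. I expect to have to work with an explicit lattice computation in $E_8^{(1)}$-type coordinates, using Sakai's description \cite{Sak01}. With that, I would verify directly that the infinitely many $(-1)$-classes are effective and irreducible, and that no nontrivial isometry preserves the effective cone.
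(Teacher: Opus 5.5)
\paragraph{Where the proposal falls short.} Your outer frame matches the paper's last step. Every admissible $G$ lies in $\Cr(X)$ by \cite[Proposition A.1]{LSX26}, so it suffices to exhibit $X$ with $\Cr(X)$ finite and $\Nef(X)=\Nef^e(X)$ not rational polyhedral. But you never produce such an $X$, and the mechanisms you sketch for the two halves do not work as stated.
\begin{enumerate}
\item[(i)] Taking the nine points to be the base points of a pencil of cubics gives $h^0(X,-K_X)=2$ and $\kappa(X,-K_X)=1$. Such an $X$ is not a Sakai surface at all.
\item[(ii)] You cannot have ``many'' extra $(-2)$-curves. By Lemma \ref{lem:lin_indep_-2curves} the $(-2)$-classes on a Sakai surface are linearly independent, so there are at most $9-|\Comp(D)|$ nodal ones.
\end{enumerate}

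\paragraph{Finiteness of $G$, done correctly.} The right target is exactly nine $(-2)$-curves in total. They then span the $9$-dimensional space $K_X^{\perp}$. Every $\sigma\in\Cr(X)$ permutes them, by the extremal ray, self-intersection and integrality argument of Step 4 of the paper's proof. An isometry fixing $K_X$ is determined by its restriction to $K_X^{\perp}$, because $\mathcal E_9\cdot K_X\neq 0$. Hence $\Cr(X)$ embeds into the symmetric group on nine letters. Made precise this way, your finiteness idea is a genuine shortcut compared with the paper. The paper instead uses \cite[Theorem 26(a)]{Sak01} to identify $\Cr(X,D)$ with the stabiliser of the nodal root in $W(R^\smperp)$, shows this stabiliser is trivial, and embeds $\Cr(X)$ into $\Aut(E_7^\smone)\cong\Z/2\Z$.

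\paragraph{Nine curves do not give non-polyhedrality.} Having nine $(-2)$-curves does not by itself make $\Nef(X)$ non-polyhedral. For $R=E_8^\smone$ all nine are components of $D$, and $\Nef(X)$ is rational polyhedral by Theorem \ref{thm:existencefunddom}(b). The extra nodal class must lie outside the support of $\delta$. The paper takes $R=E_7^\smone$ with Sakai's parameters $(a_0,a_1,s)=(1,0,1)$, so that $p_5$ is infinitely near $p_4$. This gives $\Delta^{\nod}_{X,D}=\{\mathcal E_4-\mathcal E_5\}=\{\alpha_1\}$.

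\paragraph{The source of infinitely many $(-1)$-curves.} Your proposed source is self-contradictory. Suppose infinitely many $(-1)$-classes were images of a fixed one under Cremona isometries of $X$. Then $\Cr(X)$ would be infinite, and $G=\Cr(X)$ is an admissible group, contradicting the finiteness you need in the other half. The isometries that produce these classes must \emph{fail} to preserve $\Eff(X)$, and irreducibility has to be checked directly. In the paper's example one takes $\mathcal G_n=\mathcal E_5-(n-1)\alpha_1+n(n-1)\delta$. One checks that $\mathcal G_n=(r_0r_1)^{n-1}(\mathcal E_5)$, where $r_0r_1\in W(R^\smperp)$ sends $\alpha_1$ to $\alpha_1-2\delta$. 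Since $\alpha_1-2\delta$ is not a $(-2)$-curve class, $r_0r_1$ is not a Cremona isometry of $X$. Lemma \ref{lem:irredclasses} then shows each $\mathcal G_n$ is a $(-1)$-curve:
\begin{itemize}
\item $\mathcal G_n^2=\mathcal G_n\cdot K_X=-1$;
\item $\mathcal G_n\cdot\mathcal D_i=\mathcal E_5\cdot\mathcal D_i\geq 0$;
\item $\mathcal G_n\cdot\alpha_1=2n-1>0$, which also shows the $\mathcal G_n$ are pairwise distinct.
\end{itemize}
Without a concrete choice of this kind, neither the finiteness of $G$ nor the survival of infinitely many $(-1)$-curves is established, and these two facts are the whole content of the theorem.
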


We will investigate the case of non-generic Sakai surfaces in detail in \cite{LSX26a}.

We refer to \cite{LSX26} for a detailed comparison of our results with previous results in the literature.

\subsection*{Declaration}

All the results in this paper were established by the three authors: mathematics is a human endeavour.

\subsection*{Acknowledgements} 

We are deeply indebted to Auguste H\'ebert for continuous help on the theory of root systems and Weyl groups, and for guiding us through the vast literature. We would like to thank C\'ecile Gachet and Wendelin Lutz for many useful discussions and comments over several years on the content of the present paper. We are grateful to Pierre-Emmanuel Chaput and Julia Schneider for many valuable discussions. This project started in 2023 when all three authors were based at the Universit\"at des Saarlandes.

Lazi\'c gratefully acknowledges support by the Deutsche Forschungsgemeinschaft (DFG, German Research Foundation) – Project-ID 286237555 – TRR 195 and Project-ID 530132094. The research of Stenger was partly conducted in the framework of the DFG-funded research training group RTG 2965: From Geometry to Numbers, Project number 512730679.
Xie was partly supported by PEPS JCJC de l'Insmi 2025.

\newpage

\part{}

\section{Preliminaries}\label{sec:prelim}

Throughout the paper, unless otherwise stated, we work with varieties that are normal, projective and defined over $\C$. Most of the notation we need in this paper was introduced in \cite{LSX26}; here we recall the most important properties that we need in this work.

If $X$ is a normal projective variety, we denote by $N^1(X)_\R$, $\Nef(X)$, $\Amp(X)$, $\Eff(X)$ and $\overline{\Eff}(X)$ the N\'eron-Severi space and the nef, ample, effective and pseudoeffective cone of $X$, respectively. If $D$ is an $\R$-Cartier $\R$-divisor on $X$, then we denote its numerical class in $N^1(X)_\R$ by $[D]$. We call the cone
$$\Nef^e(X)\coloneqq\Nef(X)\cap\Eff(X)$$
the \emph{effective nef cone}. The \emph{rational nef cone} $\Nef^+(X)$ is the cone in $N^1(X)_\R$ spanned by the classes of nef line bundles. The cones $\Nef^e(X)$ and $\Nef^+(X)$ are the most important cones related to the (Generalised) Cone Conjecture.

\subsection{Curves of negative self-intersection on rational surfaces}

\begin{dfn}
	We call an irreducible curve $C$ on a smooth surface $X$ a \emph{$({-}1)$-curve} if $C^2 = C\cdot K_X = {-}1$, and we call it a \emph{$({-2})$-curve} if $C^2 = {-}2$ and $C\cdot K_X= 0$.
\end{dfn}

Note that, in the context of the previous definition, when ${-}K_X$ is additionally nef, then an irreducible curve $C$ on $X$ is a $({-}2)$-curve if and only if $C^2={-}2$ by \cite[Lemma 2.5(b)]{LSX26}.

We fix some notation that we need throughout the paper.

\begin{notation}\label{not:MXNX}
	Let $X$ be a smooth rational surface with $K_X^2\geq0$, and let $D\in|{-}K_X|$. We define:
	\begin{align*}
		\Comp(D) &\coloneqq \big\{ [C]\in N^1(X) \mid C \text{ is a component of } D\big\},\\
		\cal{M}_X &\coloneqq  \big\{ [E]\in N^1(X) \mid E^2 = K_X\cdot E = -1 \text{ and }h^0(X,E)\geq0 \big\},\\
		\cal{M}_X^{\irr} &\coloneqq \big\{ [E]\in N^1(X) \mid E \text{ is a }(-1)\text{-curve} \big\}, \\
		\mathcal{N}_X &\coloneqq \big\{ E\in \mathcal M_X \mid E\cdot C \geq 0 \text{ for all } C \in \Comp(D) \big\},\\
		\Delta_{X,D}^{\nod}&\coloneqq\big\{[C] \in N^1(X)\mid C\text{ is a $(-2)$-curve with}\ C\cap\Supp D=\emptyset\big\}.
	\end{align*}
The set $\Delta_{X,D}^{\nod}$ is well defined by \cite[Lemma 2.4]{LSX26}.
\end{notation}

We need the following characterisation of the sets $\mathcal M_X$ and $\mathcal N_X$.

\begin{lem}\label{lem:-1classesHalphen}
	Let $X$ be a smooth rational surface such that $K_X^2\geq0$, and let $D\in|{-}K_X|$. Then
	\begin{enumerate}[\normalfont (a)]
		\item $ \mathcal{M}_X = \big\{ E\in N^1(X) \mid E^2 = K_X\cdot E = -1 \big\} $,
		\item $ \mathcal{N}_X = \big\{ E\in N^1(X) \mid E^2 = K_X\cdot E = -1 \text{ and }E\cdot C \geq 0 \text{ for all } C \in \Comp(D)\big\} $.
	\end{enumerate}
\end{lem}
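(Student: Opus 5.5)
Since the definition of $\mathcal M_X$ is meant to record effective classes (i.e.\ $h^0(X,E)>0$), the content of (a) is that \emph{every} class $E\in N^1(X)$ with $E^2=K_X\cdot E=-1$ is effective. Part (b) then follows at once from (a) and the definition of $\mathcal N_X$, so I concentrate on (a). The plan is to use Riemann--Roch and rule out the Serre dual term using the positive cone.

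\emph{Step 1: Riemann--Roch.} Since $X$ is a smooth rational surface, $\chi(\mathcal O_X)=1$. On a rational surface numerical and linear equivalence agree, so $E$ is the class of a line bundle. Riemann--Roch gives
$$\chi(X,E)=1+\tfrac12\big(E^2-K_X\cdot E\big)=1+\tfrac12(-1+1)=1.$$
Hence $h^0(X,E)+h^2(X,E)\geq1$, and by Serre duality $h^2(X,E)=h^0(X,K_X-E)$. It therefore suffices to show that $F:=K_X-E$ is not effective.

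\emph{Step 2: numerics of $F$.} Suppose $F$ is effective. Then
$$F^2=K_X^2-2K_X\cdot E+E^2=K_X^2+1\geq1,\qquad D\cdot F=-K_X\cdot F=-(K_X^2-K_X\cdot E)=-(K_X^2+1)<0.$$

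\emph{Step 3: positive cone argument (the main point).} Fix an ample class $H$ and consider the closure $\overline{\mathcal C}$ of the component of the positive cone $\{x: x^2>0\}$ containing $H$.

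The class $F$ is effective, so $H\cdot F\geq0$; since $F^2>0$, Hodge index forces $H\cdot F\neq0$. Hence $H\cdot F>0$ and $F\in\overline{\mathcal C}$.

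Likewise $D\in|{-}K_X|$ is effective and nonzero, because $K_X\not\equiv0$ on a rational surface. So $H\cdot D>0$, and $D^2=K_X^2\geq0$, which gives $D\in\overline{\mathcal C}$.

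By the light cone lemma (a consequence of the Hodge index theorem), any two classes in $\overline{\mathcal C}$ have nonnegative intersection, so $D\cdot F\geq0$. This contradicts Step 2. Thus $h^0(X,K_X-E)=0$ and $h^0(X,E)\geq1$, proving (a).

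The only delicate point is this last step. The hypothesis $K_X^2\geq0$ is exactly what places $D=-K_X$ in the closed positive cone, and so lets us exclude the effectivity of $K_X-E$; everything else is routine.
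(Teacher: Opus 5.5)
Your proof is correct, and it is genuinely more than what the paper provides. The paper gives no argument for (a) and simply quotes \cite[Lemma 3.2]{LH05}; it then deduces (b) from (a) and the definition of $\mathcal N_X$, exactly as you do.

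You instead give a self-contained proof of the cited lemma. Riemann--Roch with $\chi(\mathcal O_X)=1$, together with Serre duality, reduces the effectivity of $E$ to the non-effectivity of $F=K_X-E$. You then apply the light cone inequality to $F$, which would be effective with $F^2=K_X^2+1>0$, and to $D\equiv -K_X$, which is effective, nonzero and satisfies $D^2=K_X^2\geq 0$. Both would lie in the same closed half of the positive cone, so $D\cdot F\geq 0$, contradicting $D\cdot F=-(K_X^2+1)<0$.

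The citation is shorter. Your version has the advantage of making transparent how the hypothesis $K_X^2\geq 0$ enters: it forces $F^2>0$ and $D\cdot F<0$, and it places $-K_X$ in the closed positive cone. It also shows that nothing about $D$ is used beyond $-K_X$ being effective. That condition is in fact automatic here, since $h^0(X,-K_X)\geq\chi(-K_X)=1+K_X^2\geq 1$.

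You were also right to read the condition in the definition of $\mathcal M_X$ as $h^0(X,E)>0$. With $h^0(X,E)\geq 0$, as literally printed, part (a) would be a tautology.
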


\begin{proof}
	Part (a) is \cite[Lemma 3.2]{LH05}, and part (b) follows from the definition of $\mathcal N_X$ and from (a).
\end{proof}

\subsection{Sakai surfaces}
In this paper, we are mainly interested in a special class of smooth rational surfaces that we introduced as \emph{Sakai surfaces} in \cite[Definition 2.8]{LSX26}. We recall their definition in this subsection.

Let $D = \sum n_iD_i$ be an effective integral divisor on a smooth projective surface $X$, where $D_i$ are prime components. Then $D$ is \emph{of canonical type} if $K_X\cdot D_i = D\cdot D_i = 0$ for all $i$. Recall the following definition from \cite{Sak01}.

\begin{dfn}\label{def:gen_halphen_surface}
		A smooth rational surface $X$ is a \emph{generalised Halphen surface} if the linear system $|{-}K_X|$ contains a divisor of canonical type. If, additionally, we have $h^0(X,-K_X)=1$, then $X$ is called an \emph{$R$-surface}, where $R$ is the type of the unique divisor $D \in |{-}K_X|$.
\end{dfn} 

If $X$ is an $R$-surface, then the intersection matrix of $D \in |{-}K_X|$ determines a root system $R$ of affine type: such surfaces are then classified according to the \emph{type $R$} of $D$, and we also say that $X$ is \emph{of type $R$}.
We refer to Appendix \ref{sec:Sakai_appendix_A}, where we recall the possible types of $R$-surfaces from \cite[Appendix A]{Sak01}.

\begin{dfn}\label{dfn:goodHalphen}
		A smooth projective surface $X$ is a \emph{Sakai surface} if it is an $R$-surface with $\kappa(X,-K_X) = 0$. We say that $X$ is \emph{generic} if $\Delta^{\nod}_{X,D} = \emptyset$, where $D\in|{-}K_X|$. 
	\end{dfn}

A fundamental fact is that a Sakai surface $X$ is always the blowup of $\mathbb P^2$ at nine (possibly infinitely near) points, that $-K_X$ is nef, and that $K_X^2 = 0$ \cite[Lemma 2.9(a)(b)]{LSX26}.

The following lemma gives an easy criterion for determining whether an element of $ \mathcal{M}_X$ is the class of a $(-1)$-curve.

\begin{lem}\label{lem:irredclasses}
	Let $X$ be a Sakai surface and let $D\in|{-}K_X|$. Then 
	$$ \mathcal{M}_X^{\irr} = \big\{ E \in \mathcal M_X \mid E\cdot C \geq 0 \text{ for all } C \in \Comp(D) \cup \Delta^{\nod}_{X,D} \big\}. $$
	In particular, $\mathcal{M}_X^{\irr}\subseteq\mathcal{N}_X$.
\end{lem}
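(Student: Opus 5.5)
The plan is to prove the two inclusions separately. The inclusion $\subseteq$ is easy. Let $E$ be a $(-1)$-curve and let $C$ be a curve with $[C]\in\Comp(D)\cup\Delta^{\nod}_{X,D}$. Then $C$ is irreducible with $C^2<0$ and $C\neq E$, since $C^2=-2$ for every such $C$ (components of a divisor of canonical type on a rational surface have self-intersection $-2$, unless $D$ is irreducible with $D^2=0$, in which case $C\cdot E=D\cdot E=1\ge0$ anyway). Two distinct irreducible curves meet non-negatively, so $E\cdot C\geq 0$. For $\Delta^{\nod}_{X,D}$ one also uses that the class of a $(-2)$-curve determines the curve, because it has negative self-intersection. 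The final claim $\mathcal M_X^{\irr}\subseteq\mathcal N_X$ then follows from Lemma \ref{lem:-1classesHalphen}(b).

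For $\supseteq$, take $E\in\mathcal M_X$ with $E\cdot C\ge0$ for all $C\in\Comp(D)\cup\Delta^{\nod}_{X,D}$. The steps are as follows.

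\begin{enumerate}[(1)]
\item By Riemann--Roch, $h^0(X,E)\ge \chi(\mathcal O_X(E))=1+\tfrac12(E^2-K_X\cdot E)=1$, using $h^2(X,E)=h^0(X,K_X-E)=0$. The vanishing holds because $-K_X=D$ is effective and $-K_X\cdot(K_X-E)=K_X^2-K_X\cdot E\dots$; more simply, $(K_X-E)\cdot H<0$ for an ample $H$ since $-K_X$ is nonzero effective and $E$ is effective. So $E$ is effective; write $E=\sum a_iF_i$ with $F_i$ prime.
\item Since $-K_X$ is nef, each $-K_X\cdot F_i\ge0$, and these sum to $1$. Hence exactly one component $F_0$ has $-K_X\cdot F_0=1$ and multiplicity $1$. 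Every other component satisfies $K_X\cdot F_i=0$.
\item A curve $F$ with $K_X\cdot F=0$ and $F^2<0$ is a $(-2)$-curve by adjunction. Such a curve either lies in $\Supp D$, so $[F]\in\Comp(D)$, or is disjoint from $D$, since $D\cdot F=0$ and $F$ is not a component of $D$; then $[F]\in\Delta^{\nod}_{X,D}$. A curve with $K_X\cdot F=0$ and $F^2\ge0$ would be nef with $F\cdot K_X=0$, and I expect to rule this out via $\kappa(X,-K_X)=0$. Indeed, such an $F$ would satisfy $F^2=0$ by Hodge index with $K_X^2=0$, so $F$ would be proportional to $-K_X$, which the Sakai hypothesis forbids in the numerical sense. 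This is the delicate step; I would invoke the relevant parts of \cite[Lemma 2.9]{LSX26} or \cite[Lemma 2.5]{LSX26}.
\item Now set $N=E-F_0=\sum_{i\ne0}a_iF_i$, which is supported on $(-2)$-curves with classes in $\Comp(D)\cup\Delta^{\nod}_{X,D}$. Suppose $N\neq0$. The intersection form on these curves is negative definite on each connected configuration, except for the full support of $D$ (an affine type), where it is semidefinite with kernel spanned by $D$. Since $E\cdot F_i\ge0$ for all $i\neq 0$, we get $N^2=E\cdot N-F_0\cdot N\ge -F_0\cdot N$. Also $E^2=F_0^2+2F_0\cdot N+N^2=-1$, and $F_0\cdot N\ge0$ because $F_0\neq F_i$.
\item From (4), $F_0^2=-1-2F_0\cdot N-N^2\le -1-F_0\cdot N\le -1$. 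Since $F_0$ is irreducible with $K_X\cdot F_0=-1$, adjunction forces $F_0^2=-1$, and then $F_0\cdot N=0$ and $N^2=0$. So $N$ is a positive multiple of $D$, with support containing all components of $D$. Then $F_0\cdot N>0$ because $F_0\cdot D=1$, a contradiction. Hence $N=0$, and $E=F_0$ is a $(-1)$-curve.
\end{enumerate}

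The main obstacle is step (3), excluding components with $K_X\cdot F=0$ that are not $(-2)$-curves, which is where $\kappa(X,-K_X)=0$ enters.
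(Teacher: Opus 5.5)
Your proof of $\subseteq$ matches the paper's. A $(-1)$-curve is distinct from every curve $C$ with $K_X\cdot C=0$, so $E\cdot C\ge0$.

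For $\supseteq$ you take a genuinely different route. The paper simply cites \cite[Proposition 3.3]{LH05}, using only that $D$ is the unique member of $|{-}K_X|$. You prove the inclusion directly:
\begin{itemize}
\item effectivity of $E$ from Riemann--Roch;
\item the degree count $-K_X\cdot E=1$ against the nef class $-K_X$, which isolates a single reduced component $F_0$ with $K_X\cdot F_0=-1$;
\item adjunction for $F_0$, combined with negative semidefiniteness on $K_X^\perp$, to force $N=E-F_0=0$.
\end{itemize}
The inequalities in steps (2), (4) and (5) are correct. The citation is shorter, while your argument is self-contained and elementary.

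Two repairs are needed.

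First, your final (``more simply'') justification of $h^0(X,K_X-E)=0$ is circular. It uses that $E$ is effective, which is exactly what step (1) is meant to establish. The computation you started works once the sign is fixed: $-K_X\cdot(K_X-E)=-K_X^2+K_X\cdot E=-1<0$. Since $-K_X$ is nef, $K_X-E$ cannot be effective. Alternatively, quote Lemma \ref{lem:-1classesHalphen}(a).

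Second, in step (3) you claim that a component $F$ with $K_X\cdot F=0$ and $F^2\ge0$ is ruled out by $\kappa(X,-K_X)=0$. This is false when $D$ is irreducible (type $A_0^\smone$): then $F=D$ is such a curve and may occur in $E$. It is harmless only because $[D]\in\Comp(D)$. In fact you do not need this step at all. An irreducible curve $F$ with $F^2\ge0$ is nef, so $E\cdot F\ge0$ holds automatically because $E$ is effective. That inequality, together with $N\in K_X^\perp$, is all that steps (4) and (5) use.

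Similarly, in step (4) you assert without proof how the intersection form behaves on connected configurations. Replace this with the Hodge index theorem. Since $K_X^2=0$ and $K_X\not\equiv0$, the form on $K_X^\perp$ is negative semidefinite with radical $\R K_X$. So $N^2=0$ forces $N\equiv\lambda(-K_X)$, with $\lambda>0$ if $N\neq0$, and then $F_0\cdot N=\lambda>0$ gives the contradiction.

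With these changes, the step you flagged as the main obstacle disappears. Your proof of $\supseteq$ then no longer uses $\kappa(X,-K_X)=0$ or the uniqueness of $D$. It uses only that $X$ is rational with $-K_X$ nef and $K_X^2=0$.
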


\begin{proof}
	By assumption, $D$ is the only element of the linear system $|{-}K_X|$. Hence, by \cite[Proposition 3.3]{LH05} we have that
	$$ \big\{ E \in \mathcal M_X \mid E\cdot C \geq 0 \text{ for all } C \in \Comp(D) \cup \Delta^{\nod}_{X,D} \big\}\subseteq \mathcal{M}_X^{\irr} . $$
	For the opposite inclusion, let $E\in\mathcal{M}_X^{\irr}$. Then $E\cdot D={-}E\cdot [K_X]=1$, and therefore $E\notin\Comp(D)$ since $D$ is a divisor of canonical type by the definition of Sakai surfaces. Since clearly $E\notin \Delta^{\nod}_{X,D}$, we conclude that $E\cdot C \geq 0$ for all $C \in \Comp(D) \cup \Delta^{\nod}_{X,D}$, as desired.
\end{proof}

The following result gives a precise bound on the number of $(-2)$-curves on a Sakai surface.

\begin{lem}\label{lem:lin_indep_-2curves}
	Let $X$ be a Sakai surface. Then the classes of $(-2)$-curves in $N^1(X)_{\R}$ are linearly independent. In particular, $X$ has at most nine $(-2)$-curves. 
\end{lem}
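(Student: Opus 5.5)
Let $C_1,\dots,C_k$ be the distinct $(-2)$-curves on $X$. The plan is to use that $-K_X$ is nef with $K_X^2=0$, so the orthogonal complement $K_X^\perp$ of $[K_X]$ in $N^1(X)_\R$ carries a negative semidefinite form whose kernel is $\R[K_X]$. This follows from the Hodge index theorem on the rational surface $X$, since $\rho(X)=10$ and $K_X$ is a nonzero isotropic class. Every $(-2)$-curve $C$ satisfies $K_X\cdot C=0$, so each $[C_i]$ lies in $K_X^\perp$. Since $\dim K_X^\perp=9$, independence immediately gives at most nine such curves. The task therefore reduces to showing that $[C_1],\dots,[C_k]$ are linearly independent.

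Suppose there is a relation $\sum a_i[C_i]=0$. Separate the positive and negative coefficients to write $P\equiv N$, where $P=\sum_{a_i>0}a_iC_i$ and $N=\sum_{a_i<0}(-a_i)C_i$ are effective with no common components. Then $P^2=P\cdot N\ge0$. Because $P\in K_X^\perp$ is negative semidefinite there, $P^2=0$, and so $P$ is numerically a real multiple of $K_X$, say $P\equiv -tK_X$ with $t\ge0$ (the sign coming from effectivity paired with an ample class). If $t=0$, then $P$ is an effective divisor numerically trivial, hence $P=0$; likewise $N=0$, so all $a_i=0$.

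The main obstacle is excluding $t>0$. Here I would use $\kappa(X,-K_X)=0$ together with $h^0(X,-K_X)=1$. First, the relation can be taken with rational coefficients, since the $[C_i]$ are integral classes and the solution space of the linear relation is defined over $\Q$. Clearing denominators, $P$ becomes an integral effective divisor with $P\equiv -mK_X$ for some rational $m>0$. Numerical and linear equivalence agree on rational surfaces, so after scaling, $P\in|{-}rK_X|$ for some integer $r>0$. Since $\kappa(X,-K_X)=0$, the system $|{-}rK_X|$ has the single element $rD$. Hence $P=rD$, so every component of $D$ is one of the $C_i$ occurring in $P$.

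The same argument applies to $N$, which is also numerically a positive multiple of $-K_X$, giving $N=r'D$ with $r'>0$. But $P$ and $N$ have no common components, and $D\neq0$, which is a contradiction. This excludes $t>0$, so all $a_i=0$ and the classes are independent. The delicate points to double-check are the rationality step and the identification of numerical with linear equivalence. Both are standard for rational surfaces, since $\mathrm{Pic}(X)$ is torsion-free and $q=0$.
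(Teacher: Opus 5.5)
Your proof is correct, but it takes a different route from the paper.

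\textbf{The paper's route.} The paper argues by citation. A linear dependence among $(-2)$-curves produces an elliptic fibration $X\to\pr^1$ by \cite[Proposition 2.4]{Lah04}. By \cite[Corollary 5.6.1 on p.\ 347]{CD89} this fibration is given by $|{-}mK_X|$, so $\kappa(X,-K_X)=1$, a contradiction. For the count, the paper places the classes in the $9$-dimensional space $K_X^\perp$ and then cites \cite[Lemma 2.4]{LSX26} separately to know that two distinct $(-2)$-curves cannot have the same class.

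\textbf{Your route.} You avoid elliptic fibrations entirely. You split a relation into $P\equiv N$ with no common components, so $P^2=P\cdot N\ge 0$. Hodge index with $K_X^2=0$ makes $K_X^\perp$ negative semidefinite with radical $\R[K_X]$, which forces $P\equiv -tK_X$. Then numerical and linear equivalence coincide on a rational surface, and $\kappa(X,-K_X)=0$ gives $|{-}tK_X|=\{tD\}$. Hence $P=N=tD$, contradicting disjoint supports.

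\textbf{Comparison.} Your argument is self-contained, using only Hodge index and the definition of a Sakai surface. Because you index by distinct curves, it also absorbs the fact the paper cites separately: a relation $[C_1]-[C_2]=0$ already gives $-2=P^2=C_1\cdot C_2\ge 0$. The paper's version is shorter but leans on external results about elliptic fibrations on rational surfaces.

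\textbf{Two cosmetic fixes.} Pass to a nonzero integral relation at the very start, rather than midway through the case $t>0$. This is legitimate because real dependence of integral vectors implies rational dependence. Also phrase the count for an arbitrary finite set of distinct $(-2)$-curves, so the argument does not presuppose that there are only finitely many.
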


\begin{proof}
	Suppose that the classes of $(-2)$-curves are linearly dependent. Then by \cite[Proposition 2.4]{Lah04}, there exists an elliptic fibration $f\colon X\to \pr^1$. By \cite[Corollary 5.6.1 on p.\ 347]{CD89} there exists a positive integer $m$ such that $f$ is defined by the linear system $|{-}mK_X|$. Hence, $\kappa(X,-K_X) =1$, which is a contradiction since $X$ is a Sakai surface. 
    
	For the second statement, note that every $(-2)$-curve $C$ satisfies $K_X\cdot C = 0$. Since the Picard rank of $X$ is $10$, and since the classes of $(-2)$-curves are linearly independent by the first paragraph, we conclude that there exist at most nine distinct classes of $(-2)$-curves. Finally, by \cite[Lemma 2.4]{LSX26}, any two $(-2)$-curves with the same numerical class coincide. This proves the last statement of the lemma.
\end{proof}

\subsection{Cremona isometries}
In this subsection, we recall the definition of Cremona isometries from \cite[Definition 2.16]{LSX26}. Since we use this notion only for Sakai surfaces in this paper, we restrict to the case of smooth surfaces, see also \cite{Loo81,Dol83}.

\begin{dfn}\label{dfn:cremona}
	Let $X$ be a smooth projective surface. An automorphism $\sigma$ of the vector space $N^1(X)_\R$ is a \emph{Cremona isometry} of $X$ if it satisfies the following properties:
	\begin{enumerate}[\normalfont (i)]
		\item $\sigma$ maps the lattice $N^1(X)$ onto itself,
		\item $\sigma$ preserves the intersection form on $N^1(X)_\R$,
		\item $\sigma$ fixes the class $[K_X]$,
		\item $\sigma$ preserves the effective cone $\Eff(X)$.
	\end{enumerate}
	We denote the group of Cremona isometries of $X$ by $\Cr(X)$. Additionally, if $D$ is an effective divisor on $X$ with irreducible components $D_1,\dots,D_m$, we define $\Cr(X,D)$ as
	\[\Cr(X,D)\coloneqq\big\{\sigma\in\Cr(X)\mid\sigma\big([D_i]\big)=[D_i]\text{ for }i=1,\dots,m\big\}.\]
\end{dfn}

\subsection{Group actions and fundamental domains}

Let $\Lambda$ be a free $\Z$-module of finite rank and let $V\coloneqq \Lambda\otimes_\Z \R$. A \emph{convex cone} in $V$ is a convex subset $\mathcal C\subseteq V$ such that $\lambda v\in\mathcal C$ for every $\lambda\geq0$ and $v\in\mathcal C$. 
It is \emph{strictly convex} if its closure $\overline{\mathcal C}$ contains no lines. 
It is \emph{polyhedral}, respectively \emph{rational polyhedral}, if it is spanned by finitely many elements of $V$, respectively of $\Lambda$.
For a convex cone $\mathcal C\subseteq V$ we denote by $\mathcal \Int(C)$ the interior of $\mathcal C$. We say that a subgroup of $G\subseteq\GL(V)$ \emph{preserves $\mathcal C$} if $g(\mathcal C)=\mathcal C$ for each $g\in G$; then we also say that \emph{$G$ acts on $\mathcal C$}, and that we have an \emph{action of $G$ on $\mathcal C$}. Moreover, if $\Pi\subseteq\mathcal C$, then we denote $G\cdot\Pi\coloneqq\bigcup_{g\in G}g(\Pi)$.

\begin{dfn}
	Let $\mathcal C\subseteq V$ be a strictly convex cone such that $\Int(\mathcal C)\neq\emptyset$, and let $G$ be a subgroup of $\GL(V)$ which preserves $\Lambda$ and $\mathcal C$. 
	We say that the action of $G$ on $\mathcal C$ admits a \emph{fundamental domain} if there exists a convex cone $\Pi \subseteq \mathcal C$ such that $G \cdot \Pi = \mathcal C$, and if $g\big( \Int(\Pi) \big) \cap \Int(\Pi) \neq\emptyset$ for some $g \in G$, then $g$ is the identity in $G$.
	We say that the action of $G$ on $\mathcal C$ admits a \emph{rational polyhedral fundamental domain} if $\Pi$ can be chosen to be rational polyhedral.
\end{dfn}

\section{Root data}\label{sec:rootsystems}

In this paper we use crucially the extended theory of root systems and Weyl groups. The classical theory of root systems as in \cite{Kac90,Bou08} is, however, not sufficient for the purposes of this work. We need the concept of \emph{root data} from \cite{MP89}, which allows simple roots to be linearly dependent.

We start by recalling definitions and results on root systems and Weyl groups from \cite{Kac90}. In our setup, we need many classical results on finite and affine root systems. Afterwards, we present the notion of sets of root data from \cite{MP89}, which generalises the concept of root systems. This gives considerable flexibility when dealing with \emph{subroot systems} on Sakai surfaces.

Throughout the section, if $V$ is a finite-dimensional real vector space, then we denote by $V^*$ its dual, and we consider the pairing
$$\langle\cdot\,,\cdot\rangle\colon V\times V^*\to\R,\quad \langle v,w\rangle\coloneqq w(v).$$

\subsection{Root systems and Weyl groups}\label{subsec:roots}

    A real $(\ell+1)\times (\ell+1)$-matrix 
    $$A=(a_{i,j})_{0\leq i,j\leq \ell}$$
    is a \emph{generalised Cartan matrix} if
    \begin{enumerate}[(i)]
        \item $a_{i,i}=2$ for any $i$;
        \item $a_{i,j}\in \Z_{\leq 0}$ for distinct $i$ and $j$;
        \item $a_{i,j}=0$ if and only if $a_{j,i}=0$ for distinct $i$ and $j$.
    \end{enumerate}
    The matrix is \emph{indecomposable} if it is not a direct sum of matrices. A \emph{realisation of $A$} is a triple $(V,\Pi,\Pi^\smvee)$, where $V$ is a real vector space\footnote{We use different notation from that in \cite{Kac90}: the roles of vector spaces $V$ and $V^*$ are reversed in op.\ cit. We use this notation to make it compatible with other literature, and in particular with the paper \cite{MP89}, which is crucial for this paper.} and:
    \begin{enumerate}[(i)]
        \item the sets $\Pi=\{\alpha_0,\dots,\alpha_\ell\}\subseteq V$ and $\Pi^\smvee=\{\alpha_0^\smvee, \dots,\alpha_\ell^\smvee\}\subseteq V^*$ are linearly independent,
        \item $\langle \alpha_i,\alpha_j^\smvee\rangle=a_{ij}$ for all $i$ and $j$,
        \item $\dim V=2\ell+2-\rk (A)$.
    \end{enumerate}
    We call $\Pi$ a \emph{root basis} and the elements of $\Pi$ \emph{simple roots}. The lattice 
    $$Q\coloneqq \sum_{i=0}^\ell\Z\alpha_i \subseteq V$$
    is the \emph{root lattice}, and we denote
    $$Q_+\coloneqq \sum_{i=0}^\ell\Z_+\alpha_i \subseteq V.$$
    
    We can associate to $A$ a Kac--Moody algebra $\mathfrak{g}(A)$ with a Lie bracket $[\cdot,\cdot]$ as in \cite[\S1.3]{Kac90}, and for each $\alpha\in Q$ we consider the vector space
    $$\mathfrak{g}_\alpha\coloneqq \big\{x\in\mathfrak{g}(A)\mid [h,x]=\alpha(h)x\text{ for all }h\in V^*\big\};$$
    here we identify $V$ with $V^{**}$. We say that $\alpha\in Q \backslash \{ 0 \} $ is a \emph{root} if $\dim\mathfrak{g}_\alpha>0$, and we denote the set of all roots by $\Delta$: we call it the \emph{root system of $\mathfrak{g}(A)$}. It follows from \cite[Proposition 1.5]{Kac90} that for each root $\alpha$ we have $\alpha\in Q_+$, in which case we call it a \emph{positive root}, or $-\alpha\in Q_+$, in which case we call it a \emph{negative root}. We denote the set of all positive roots by $\Delta_+$, and the set of all negative roots by $\Delta_-$. It follows that we have a disjoint union 
    $$\Delta=\Delta_+\sqcup\Delta_-\subseteq Q.$$
    It follows by construction that 
    $$\Pi\subseteq\Delta_+\subseteq Q_+,$$
    and we have $\Delta_-={-}\Delta_+$ by \cite[\S1.3]{Kac90}.
    
	For any $\alpha_i\in \Pi$ the \emph{fundamental reflection} associated to $\alpha_i$ is the linear map
    $$ r_i\colon V\to V,\quad x\mapsto x-\langle x,\alpha_i^\smvee\rangle \alpha_i. $$
    The map $r_i$ leaves the hyperplane $\ker\alpha_i^\smvee\subseteq V$ pointwise fixed and satisfies $r_i(\alpha_i)={-}\alpha_i$; in particular, it is a linear automorphism of $V$. The subgroup 
    $$W\coloneqq \big\langle r_i\mid\alpha_i\in \Pi\big\rangle\subseteq\GL(V)$$
    is the \emph{Weyl group of $\mathfrak{g}(A)$}. The set $\Delta$ is $W$-invariant by \cite[Proposition 3.7(b)]{Kac90}. We define similarly the \emph{dual fundamental reflection} $r_i^{\smvee}$ and the \emph{dual Weyl group $W^\smvee$}.
    
    The set
    $$\Delta^\re\coloneqq \big\{w(\alpha)\in V\mid \alpha\in \Pi,\ w\in W\big\}\subseteq\Delta$$
    is the \emph{set of real roots}, and the set
    $$\Delta^\re_+\coloneqq \Delta^\re\cap Q_+$$
    is the \emph{set of positive real roots}. The set
    $$\Delta^\im\coloneqq \Delta\setminus\Delta^\re$$
    is the \emph{set of imaginary roots}. The set of \emph{real coroots $(\Delta^\smvee)^\re$} is defined similarly.

Note that there is a unique $W$-equivariant bijection $(\cdot)^\smvee\colon \Delta^\re\to (\Delta^\smvee)^\re$ such that
$$ (\alpha_i)^\smvee=\alpha_i^\smvee\quad\text{for all } i; $$
we will denote simply $\alpha^\smvee\coloneqq (\alpha)^\smvee$. Moreover, we have a bijection $(\cdot)^\smvee\colon \Delta^\re_+\to (\Delta^\smvee)^\re_+$ and
$$ \langle\alpha,\alpha^\smvee\rangle = 2\quad\text{for all } \alpha\in\Delta^\re. $$

   We note that for any $\alpha\in \Delta^\re$, the \emph{reflection}
$$ r_\alpha\colon V \to V, \quad x-\langle x,\alpha^\smvee\rangle \alpha $$
belongs to $W$ by \cite[\S5.1]{Kac90}, and then clearly 
$$ W=\big\langle r_\alpha\mid\alpha\in \Delta^\re\big\rangle. $$
Thus, the Weyl group $W$ depends only on the set $\Delta^\re$ and not on the root basis $\Pi$.

    The triple $(V^*,\Pi^\smvee,\Pi)$ is a realisation of the generalised Cartan matrix $A^T$, and it is called \emph{dual} to $(V,\Pi,\Pi^\smvee)$. We define the action of the group $W$ on $V$ by
$$ \big\langle w(x), y\big\rangle \coloneqq  \big\langle x, w^{-1}(y)\big\rangle \quad \text{for any }w\in W,\ x\in V,\ y\in V^*. $$

\subsection{Standard bilinear forms}\label{subsec:standardbilinear}

We recall the definitions of standard bilinear forms from \cite[Chapter 2]{Kac90}. Let $A=(a_{i,j})_{0\leq i,j\leq \ell}$ be a symmetric generalised Cartan matrix and let $(V,\Pi,\Pi^\smvee)$ be a realisation of $A$. Fix positive rational numbers $\varepsilon_0,\dots,\varepsilon_\ell$ such that there exists a symmetric matrix $B$ with
$$A=\diag(\varepsilon_0,\dots,\varepsilon_\ell)\cdot B.$$
Note that if $A=A_1\oplus A_2$, where $A_1$ and $A_2$ are indecomposable matrices of sizes $\ell_1\times\ell_1$ and $\ell_2\times\ell_2$, then we must have $\varepsilon_0=\ldots=\varepsilon_{\ell_1-1}$ and $\varepsilon_{\ell_1}=\ldots=\varepsilon_\ell$.

Let $V_1$ be the subspace of $V^*$ spanned by $\Pi^\smvee$, and let $V_2$ be any complementary subspace to $V_1$ in $V^*$. We define a symmetric bilinear $\R$-valued form $(\cdot\, ,\cdot)$ on $V^*$ by setting
$$ (\alpha_i^\smvee,v) \coloneqq  \varepsilon_i\langle \alpha_i,v\rangle\quad\text{for }v\in V^*\text{ and all } i,$$
and by $(v_1,v_2)=0$ for $v_1,v_2\in V_2$. This form is non-degenerate, hence we have an induced bilinear form on $V$, which we also denote by $(\cdot\, ,\cdot)$. This bilinear form is $W$-invariant by \cite[Proposition 3.9]{Kac90}, and we call it a \emph{standard invariant bilinear form} associated to the numbers $\varepsilon_1,\dots,\varepsilon_\ell$. For every $\alpha\in V^*$, we denote $|\alpha|^2\coloneqq (\alpha,\alpha)$.

With these definitions, we have
$$(\alpha_i,\alpha_i)>0\text{ for all }i,\quad (\alpha_i,\alpha_j)\leq0\text{ for all }i\neq j,$$
and
$$a_{ij}=\frac{2(\alpha_i,\alpha_j)}{(\alpha_i,\alpha_i)}\quad\text{for all }i\text{ and }j.$$
For future reference we note that we have an isomorphism $\nu\colon V^*\to V$ defined by
$$\langle\nu(v),v'\rangle = (v,v')\quad \text{for }v,v'\in V^*.$$
Then
$$\nu(\alpha_i^\smvee)=\varepsilon_i\alpha_i\quad\text{for all } i.$$

\subsection{Finite and affine root systems}

We recall some special types of generalised Cartan matrix defined in \cite[Chapter 4]{Kac90}. Since we always work with symmetric pairing $\langle\cdot\,,\cdot\rangle\colon V\times V^*\to\R$, we only consider symmetric generalised Cartan matrices and use the following as definition. 

\begin{dfn}
    Let $A$ be an indecomposable symmetric generalised Cartan matrix, and let $(V,\Pi,\Pi^\smvee)$ be its realisation with the corresponding root system $\Delta$ and Weyl group $W$. Then $A$ (or $\Delta$ or $W$) is of \emph{finite type} if it is positive definite, and it is of \emph{affine type} is it is positive semidefinite of corank $1$.
\end{dfn}

Generalised Cartan matrices of finite and affine type are determined by their Dynkin diagram; a complete classification can be found in the tables in \cite[pp.\ 43-45]{Kac90}. We have the following properties by \cite[Propositions 4.7(e) and Theorems 4.8(c) and 5.6]{Kac90}.

\begin{thm}\label{thm:imaginary_roots}
    Let $A$ be an indecomposable symmetric generalised Cartan matrix with a realisation $(V,\Pi,\Pi^\smvee)$, where $\Pi = \{\alpha_0,\dots,\alpha_{\ell} \}$.
    \begin{enumerate}[\normalfont (a)]
    \item If $A$ is of finite type, then $\Delta^\re$ is finite and $\Delta^\im=\emptyset$.
    \item Assume that $A$ is of affine type, and set
    $$ \delta = \sum_{i=0}^{\ell} a_i \alpha_i \in V,$$
    where $a_0,\dots,a_{\ell}$ are the labels in \cite[Tables Aff 1-3]{Kac90}. Then $A\delta =0 $ and
    $$ \Delta^\im_+ = \{n\delta\mid n\in \Z_{>0}\}.$$
    \end{enumerate}
\end{thm}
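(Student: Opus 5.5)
This statement is a compilation of classical results from \cite{Kac90}, and the paper already points to the relevant sources: Proposition 4.7(e), Theorem 4.8(c) and Theorem 5.6 of op.\ cit. The plan is to match our conventions with Kac's, which reverse the roles of $V$ and $V^*$, and then read off each part. Since $A$ is symmetric, a realisation of $A$ in our notation is a realisation of $A^T=A$ in Kac's notation after swapping $V$ and $V^*$. Moreover, the notions of root, real root and imaginary root in $Q$ are defined intrinsically by $\mathfrak{g}(A)$ and $W$. So the translation is harmless.

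For part (a), let $A$ be positive definite, equip $V$ with the standard invariant form from Subsection \ref{subsec:standardbilinear}, and note that this form is positive definite on the span of $\Pi$.

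\emph{Imaginary roots are empty.} By \cite[Proposition 5.1]{Kac90}, every imaginary root $\alpha$ satisfies $(\alpha,\alpha)\le 0$. Positive definiteness then forces $\alpha=0$, and $0$ is not a root. So $\Delta^\im=\emptyset$, which is also the content of \cite[Proposition 4.9]{Kac90}.

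\emph{Real roots are finite.} Every real root $w(\alpha_i)$ has $(\cdot,\cdot)$-norm equal to $(\alpha_i,\alpha_i)$, and it lies in the lattice $Q$. A positive definite form has only finitely many lattice points of bounded norm, so $\Delta^\re$ is finite. This also follows from $W$ being finite, \cite[Proposition 4.9]{Kac90}.

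For part (b), let $A$ be affine.

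\emph{$\delta$ is in the kernel.} By \cite[Theorem 4.8]{Kac90} and the tables Aff~1--3, the kernel of $A$ is spanned by the vector of labels $(a_0,\dots,a_\ell)$ with positive integer entries. Hence $A\delta=0$, interpreting $A\delta$ as $A$ applied to the coefficient vector. Equivalently, $\langle\delta,\alpha_j^\smvee\rangle=\sum_i a_i a_{ij}=0$ for all $j$, using the symmetry of $A$.

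\emph{Imaginary roots are the multiples of $\delta$.} \cite[Theorem 5.6(b)]{Kac90} states $\Delta^\im_+=\{n\delta\mid n\in\Z_{>0}\}$ in the affine case. The idea behind it is as follows. An imaginary positive root $\alpha$ can be moved by $W$ into the negative dual of the Tits cone, so that $\langle\alpha,\alpha_i^\smvee\rangle\le 0$ for all $i$ (\cite[Theorem 5.4]{Kac90}). For an affine indecomposable matrix this forces $\alpha$ into the kernel, hence $\alpha\in\Z_{>0}\delta$ (\cite[Theorem 4.3]{Kac90}). Conversely, $\delta$ is fixed by $W$, and all its positive multiples are roots.

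The only real obstacle is bookkeeping: checking that Kac's dual conventions and his statement for $A^T$ coincide with ours, which is immediate from symmetry.
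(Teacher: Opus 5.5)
Your proposal is correct and takes the same route as the paper, which proves the statement only by citing \cite[Proposition 4.7(e), Theorems 4.8(c) and 5.6]{Kac90}; your sketches of the underlying arguments are sound (imaginary roots have nonpositive norm, a positive definite lattice has finitely many points of bounded norm, and $Au\geq 0\Rightarrow Au=0$ in the affine case). The one piece of bookkeeping you overlooked is that the paper defines finite and affine type through positive definiteness, respectively positive semidefiniteness of corank $1$, not through Kac's trichotomy. So applying \cite[Theorems 4.8 and 5.6]{Kac90} in part (b) requires identifying the two notions, and that identification is exactly what the paper's citation of \cite[Proposition 4.7(e)]{Kac90} supplies.
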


\subsection{Squared lengths of roots}

    Let $A$ be an indecomposable symmetric generalised Cartan matrix, and let $(V,\Pi,\Pi^\smvee)$ be its realisation with the corresponding root system $\Delta$ and Weyl group $W$. Let $|\cdot|$ be the norm induced by a standard bilinear form on $V$, and set $a_{\min}\coloneqq \min\limits_{\alpha\in\Pi}|\alpha|^2$. Then for any $\alpha\in\Delta^\re$ we have $|\alpha|^2=a_{\min}$; in other words, \emph{short} and \emph{long} roots in \cite[p.\ 51]{Kac90} coincide in this situation. The following result follows from \cite[Proposition 5.10]{Kac90}, and gives a characterisation of real and imaginary roots for root systems of finite or affine type.

\begin{lem}\label{lem:lenthsofroots}
    With notation as above, assume additionally that $A$ is of finite or affine type. Then
    $$ \Delta^\re=\big\{\alpha\in Q\mid |\alpha|^2=a_{\min}\big\}\quad\text{and}\quad\Delta^\im=\big\{\alpha\in Q\setminus\{0\}\mid |\alpha|^2\leq0\big\}. $$
\end{lem}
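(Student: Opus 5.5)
The plan is to reduce both equalities to the root-theoretic description in \cite[Proposition 5.10]{Kac90} together with Theorem \ref{thm:imaginary_roots}. Throughout, $A$ is symmetric and indecomposable, so all $\varepsilon_i$ coincide, and we may rescale so that $\varepsilon_i=\varepsilon$ for all $i$. Then $|\alpha_i|^2=2\varepsilon_i\cdot$(const), so all simple roots have the same squared length $a_{\min}$. Since $W$ preserves $(\cdot,\cdot)$, every $w(\alpha_i)$ has squared length $a_{\min}$. This gives the inclusion $\Delta^\re\subseteq\{\alpha\in Q\mid |\alpha|^2=a_{\min}\}$.

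\textbf{Imaginary roots.} I treat the second equality before the reverse inclusion for real roots.

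In the finite case, $(\cdot,\cdot)$ restricted to $Q\otimes\R$ is positive definite, because it is a positive multiple of $A$. Hence $\{\alpha\in Q\setminus\{0\}\mid|\alpha|^2\le 0\}=\emptyset=\Delta^\im$ by Theorem \ref{thm:imaginary_roots}(a).

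In the affine case, the form on $Q\otimes\R$ is positive semidefinite with radical spanned by $\delta$, since $A\delta=0$. Hence for $\alpha\in Q\setminus\{0\}$, the condition $|\alpha|^2\le0$ forces $|\alpha|^2=0$. This means $\alpha$ lies in the radical intersected with $Q$. Because the labels $a_i$ include $a_0=1$, that intersection is $\Z\delta$. By Theorem \ref{thm:imaginary_roots}(b), $\Delta^\im=\{n\delta\mid n\neq0\}$, using $\Delta_-=-\Delta_+$. This establishes the second equality.

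\textbf{Real roots.} For the reverse inclusion, take $\alpha\in Q$ with $|\alpha|^2=a_{\min}$. I would invoke \cite[Proposition 5.10]{Kac90}, which states this directly for finite and affine types in the symmetric case, where the short and long roots coincide. For a self-contained route, I would argue as follows.

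In the affine case, write $\alpha=\bar\alpha+n\delta$, where $\bar\alpha$ lies in the finite root lattice $\bar Q$ spanned by $\alpha_1,\dots,\alpha_\ell$. Then $|\bar\alpha|^2=|\alpha|^2=a_{\min}$. This reduces the affine case to the finite case, combined with the description $\Delta^\re=\{\bar\beta+n\delta\mid\bar\beta\in\bar\Delta\}$ of \cite[Proposition 6.3]{Kac90}.

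In the finite (simply laced) case, I need that every lattice vector of minimal norm $a_{\min}$ is a root. I would use the standard argument: pick $\alpha\in Q$ with $|\alpha|^2=a_{\min}$ and move it by $W$ into the fundamental chamber. Then show that a dominant element of norm $a_{\min}$ in the root lattice must be the highest root. This works because $(\alpha,\theta)$ and integrality force it to equal $\theta$.

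The main obstacle is this last step, the characterisation of minimal-norm vectors in simply laced finite root lattices. In practice I would simply cite \cite[Proposition 5.10]{Kac90} for it rather than reprove it.
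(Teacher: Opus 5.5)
Your proposal is correct and takes the same route as the paper: the paper simply derives the lemma from \cite[Proposition 5.10]{Kac90}, and your extra checks (equal lengths of simple roots with $W$-invariance, $\Delta^\im$ via the radical $\R\delta$ and $\R\delta\cap Q=\Z\delta$, and reducing the affine real-root case to the finite one via Proposition \ref{prop:rootsafterremoving}) are valid unpackings of that citation. Two harmless slips. First, with the paper's conventions $|\alpha_i|^2=2/\varepsilon_i$, not $2\varepsilon_i$ times a constant. Second, your sketch that a dominant $\lambda\in Q$ with $|\lambda|^2=a_{\min}$ equals $\theta$ needs more than $(\lambda,\theta)$ and integrality: you must also exclude $(\lambda,\theta)=a_{\min}/2$, which would make $\lambda$ a minuscule fundamental weight and hence not in $Q$. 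Since you fall back on the citation there, as the paper does, this does not affect the proof.
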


\subsection{Removing a root}\label{subsec:removingroot}

We briefly recall the following construction from \cite[Chapter 6]{Kac90}. We only consider matrices of type Aff 1 from \cite[Table Aff 1 on p.\ 44]{Kac90}.

Let $A$ be a symmetric generalised Cartan matrix of affine type with realisation $(V,\Pi,\Pi^\smvee)$, where $\Pi = \{\alpha_0,\dots, \alpha_\ell\}$, with the corresponding root system $\Delta$ and the Weyl group $W$. We label the elements of $\Pi$ as in \cite[\S6.1]{Kac90}, so that $\alpha_0$ corresponds the leftmost vertex in the Dynkin diagram of $A$.

Let $A^\circ$ be the symmetric generalised Cartan matrix obtained by deleting the first column and the first row of $A$. Set $\Pi^\circ \coloneqq \{ \alpha_1,\dots, \alpha_\ell \}$, and let $V^\circ$ be the subspace of $V$ generated by $\alpha_1,\dots,\alpha_\ell$. Then $A^\circ$ is a generalised Cartan matrix of finite type whose realisation is $\big(V^\circ,\Pi^\circ, (\Pi^\circ)^\smvee\big)$. Denote by $\Delta^\circ$ the corresponding root system, by $Q^\circ$ the corresponding root lattice and by $W^\circ$ the corresponding Weyl group. Let $\delta \in V$ be as in Theorem \ref{thm:imaginary_roots}(b). 

The following result is contained in \cite[Proposition 6.3(a)(e) and Proposition 4.9]{Kac90}.

\begin{prop}\label{prop:rootsafterremoving}
With notation as above, we have
$$\Delta^{\re} = \{ \alpha + n\delta \mid \alpha \in \Delta^\circ, n\in \Z\}$$
and
$$\Delta^{\re}_+ = \{ \alpha + n\delta \mid \alpha \in \Delta^\circ, n\in \Z_{>0}\} \cup \Delta^\circ_+.$$
Moreover, the set $\Delta^\circ$ is finite.
\end{prop}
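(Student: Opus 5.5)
The statement is Proposition \ref{prop:rootsafterremoving}, which the paper attributes to \cite[Propositions 6.3(a)(e) and 4.9]{Kac90}. I would still derive it directly from Lemma \ref{lem:lenthsofroots} and Theorem \ref{thm:imaginary_roots}. The basic tool is the standard bilinear form $(\cdot\,,\cdot)$ on $V$; since $A$ is symmetric, we may take all $\varepsilon_i=1$.

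\emph{Finiteness of $\Delta^\circ$.} By Theorem \ref{thm:imaginary_roots}(a) applied to $A^\circ$, the set $\Delta^\circ$ consists only of real roots and is finite. Alternatively, $A^\circ$ is positive definite, so by Lemma \ref{lem:lenthsofroots} we have $\Delta^\circ=\{\alpha\in Q^\circ\mid |\alpha|^2=a_{\min}\}$. This is the set of lattice points on a sphere for a positive definite form, and hence is finite. Here $a_{\min}=2$ in both $A$ and $A^\circ$.

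\emph{Description of $\Delta^\re$.} The key facts are the following.
\begin{enumerate}[(i)]
\item $(\delta,\alpha_i)=0$ for all $i$, since $A\delta=0$. Hence $\delta$ lies in the radical of the form restricted to $Q$.
\item $a_0=1$ in type Aff~1. Therefore $Q=Q^\circ\oplus\Z\delta$, because $\alpha_0=\delta-\sum_{i\ge1}a_i\alpha_i$.
\end{enumerate}
Writing $\beta\in Q$ as $\beta=\alpha+n\delta$ with $\alpha\in Q^\circ$, we get $|\beta|^2=|\alpha|^2$. Lemma \ref{lem:lenthsofroots} for $A$ then gives
\[
\beta\in\Delta^\re \iff |\alpha|^2=2.
\]
Applying Lemma \ref{lem:lenthsofroots} to $A^\circ$, the condition $|\alpha|^2=2$ with $\alpha\in Q^\circ$ is equivalent to $\alpha\in\Delta^\circ$. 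This yields $\Delta^\re=\{\alpha+n\delta\mid \alpha\in\Delta^\circ,\ n\in\Z\}$.

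\emph{Positive real roots.} Write $\alpha+n\delta$ in the basis $\Pi$. Its $\alpha_0$-coefficient is $n$, since $a_0=1$ and $\alpha\in Q^\circ$ has no $\alpha_0$-component. Its $\alpha_i$-coefficient for $i\ge1$ is $c_i+na_i$, where $\alpha=\sum_{i\ge1} c_i\alpha_i$. We split into three cases.
\begin{enumerate}[(1)]
\item If $n=0$, then $\alpha+n\delta=\alpha$ is positive exactly when $\alpha\in\Delta^\circ_+$.
\item If $n<0$, the $\alpha_0$-coefficient is negative, so the root is not positive.
\item If $n>0$, the root lies in $\Delta$. Its $\alpha_0$-coefficient is positive, so it cannot be a negative root. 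Since every root is either positive or negative, it is positive.
\end{enumerate}
This gives the stated description of $\Delta^\re_+$.

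The main subtlety, and the point I would check most carefully, is the identity $Q=Q^\circ\oplus\Z\delta$. It depends on the Aff~1 labelling convention that $a_0=1$ for the vertex $\alpha_0$. It also requires that $\alpha_1,\dots,\alpha_\ell,\delta$ are linearly independent in $V$, which holds because $\Pi$ is linearly independent in a realisation.
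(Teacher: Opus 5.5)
Your argument is correct. The paper gives no proof of its own and simply cites \cite[Propositions 6.3(a)(e) and 4.9]{Kac90}, and your derivation is exactly the standard argument behind that citation: Lemma \ref{lem:lenthsofroots} applied to both $A$ and $A^\circ$, together with $(\delta,Q)=0$ and $Q=Q^\circ\oplus\Z\delta$ (from $a_0=1$), and then the sign of the $\alpha_0$-coefficient to identify the positive roots. The one point you leave implicit is that the standard form for $A^\circ$ on $V^\circ$ is the restriction of the one on $V$; this holds because on $Q^\circ$ both forms are given by $(\alpha_i,\alpha_j)=a_{ij}$.
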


\subsection{Normalised standard bilinear form}\label{subsec:normalised}

Let $A=(a_{ij})_{0\leq i,j\leq\ell}$ be a symmetric generalised Cartan matrix of affine type with realisation $(V,\Pi,\Pi^\smvee)$. We only consider matrices of type Aff 1 from \cite[Table Aff 1 on p.\ 44]{Kac90}.

Then, in the notation from \cite[\S6.1]{Kac90}, we have $a_i^\smvee=a_i$ for $i=0,\dots,\ell$, as well as $a_0=1$. Following \cite[\S6.2]{Kac90}, we call the element
$$c\coloneqq \sum_{i=0}^\ell a_i\alpha_i^\smvee\in V^*$$
the \emph{canonical central element}. We fix $d\in V^*$ satisfying $\langle \alpha_0,d\rangle = 1$ and $\langle \alpha_i,d\rangle = 0$ for $i>0$; this element, called the \emph{scaling element}, is defined up to a summand proportional to $c$, which will be useful for us later. The elements $\alpha_0^\smvee,\dots,\alpha_\ell^\smvee, d$ form a basis of $V^*$. Define $\Lambda_0\in V$ by the conditions $\langle\Lambda_0,\alpha_0^\smvee\rangle=1$, $\langle\Lambda_0,\alpha_i^\smvee\rangle=0$ for $i > 0$ and $\langle\Lambda_0,d\rangle=0$. The elements $\alpha_0,\dots,\alpha_\ell, \Lambda_0$ form a basis of $V$. Then as in \cite[\S6.2]{Kac90} we define the \emph{normalised standard bilinear form} $(\cdot\, ,\cdot)$ on $V$ by
\begin{align*}
(\alpha_i,\alpha_j)&=a_{ij}\quad\text{for all }i\text{ and }j,\\
(\alpha_i,\Lambda_0)&=0\quad\text{for all }i>0,\\
(\alpha_0,\Lambda_0)&=1\quad\text{and}\quad (\Lambda_0,\Lambda_0)=0.
\end{align*}
With the definition of the isomorphism $\nu\colon V^*\to V$ from \S\ref{subsec:standardbilinear}, we then have
\begin{equation}\label{eq:cdelta1}
\nu(c)=\delta\quad\text{and}\quad \nu(\alpha_i^\smvee)=\alpha_i\text{ for all }i.
\end{equation}

Now, following \cite[\S6.5]{Kac90} and the notation from \S\ref{subsec:removingroot}, for each $\alpha \in Q^\circ$ we consider its associated \emph{translation} $t_\alpha\colon V\to V$ given by the formula
	    \begin{equation}\label{eq:117}
	    \textstyle t_\alpha(x) \coloneqq x + \langle x, c\rangle \alpha - \big((x,\alpha)+ \frac{1}{2} (\alpha,\alpha)\langle x,c\rangle\big)\delta,
	    \end{equation}
and the \emph{group of translations}
	    $$ T \coloneqq \{ t_\alpha \mid \alpha \in Q^\circ\}. $$
Then we have the following proposition \cite[Proposition 6.5]{Kac90}.

\begin{prop}\label{prop:translations}
With notation as above, we have
$$ W\simeq W^\circ\ltimes T. $$
\end{prop}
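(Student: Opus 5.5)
Although the statement is cited from \cite[Proposition 6.5]{Kac90}, here is how I would prove $W\simeq W^\circ\ltimes T$ directly from the formula \eqref{eq:117} and Proposition \ref{prop:rootsafterremoving}.

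\emph{Step 1: $T$ is an abelian subgroup of $\GL(V)$ isomorphic to $Q^\circ$.} Since $\langle\delta,c\rangle=(\delta,\delta)=0$ and $(\alpha,\delta)=0$ for $\alpha\in Q^\circ$, composing two instances of \eqref{eq:117} gives $t_\alpha t_\beta=t_{\alpha+\beta}$ and $t_0=\id$. Hence $\alpha\mapsto t_\alpha$ is a homomorphism $Q^\circ\to\GL(V)$. It is injective, because $t_\alpha(\Lambda_0)=\Lambda_0+\alpha+(\text{multiple of }\delta)$, using $\langle\Lambda_0,c\rangle=a_0=1$.

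\emph{Step 2: $T\subseteq W$.} For $\alpha\in\Delta^\circ$, Proposition \ref{prop:rootsafterremoving} says that $\delta-\alpha$ is a real root. Via $\nu$ its coroot is $c-\alpha^\smvee$, and the coroot of $\alpha$ is $\alpha^\smvee$. A direct computation then gives
$$r_{\delta-\alpha}r_\alpha(x)=x+\langle x,c\rangle\alpha-\big((x,\alpha)+\langle x,c\rangle\big)\delta.$$
Since $(\alpha,\alpha)=2$, this is exactly $t_\alpha(x)$, so $t_\alpha\in W$. Because $\Delta^\circ$ spans $Q^\circ$ and $T\simeq Q^\circ$ by Step 1, we get $T\subseteq W$.

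\emph{Step 3: $W^\circ$ normalises $T$.} For $w\in W^\circ$ we have $w(\delta)=\delta$ and $w^*(c)=c$, and $w$ preserves $(\cdot,\cdot)$. Conjugating \eqref{eq:117} therefore yields $wt_\alpha w^{-1}=t_{w(\alpha)}$, and $W^\circ$ preserves $Q^\circ$.

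\emph{Step 4: generation and trivial intersection.} $W$ is generated by $r_1,\dots,r_\ell\in W^\circ$ together with $r_0$. Since $\alpha_0=\delta-\theta$ for the highest root $\theta\in\Delta^\circ_+$, Step 2 gives $r_0=t_\theta r_\theta\in TW^\circ$. Hence $W=W^\circ T$, and $T$ is normal in $W$ by Step 3.

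For the intersection, note that every $w\in W^\circ$ fixes $\Lambda_0$, because $\langle\Lambda_0,\alpha_i^\smvee\rangle=0$ for $i>0$. On the other hand, by Step 1, $t_\alpha(\Lambda_0)=\Lambda_0$ forces $\alpha=0$. Thus $W^\circ\cap T=\{1\}$, and $W\simeq W^\circ\ltimes T$.

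The main obstacle is the sign and normalisation bookkeeping in Step 2. One has to identify coroots with roots via $\nu$ under the normalised form, check that $(\delta-\alpha)^\smvee=c-\alpha^\smvee$, and match the resulting coefficient of $\delta$ with \eqref{eq:117}. I would settle this by evaluating both sides on the basis $\alpha_0,\dots,\alpha_\ell,\Lambda_0$.
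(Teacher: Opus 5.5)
Your proof is correct. The paper gives no argument of its own and only cites \cite[Proposition 6.5]{Kac90}, and your proof is essentially the cited one: the composition law and $W^\circ$-equivariance of the $t_\alpha$, the identity $r_{\delta-\alpha}r_\alpha=t_\alpha$ (your computation is right, using $(\alpha,\alpha)=2$ and $\nu(\beta^\vee)=\beta$ for real roots), $r_0=t_\theta r_\theta$, and the fact that $W^\circ$ fixes $\Lambda_0$ while no nontrivial $t_\alpha$ does. The one point you leave implicit is $\langle x,c\rangle=(x,\delta)$, which follows from $\nu(c)=\delta$; it gives $\langle\beta,c\rangle=0$ for $\beta\in Q^\circ$, and hence $\langle t_\beta(x),c\rangle=\langle x,c\rangle$, which Step 1 needs.
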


\subsection{Root data}

In this paper we need the concept of \emph{root data} from \cite{MP89}. We adapt the definition from \cite[Section 2]{MP89} slightly, since in our setting all index sets are finite, we only consider real vector spaces, and the pairing is always the natural pairing between a vector space and its dual.

\begin{dfn}\label{dfn:root_data}
Let $J$ be a finite index set. A set of \emph{root data} over $\R$ is a $6$-tuple
$$ \big(A,\Pi,\Pi^\smvee,V,V^*,\langle\cdot\, ,\cdot\rangle\big), $$
where $A=(a_{ij})_{i,j\in J}$ is a generalised Cartan matrix, $V$ is an $\R$-vector space together with the natural pairing $\langle\cdot\, ,\cdot\rangle\colon V \times V^* \to\R$, and:
\begin{enumerate}[\normalfont (i)]
\item the subsets $\Pi=\{\alpha_j\mid j\in J\}\subseteq V$ and $\Pi^\smvee=\{\alpha_j^\smvee\mid j\in J\}\subseteq V^*$ satisfy $\langle\alpha_i,\alpha_j^\smvee\rangle = a_{ij}$ for all $i,j\in J$, and
\item the sets $Q \coloneqq  \sum_{j\in J}\Z\alpha_j\subseteq V$ and $Q^\smvee \coloneqq  \sum_{j\in J}\Z\alpha_j^\smvee\subseteq V^*$ are free abelian groups,
\item there exist a finite index set $I$ and $\R$-linearly independent subsets $\Gamma\coloneqq \{\gamma_i\mid i\in I\}\subseteq V$ and $\Gamma^\smvee\coloneqq \{\gamma_i^\smvee\mid i\in I\}\subseteq V^*$, such that $\Gamma$ is a basis of $Q$, $\Gamma^\smvee$ is a basis of $Q^\smvee$, and
$$ \Pi\subseteq \bigoplus_{i\in I}\N\gamma_i\quad\text{and}\quad \Pi^\smvee\subseteq \bigoplus_{i\in I}\N\gamma_i^\smvee.$$
\end{enumerate}
\end{dfn}

Note that if a triple $(V,\Pi,\Pi^\smvee)$ is a realisation of a generalised Cartan matrix $A$ as in \S\ref{subsec:roots}, then the $6$-tuple $\big(A,\Pi,\Pi^\smvee,V,V^*,\langle\cdot\, ,\cdot\rangle\big)$ is a set of root data over $\R$. This is our basic example of root data.

We will need the following lemma several times in Appendix \ref{sec:appendix_cyclic_subgroup}.

\begin{lem}\label{lem:simplelemma}
Let $\mathscr D\coloneqq \big(A,\Pi,\Pi^\smvee,V,V^*,\langle\cdot\, ,\cdot\rangle\big)$ be a set of root data. Let $\alpha,\beta,\gamma\in V^*$. Consider the sets
$$ S\coloneqq \big\{x\in V\mid \langle x,\alpha\rangle\leq0,\ \langle x,\beta \rangle\leq0\big\} \cup \big\{x\in V\mid \langle x,\alpha\rangle\geq0,\ \langle x,\gamma \rangle\leq0\big\} $$
and
$$ P\coloneqq \big\{x\in V\mid \langle x,\beta\rangle\leq0,\ \langle x,\gamma \rangle\leq0\big\}. $$
Then:
\begin{enumerate}[\normalfont (a)]
\item $P\subseteq S$,
\item if $\gamma=p\alpha+q\beta$ for $p,q\in\R_{>0}$, then $P=S$.
\end{enumerate}
\end{lem}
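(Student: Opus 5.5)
This is elementary linear algebra: everything is reduced to the signs of the two real numbers $a\coloneqq\langle x,\beta\rangle$ and $b\coloneqq\langle x,\alpha\rangle$ for a fixed $x\in V$. The root data $\mathscr D$ plays no role beyond supplying the ambient space $V$ with its pairing.

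For (a), I take $x\in P$, so that $\langle x,\beta\rangle\leq0$ and $\langle x,\gamma\rangle\leq0$, and split according to the sign of $\langle x,\alpha\rangle$.
\begin{itemize}
\item If $\langle x,\alpha\rangle\leq0$, then together with $\langle x,\beta\rangle\leq0$ the point $x$ lies in the first set in the definition of $S$.
\item If $\langle x,\alpha\rangle\geq0$, then together with $\langle x,\gamma\rangle\leq0$ it lies in the second set.
\end{itemize}
Since $\R$ is totally ordered, one of these two cases always occurs, and hence $P\subseteq S$.

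For (b), by (a) it suffices to prove $S\subseteq P$. I write $\langle x,\gamma\rangle=p\langle x,\alpha\rangle+q\langle x,\beta\rangle$ with $p,q>0$, which holds by linearity of the pairing in the second variable.
\begin{itemize}
\item Suppose $x$ lies in the first set, so $\langle x,\alpha\rangle\leq0$ and $\langle x,\beta\rangle\leq0$. Then $\langle x,\gamma\rangle\leq0$ because $p,q>0$, and since also $\langle x,\beta\rangle\leq0$, we get $x\in P$.
\item Suppose $x$ lies in the second set, so $\langle x,\alpha\rangle\geq0$ and $\langle x,\gamma\rangle\leq0$. Then
$$q\langle x,\beta\rangle=\langle x,\gamma\rangle-p\langle x,\alpha\rangle\leq0.$$
Dividing by $q>0$ gives $\langle x,\beta\rangle\leq0$, so $x\in P$.
\end{itemize}
This proves $S\subseteq P$, and hence $P=S$.

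There is no real obstacle here. The only point needing care is that part (b) uses both $p>0$ and $q>0$: positivity of $p$ is needed in both cases, while positivity of $q$ is used to divide in the second case.
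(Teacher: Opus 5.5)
Your proposal is correct and follows the paper's proof essentially verbatim: for (a) you split on the sign of $\langle x,\alpha\rangle$ (you use $\geq 0$ where the paper uses $>0$ in the second case, which is harmless), and for (b) you reduce to $S\subseteq P$ and handle the two pieces of $S$ using $\gamma=p\alpha+q\beta$ exactly as the paper does. One small precision on your closing remark: the first case of (b) also uses $q\geq 0$, not only $p\geq 0$, whereas strict positivity is needed only for $q$ when you divide in the second case.
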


\begin{proof}
Let $x\in P$. Then either $\langle x, \alpha \rangle \leq 0$, in which case $x\in \big\{x\in V\mid \langle x,\alpha\rangle\leq0,\ \langle x,\beta \rangle\leq0\big\}$, or $\langle x , \alpha\rangle > 0$, in which case $x\in \big\{x\in V\mid \langle x,\alpha\rangle\geq0,\ \langle x,\gamma \rangle\leq0\big\}$. Thus, $x\in S$, which shows (a).

Now we show (b). By (a) it suffices to show that $S\subseteq P$. If $x\in S$, then there are two cases. Assume first that $\langle x,\alpha\rangle\leq0$ and $\langle x,\beta \rangle\leq0$. These two inequalities yield $\langle x,p\alpha+q\beta\rangle\leq0$, hence $\langle x,\gamma\rangle\leq0$. Since $\langle x,\beta \rangle\leq0$ by assumption, we obtain $x\in P$. Alternatively, we have $\langle x,\alpha\rangle\geq0$ and $\langle x,p\alpha+q\beta\rangle=\langle x,\gamma\rangle\leq0$. These two inequalities yield $\langle x,\beta\rangle\leq0$. Since $\langle x,\gamma \rangle\leq0$ by assumption, we again obtain $x\in P$. This shows (b).
\end{proof}

\subsection{Root systems of root data}

Let $\mathscr D\coloneqq \big(A,\Pi,\Pi^\smvee,V,V^*,\langle\cdot\, ,\cdot\rangle\big)$ be a set of root data. One can define reflections $r_\alpha$, dual reflections $r_{\alpha^{\smvee}}$, the Weyl group $W_{\mathscr D}$ of $\mathscr D$ and the dual Weyl group $W^{\smvee}_{\mathscr{D}}$ similarly as in \S\ref{subsec:roots}. The \emph{root system of $\mathscr{D}$} is the set
$$\Delta^{\re}_\mathscr D\coloneqq W_{\mathscr D}\Pi\subseteq V.$$
Note that this terminology unfortunately differs from the one in \S\ref{subsec:roots}; we hope this will not cause confusion and it will be clear from the context what the phrase \emph{root system} means.

One can define positive roots $(\Delta^\re_{\mathscr D})_+$ and negative roots $(\Delta^\re_{\mathscr D})_-$, as well as coroots, similarly as in \S\ref{subsec:roots}, see \cite[p.\ 666]{MP89}. We will need the following important property \cite[Corollary on p.\ 668]{MP89}.

\begin{prop}\label{pro:positivenegative}
Let $\mathscr D\coloneqq\big(A,\Pi,\Pi^\smvee,V,V^*,\langle\cdot\, ,\cdot\rangle\big)$ be a set of root data. Then we have the disjoint union
$$\Delta^\re_{\mathscr D}=(\Delta^\re_{\mathscr D})_+\cup(\Delta^\re_{\mathscr D})_-.$$
\end{prop}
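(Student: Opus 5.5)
The plan is to reduce to the classical case of a genuine realisation, where simple roots are linearly independent. There the dichotomy real roots $=$ positive $\sqcup$ negative is \cite[Proposition 1.5 and \S5.1]{Kac90}. Condition (iii) of Definition \ref{dfn:root_data} will then take care of disjointness.

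First, disjointness. By (iii) every $\alpha_j$ lies in $\bigoplus_{i\in I}\N\gamma_i$, and the $\gamma_i$ are linearly independent. Hence the cone $Q_+=\sum_j\Z_{\geq0}\alpha_j$ is contained in $\bigoplus_i\N\gamma_i$, so $Q_+\cap(-Q_+)=\{0\}$. Every element of $\Delta^\re_{\mathscr D}=W_{\mathscr D}\Pi$ is nonzero, since $\alpha_j\neq0$ (because $\langle\alpha_j,\alpha_j^\smvee\rangle=2$) and each $w\in W_{\mathscr D}$ is invertible. Therefore no real root is both positive and negative.

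Second, exhaustiveness, by a lifting argument. Choose a realisation $(\tilde V,\tilde\Pi,\tilde\Pi^\smvee)$ of the same generalised Cartan matrix $A$ in the sense of \S\ref{subsec:roots}, with linearly independent simple roots $\tilde\alpha_j$. Let $\tilde Q=\bigoplus_j\Z\tilde\alpha_j$, and let $\pi\colon\tilde Q\to Q$ be the surjective group homomorphism $\tilde\alpha_j\mapsto\alpha_j$. The key observation is that
$$r_j(\alpha_k)=\alpha_k-a_{kj}\alpha_j\quad\text{and}\quad\tilde r_j(\tilde\alpha_k)=\tilde\alpha_k-a_{kj}\tilde\alpha_j,$$
so $\pi\circ\tilde r_j=r_j\circ\pi$ on $\tilde Q$; here we use that $r_j$ preserves $Q$.

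Now take a word $w=r_{j_1}\cdots r_{j_k}\in W_{\mathscr D}$ and set $\tilde w=\tilde r_{j_1}\cdots\tilde r_{j_k}$. Then $w(\alpha_j)=\pi(\tilde w(\tilde\alpha_j))$. We do not even need a well-defined group homomorphism $\tilde W\to W_{\mathscr D}$, since we only evaluate words. By the classical theory for $(\tilde V,\tilde\Pi,\tilde\Pi^\smvee)$, the element $\tilde w(\tilde\alpha_j)$ is a real root, hence lies in $\tilde Q_+$ or in $-\tilde Q_+$. Since $\pi(\tilde Q_+)=Q_+$, we conclude that $w(\alpha_j)\in Q_+\cup(-Q_+)$. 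This gives the decomposition as a union, and together with the first step it is disjoint.

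The only point needing care is matching the definitions of positive and negative roots in \cite[p.\ 666]{MP89} with membership in $\pm Q_+$. If op.\ cit.\ defines them via the cone $\bigoplus_i\N\gamma_i$ instead, the same argument applies, because $Q_+\subseteq\bigoplus_i\N\gamma_i$. I expect this bookkeeping, rather than any real mathematics, to be the main obstacle.
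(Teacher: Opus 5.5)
Your proof is correct, and it argues differently from the paper, which gives no proof and simply imports the statement from \cite[Corollary on p.\ 668]{MP89]}. The citation keeps the paper short and places the fact inside Moody--Pianzola's framework. The same part of op.\ cit.\ also supplies the Coxeter property and the conjugation formula $r_{i_s}=w'r_\alpha w'^{-1}$ used later in Lemma \ref{lem:length}.

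Your argument is instead self-contained and uses only the classical facts from \cite{Kac90} already invoked in \S\ref{subsec:roots}. The map $\pi\colon\tilde Q\to Q$ intertwines $\tilde r_j$ and $r_j$ because both act on simple roots by the same integers: $\tilde r_j(\tilde\alpha_k)=\tilde\alpha_k-a_{kj}\tilde\alpha_j$ and $r_j(\alpha_k)=\alpha_k-a_{kj}\alpha_j$. Hence every $w(\alpha_j)$ is the image of a classical real root and lies in $Q_+\cup(-Q_+)$. As you note, no homomorphism $\tilde W\to W_{\mathscr D}$ is needed.

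A side benefit of your route is that it shows which axiom does what. Exhaustiveness holds for any family satisfying condition (i). Condition (iii) is used only for disjointness, and it is genuinely needed there. For example, take $A$ of type $A_1^\smone$, $V=\R$, $\alpha_0=1=-\alpha_1$ and $\alpha_0^\smvee=2=-\alpha_1^\smvee$. This satisfies (i) and (ii), but $Q_+=\Z$, so $\alpha_0$ is both positive and negative.

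Finally, the bookkeeping you were worried about is not an issue. The paper defines $(\Delta^\re_{\mathscr D})_\pm$ ``similarly as in \S\ref{subsec:roots}'', that is, as $\Delta^\re_{\mathscr D}\cap(\pm Q_+)$. This is exactly what you use, and it is also how the paper applies the statement in Lemma \ref{lem:equivalence_chamber_inclusion}.
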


A subset $\Delta$ of an $\R$-space $V$ is a \emph{root system} if $\Delta$ is the set of real roots of a set of root data of the form $\big(A,\Pi,\Pi^\smvee,V,V^*,\langle\cdot\, ,\cdot\rangle\big)$.

We will frequently use the following property from \cite[Section 2, Proposition 2]{MP89}.

\begin{prop}\label{prop:W_invariant_pairing}
    Let $\mathscr D \coloneqq \big(A,\Pi,\Pi^\smvee,V,V^*,\langle\cdot\, ,\cdot\rangle\big)$ be a set of root data. There exists a unique group isomorphism between $W_{\mathscr{D}}$ and $W_{\mathscr{D}}^\smvee$. Moreover, if we identify $W_{\mathscr{D}}$ and $W_{\mathscr{D}}^\smvee$ via this isomorphism, then
    $$ \langle x, y \rangle = \big\langle w(x), w(y) \big\rangle\quad\text{for any $x\in V$, $y\in V^*$ and $w\in W_{\mathscr{D}}$.} $$
\end{prop}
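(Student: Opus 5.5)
The plan is to realise the isomorphism $W_{\mathscr D}\to W^\smvee_{\mathscr D}$ as the restriction of the contragredient map
$$\iota\colon \GL(V)\to\GL(V^*),\quad g\mapsto (g^{-1})^*,$$
where $g^*$ denotes the transpose with respect to the natural pairing. Thus $\iota(g)$ is defined by $\langle x,\iota(g)(y)\rangle=\langle g^{-1}(x),y\rangle$. Because the pairing $V\times V^*\to\R$ is the natural, hence non-degenerate, one, $\iota$ is an injective group homomorphism. By construction it satisfies
$$\big\langle g(x),\iota(g)(y)\big\rangle=\langle x,y\rangle\quad\text{for all }x\in V,\ y\in V^*.$$
This is exactly the invariance statement of the proposition, so everything reduces to showing that $\iota(W_{\mathscr D})=W^\smvee_{\mathscr D}$.

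The key step is to compute $\iota$ on the generators. Recall that
$$r_i(x)=x-\langle x,\alpha_i^\smvee\rangle\alpha_i\quad\text{and}\quad r_i^\smvee(y)=y-\langle\alpha_i,y\rangle\alpha_i^\smvee.$$
A direct expansion, using only $\langle\alpha_i,\alpha_i^\smvee\rangle=a_{ii}=2$, gives
$$\big\langle r_i(x),r_i^\smvee(y)\big\rangle=\langle x,y\rangle-2\langle x,\alpha_i^\smvee\rangle\langle\alpha_i,y\rangle+2\langle x,\alpha_i^\smvee\rangle\langle\alpha_i,y\rangle=\langle x,y\rangle.$$
Non-degeneracy of the pairing then forces $r_i^\smvee=\iota(r_i)$. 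Since $\iota$ is a homomorphism and $W_{\mathscr D}$, respectively $W^\smvee_{\mathscr D}$, is generated by the $r_i$, respectively the $r_i^\smvee$, we get $\iota(W_{\mathscr D})=W^\smvee_{\mathscr D}$. Together with injectivity, this shows that $\iota|_{W_{\mathscr D}}$ is an isomorphism onto $W^\smvee_{\mathscr D}$ sending $r_i\mapsto r_i^\smvee$. This is also consistent with the convention $\langle w(x),y\rangle=\langle x,w^{-1}(y)\rangle$ used in \S\ref{subsec:roots}.

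For uniqueness, I would read the statement as saying that there is exactly one isomorphism compatible with the pairing (equivalently, exactly one sending $r_i\mapsto r_i^\smvee$). Suppose $\phi\colon W_{\mathscr D}\to W^\smvee_{\mathscr D}$ satisfies $\langle w(x),\phi(w)(y)\rangle=\langle x,y\rangle$ for all $x,y$. Then $\langle x,\phi(w)(y)\rangle=\langle w^{-1}(x),y\rangle$ for all $x\in V$, and non-degeneracy gives $\phi(w)=\iota(w)$. Any isomorphism sending $r_i\mapsto r_i^\smvee$ agrees with $\iota$ on generators, hence everywhere.

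I do not expect a genuine obstacle here. The only points requiring care are that the pairing is non-degenerate, which holds since it is the natural pairing with the dual space as fixed in Definition~\ref{dfn:root_data}, and that $a_{ii}=2$ is the only property of $A$ used. In particular, linear (in)dependence of $\Pi$, which is the point of working with root data, plays no role in this argument.
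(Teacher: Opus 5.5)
Your proof is correct, and you read the uniqueness clause the right way. Taken literally, the clause would fail, since composing with any non-trivial automorphism of $W_{\mathscr D}$ yields another isomorphism. It must therefore mean uniqueness subject to $r_i\mapsto r_i^\smvee$, equivalently subject to compatibility with the pairing, and that is exactly what your last paragraph proves.

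The paper gives no argument for this proposition. It simply quotes \cite[Section 2, Proposition 2]{MP89}, where the statement sits inside the general Moody--Pianzola theory of root data.

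Your route via the contragredient map $g\mapsto (g^{-1})^*$ is a genuinely self-contained alternative, and it shows why the result is purely formal in the paper's setting. Definition~\ref{dfn:root_data} was adapted so that the pairing is the natural, hence non-degenerate, pairing between $V$ and $V^*$. Under that definition, injectivity of the contragredient map and the identification $\iota(r_i)=r_i^\smvee$ need nothing beyond $a_{ii}=2$. In particular, axioms (ii) and (iii) of root data, the positive/negative dichotomy and the Coxeter structure of $W_{\mathscr D}$ play no role, as you note. Your argument does depend on non-degeneracy, so it would not transfer verbatim to a framework allowing an arbitrary bilinear pairing.

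The citation spares the paper from reproving a standard fact and keeps it within the framework of \cite{MP89}, which it relies on heavily elsewhere. Your argument instead gives a short, fully verifiable proof tailored to the restricted definition the paper actually uses.
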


\subsection{Root data of affine type}

\begin{dfn}
    Let $\mathscr D \coloneqq \big(A,\Pi,\Pi^\smvee,V,V^*,\langle\cdot\, ,\cdot\rangle\big)$ be a set of root data. We say that $\mathscr{D}$ is of \emph{affine type} if $A$ is an indecomposable symmetric generalised Cartan matrix of  affine type, and there exist vector subspaces $U\subseteq V$ and $U'\subseteq V^*$ such that $\Pi\subseteq U$, $\Pi^\smvee\subseteq U'$, $U'$ naturally has the structure of the dual of $U$, and $(U,\Pi, \Pi^\smvee)$ is a realisation of the generalised Cartan matrix $A$ in the sense of classical root systems.
\end{dfn}

The following lemma says that the root system of a set of root data of affine type is actually the set of real roots of an affine root system in the sense of \cite{Kac90}.

\begin{lem}\label{lem:affine_root_data}
    Let $\mathscr D \coloneqq \big(A,\Pi,\Pi^\smvee,V,V^*,\langle\cdot\, ,\cdot\rangle\big)$ be a set of root data. Assume that $A$ is an indecomposable symmetric generalised Cartan matrix of affine type, and that $\Pi$ and $\Pi^\smvee$ are linearly independent with $|\Pi| = |\Pi^\smvee| < \dim V$. Then $\mathscr D $ is a set of root data of affine type.
\end{lem}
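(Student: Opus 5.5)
We need to show that there are subspaces $U\subseteq V$ and $U'\subseteq V^*$ with $\Pi\subseteq U$, $\Pi^\smvee\subseteq U'$, such that the pairing restricts to a perfect pairing $U\times U'\to\R$ and $(U,\Pi,\Pi^\smvee)$ is a realisation of $A$. Write $\ell+1\coloneqq|\Pi|$. Since $A$ is affine, $\rk(A)=\ell$, so the realisation condition (iii) requires $\dim U=\ell+2$. The plan is to construct $U$ and $U'$ of that dimension explicitly.

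\textbf{Step 1: build $U$ and $U'$.} Let $U_0\coloneqq\operatorname{span}\Pi$ and $U_0'\coloneqq\operatorname{span}\Pi^\smvee$; both have dimension $\ell+1$ by linear independence. The pairing matrix between the bases $\Pi$ and $\Pi^\smvee$ is $A$, which has corank $1$. Hence there are nonzero elements
\[
\delta=\sum a_i\alpha_i\in U_0,\qquad c=\sum a_i\alpha_i^\smvee\in U_0',
\]
with $\langle\delta,\alpha_j^\smvee\rangle=0$ and $\langle\alpha_j,c\rangle=0$ for all $j$; here we use that $A$ is symmetric, so $A\delta=0$ gives both. Thus $\delta$ pairs to zero with all of $U_0'$.

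Since $\dim V^*=\dim V>\ell+1$ and $\delta\neq0$, there is $d\in V^*$ with $\langle\delta,d\rangle=1$; in particular $d\notin U_0'$. Symmetrically, there is $\Lambda\in V$ with $\langle\Lambda,c\rangle=1$, and $\Lambda\notin U_0$. This is exactly where the hypothesis $|\Pi|<\dim V$ is used. Set
\[
U\coloneqq U_0\oplus\R\Lambda,\qquad U'\coloneqq U_0'\oplus\R d,
\]
both of dimension $\ell+2$.

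\textbf{Step 2: check the pairing is perfect.} This is the main point. Suppose $u=x+t\Lambda$ with $x\in U_0$ pairs to zero with all of $U'$.
\begin{itemize}
\item Pairing $u$ with $c$ gives $t=0$, because $\langle x,c\rangle=0$ for every $x\in U_0$.
\item Pairing $x$ with $U_0'$ then puts $x$ in the kernel of $A$, so $x=s\delta$.
\item Pairing with $d$ gives $s=0$.
\end{itemize}
So $u=0$, and by equality of dimensions the restricted pairing is perfect, identifying $U'$ with $U^*$.

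\textbf{Step 3: conclude.} Under this identification, $(U,\Pi,\Pi^\smvee)$ satisfies all three realisation conditions:
\begin{enumerate}[(i)]
\item $\Pi$ and $\Pi^\smvee$ are linearly independent by hypothesis;
\item $\langle\alpha_i,\alpha_j^\smvee\rangle=a_{ij}$ holds by assumption (i) of Definition~\ref{dfn:root_data};
\item $\dim U=\ell+2=2(\ell+1)-\rk(A)$.
\end{enumerate}
Hence $\mathscr D$ is of affine type.

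The only potential subtlety is that $\Lambda$ and $d$ must be chosen compatibly. The kernel argument in Step 2 shows that any choices of $\Lambda$ and $d$ with the stated normalisations work, so I expect no real obstacle beyond this verification.
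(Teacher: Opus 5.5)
Your proof is correct, and its skeleton matches the paper's: enlarge $\operatorname{span}\Pi$ and $\operatorname{span}\Pi^\smvee$ by one vector each, then check that the resulting $(\ell+2)$-dimensional spaces are in perfect duality. The difference lies in how the extra vectors are chosen and how nondegeneracy is verified.

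\textbf{Choice of vectors.} The paper uses Kac-type normalisations, i.e.\ the analogues of $\Lambda_0$ and $d$:
\begin{itemize}
\item[(i)] $\langle\alpha_{\ell+1},\alpha_0^\smvee\rangle=1$ and $\langle\alpha_{\ell+1},\alpha_i^\smvee\rangle=0$ for $i\geq1$;
\item[(ii)] dually, $\langle\alpha_0,\alpha_{\ell+1}^\smvee\rangle=1$ and $\langle\alpha_i,\alpha_{\ell+1}^\smvee\rangle=0$ for $i\geq1$.
\end{itemize}
It then needs a contradiction argument using the label $a_0\neq0$ to see that $\alpha_{\ell+1}^\smvee\notin V_{\Pi^\smvee}$. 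You impose only the single conditions $\langle\delta,d\rangle=1$ and $\langle\Lambda,c\rangle=1$. With that choice, existence and non-membership in the spans are immediate.

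\textbf{Nondegeneracy.} The paper computes $\det B=-\det A^\circ\neq0$ by a double Laplace expansion, which uses invertibility of the finite-type submatrix $A^\circ$. Your kernel computation uses only that $\ker A$ is one-dimensional. It also proves directly that the restricted pairing $U\times U'\to\R$ is perfect. The paper's sentence ``it suffices to show that $\dim U_\Pi=\dim U_{\Pi^\smvee}=\ell+2$'' passes over this point, since equal dimensions alone do not give a perfect pairing, although $\det B\neq0$ does imply it.

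\textbf{A correction to your commentary.} The existence of $d$ uses only $\delta\neq0$ and perfectness of $V\times V^*\to\R$. It does not use the hypothesis $|\Pi|<\dim V$. In fact that hypothesis is automatic: $\delta\neq0$ annihilates the $(\ell+1)$-dimensional space $\operatorname{span}\Pi^\smvee$, which forces $\dim V\geq\ell+2$.
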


\begin{proof}
   Set $n \coloneqq \dim V$, $\Pi \eqqcolon \{\alpha_0,\alpha_1,\dots,\alpha_\ell\}$ and $\Pi^\smvee \eqqcolon \{\alpha_0^\smvee,\alpha_1^\smvee,\dots,\alpha_\ell^\smvee\}$. Denote
   $$\textstyle V_\Pi\coloneqq\sum\limits_{i=0}^\ell\R\alpha_i\subseteq V\quad\text{and} \quad V_{\Pi^\smvee}\coloneqq\sum\limits_{i=0}^\ell\R\alpha_i^\smvee\subseteq V^*.$$
   Recall that $A=\big(\langle\alpha_i,\alpha_j^\smvee\rangle\big)_{i,j=0}^\ell$, and set $A^\circ\coloneqq \big(\langle\alpha_i,\alpha_j^\smvee\rangle\big)_{i,j=1}^\ell$. Since $A$ is of affine type, both $A$ and $A^\circ$ have rank $\ell$.
   
   \medskip

   \emph{Step 1.}
   In this step we show that:
	\begin{enumerate}[\normalfont (i)]
	\item there exists $\alpha_{\ell+1}\in V\backslash V_\Pi$ such that $\langle \alpha_{\ell+1}, \alpha_0^\smvee \rangle =1$ and $\langle \alpha_{\ell+1}, \alpha_i^\smvee \rangle =0$ for any $i\in \{1,\ldots,\ell\}$,
	\item there exists $\alpha_{\ell+1}^\smvee\in V^*\backslash V_{\Pi^\smvee}$ such that $\langle \alpha_0,\alpha_{\ell+1}^\smvee \rangle =1$ and $\langle \alpha_i,\alpha_{\ell+1}^\smvee \rangle =0$ for any $i\in \{1,\ldots,\ell\}$.
	\end{enumerate}	   
   We only show (ii), as the proof of (i) is analogous.
   
	Denote $V_\Pi^\circ \coloneqq \sum\limits_{i=1}^\ell\R\alpha_i\subseteq V$, and let $ (V_\Pi)^\smperp $  and $ (V_\Pi^\circ)^\smperp $ be the vector subspaces of $V^*$ consisting of all vectors which are orthogonal to $V_\Pi$ and $V_\Pi^\circ$ with respect to the pairing $\langle\cdot\, ,\cdot\rangle$. Since the pairing $\langle\cdot\, ,\cdot\rangle$ is non-degenerate, we have $\dim (V_\Pi)^\smperp=n-\ell-1$ and $\dim(V_\Pi^\circ)^\smperp=n-\ell$. Therefore, there exists $\alpha_{\ell+1}^\smvee\in (V_\Pi^\circ)^\smperp\setminus (V_\Pi)^\smperp$, and by rescaling, we may assume that $\langle \alpha_0,\alpha_{\ell+1}^\smvee \rangle =1$.
   
   Assume that $\alpha_{\ell+1}^\smvee\in V_{\Pi^\smvee}$. Then there exists a vector $\mathbf{x}\coloneqq (x_0,\dots,x_\ell)\in\R^{\ell+1}$ such that $\alpha_{\ell+1}^\smvee=x_0\alpha_0^\smvee+\dots+x_\ell\alpha_\ell^\smvee$. Since $\langle \alpha_0,\alpha_{\ell+1}^\smvee \rangle =1$ and $\langle \alpha_i,\alpha_{\ell+1}^\smvee \rangle =0$ for any $i\in \{1,\ldots,\ell\}$, we conclude that $\mathbf{x}A=(1,0,\dots,0)$. Now, since $A$ is of affine type, by \cite[Theorem 4.8 and Tables Aff 1--3]{Kac90} there exists a vector $\delta=(a_0,\dots,a_\ell)^T$ such that $A\delta=0$ and $a_0\neq0$. Thus,
   $$0\neq a_0=(1,0,\dots,0)\cdot\delta=(\mathbf{x}A)\delta=\mathbf{x}(A\delta)=0,$$
   a contradiction.

   \medskip
   
   \emph{Step 2.}
   Set
   $$U_\Pi\coloneqq V_\Pi\oplus\R\alpha_{\ell+1}\quad\text{and}\quad U_{\Pi^\smvee}\coloneqq V_{\Pi^\smvee}\oplus\R\alpha_{\ell+1}^\smvee.$$
   In order to finish the proof it suffices to show that $\dim U_\Pi=\ell+2$ and $U_{\Pi^\smvee}$ naturally has the structure of the dual of $U_\Pi$. To that end, since $U_{\Pi^\smvee}\subseteq V^*$, it suffices to show that $\dim U_\Pi=\dim U_{\Pi^\smvee}=\ell+2$. We only show that $\dim U_\Pi=\ell+2$, as the proof of the second equality is analogous.

   We need to show that the vectors $\alpha_0,\dots,\alpha_{\ell+1}$ are linearly independent. To that end, let $\mathbf{y}\coloneqq(y_0,\dots,y_{\ell+1})\in\R^{\ell+2}$ such that $y_0\alpha_0+\dots+y_{\ell+1}\alpha_{\ell+1}=0$. By applying the linear forms $\alpha_0^\smvee,\dots,\alpha^\smvee_{\ell+1}$ to this equation, we conclude that $\mathbf{y}B=0$, where $B\coloneqq\big(\langle\alpha_i,\alpha_j^\smvee\rangle\big)_{i,j=0}^{\ell+1}$. By applying the Laplace expansion twice and since $\det A=0$, it is easy to calculate that $\det B={-}\det A^\circ\neq0$. Thus, $\mathbf{y}=0$, as desired.
\end{proof}

\subsection{The negative fundamental chamber and the negative Tits cone}

    Consider a system of root data $\mathscr D\coloneqq \big(A,\Pi,\Pi^\smvee,V,V^*,\langle\cdot\, ,\cdot\rangle\big)$. The cone
        $$ \mathcal F_{\mathscr D}\coloneqq  \big\{x\in V\mid \langle x,\beta\rangle < 0 \text{ for all }\beta\in \Pi^\smvee\big\} \subseteq V $$
        is the \emph{negative fundamental chamber of $\mathscr D$}. If $W_{\mathscr D}$ is the Weyl group of $\mathscr D$ and if $\overline{\mathcal F_{\mathscr D}}$ denotes the closure of $\mathcal F_{\mathscr D}$ in $V$, then
        $$ \mathcal T_{\mathscr D} \coloneqq  \bigcup_{w\in W_{\mathscr D}} w\big(\overline{\mathcal F_{\mathscr D}}\big)\subseteq V $$
        is the \emph{negative Tits cone of $\mathscr D$}. It is $W_{\mathscr D}$-invariant. Note that in the literature one usually considers the fundamental chamber and the Tits cone which are the negatives of these cones. In this paper we opt for this change of sign, since it simplifies notation considerably.

The following result is \cite[Proposition 8(FD1)]{MP89}, and it is of fundamental importance for our paper. 

\begin{prop}\label{prop:Titsfundchamber}
Let $\mathscr D\coloneqq \big(A,\Pi,\Pi^\smvee,V,V^*,\langle\cdot\, ,\cdot\rangle\big)$ be a system of root data. Then $\overline{\mathcal F_{\mathscr D}}$ is a fundamental domain for the action of $W_{\mathscr D}$ on $\mathcal T_{\mathscr D}$.
\end{prop}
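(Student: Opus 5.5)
The statement is quoted from \cite[Proposition 8(FD1)]{MP89}. If I were to reprove it directly, I would follow the classical argument for Tits cones, as in \cite[Proposition 3.12]{Kac90}, adapted to root data with possibly dependent simple roots. Two things must be shown. The covering $W_{\mathscr D}\cdot\overline{\mathcal F_{\mathscr D}}=\mathcal T_{\mathscr D}$ holds by definition. The substantive claim is: if $x\in\overline{\mathcal F_{\mathscr D}}$ and $w(x)\in\overline{\mathcal F_{\mathscr D}}$ for some $w\in W_{\mathscr D}$, then $w(x)=x$, and $w$ lies in the subgroup generated by the reflections $r_i$ with $\langle x,\alpha_i^\smvee\rangle=0$. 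In particular, if $w(\mathcal F_{\mathscr D})\cap\mathcal F_{\mathscr D}\neq\emptyset$, then $w$ fixes a point $x$ with $\langle x,\alpha_i^\smvee\rangle<0$ for all $i$. So $w$ lies in the empty parabolic subgroup, i.e.\ $w=\id$. This gives the fundamental domain property with respect to interiors.

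The key combinatorial input is the exchange property relating length to positivity. I would work on the dual side, with the dual Weyl group $W^\smvee_{\mathscr D}$ acting on $V^*$ and identified with $W_{\mathscr D}$ via Proposition \ref{prop:W_invariant_pairing}. Let $\ell(w)$ be the length of $w$ in the generators $r_i$. The claim is: if $\ell(wr_i)<\ell(w)$, then $w(\alpha_i^\smvee)$ is a negative coroot. I would derive this from Proposition \ref{pro:positivenegative}, applied to the dual root data, together with the standard fact that $r_i$ permutes the positive coroots other than $\alpha_i^\smvee$.

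This standard fact is the step I expect to be the main obstacle. When the simple roots are linearly independent, one argues that a positive root other than $\alpha_i$ has a positive coefficient on some $\alpha_j$ with $j\neq i$, and this coefficient is unchanged by $r_i$. In the root data setting, $\Pi$ may be linearly dependent, so coefficients are not well defined. Here condition (iii) of Definition \ref{dfn:root_data} is used: I would express everything in the free basis $\Gamma^\smvee$ of $Q^\smvee$, in which all simple coroots have nonnegative coefficients. Positivity is then defined as lying in $\bigoplus\N\gamma_i^\smvee$, and the dichotomy of Proposition \ref{pro:positivenegative} replaces the coefficient argument. Concretely, suppose $\beta^\smvee$ is positive, $\beta^\smvee\neq\alpha_i^\smvee$, and $r_i(\beta^\smvee)$ is negative. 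Then $\beta^\smvee=c\,\alpha_i^\smvee+(\text{negative})$ with $c>0$. One must rule this out; I would try to do so via the reflection being determined by the root, possibly needing a small rescaling argument.

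With the exchange property in hand, I would induct on $\ell(w)$. Suppose $x,w(x)\in\overline{\mathcal F_{\mathscr D}}$ and $w\neq\id$. Write $w=w'r_i$ with $\ell(w')<\ell(w)$; then $w(\alpha_i^\smvee)$ is negative. On one hand, $\langle x,\alpha_i^\smvee\rangle\le0$. On the other hand, by $W$-invariance of the pairing,
$$\langle x,\alpha_i^\smvee\rangle=\big\langle w(x),w(\alpha_i^\smvee)\big\rangle\geq0,$$
since $w(x)$ pairs nonpositively with the positive coroot $-w(\alpha_i^\smvee)$. Hence $\langle x,\alpha_i^\smvee\rangle=0$, so $r_i(x)=x$ and $w'(x)=w(x)$. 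Induction applied to $w'$ gives $w(x)=x$, with $w$ generated by the reflections fixing $x$. This concludes the argument.
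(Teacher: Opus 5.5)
\textbf{Verdict: genuine gap.} The paper gives no argument of its own here. The statement is quoted from \cite[Proposition 8(FD1)]{MP89}, and the citation is the whole proof.

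Your reproof follows the classical route of \cite[Proposition 3.12]{Kac90}, and its second half is sound. Once you know that $\ell(wr_i)<\ell(w)$ forces $w(\alpha_i^\smvee)$ to be a negative coroot, the induction on $\ell(w)$ and the conclusion about $w(\mathcal F_{\mathscr D})\cap\mathcal F_{\mathscr D}$ go through. In the pairing step you need $-w(\alpha_i^\smvee)\in\sum_j\N\alpha_j^\smvee$, because $\overline{\mathcal F_{\mathscr D}}$ is cut out by the $\alpha_j^\smvee$ and not by the $\gamma_i^\smvee$. For real coroots this agrees with your $\Gamma^\smvee$-notion of positivity by Proposition \ref{pro:positivenegative}, but you should say so.

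The gap is the one step that is genuinely specific to root data, which you flag but do not carry out: that $r_i$ cannot send a positive real coroot $\beta^\smvee\neq\alpha_i^\smvee$ to a negative one. The device you suggest (the reflection being determined by the root, plus a rescaling) does not address the actual difficulty. For linearly dependent $\Pi^\smvee$, the relation $\beta^\smvee=c\,\alpha_i^\smvee+(\text{negative})$ carries no coefficientwise information. A rescaling could at most handle the residual case $\beta^\smvee\in\R_{>0}\alpha_i^\smvee$, and you never reduce to that case. Your exchange claim, and with it the whole induction, rests on this step, so the argument is incomplete as written.

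The missing idea is to pair with the simple root $\alpha_i$ and to use that condition (iii) of Definition \ref{dfn:root_data} makes $\sum_j\R_{\geq0}\alpha_j^\smvee$ a pointed cone: each $\alpha_j^\smvee$ is nonzero with nonnegative coordinates in the basis $\Gamma^\smvee$.
\begin{itemize}
\item Write $\beta^\smvee=\sum_j p_j\alpha_j^\smvee$ and $-r_i(\beta^\smvee)=\sum_j q_j\alpha_j^\smvee$ with $p_j,q_j\in\N$. Adding gives $\sum_j(p_j+q_j)\alpha_j^\smvee=t\,\alpha_i^\smvee$ with $t=\langle\alpha_i,\beta^\smvee\rangle$.
\item Applying $\langle\alpha_i,\cdot\,\rangle$ and using $a_{ij}\leq0$ for $j\neq i$ gives $p_i+q_i\geq t$. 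Hence
\[
\textstyle\sum_{j\neq i}(p_j+q_j)\,\alpha_j^\smvee=(t-p_i-q_i)\,\alpha_i^\smvee
\]
lies both in the cone and in its negative.
\item By pointedness both sides vanish, so $p_j=q_j=0$ for $j\neq i$, and $\beta^\smvee=p\,\alpha_i^\smvee$ with $p\coloneqq p_i\in\Z_{>0}$.
\item To force $p=1$, write $\beta^\smvee=u(\alpha_k^\smvee)$. Then $\frac1p\alpha_k^\smvee=u^{-1}(\alpha_i^\smvee)$ is a real coroot, and it is not negative by pointedness. If it equals $\sum_j n_j\alpha_j^\smvee$ with $n_j\in\N$, pairing with $\alpha_k$ gives $2n_k\geq\frac2p>0$, so $n_k\geq1$. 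Pointedness applied to $\sum_{j\neq k}n_j\alpha_j^\smvee=(\frac1p-n_k)\alpha_k^\smvee$ then gives $n_k=\frac1p$, whence $p=1$.
\end{itemize}

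With this in hand, use \cite[p.\ 669, property (c)]{MP89} to turn $w'(\alpha_i^\smvee)=\alpha_{i_s}^\smvee$ into $w'r_iw'^{-1}=r_{i_s}$ (notation as in the proof of Lemma \ref{lem:length}, with $\alpha=\alpha_i$). This gives Lemma \ref{lem:length} for the dual root data, which is again a set of root data because Definition \ref{dfn:root_data} is symmetric in $(\Pi,V)$ and $(\Pi^\smvee,V^*)$, with $A$ replaced by $A^T$. Applied to $wr_i$ and combined with Proposition \ref{pro:positivenegative}, it is exactly your exchange claim.

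The step \emph{hence $a_i=0$} in the paper's proof of Lemma \ref{lem:length} involves the same dependence issue and is settled by the same argument.
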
        
        
        We also have the following alternative description of the negative Tits cone, see \cite[Proposition 4]{MP89} and \cite[Proposition 3.12(c)]{Kac90}.

\begin{prop}\label{prop:titscone}
Let $\mathscr D \coloneqq \big(A,\Pi,\Pi^\smvee,V,V^*,\langle\cdot\, ,\cdot\rangle\big)$ be a set of root data. Then $\mathcal T_{\mathscr D} $ is a convex cone and
$$ \mathcal T_{\mathscr D} = \{ x \in V \mid \langle x,\alpha^\smvee \rangle \leq 0 \text{ for all but finitely many }\alpha\in (\Delta^\re_\mathscr D)_+\}. $$
\end{prop}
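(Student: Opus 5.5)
The plan is to prove the two inclusions between $\mathcal T_{\mathscr D}$ and
$$\mathcal T'\coloneqq\{ x \in V \mid \langle x,\alpha^\smvee \rangle \leq 0 \text{ for all but finitely many }\alpha\in (\Delta^\re_\mathscr D)_+\}$$
directly. Convexity of $\mathcal T_{\mathscr D}$ then comes for free. Indeed, $\mathcal T'$ is closed under multiplication by $\lambda\geq0$. It is also closed under sums: if $x,y\in\mathcal T'$, then the set of positive real roots $\alpha$ with $\langle x+y,\alpha^\smvee\rangle>0$ is contained in the union of the two finite exceptional sets for $x$ and for $y$.

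\emph{The exchange lemma.} The basic tool is the following. For a simple root $\alpha_i$, the reflection $r_i$ maps $-\alpha_i$ to $\alpha_i$ and permutes $(\Delta^\re_{\mathscr D})_+\setminus\{\alpha_i\}$, and the same holds for coroots. To see this, take $\alpha\in(\Delta^\re_{\mathscr D})_+$ with $\alpha\neq\alpha_i$, and write it in the basis $\Gamma$ of $Q$ from Definition \ref{dfn:root_data}(iii). Then $r_i\alpha=\alpha-\langle\alpha,\alpha_i^\smvee\rangle\alpha_i$ changes $\alpha$ only by a multiple of $\alpha_i$. By Proposition \ref{pro:positivenegative}, $r_i\alpha$ is either positive or negative. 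It cannot be negative unless $\alpha$ is a positive multiple of $\alpha_i$, and for real roots that forces $\alpha=\alpha_i$.

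I expect this step to be the main obstacle, because the simple roots may be linearly dependent in the root data setting. I would first try to handle it via the expansions in $\Gamma$ together with condition (iii). If that is insufficient, I would fall back on the treatment in \cite{MP89}.

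From the exchange lemma and induction on the length of $w$, the inversion set
$$N(w)\coloneqq\{\alpha\in(\Delta^\re_{\mathscr D})_+\mid w^{-1}\alpha\in(\Delta^\re_{\mathscr D})_-\}$$
is finite for every $w\in W_{\mathscr D}$. The corresponding statement holds on the coroot side, via the $W$-equivariant bijection $\alpha\mapsto\alpha^\smvee$ and the identification of Proposition \ref{prop:W_invariant_pairing}.

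\emph{The inclusion $\mathcal T_{\mathscr D}\subseteq\mathcal T'$.} Let $x=w(y)$ with $y\in\overline{\mathcal F_{\mathscr D}}$. By Proposition \ref{prop:W_invariant_pairing}, for every $\alpha\in(\Delta^\re_{\mathscr D})_+$ we have
$$\langle x,\alpha^\smvee\rangle=\langle y,w^{-1}(\alpha^\smvee)\rangle.$$
Suppose $\alpha\notin N(w)$. Then $w^{-1}\alpha^\smvee$ is a positive coroot, hence a nonnegative integral combination of the elements of $\Pi^\smvee$. Since $y$ lies in the closed negative chamber, this pairing is $\leq0$. As $N(w)$ is finite, $x\in\mathcal T'$.

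\emph{The inclusion $\mathcal T'\subseteq\mathcal T_{\mathscr D}$.} For $x\in\mathcal T'$, let $n(x)$ be the finite number of $\alpha\in(\Delta^\re_{\mathscr D})_+$ with $\langle x,\alpha^\smvee\rangle>0$. We argue by induction on $n(x)$.

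If $\langle x,\alpha_i^\smvee\rangle\leq 0$ for all $i$, then $x\in\overline{\mathcal F_{\mathscr D}}\subseteq\mathcal T_{\mathscr D}$. In particular this covers the base case $n(x)=0$.

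Otherwise there is a simple $\alpha_i$ with $\langle x,\alpha_i^\smvee\rangle>0$. By the exchange lemma and $W$-invariance of the pairing, $r_i$ maps the set of positive roots that are positive on $x$, with $\alpha_i$ removed, bijectively onto the set of positive roots that are positive on $r_ix$. Moreover $\langle r_ix,\alpha_i^\smvee\rangle<0$. Hence $n(r_ix)=n(x)-1$. By induction $r_ix\in\mathcal T_{\mathscr D}$, and since $\mathcal T_{\mathscr D}$ is $W_{\mathscr D}$-invariant, $x\in\mathcal T_{\mathscr D}$.
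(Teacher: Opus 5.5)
Your argument is the proof of \cite[Proposition 3.12(c)]{Kac90} transplanted to root data. The paper gives no proof of its own here; it cites exactly that result together with \cite[Proposition 4]{MP89}, so you are on the expected route. Your convexity argument, the finiteness of $N(w)$, the inclusion $\mathcal T_{\mathscr D}\subseteq\mathcal T'$ and the induction on $n(x)$ are all correct once the exchange lemma is in place.

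\textbf{The gap: the exchange lemma.} The step you flagged is a genuine gap as you propose to fill it. Suppose $r_i\alpha=\alpha-c\alpha_i\in-Q_+$. With dependent simple roots, the $\Gamma$-coordinates alone only show that those of $\alpha$ are at most $c$ times those of $\alpha_i$. This does not force $\alpha\in\R_{>0}\alpha_i$. The missing ingredient is the Cartan matrix, via the following claim: \emph{if $c>0$ and $c\alpha_i=\sum_j n_j\alpha_j$ with all $n_j\geq0$, then $n_i=c$ and $n_j=0$ for $j\neq i$}. To prove it, consider two cases.
\begin{itemize}
\item If $n_i\geq c$, this is a nonnegative relation among the simple roots. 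Condition (iii) makes it trivial, since each $\alpha_j$ is a nonzero element of $\bigoplus_k\N\gamma_k$.
\item If $n_i<c$, pair $(c-n_i)\alpha_i=\sum_{j\neq i}n_j\alpha_j$ with $\alpha_i^\smvee$. This gives $0<2(c-n_i)=\sum_{j\neq i}n_ja_{ji}\leq0$, a contradiction.
\end{itemize}

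\textbf{Deriving the exchange lemma.} Now suppose $\alpha\in(\Delta^\re_{\mathscr D})_+$ and $r_i\alpha$ is negative. Then $c=\langle\alpha,\alpha_i^\smvee\rangle>0$, and $c\alpha_i=\alpha+(-r_i\alpha)$ is a sum of two elements of $Q_+$. The claim gives $\alpha=m\alpha_i$ with $m\in\Z_{>0}$. To get $m=1$, write $\alpha=w(\alpha_k)$, so $\alpha_k=m\,w^{-1}(\alpha_i)$. Since $Q_+\cap(-Q_+)=\{0\}$, the real root $w^{-1}(\alpha_i)$ lies in $Q_+$. Applying the claim to the resulting expansion of $\alpha_k$ gives $m=1$.

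\textbf{Two further points.} The dual statement holds symmetrically. By induction on length it shows that $\alpha\mapsto\alpha^\smvee$ preserves positivity, which your first inclusion uses. Finally, your base case identifies $\overline{\mathcal F_{\mathscr D}}$ with $\{x\mid\langle x,\alpha_i^\smvee\rangle\leq0\text{ for all }i\}$. This needs $\mathcal F_{\mathscr D}\neq\emptyset$, which follows from (iii) applied to $\Gamma^\smvee$.
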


\subsection{Subroot systems}

We need the following definition of subroot systems from \cite[Section 6]{MP89}.

\begin{dfn}\label{dfn:subsystem}
   Let $\mathscr D\coloneqq\big(A,\Pi,\Pi^\smvee,V,V^*,\langle\cdot\, ,\cdot\rangle\big)$ be a set of root data. A non-empty subset $\Omega\subseteq \Delta^\re_\mathscr D$ is a \emph{subroot system} if there exists a generalised Cartan matrix $B$ and a subset $\Upsilon\subseteq \Delta^\re_+$ such that if we define $\Upsilon^\smvee \coloneqq \{\alpha^\smvee\mid\alpha\in\Upsilon\}$, then $\big(B,\Upsilon,\Upsilon^\smvee,V,V^*,\langle\cdot\, ,\cdot\rangle\big)$ is a set of root data with the root system $\Omega$.
\end{dfn}

As an immediate corollary to Proposition \ref{prop:titscone}, we have:

\begin{cor}\label{cor:Titssubcone}
Let $\mathscr D\coloneqq\big(A,\Pi,\Pi^\smvee,V,V^*,\langle\cdot\, ,\cdot\rangle\big)$ be a set of root data. Let $\Omega\subseteq \Delta^\re_\mathscr D$ be a subroot system and let $\mathscr D'\coloneqq\big(B,\Upsilon,\Upsilon^\smvee,V,V^*,\langle\cdot\, ,\cdot\rangle\big)$ be a set of root data with the root system $\Omega$ as in Definition \ref{dfn:subsystem}. Then
$$\mathcal T_{\mathscr D}\subseteq\mathcal{T}_{\mathscr D'}.$$
\end{cor}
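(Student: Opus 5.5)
We deduce Corollary \ref{cor:Titssubcone} directly from the description of the negative Tits cone in Proposition \ref{prop:titscone}, applied to both $\mathscr D$ and $\mathscr D'$. Let $x\in\mathcal T_{\mathscr D}$. By Proposition \ref{prop:titscone} for $\mathscr D$, the set
$$S_x\coloneqq\big\{\alpha\in(\Delta^\re_{\mathscr D})_+\mid \langle x,\alpha^\smvee\rangle>0\big\}$$
is finite. By the same proposition for $\mathscr D'$, it suffices to show that $\langle x,\beta^\smvee\rangle\le 0$ for all but finitely many $\beta\in(\Delta^\re_{\mathscr D'})_+$, where $\beta^\smvee$ denotes the coroot in $\mathscr D'$.

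\medskip

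The key step is to compare the roots and coroots of $\mathscr D'$ with those of $\mathscr D$. Since $\Upsilon\subseteq\Delta^\re_{\mathscr D}$, each reflection $r_\alpha$ with $\alpha\in\Upsilon$ lies in $W_{\mathscr D}$, and the reflection defined in $\mathscr D'$ coincides with that in $\mathscr D$, because the coroot $\alpha^\smvee$ used in $\Upsilon^\smvee$ is the $\mathscr D$-coroot. Hence $W_{\mathscr D'}\subseteq W_{\mathscr D}$, and so
$$\Omega=\Delta^\re_{\mathscr D'}=W_{\mathscr D'}\Upsilon\subseteq\Delta^\re_{\mathscr D}.$$
By the $W$-equivariance of $(\cdot)^\smvee$, together with Proposition \ref{prop:W_invariant_pairing}, the $\mathscr D'$-coroot of $w(\alpha)$, namely $w(\alpha^\smvee)$, equals the $\mathscr D$-coroot of $w(\alpha)$.

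\medskip

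Next we handle signs. Positivity in $\mathscr D'$ means lying in $\sum_{\alpha\in\Upsilon}\Z_{\ge0}\alpha$. Since $\Upsilon\subseteq(\Delta^\re_{\mathscr D})_+\subseteq Q_+$, every positive root of $\mathscr D'$ lies in $Q_+\cap\Delta^\re_{\mathscr D}$. By Proposition \ref{pro:positivenegative}, such a root belongs to $(\Delta^\re_{\mathscr D})_+$, since a nonzero element of $Q_+$ is not in $-Q_+$. Consequently $(\Delta^\re_{\mathscr D'})_+\subseteq(\Delta^\re_{\mathscr D})_+$, with matching coroots. The set of $\beta\in(\Delta^\re_{\mathscr D'})_+$ with $\langle x,\beta^\smvee\rangle>0$ is therefore contained in $S_x$, and so it is finite. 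This gives $x\in\mathcal T_{\mathscr D'}$.

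\medskip

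The main obstacle is verifying carefully that coroots and positivity are compatible between $\mathscr D$ and $\mathscr D'$, in the general setting of root data where the simple roots may be linearly dependent. This compatibility rests on the equivariance of $(\cdot)^\smvee$ and on the disjointness statement of Proposition \ref{pro:positivenegative}.
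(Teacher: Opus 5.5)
Your proof is correct and takes the same route as the paper. Both arguments apply Proposition \ref{prop:titscone} to $\mathscr D$ and to $\mathscr D'$, and observe that $\Upsilon\subseteq(\Delta^\re_{\mathscr D})_+$ forces $(\Delta^\re_{\mathscr D'})_+\subseteq(\Delta^\re_{\mathscr D})_+$. You additionally check explicitly that the $\mathscr D'$-coroots coincide with the $\mathscr D$-coroots, a point the paper leaves implicit.
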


\begin{proof}
Since $\Upsilon \subseteq (\Delta^\re_\mathscr D)_+$, we have $\Omega_+\subseteq (\Delta^\re_\mathscr D)_+$. Hence, the statement follows from Proposition \ref{prop:titscone}.
\end{proof}

The following lemma is used in the proof of Proposition \ref{prop:WDkl}.

\begin{lem}\label{lem:equivalence_chamber_inclusion}
    Let $\mathscr D\coloneqq \big(A,\Pi,\Pi^\smvee,V,V^*,\langle\cdot\, ,\cdot\rangle\big)$ be a set of root data. Let $\Sigma$ be a subset of $\Delta^\re_\mathscr D$ and define
    $$\mathcal{F}_\Sigma \coloneqq \{ x\in V \mid \langle x, \alpha^\smvee\rangle < 0 \text{ for all }\alpha\in \Sigma \}.$$
    Assume that $\mathcal F_\mathscr D\neq\emptyset$. Then $\mathcal{F}_{\mathscr D}\subseteq \mathcal{F}_\Sigma$ if and only if $\Sigma\subseteq (\Delta^\re_\mathscr D)_+$.
\end{lem}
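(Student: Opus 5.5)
Both directions reduce to Proposition \ref{pro:positivenegative}, together with the observation that every positive real coroot is a non-negative combination of simple coroots.

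For the direction ``$\Leftarrow$'', assume $\Sigma\subseteq(\Delta^\re_{\mathscr D})_+$, let $\alpha\in\Sigma$, and let $x\in\mathcal F_{\mathscr D}$. The coroot $\alpha^\smvee$ is a positive real coroot. I will use the fact that positive coroots lie in $\sum_{j}\Z_{\geq0}\alpha_j^\smvee$, with $\alpha^\smvee\neq0$. This is the dual version of the definition of positive roots in \cite{MP89}, and the bijection $(\cdot)^\smvee$ sends positive real roots to positive real coroots.

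So we may write $\alpha^\smvee=\sum_j n_j\alpha_j^\smvee$ with all $n_j\geq0$ and not all $n_j=0$. Then
$$\langle x,\alpha^\smvee\rangle=\sum_j n_j\langle x,\alpha_j^\smvee\rangle<0,$$
since every term $\langle x,\alpha_j^\smvee\rangle$ is negative. Hence $x\in\mathcal F_\Sigma$. This direction does not use the hypothesis $\mathcal F_{\mathscr D}\neq\emptyset$.

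For the direction ``$\Rightarrow$'', assume $\mathcal F_{\mathscr D}\subseteq\mathcal F_\Sigma$ and let $\alpha\in\Sigma$. By Proposition \ref{pro:positivenegative}, either $\alpha\in(\Delta^\re_{\mathscr D})_+$ or $\alpha\in(\Delta^\re_{\mathscr D})_-$. Suppose $\alpha$ is negative. Then ${-}\alpha$ is a positive real root, and $({-}\alpha)^\smvee={-}\alpha^\smvee$, which follows from $W$-equivariance of $(\cdot)^\smvee$ applied to $r_\alpha(\alpha)={-}\alpha$. Pick $x\in\mathcal F_{\mathscr D}$, which is possible because $\mathcal F_{\mathscr D}\neq\emptyset$. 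The first paragraph, applied to the positive root ${-}\alpha$, gives $\langle x,{-}\alpha^\smvee\rangle<0$, so $\langle x,\alpha^\smvee\rangle>0$. Hence $x\notin\mathcal F_\Sigma$, contradicting the inclusion. Therefore $\alpha$ is positive, and $\Sigma\subseteq(\Delta^\re_{\mathscr D})_+$.

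The main point needing care is the compatibility of signs between roots and coroots in the root-data setting. Simple roots may be linearly dependent there, so sign information on $\alpha$ does not automatically transfer to $\alpha^\smvee$. I would handle this by writing $\alpha=w(\alpha_i)$, so that $\alpha^\smvee=w(\alpha_i^\smvee)$ under the identification of $W_{\mathscr D}$ with $W^\smvee_{\mathscr D}$ in Proposition \ref{prop:W_invariant_pairing}. I would then invoke the statement in \cite{MP89} that $w(\alpha_i)$ is positive if and only if $w(\alpha_i^\smvee)$ is positive; this is the standard length criterion $\ell(wr_i)>\ell(w)$, valid on both sides. Once that is in place, the rest is the elementary sign computation above.
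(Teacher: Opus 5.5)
Your proof is correct and follows essentially the same route as the paper. Both arguments expand the (co)root in the simple (co)roots with integer coefficients of a single sign and evaluate on a point $x\in\mathcal F_{\mathscr D}$; the only cosmetic difference is that you deduce ``$\Rightarrow$'' by applying ``$\Leftarrow$'' to ${-}\alpha$. Your explicit argument that $\alpha^\smvee$ is positive exactly when $\alpha$ is (via $\alpha=w(\alpha_i)$, $\alpha^\smvee=w(\alpha_i^\smvee)$ and the length criterion on both sides) fills a step the paper skips: its proof pairs the root expansion $\alpha_i=\sum_j n_{ij}\beta_j$ directly with $x$, as though it were the coroot expansion.
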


\begin{proof}
	Let $\Sigma=\{\alpha_i\mid i\in I\}$ and $\Pi=\{\beta_j\mid j\in J\}$. Since every real root is either positive or negative, there exist integers $n_{ij}$ such that
    $$ \alpha_i = \sum_{j\in J} n_{ij} \beta_j\quad\text{for each }i\in I, $$
    where for each fixed $i$ the numbers $n_{ij}$ are either all non-positive or non-negative. Moreover, for each $i$ we have $\alpha_i\neq0$, hence there exists $j_i\in J$ such that $n_{ij_i}\neq0$.
    
    Assume first that $\mathcal{F}_{\mathscr D}\subseteq \mathcal{F}_\Sigma$, and assume that there exists $i\in I$ such that $\alpha_i\notin(\Delta^\re_\mathscr D)_+$. Then $n_{ij}\leq0$ for all $j\in J$. Fix $x\in\mathcal F_\mathscr D$; thus $x\in\mathcal{F}_\Sigma$ by assumption. Then $\langle\alpha_i,x\rangle<0$ and $\langle\beta_j,x\rangle<0$ for all $j\in J$, hence
    $$ 0>\langle\alpha_i,x\rangle = \sum_{j\in J} n_{ij} \langle\beta_j,x\rangle\geq n_{ij_i} \langle\beta_{j_i},x\rangle>0, $$
    a contradiction which shows that $\alpha_i\in (\Delta^\re_\mathscr D)_+$ for all $i\in I$.

    Conversely, assume that $\Sigma\subseteq (\Delta^\re_\mathscr D)_+$. Then $n_{ij}\geq0$ for all $i$ and $j$. Let $x\in\mathcal F_\mathscr D$. Then $\langle\beta_j,x\rangle<0$ for all $j\in J$, hence for each $i\in I$ we have
    $$ \langle \alpha_i, x\rangle = \sum_{j\in J} n_{ij} \langle\beta_j,x\rangle\leq n_{ij_i} \langle\beta_{j_i},x\rangle<0. $$
    This shows that $x\in \mathcal{F}_{\Sigma}$, and therefore $\mathcal{F}_\mathscr D\subseteq \mathcal{F}_\Sigma$, as desired.
\end{proof}

\subsection{Decomposable sets of root data}

We recall the definition of decomposable sets from \cite{MP89}.

\begin{dfn}
Let $\mathscr D\coloneqq \big(A,\Pi,\Pi^\smvee,V,V^*,\langle\cdot\, ,\cdot\rangle\big)$ be a set of root data. A subset $S\subseteq\Delta^\re_\mathscr D$ is \emph{decomposable} if we can write $S = S_1\cup S_2$ with $S_1\neq\emptyset$ and $S_2\neq\emptyset$, and for all $\gamma_1\in S_1$ and $\gamma_2\in S_2$ we have $\langle\gamma_1,\gamma_2^\smvee\rangle = \langle\gamma_2,\gamma_1^\smvee\rangle = 0$. Otherwise we say that $S$ is \emph{indecomposable}. If $\Delta^\re_\mathscr D$ is indecomposable, then we say that $\mathscr D$ is indecomposable. By \cite[Proposition 1 on p.\ 689]{MP89}, the set of root data $\mathscr D$ is indecomposable if and only if $\Pi$ is indecomposable.
\end{dfn}

Note that in the notation as in the previous definition, the union $S = S_1\cup S_2$ is automatically disjoint, since $\langle\alpha,\alpha^\smvee\rangle=2$ for all $\alpha\in\Delta^\re_\mathscr D$.

Moreover, the set $\Pi$ is decomposable if and only if the generalised Cartan matrix $A$ is decomposable. More precisely, let $\Pi=\Pi_1\cup\Pi_2$ be the corresponding decomposition. If $A_1$ and $A_2$ are the corresponding generalised Cartan matrices, then $A=A_1\oplus A_2$. Furthermore, if we set $\mathscr D_i\coloneqq\big(A_i,\Pi_i,\Pi_i^\smvee,V,V^*,\langle\cdot\, ,\cdot\rangle\big)$ for $i\in\{1,2\}$, then $\mathscr D_1$ and $\mathscr D_2$ are sets of root data by \cite[p.\ 669]{MP89}, and we have the following result.

\begin{prop}\label{pro:decomposable}
With notation as above, then:
\begin{enumerate}[\normalfont (a)]
\item we have $W_{\mathscr D}=W_{\mathscr D_1}W_{\mathscr D_2}$, and for all $w_1\in W_{\mathscr D_1}$ and $w_2\in W_{\mathscr D_2}$ we have $w_1w_2=w_2w_1$,
\item the set $\Delta^\re_\mathscr D$ is decomposable, with the corresponding decomposition $\Delta^\re_\mathscr D=\Delta^\re_{\mathscr D_1}\cup\Delta^\re_{\mathscr D_2}$,
\item the set $(\Delta^\re_\mathscr D)_+$ is decomposable, with the corresponding decomposition $(\Delta^\re_\mathscr D)_+=(\Delta^\re_{\mathscr D_1})_+\cup(\Delta^\re_{\mathscr D_2})_+$,
\item $\cal{F}_{\mathscr D}=\cal{F}_{\mathscr D_1}\cap \cal{F}_{\mathscr D_2}$,
\item for distinct $i,j\in\{1,2\}$ and for any $w_i\in W_{\mathscr D_i}$ we have $w_i(\cal{F}_{\mathscr{D}_j}) = \cal{F}_{\mathscr{D}_j}$ and $w_i(\cal{T}_{\mathscr{D}_j}) = \cal{T}_{\mathscr{D}_j}$,
\item for distinct $i,j\in\{1,2\}$ and for any $w_i\in W_{\mathscr D_i}$ we have $w_i(\cal{F}_{\mathscr{D}}) = w_i(\cal{F}_{\mathscr{D}_i})\cap \cal{F}_{\mathscr{D}_j}$.
\end{enumerate}
\end{prop}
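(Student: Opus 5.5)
Every item rests on one basic fact: for $\alpha\in\Pi_1$ and $\beta\in\Pi_2$ we have $\langle\alpha,\beta^\smvee\rangle=\langle\beta,\alpha^\smvee\rangle=0$, because $A=A_1\oplus A_2$. Consequently $r_\beta(\alpha)=\alpha$ and $r_{\beta^\smvee}(\alpha^\smvee)=\alpha^\smvee$. By symmetry of the decomposition, the same holds with the roles of $\Pi_1$ and $\Pi_2$ swapped.

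\textbf{Part (a).} Take a generator $r_\alpha$ with $\alpha\in\Pi_1$ and a generator $r_\beta$ with $\beta\in\Pi_2$. For $x\in V$ we compute
$$r_\alpha r_\beta(x)=x-\langle x,\beta^\smvee\rangle\beta-\langle x,\alpha^\smvee\rangle\alpha+\langle x,\beta^\smvee\rangle\langle\beta,\alpha^\smvee\rangle\alpha.$$
The last term vanishes, so the expression is symmetric in $\alpha$ and $\beta$, and hence the two reflections commute. Since each $W_{\mathscr D_i}$ is generated by such reflections, elements of $W_{\mathscr D_1}$ commute with elements of $W_{\mathscr D_2}$. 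The group $W_{\mathscr D}$ is generated by $W_{\mathscr D_1}\cup W_{\mathscr D_2}$, so commutativity gives $W_{\mathscr D}=W_{\mathscr D_1}W_{\mathscr D_2}$.

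\textbf{Parts (b) and (c).} By (a), a real root has the form $w_1w_2(\gamma)$ with $\gamma\in\Pi$. If $\gamma\in\Pi_1$, then $w_2$ fixes $\gamma$, because each generator $r_\beta$ with $\beta\in\Pi_2$ fixes $\Pi_1$ pointwise. Hence $w_1w_2(\gamma)=w_1(\gamma)\in\Delta^\re_{\mathscr D_1}$, and this gives $\Delta^\re_\mathscr D=\Delta^\re_{\mathscr D_1}\cup\Delta^\re_{\mathscr D_2}$. For the orthogonality, take $\gamma_1=w_1(\alpha)$ and $\gamma_2=w_2(\beta)$. Then
$$\langle\gamma_1,\gamma_2^\smvee\rangle=\langle w_1(\alpha),w_2(\beta^\smvee)\rangle=\langle w_2^{-1}w_1(\alpha),\beta^\smvee\rangle=\langle w_1(\alpha),\beta^\smvee\rangle.$$
The middle equality uses Proposition~\ref{prop:W_invariant_pairing} together with equivariance of coroots. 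Now $w_1(\alpha)$ lies in the span of $\Pi_1$, and this span pairs to zero with $\beta^\smvee$, so the pairing vanishes. For (c), roots in $\Delta^\re_{\mathscr D_i}$ lie in $\sum_{\Pi_i}\Z\alpha$. Such a root is positive in $\mathscr D$ exactly when it is positive in $\mathscr D_i$. Here I use Proposition~\ref{pro:positivenegative} and the fact that positivity is read off from the expansion in terms of $\Pi$.

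\textbf{Parts (d)–(f).} Part (d) is immediate from $\Pi^\smvee=\Pi_1^\smvee\cup\Pi_2^\smvee$. For (e), take $w_i\in W_{\mathscr D_i}$ and $x\in V$. For $\beta^\smvee\in\Pi_j^\smvee$ we have $\langle w_i(x),\beta^\smvee\rangle=\langle x,w_i^{-1}(\beta^\smvee)\rangle=\langle x,\beta^\smvee\rangle$, since $w_i$ fixes $\Pi_j^\smvee$ pointwise. Hence $w_i(\mathcal F_{\mathscr D_j})=\mathcal F_{\mathscr D_j}$. The same equality holds for the closure of $\mathcal F_{\mathscr D_j}$. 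Because $w_i$ commutes with $W_{\mathscr D_j}$, this carries over to the Tits cone $\mathcal T_{\mathscr D_j}$. Part (f) follows by applying the bijection $w_i$ to (d) and then using (e).

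The only point needing some care is the coroot equivariance used in (b) and (e), namely that $w$ acts compatibly on $V$ and $V^*$. This is exactly what Proposition~\ref{prop:W_invariant_pairing} supplies.
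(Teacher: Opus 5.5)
Your proof is correct and matches the paper's argument for (c)–(f). For (a) and (b) the paper only cites the proof of \cite[Proposition 1 on p.\ 689]{MP89}, whereas you check by hand that $r_\alpha$ and $r_\beta$ commute and that $\Pi_1$ and $\Pi_2^\smvee$ pair to zero. In (c), the fact doing the work is the disjointness in Proposition~\ref{pro:positivenegative}, which the paper uses explicitly: a root that is negative in $\mathscr D_i$ is negative in $\mathscr D$, so it cannot be positive there. Your phrase about positivity being ``read off from the expansion in terms of $\Pi$'' suggests unique coefficients, which is not available, because $\Pi=\Pi_1\cup\Pi_2$ can be linearly dependent, as in \S\ref{subsec:Rperp2}.
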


\begin{proof}
Parts (a) and (b) follow from the proof of \cite[Proposition 1 on p.\ 689]{MP89}.

To show (c), note that $(\Delta^\re_{\mathscr D_1})_+\cup(\Delta^\re_{\mathscr D_2})_+\subseteq(\Delta^\re_{\mathscr D})_+$ by (b) and by the definition of positive roots. For the opposite inclusion, let $\gamma\in(\Delta^\re_{\mathscr D})_+$. Then by (b) we have $\gamma\in\Delta^\re_{\mathscr D_1}\cup\Delta^\re_{\mathscr D_2}$, and without loss of generality we may assume that $\gamma\in\Delta^\re_{\mathscr D_1}$. By Proposition \ref{pro:positivenegative} we conclude that either $\gamma\in(\Delta^\re_{\mathscr D_1})_+$ or $\gamma\in(\Delta^\re_{\mathscr D_1})_-$. Since $(\Delta^\re_{\mathscr D_1})_-\subseteq(\Delta^\re_{\mathscr D})_-$, the second alternative would imply that $\gamma\in(\Delta^\re_{\mathscr D})_-$. But this is impossible since $(\Delta^\re_{\mathscr D})_+\cap(\Delta^\re_{\mathscr D})_-=\emptyset$ by Proposition \ref{pro:positivenegative}.

Part (d) follows immediately from the definition of negative fundamental chambers.

Now we prove (e) and (f). Fix $w_1 \in W_{\mathscr{D}_1}$. Note that we have $\alpha_i\cdot \alpha'_j = 0$ for any $\alpha_i \in \Pi_1$ and for any $\alpha'_j\in \Pi_2$, hence $r_{\alpha_i}(\alpha'_j) = \alpha'_j$. Therefore, by Proposition \ref{prop:W_invariant_pairing} we have
\begin{align*}
    r_{\alpha_i} (\cal{F}_{\mathscr{D}_2}) &= 
    \{ r_{\alpha_i}(y) \mid y \in V, \langle y,\alpha'_j \rangle < 0 \text{ for any }\alpha'_j\in \Pi_2 \} \\
    &= \{  x \in V \mid x \in V, \langle x, r_{\alpha_i}(\alpha'_j) \rangle <0 \text{ for any } \alpha'_j\in \Pi_2 \} \\
    &= \{ x\in V \mid \langle x , \alpha'_j \rangle < 0 \text{ for any } \alpha'_j\in \Pi_2 \} = \cal{F}_{\mathscr{D}_2}.
\end{align*}
As $w_1$ is a word in the reflections $r_{\alpha_i}$, we conclude that
\begin{equation}\label{eq:decompsable_fund_chambers}
w_1(\cal{F}_{\mathscr{D}_2}) = \cal{F}_{\mathscr{D}_2}.
\end{equation}
Therefore, by (d) we have
$$
w_1 (\cal{F}_{\mathscr{D}}) = w_1 (\cal{F}_{\mathscr{D}_1}) \cap w_1 ( \cal{F}_{\mathscr{D}_2}) = w_1 (\cal{F}_{\mathscr{D}_1}) \cap \cal{F}_{\mathscr{D}_2}.
$$
Furthermore, by \eqref{eq:decompsable_fund_chambers} and by (a) we have
\begin{align*}
w_1(\cal{T}_{\mathscr{D}_2}) &= \textstyle w_1 \big(\bigcup_{w_2\in W_{\mathscr {D}_2}}w_2(\overline{\cal{F}_{\mathscr{D}_2}}) \big)= \bigcup_{w_2\in W_{\mathscr{D}_2}}w_1 w_2 (\overline{\cal{F}_{\mathscr{D}_2}}) \\
&= \textstyle \bigcup_{w_2 \in W_{\mathscr{D}_2}}w_2 w_1 (\overline{\cal{F}_{\mathscr{D}_2}}) = \bigcup_{w_2\in W_{\mathscr{D}_2}}w_2(\overline{\cal{F}_{\mathscr{D}_2}}) =\cal{T}_{\mathscr{D}_2}.
\end{align*}
The rest of (e) and (f) is proved analogously.
\end{proof}

\subsection{The length function}

Let $\mathscr D\coloneqq\big(A,\Pi,\Pi^\smvee,V,V^*,\langle \cdot\, ,\cdot\rangle\big)$ be a set of root data, let $\Pi = \{ \alpha_i \mid i\in I\}$, and let $W_{\mathscr D}$ be its associated Weyl group with generators $r_i$. Then $W_{\mathscr D}$ together with this set of generators is a Coxeter group by \cite[p.\ 668, property (a)]{MP89}, thus we can associate to it the length function $\ell\colon W_{\mathscr D} \to \N$. The following results are analogues of \cite[Lemma 3.11(a) and Proposition 3.12(d)]{Kac90}, where they were proved in the context of the usual root systems. We give the proofs here for the benefit of the reader.

\begin{lem}\label{lem:length}
     Let $\mathscr D\coloneqq \big(A,\Pi,\Pi^\smvee,V,V^*,\langle \cdot\, ,\cdot\rangle\big)$ be a set of root data. Let $w\in W_{\mathscr D}$ and let $\alpha\in \Pi$. If $w(\alpha)\in\Delta^\re_\mathscr D$ is a negative real root, then $\ell(w \cdot r_{\alpha}) < \ell(w)$.
\end{lem}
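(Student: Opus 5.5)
We follow the classical argument of \cite[Lemma 3.11(a)]{Kac90}, adapting it to root data, where simple roots may be linearly dependent. We argue by induction on $\ell(w)$; the key intermediate statement is the following exchange property. If $w=r_{i_1}\cdots r_{i_t}$ and $w(\alpha_i)$ is negative, where $\alpha=\alpha_i$, then there is an index $s$ such that
$$ r_{i_s}r_{i_{s+1}}\cdots r_{i_t} = r_{i_{s+1}}\cdots r_{i_t}r_i. $$
Given this, we get $w r_i = r_{i_1}\cdots\widehat{r_{i_s}}\cdots r_{i_t}$. Taking the expression of $w$ to be reduced, i.e.\ $t=\ell(w)$, this gives $\ell(wr_i)\le t-1<\ell(w)$.

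To find $s$, consider the roots $\beta_k\coloneqq r_{i_{k+1}}\cdots r_{i_t}(\alpha_i)$ for $k=0,\dots,t$. Then $\beta_t=\alpha_i$ is positive and $\beta_0=w(\alpha_i)$ is negative. All $\beta_k$ lie in $\Delta^\re_\mathscr D$, and by Proposition \ref{pro:positivenegative} each is either positive or negative. Hence there is a smallest $s\ge1$ such that $\beta_s$ is positive and $\beta_{s-1}=r_{i_s}(\beta_s)$ is negative.

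We then need the fact that a simple reflection $r_j$ permutes $(\Delta^\re_\mathscr D)_+\setminus\{\alpha_j\}$. Granting this, a positive root sent to a negative one by $r_{i_s}$ must equal $\alpha_{i_s}$, so $u(\alpha_i)=\alpha_{i_s}$ with $u\coloneqq r_{i_{s+1}}\cdots r_{i_t}$. It follows that $u r_i u^{-1}=r_{u(\alpha_i)}=r_{i_s}$. Indeed, for $x\in V$ we have
$$ ur_iu^{-1}(x)=x-\langle u^{-1}x,\alpha_i^\smvee\rangle u(\alpha_i)=x-\langle x,u(\alpha_i^\smvee)\rangle\alpha_{i_s}, $$
using Proposition \ref{prop:W_invariant_pairing} to move $u$ across the pairing. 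This gives $r_{i_s}u=ur_i$, which is exactly the displayed exchange relation.

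The main obstacle is the two root-data facts used in the last step. First, $u(\alpha_i^\smvee)=(u\alpha_i)^\smvee=\alpha_{i_s}^\smvee$, i.e.\ the $W$-equivariance of the bijection $\alpha\mapsto\alpha^\smvee$ between real roots and real coroots. Second, $r_j$ maps positive real roots other than $\alpha_j$ to positive roots. For the second fact in the classical setting one uses that a positive root $\beta\neq\alpha_j$ has a positive coefficient on some $\alpha_k$ with $k\ne j$, and that $r_j$ only changes the $\alpha_j$-coefficient. With linearly dependent $\Pi$, coefficients are not unique. We would instead work in the basis $\Gamma$ of Definition \ref{dfn:root_data}(iii), or quote the corresponding statements in \cite[\S2]{MP89} directly. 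There, the uniqueness of $\alpha^\smvee$ and the positivity statements are established for root data, which is precisely the setting of the Corollary on p.\ 668 underlying Proposition \ref{pro:positivenegative}. Once these two facts are in hand, the remainder is formal and identical to the Kac--Moody case.
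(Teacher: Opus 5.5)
Your proposal is the paper's proof step for step: a reduced expression for $w$, the roots $\beta_k$ with a sign change at some $s$, the identification $\beta_s=\alpha_{i_s}$, and the conjugation $u\,r_\alpha u^{-1}=r_{i_s}$ that deletes $r_{i_s}$ from the word (no induction on $\ell(w)$ is actually needed). There are two differences: the paper cites \cite[p.\ 669, property (c)]{MP89} for the conjugation instead of deriving it from coroot equivariance and Proposition \ref{prop:W_invariant_pairing}, and it obtains $\beta_s=\alpha_{i_s}$ by expanding $\beta_s$ in $\Pi$ with nonnegative integer coefficients and applying $r_{i_s}$, rather than through a permutation lemma. The non-uniqueness you worry about does not block this: comparing the expansions of $\beta_s$ and $-r_{i_s}(\beta_s)$, pairing with $\alpha_{i_s}^\smvee$ and using $\Pi\subseteq\bigoplus\N\gamma_i$ forces $\beta_s\in\Z_{>0}\alpha_{i_s}$, whereas the classical coefficient argument does not transfer directly to $\Gamma$-coordinates.
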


\begin{proof}
    With notation as above, let $n\coloneqq \ell(w)$ and let $w = r_{i_1} \dots r_{i_n}$ be a reduced expression of $w\in W_\mathscr D$, where $i_s\in I$ for any $s\in\{1,\dots,n\}$.

    We set $\beta_k \coloneqq (r_{i_{k+1}}\dots r_{i_n})(\alpha)$ for any $k<n$, and $\beta_n \coloneqq \alpha$. Then $\beta_0 = w(\alpha)$ is a negative real root by assumption, and $\beta_n = \alpha$ is a positive real root. Thus, there exists $s\in \{ 1,\dots, n\}$ such that
    $$ \beta_{s-1} = r_{i_s}(\beta_s) \in (\Delta^\re_\mathscr D)_- \quad\text{and}\quad \beta_s \in (\Delta^\re_\mathscr D)_+. $$
	In particular, there exist $a_i\in \Z_{\geq 0}$ such that $\beta_s = \sum_{i\in I} a_i \alpha_i$. Then
     \begin{align*}
     (\Delta^\re_\mathscr D)_-\ni\beta_{s-1}&=r_{i_s}(\beta_s) = \sum_{i\in I\setminus\{i_s\}} a_i r_{i_s} (\alpha_i) + a_{i_s} r_{i_s}(\alpha_{i_s}) \\
     &= \sum_{i\in I\setminus\{i_s\}} a_i \alpha_i - \Big( \sum_{i\in I\setminus\{i_s\}} a_i \langle \alpha_i, \alpha_{i_s}^\smvee \rangle + a_{i_s}\Big) \alpha_{i_s},
     \end{align*}
     hence $a_i=0$ for any $i\in I\setminus\{i_s\}$. Therefore, 
     $$ \beta_s=a_{i_s}\alpha_{i_s}, \quad\text{where }a_{i_s}>0.$$
Note that $\langle \beta_s, \beta_s^\smvee \rangle = \langle \alpha,\alpha^\smvee\rangle = 2$ by construction and by Proposition \ref{prop:W_invariant_pairing}. As $\langle \alpha_{i_s},\alpha_{i_s}^\smvee \rangle =2$, we conclude that $\beta_s = \alpha_{i_s}$, hence
     $$ w'(\alpha) = \alpha_{i_s}, \quad\text{where }w'\coloneqq r_{i_{s+1}}\cdot\ldots\cdot r_{i_n}. $$
     It follows from \cite[p.\ 669, property (c)]{MP89} that
     $$ r_{i_s} = w'\cdot r_\alpha \cdot w'^{-1}. $$
     Multiplying both sides by $r_{i_1}\cdot\ldots\cdot r_{i_{s-1}}$ on the left and by $r_{i_{s+1}}\cdot\ldots\cdot r_n \cdot r_\alpha$ on the right, we obtain
     $$ w\cdot r_\alpha = r_{i_1}\dots r_{i_{s-1}} \cdot r_{i_{s+1}}\dots r_{i_n}, $$
     where we used that $w'^{-1}=r_{i_n}\cdot\ldots\cdot r_{i_{s+1}}$. Therefore, $\ell (w\cdot r_\alpha)\leq n-1 = \ell(w)-1$, which completes the proof.
\end{proof}

The following lemma will be essential for the construction of a rational polyhedral fundamental domain in the proof of Theorem \ref{thm:main2}.

\begin{lem}\label{lem:actionW}
    Let $\mathscr D\coloneqq \big(A,\Pi,\Pi^\smvee,V,V^*,\langle \cdot\, ,\cdot\rangle\big)$ be a set of root data, and let $x\in \overline{\mathcal{F}_{\mathscr D}}$. Then for any $w\in W_\mathscr D$ we have that $ w(x) - x $ is a nonnegative linear combination of the elements of $\Pi$.
\end{lem}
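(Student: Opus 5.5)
I will argue by induction on the length $\ell(w)$, following the proof of \cite[Proposition 3.12(d)]{Kac90} and using Lemma \ref{lem:length} as the key input. The statement is trivial if $\ell(w)=0$. For $\ell(w)\geq 1$, write $w=w' r_\alpha$ with $\alpha\in\Pi$ and $\ell(w')=\ell(w)-1$. This is possible by taking the last letter of a reduced expression.

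The key step is to compute
$$ w(x)-x = w'\big(r_\alpha(x)\big)-x = w'(x)-x - \langle x,\alpha^\smvee\rangle\, w'(\alpha). $$
By the induction hypothesis, $w'(x)-x$ is a nonnegative combination of elements of $\Pi$. Since $x\in\overline{\mathcal F_{\mathscr D}}$, we have $\langle x,\alpha^\smvee\rangle\leq 0$. Hence it suffices to show that $w'(\alpha)$ is a positive real root, i.e.\ lies in $\sum_{\beta\in\Pi}\N\beta$. Then $-\langle x,\alpha^\smvee\rangle w'(\alpha)$ is a nonnegative combination of simple roots, and the sum of the two terms is as well.

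To see that $w'(\alpha)$ is positive, I use Proposition \ref{pro:positivenegative}: $w'(\alpha)\in\Delta^\re_{\mathscr D}$ is either positive or negative. Suppose it were negative. Then Lemma \ref{lem:length} gives $\ell(w' r_\alpha)<\ell(w')$, that is, $\ell(w)<\ell(w)-1$, which is absurd. So $w'(\alpha)$ is positive and the induction closes.

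The only subtle point is the sign convention. Since $\mathcal F_{\mathscr D}$ is the \emph{negative} chamber, $r_\alpha(x)-x=-\langle x,\alpha^\smvee\rangle\alpha$ has a nonnegative coefficient, so the conclusion is that $w(x)-x$ is a \emph{nonnegative} combination. This is the opposite sign from Kac's statement $\lambda-w(\lambda)\in Q_+$ for the usual positive chamber, and it is consistent with the claim. I also need that a positive real root in the root-data setting means an element of $\Delta^\re_{\mathscr D}\cap\sum\N\alpha_i$, which is what the definition in \cite{MP89} gives. No further obstacle is expected.
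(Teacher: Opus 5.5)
Your proposal is correct and follows the paper's proof essentially verbatim: induction on $\ell(w)$, the decomposition $w(x)-x=\big(w'(x)-x\big)+w'\big(r_\alpha(x)-x\big)$, and positivity of $w'(\alpha)$ via Proposition \ref{pro:positivenegative} combined with Lemma \ref{lem:length}. Your remark on the sign convention for the negative chamber is also consistent with the paper.
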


\begin{proof}
We will use a few times in the proof that for every $\alpha\in \Pi$, since $x\in \overline{\mathcal{F}_{\mathscr D}}$, we have
    \begin{equation}\label{eq:induction1}
    r_\alpha(x) - x = - \langle x , \alpha^\smvee \rangle \alpha \in \R_{\geq 0}\cdot \alpha.
    \end{equation}
    
    We use the notation from the paragraph preceding Lemma \ref{lem:length}. Fix $w\in W_\mathscr D$. Let $n\coloneqq \ell(w)$ and let $w = r_{i_1} \dots r_{i_n}$ be a reduced expression of $w\in W_\mathscr D$, where $i_s\in I$ for any $s\in\{1,\dots,n\}$. We will prove the statement by induction on $n$.
    
    If $n=1$, then we conclude by \eqref{eq:induction1}. Now assume that the statement holds for any element in $W_\mathscr D$ of length $n-1$. Set $w' \coloneqq  r_{i_1} \cdot\ldots\cdot r_{i_{n-1}}$. Then
    $$ w(x) - x = \big( w'(x) - x \big) + w'\big( r_{i_n}(x)-x\big). $$
    Note that the first summand is a nonnegative linear combination of the elements in $\Pi$ by induction, while the second summand belongs to the set $\R_{\geq 0} \cdot w'(\alpha_{i_n}) $ by \eqref{eq:induction1}. Thus, it suffices to show that $w'(\alpha_{i_n})$ is a nonnegative linear combination of the elements in $\Pi$.
    
    To that end, since $w'(\alpha_{i_n})$ is a real root, it is either positive or negative. If $w'(\alpha_{i_n})\in(\Delta^\re_\mathscr D)_-$, then by Lemma \ref{lem:length} we have
    $$n=\ell(w)=\ell(w'\cdot r_{i_n}) < \ell(w')=n-1,$$
    a contradiction. Consequently, $w'(\alpha_{i_n})\in(\Delta^\re_\mathscr D)_+$, and in particular, $w'(\alpha_{i_n})$ is a nonnegative linear combination of the elements in $\Pi$, as desired.
\end{proof}

\section{Root systems on Sakai surfaces}\label{sec:rootsystemsHalphen}

Throughout the section, $X$ is a Sakai surface with the unique divisor $D\in|{-}K_X|$. We denote by $\cal{D}_i$ the classes of the components of $D$.

Recall from \cite[Lemma 2.9(b)]{LSX26} that $X$ is the blowup of $\pr^2$ at $9$ (possibly infinitely near) points. Then there exist birational morphisms $\pi_i\colon X_i\to X_{i-1}$ for $i=1,\dots,9$, such that $X_9:=X$ and $X_0:=\pr^2$, and each $\pi_i$ is a contraction of a $(-1)$-curve $E_i\subseteq X_i$. We call the induced morphism $\pi_1\circ\ldots\circ\pi_9\colon X\to\mathbb{P}^2$ a \emph{blowdown structure} on $X$.

Let $\mathcal E_0\in N^1(X)_\R$ be class of the pullback of a line on $\pr^2$, and let $\mathcal E_i\in N^1(X)_\R$ be the class of the pullback of $E_i$ for any $i\in\{1,\dots,9\}$. Then $\{\mathcal{E}_0,\ldots,\mathcal{E}_9\}$ is a basis of the vector space $N_1(X)_\R$, see for instance \cite[Lemma 2.1(c)]{LSX26}; we call this basis the \textit{tautological basis} of $N_1(X)_\R$, and we say that $(\mathcal E_0,\dots,\mathcal E_9)$ \emph{defines the blowdown structure $\pi_1\circ\ldots\circ\pi_9\colon X\to\mathbb{P}^2$}.

It is easy to check that
\begin{equation}\label{eq:intersectnumbers}
    \mathcal E_0^2=1, \quad \mathcal E_i^2={-}1 \text{ for }i>0,\quad \mathcal E_i\cdot\mathcal E_j= 0 \text{ for }i\neq j,
\end{equation}
and
\begin{equation}\label{eq:anticandiv}
    -K_X \sim 3\mathcal E_0 - \mathcal E_1-\dots-\mathcal E_9.
\end{equation}
Moreover,
\begin{equation}\label{eq:picgroup}
N^1(X) = \Z \mathcal E_0\oplus\dots\oplus \Z \mathcal E_9,
\end{equation}
as can easily be seen by the induction on the number of blowups of $\mathbb{P}^2$.

\subsection{Root system $E_8^\smone$}\label{subsec:tautological}

With notation as above, consider the following classes in $N^1(X)_\R$:
$$ \rho_8 \coloneqq \mathcal E_0 - \mathcal E_1 - \mathcal E_2 - \mathcal E_3, \quad \rho_i \coloneqq \mathcal E_i-\mathcal E_{i+1}\text{ for }i=1,\dots, 7,\quad \rho_0 \coloneqq \mathcal E_8-\mathcal E_9;$$
note here that the labelling of these elements is chosen to be compatible with the discussion of affine root systems in Section \ref{sec:rootsystems}. Then the set
$$ \Pi(E_8^\smone)\coloneqq \{\rho_0,\rho_1,\dots,\rho_8\}\subseteq N^1(X)_\R $$
is linearly independent in $N^1(X)_\R$. We consider the pairing
\begin{equation}\label{eq:pairing}
\langle\cdot\, ,\cdot\rangle\colon N^1(X)_\R\times N_1(X)_\R\to\R,\quad \langle\alpha,\beta\rangle\mapsto{-}\alpha\cdot \beta,
\end{equation}
and we set 
$$\rho_i^\smvee\coloneqq \rho_i\in N_1(X)_\R\quad\text{for all }i.$$
Set
$$A\coloneqq \big(\langle \rho_i,\rho_j^\smvee\rangle\big)_{0\leq i,j\leq8}.$$
Then it is easy to check, using \eqref{eq:intersectnumbers}, that $A$ is a generalised Cartan matrix and that the triple $\big(N^1(X)_\R,\Pi(E_8^\smone),\Pi(E_8^\smone)^\smvee\big)$ is its realisation. The Dynkin diagram associated to $A$ is of type $E_8^\smone$, see \cite[p.\ 44]{Kac90}. In particular, $A$ is indecomposable and of affine type. Additionally, if we denote
$$\delta\coloneqq [{-}K_X],$$
then it is easy to check by using \eqref{eq:anticandiv} that $\delta$ is the imaginary root of this root system from Theorem \ref{thm:imaginary_roots}(b). Note that
$$ \delta^2=0\quad\text{and}\quad\delta\cdot\rho_i=0\text{ for all }i. $$
We denote by $Q(E_8^\smone)$ the lattice spanned by $\Pi(E_8^\smone)$. By \cite[Propositions 7 and 8]{Sak01} we have $Q(E_8^\smone)=\{x\in N^1(X)\mid x\cdot\delta=0\}$. We denote by $W(E_8^\smone)$ the associated Weyl group and by $\Delta(E_8^\smone)^\re$ the associated set of real roots. Since every element of $Q(E_8^\smone)$ is orthogonal to $\delta$, we have
\begin{equation}\label{eq:orthogonaltodelta}
w(\delta)=\delta\quad\text{for any }w\in W(E_8^\smone).
\end{equation}

We will need the following simple lemma in the proof of Corollary \ref{cor:finiteness}.

\begin{lem}\label{lem:integral_elements}
With notation as above, we have
$$ \sum_{i=1}^{8} \Z \rho_i = \sum_{i=1}^{8} \R \rho_i \cap \sum_{i=0}^8 \Z \mathcal E_i . $$
\end{lem}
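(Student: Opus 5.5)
The inclusion $\subseteq$ is immediate, since each $\rho_i$ with $1\leq i\leq 8$ is an integral combination of $\mathcal E_0,\dots,\mathcal E_9$. For the reverse inclusion I will write elements in the tautological basis and show directly that the $\rho_i$ with $i\geq1$ form a $\Z$-basis of the lattice of integral vectors in their real span.

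First I identify the real span $V^\circ\coloneqq\sum_{i=1}^8\R\rho_i$. All of $\rho_1,\dots,\rho_8$ involve only $\mathcal E_0,\dots,\mathcal E_8$, and they have zero intersection with $\delta$. So they lie in the subspace
$$U\coloneqq\Big\{x=a_0\mathcal E_0-\sum_{i=1}^9 a_i\mathcal E_i \;\Big|\; a_9=0,\ 3a_0=a_1+\dots+a_8\Big\}.$$
By \eqref{eq:intersectnumbers} and \eqref{eq:anticandiv}, $x\cdot\delta=3a_0-\sum a_i$. The space $U$ has dimension $8$. Since the $\rho_i$ are linearly independent, $V^\circ=U$.

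Now let $x\in U$ have integer coefficients $a_0,\dots,a_8$. I subtract $a_0\rho_8$ to kill the $\mathcal E_0$-coefficient. The result is $\sum_{i=1}^8 b_i\mathcal E_i$ with $b_i\in\Z$, and it still lies in $U$, so $\sum_{i=1}^8 b_i=0$. Any integral vector $\sum_{i=1}^8 b_i\mathcal E_i$ with coefficient sum zero is an integral combination of the differences $\mathcal E_i-\mathcal E_{i+1}=\rho_i$ for $i=1,\dots,7$. Explicitly, it equals $\sum_{k=1}^7 (b_1+\dots+b_k)\rho_k$, which one checks by telescoping, using that the total sum is zero. Hence $x\in\sum_{i=1}^8\Z\rho_i$.

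There is no real obstacle here. The only point needing care is the identification $V^\circ=U$, which ensures that an arbitrary element of the real span has vanishing $\mathcal E_9$-coefficient and satisfies the degree relation. Alternatively, one could compute that the Gram determinant of $\rho_1,\dots,\rho_8$ is $1$, since it is the $E_8$ Cartan matrix, and deduce primitivity of the sublattice. The explicit elimination above seems cleaner, so I will use it.
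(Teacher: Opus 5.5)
Your argument is correct and is essentially the paper's proof: in both, the conclusion comes from a direct unitriangular coordinate computation in the tautological basis. The paper writes a real combination $\sum_{i=1}^8 a_i\rho_i$ in terms of $\mathcal E_0,\dots,\mathcal E_8$ and reads off successively that $a_8,a_1,a_2,\dots,a_7\in\Z$, which avoids your identification of the span with $U$; also, for $\subseteq$ you need $\rho_1,\dots,\rho_8\in\sum_{i=0}^8\Z\mathcal E_i$, i.e.\ without $\mathcal E_9$, which you do note later.
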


\begin{proof}
One inclusion is clear. Conversely, let $\alpha \in \sum_{i=1}^{8} \R \rho_i \cap \sum_{i=0}^8 \Z \mathcal E_i $. Then there exist $a_i\in \R$ such that $\alpha = \sum_{i=1}^{8} a_i \rho_i $. This implies that
\begin{align*}
\alpha &= a_8\mathcal E_0+(a_1-a_8)\mathcal E_1+(a_2-a_1-a_8)\mathcal E_2+(a_3-a_2-a_8)\mathcal E_3\\
&+(a_4-a_3)\mathcal E_4+(a_5-a_4)\mathcal E_5+(a_6-a_5)\mathcal E_6+(a_7-a_6)\mathcal E_7-a_7\mathcal E_8.
\end{align*}
Since $\alpha \in \sum_{i=0}^8 \Z \mathcal E_i $ and since $\mathcal E_i$ are linearly independent in $N^1(X)_\R$, we conclude that the coefficients with $\mathcal E_i$ on the right hand side of the equation above are integral, which easily implies that $a_i\in\Z$ for all $i$. This proves the lemma.
\end{proof}
	  	
\subsection{Normalised standard bilinear form on $E_8^\smone$}\label{subsec:normalisedE_8}

We now specialise the setup from \S\ref{subsec:normalised} to our geometric situation. Then, if $c\in N_1(X)_\R$ is the canonical central element of the root system $E_8^\smone$, we have 
\begin{equation}\label{eq:cdelta}
c=\delta,
\end{equation}
where we view $\delta$ as an element of $N_1(X)_\R$. We set $d\coloneqq {-}\mathcal E_9-\frac12\delta\in N_1(X)_\R$. Then it is easy to check that $d$ is a scaling element of the root system $E_8^\smone$, and set $\Lambda_0\coloneqq d$. Then it follows easily that the bilinear form
$$(\cdot\, ,\cdot)\colon N^1(X)_\R\times N^1(X)_\R\to\R,\quad (F,G)\mapsto{-}F\cdot G$$
is the associated normalised standard bilinear form for $E_8^\smone$ as in \S\ref{subsec:normalised}; here we use that $\mathcal{E}_9\cdot\delta=1$.

\subsection{Translations}\label{subsec:translations1}

With notation from \S\ref{subsec:tautological}, if we remove the vertex corresponding to $\rho_0$ from the Dynkin diagram of $E_8^\smone$ as in \cite[Table on p.\ 44]{Kac90}, we obtain the finite Dynkin diagram $E_8$. Note that
	\begin{equation}\label{eq:6677}
	\mathcal E_9\cdot\alpha=\delta\cdot\alpha=0\quad\text{for each }\alpha\in Q(E_8).
	\end{equation}
 For each $\alpha \in Q(E_8)$, we obtain the translation $t_\alpha\colon N^1(X)_\R\to N^1(X)_\R$ given by the formula \eqref{eq:117}; in our concrete geometric situation and taking \S\ref{subsec:normalisedE_8} into account, this formula takes the following form:
	    $$\textstyle t_\alpha(x) \coloneqq x - (x\cdot\delta)\alpha + (x\cdot\alpha)\delta - \frac{1}{2} \alpha^2(x\cdot\delta)\delta.$$
		It is easy to see that $\alpha^2$ is an even integer. In particular, as $\mathcal{E}_9\cdot \delta=1$ and by \eqref{eq:6677}, we have
	    \begin{equation}\label{eq:117aa}
	    \textstyle t_\alpha(\mathcal{E}_9) = \mathcal{E}_9 - \alpha - \big(\frac{1}{2}\alpha^2\big)\delta\in N^1(X).
	    \end{equation}
	    We note for future reference the following simple result.
	    
	  	\begin{lem}\label{lem:translationsatE9a}
	  	Assume notation as above. Let $\alpha$ and $\beta$ be elements in $Q(E_8)$ such that $t_\alpha(\mathcal{E}_9)=t_\beta(\mathcal{E}_9)$. Then $\alpha=\beta$.
	  	\end{lem}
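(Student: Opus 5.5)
The plan is to use formula \eqref{eq:117aa} directly. It gives
$$\textstyle t_\alpha(\mathcal E_9)=\mathcal E_9-\alpha-\tfrac12\alpha^2\delta \quad\text{and}\quad t_\beta(\mathcal E_9)=\mathcal E_9-\beta-\tfrac12\beta^2\delta,$$
so the hypothesis yields
$$\textstyle \alpha-\beta=\tfrac12(\beta^2-\alpha^2)\,\delta .$$
It then suffices to show that the right-hand side vanishes.

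To do this, note that $\alpha-\beta\in Q(E_8)$ lies in the span of $\rho_1,\dots,\rho_8$. We claim that $\delta$ does not lie in this span. Indeed, every element of $\sum_{i=1}^8\R\rho_i$ has zero $\mathcal E_9$-coefficient in the tautological basis, as is visible from the expansion in the proof of Lemma \ref{lem:integral_elements}. On the other hand, by \eqref{eq:anticandiv} the $\mathcal E_9$-coefficient of $\delta=3\mathcal E_0-\mathcal E_1-\dots-\mathcal E_9$ is ${-}1$.

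Comparing $\mathcal E_9$-coefficients on both sides of the displayed equation, the left side has coefficient $0$ and the right side has coefficient ${-}\tfrac12(\beta^2-\alpha^2)$. Hence $\alpha^2=\beta^2$, so the right side vanishes and $\alpha=\beta$.

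An alternative route is to intersect both sides of the displayed equation with $\mathcal E_9$. By \eqref{eq:6677} the left side pairs to $0$, while the right side pairs to $\tfrac12(\beta^2-\alpha^2)(\delta\cdot\mathcal E_9)=\tfrac12(\beta^2-\alpha^2)$, using $\delta\cdot\mathcal E_9=1$. This again gives $\alpha^2=\beta^2$ and hence $\alpha=\beta$. There is no real obstacle; the only point needing care is the non-membership $\delta\notin\sum_{i=1}^8\R\rho_i$, and pairing with $\mathcal E_9$ handles it cleanly.
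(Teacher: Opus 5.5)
Your proof is correct and has the same structure as the paper's: derive $\alpha-\beta=\frac12(\beta^2-\alpha^2)\delta$, pair it with a suitable class to force $\alpha^2=\beta^2$, and conclude. The only difference is the pairing class: the paper pairs with $\alpha+\beta$, using $\delta\cdot\alpha=\delta\cdot\beta=0$, whereas you pair with $\mathcal E_9$, using $\mathcal E_9\cdot Q(E_8)=0$ and $\delta\cdot\mathcal E_9=1$; both facts come directly from \eqref{eq:6677}.
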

	  	
	  	\begin{proof}
	  	The assumption implies that
	  	\begin{equation}\label{eq:alphabeta}
	  	\textstyle \alpha-\beta=\frac{1}{2}(\beta^2-\alpha^2)\delta.
		\end{equation}
		Multiplying both sides by $\alpha+\beta$ and using \eqref{eq:6677} gives $\alpha^2-\beta^2=0$. This, together with \eqref{eq:alphabeta}, gives $\alpha=\beta$, as desired.
	  	\end{proof}

\subsection{The negative fundamental chamber and the negative Tits cone}

With our notation as above, we have the negative fundamental chamber
        $$ \mathcal F(E_8^\smone) = \big\{x\in V\mid x\cdot\beta > 0 \text{ for all }\beta\in \Pi(E_8^\smone)^\smvee\big\} \subseteq N^1(X)_\R, $$
and the negative Tits cone
        $$ \mathcal T(E_8^\smone) = \bigcup_{w\in W(E_8^\smone)} w\big(\overline{\mathcal{F}(E_8^\smone)}\big)\subseteq N^1(X)_\R. $$
By \cite[Proposition 5.8(b)]{Kac90}, by \eqref{eq:cdelta1} and \eqref{eq:cdelta} we have 
\begin{equation}\label{eq:tautological_tits_cone}
\mathcal T(E_8^\smone) = \{x\in N^1(X)_{\R}\mid \delta\cdot x>0 \}\cup\R[\delta].
\end{equation}

\subsection{The subroot system $R$}\label{subsec:root_system_R}

We consider special subroot systems of the root system $E_8^\smone$ above, following \cite{Sak01}.

The \emph{root system $R$} is a root system with the basis
$$ \Pi(R)\coloneqq \{\cal{D}_i\mid \cal{D}_i \text{ is the class of a component of } D\}, $$
and we denote by $Q(R)$ the associated lattice, by $W(R)$ the associated Weyl group, and by $\Delta(R)^\re$ the associated set of real roots. We denote by $\mathcal F(R)$ its negative fundamental chamber and by $\mathcal T(R)$ its negative Tits cone. This is indeed a root system by \cite[p.\ 181]{Sak01}, and it depends on the blowing-down structure of $X$. In Appendix \ref{sec:Sakai_appendix_A}, for each possible blowdown structure in \cite[Appendix B]{Sak01}, a relationship between the classes of a component $\cal{D}_i$ and the tautological basis $\{\mathcal{E}_0,\dots,\mathcal{E}_9\}$ is written down. 

The surfaces $X$ are classified in according to the \emph{type $R$} of $D$. All possible subroot systems $R$ are given in \cite[pp.\ 181--184]{Sak01} and in Appendix \ref{sec:Sakai_appendix_A}:
\begin{align*}
&A_0^\smone,\ A_1^\smone,\ A_2^\smone,\ A_3^\smone,\ A_4^\smone,\ A_5^\smone,\ A_6^\smone,\ A_7^\smone,\ {A_7^\smone}',\\
&A_8^\smone,\ D_4^\smone,\ D_5^\smone,\ D_6^\smone,\ D_7^\smone,\ D_8^\smone,\ E_6^\smone,\ E_7^\smone,\ E_8^\smone.
\end{align*}
There are four additional items in the list in Appendix \ref{sec:Sakai_appendix_A}: $A_0^\smonestar$, $A_0^\smonestarstar$, $A_1^\smonestar$ and $A_2^\smonestar$; however, these are the same as the root systems $A_0^\smone$, $A_1^\smone$ and $A_2^\smone$, respectively, hence they carry no new numerical information; the $*$ refers to different geometric configurations of the components of $D$, and this will be relevant only when we use information from \cite[Appendix B]{Sak01}.

We set $\cal{D}_i^\smvee \coloneqq \cal{D}_i\in N_1(X)_{\R}$ for each class of a component of $D$, and set
$$ A\coloneqq \big( \langle \cal{D}_i,\cal{D}_j^\smvee \rangle \big)_{i,j}. $$
By Lemma \ref{lem:affine_root_data} and Appendix \ref{sec:Sakai_appendix_A}, if $R\neq A_0^\smone$, then
$$
\big(A,\Pi(R),\Pi(R)^\smvee,N^1(X)_\R,N_1(X)_\R,\langle\cdot\, ,\cdot\rangle\big)
$$
is a set of root data of affine type.

\subsection{The set of root data $R^\smperp$}\label{subsec:Rperp0}

Consider the lattice
$$Q(R^\smperp) \coloneqq \{v \in N^1(X) \mid v \cdot  \cal{D}_i =0 \text{ for all }i \}, $$
and observe that $Q(R)\cap Q(R^\smperp) = \Z \delta$. We denote by $\Pi(R^\smperp)$ the corresponding set as in Appendix \ref{sec:Sakai_appendix_A}. We consider the pairing $\langle\cdot\, ,\cdot\rangle$ as in \eqref{eq:pairing}.

As we discuss in \S\ref{subsec:Rperp1}, \S\ref{subsec:Rperp2} and \S\ref{subsec:Rperp3} below, for each $ R\notin\{A_8^\smone, D_8^\smone, E_8^\smone \}$ we obtain a set of root data which we denote by $R^\smperp$. We denote by
$$W(R^\smperp)\quad\text{and}\quad \Delta(R^\smperp)^\re$$
the associated Weyl group and the associated set of real roots; we denote by 
$$\mathcal F(R^\smperp)\quad\text{and}\quad \mathcal T(R^\smperp)$$
its negative fundamental chamber and its negative Tits cone. 

\subsection{The set of root data $R^\smperp$: indecomposable cases}\label{subsec:Rperp1}

Continuing the discussion from \S\ref{subsec:Rperp0}, assume now that
$$ R\notin\{A_5^\smone,A_6^\smone, A_8^\smone, D_6^\smone, D_8^\smone, E_8^\smone \}.$$
Then by going through the list in Appendix \ref{sec:Sakai_appendix_A}, we observe that we can write
$$\Pi(R^\smperp) = \{ \alpha_i \mid 0\leq i \leq k \} \quad\text{for some }k,$$
and the set $\Pi(R^\smperp)$ is linearly independent in $N^1(X)_\R$. Then for each $\alpha_i\in\Pi(R^\smperp)$ we define
$$\alpha_i^\smvee\coloneqq \begin{cases}
\alpha_i\in N_1(X)_\R, & \text{if additionally }R\notin \{A_7^\smone,D_7^\smone\},\\
\frac{1}{4}\alpha_i\in N_1(X)_\R, & \text{if }R=A_7^\smone,\\
\frac{1}{2}\alpha_i\in N_1(X)_\R, & \text{if }R=D_7^\smone.
\end{cases}$$
With these choices, it is easy to check that the associated matrix
$$A\coloneqq \big(\langle \alpha_i,\alpha_j^\smvee\rangle\big)_{0\leq i,j\leq k} $$
is an indecomposable symmetric generalised Cartan matrix of affine type.

We will see below in Proposition \ref{prop:subrootdata000} that 
$$\big(A,\Pi(R^\smperp),\Pi(R^\smperp)^\smvee,N^1(X)_\R,N_1(X)_\R,\langle\cdot\, ,\cdot\rangle\big)$$
is a set of root data. Then by Lemma \ref{lem:affine_root_data} and Appendix \ref{sec:Sakai_appendix_A}, this set of root data is of affine type.

\subsection{The set of root data $R^\smperp$: decomposable cases}\label{subsec:Rperp2}

Continuing the discussion from \S\ref{subsec:Rperp0}, assume now that
$$ R\in\{A_5^\smone,A_6^\smone, D_6^\smone \}.$$
Then by going through the list in Appendix \ref{sec:Sakai_appendix_A}, we observe that we can write
$$\Pi(R^\smperp) = \{ \alpha_i \mid 0\leq i \leq k \} \cup \{\alpha'_j \mid 0\leq j\leq \ell \}\quad\text{for some }k\text{ and }\ell.$$
With this notation, if we denote 
$$\Pi\coloneqq \{ \alpha_i \mid 0\leq i \leq k \} \quad\text{and}\quad\Pi'\coloneqq \{\alpha'_j \mid 0\leq j\leq \ell \},$$
then the sets $\Pi$ and $\Pi'$ are linearly independent in $N^1(X)_\R$.

We will need in the proof of Proposition \ref{prop:subrootdata000} the following additional information. Denote $\widetilde\Pi\coloneqq \Pi(R^\smperp)\setminus\{\alpha'_0\}$. Then it is easy to check that the set $\widetilde\Pi$ is linearly independent in $N^1(X)_\R$ and that
\begin{equation}\label{eq:113a}
\alpha'_0 = \sum_{i=0}^k\alpha_i - \sum_{j=1}^\ell \alpha'_j.
\end{equation}

Now, for each $\alpha_i\in\Pi$ we define
$$(\alpha_j)^\smvee\coloneqq \begin{cases}
\alpha_j\in N_1(X)_\R, & \text{if }R\in \{A_5^\smone,D_6^\smone\},\\
\frac{1}{7}\alpha_j\in N_1(X)_\R, & \text{if }R=A_6^\smone.
\end{cases}$$
For each $\alpha_j'\in\Pi'$ we define 
$(\alpha_j')^\smvee\coloneqq \alpha'_j$.
With these choices, it is easy to check that the associated matrices
$$B\coloneqq \big(\langle \alpha_i,\alpha_j^\smvee\rangle\big)_{0\leq i,j\leq k} \quad\text{and}\quad B'\coloneqq\big( \big\langle \alpha_i',(\alpha'_j)^\smvee\big\rangle \big)_{0\leq i,j\leq \ell} $$
are indecomposable symmetric generalised Cartan matrices of affine type.

Furthermore, it is easy to check that for all $\gamma\in \Pi$ and $\gamma'\in \Pi'$ we have $\big\langle\gamma,(\gamma')^\smvee\big\rangle = \langle\gamma',\gamma^\smvee\big\rangle = 0$. Therefore, the direct sum matrix
$$A\coloneqq B \oplus B' $$
is a symmetric generalised Cartan matrix associated to $\Pi(R^\smperp)$. We will see below in Proposition \ref{prop:subrootdata000} that 
$$\big(A,\Pi(R^\smperp),\Pi(R^\smperp)^\smvee,N^1(X)_\R,N_1(X)_\R,\langle\cdot\, ,\cdot\rangle\big)$$
is a decomposable system of root data. We denote by
$$\mathscr D\coloneqq \big(B,\Pi,\Pi^\smvee,N^1(X)_\R,N_1(X)_\R,\langle\cdot\, ,\cdot\rangle\big)$$
and
$$\mathscr D'\coloneqq \big(B',\Pi',(\Pi')^\smvee,N^1(X)_\R,N_1(X)_\R,\langle\cdot\, ,\cdot\rangle\big)$$
the corresponding root systems. Then by Lemma \ref{lem:affine_root_data} and Appendix \ref{sec:Sakai_appendix_A}, the sets of root data $\mathscr{D}$ and $\mathscr{D}'$ are of affine type.

\subsection{The set of root data $R^\smperp$: properties}\label{subsec:Rperp3}

Now we can prove the properties of $R^\smperp$ that we announced above.
 
\begin{prop}\label{prop:subrootdata000}
    Let $X$ be a Sakai surface of type $R$, and assume that $ R\notin\{A_8^\smone, D_8^\smone, E_8^\smone \} $. Denote $V(R)\coloneqq Q(R)\otimes_\Z\R\subseteq N^1(X)_\R$. Then, with notation from \S\ref{subsec:Rperp0}, \S\ref{subsec:Rperp1} and \S\ref{subsec:Rperp2}, we have:
\begin{enumerate}[\normalfont (a)]
\item $\big(A,\Pi(R^\smperp),\Pi(R^\smperp)^\smvee,N^1(X)_\R,N_1(X)_\R,\langle\cdot\, ,\cdot\rangle\big)$ is a set of root data,
\item for each $\alpha\in \Delta(R^\smperp)^\re$ there exists a positive rational number $\varepsilon_\alpha$ such that $\alpha^\smvee=\varepsilon_\alpha \alpha$,
\item we have $\{x\in N^1(X)_\R \mid \delta\cdot x>0\} \cup V(R) \subseteq \mathcal{T}(R^\smperp)$, and in particular,
    $$\mathcal{T}(E_8^\smone)\subseteq \mathcal{T}(R^\smperp).$$
\end{enumerate}
\end{prop}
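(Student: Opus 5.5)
For (a), I will check the three conditions of Definition \ref{dfn:root_data} directly. Condition (i) holds because $A$ is defined as the matrix $\big(\langle\alpha_i,\alpha_j^\smvee\rangle\big)$. Condition (ii) holds because $Q$ and $Q^\smvee$ are finitely generated subgroups of the lattices $N^1(X)$ and $\frac1N N_1(X)$ with $N\in\{1,4,7,2\}$, so they are free.

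For condition (iii), I split into two cases.
\begin{itemize}
\item In the indecomposable cases of \S\ref{subsec:Rperp1}, $\Pi(R^\smperp)$ is linearly independent. So I take $\Gamma\coloneqq\Pi(R^\smperp)$ and $\Gamma^\smvee\coloneqq\Pi(R^\smperp)^\smvee$, which are bases of $Q$ and $Q^\smvee$. The coroots are positive rescalings of the roots, so they are also linearly independent.
\item In the decomposable cases of \S\ref{subsec:Rperp2}, I take $\Gamma\coloneqq\widetilde\Pi$, which is linearly independent. By \eqref{eq:113a}, $\alpha'_0$ lies in $\sum\Z\gamma$, so $\Gamma$ is a basis of $Q$. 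The difficulty is that \eqref{eq:113a} has negative coefficients, so $\alpha'_0\notin\bigoplus\N\gamma_i$ for this choice. I will therefore choose a different basis of $Q$ adapted to positivity, using the freedom in (iii). Concretely, I plan to take $\Gamma$ to be $\Pi\cup\{\alpha'_0,\dots,\alpha'_\ell\}$ minus one element $\alpha_{i_0}$ of $\Pi$, which works if $\alpha_{i_0}$ is a nonnegative combination of the rest. Since \eqref{eq:113a} gives $\sum_i\alpha_i=\sum_j\alpha'_j$, I have $\alpha_{i_0}=\sum_j\alpha'_j-\sum_{i\ne i_0}\alpha_i$, and the negative terms remain. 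So instead I will look for some $\gamma$'s inside $\bigoplus\N$-combinations, checked against the explicit tables in Appendix \ref{sec:Sakai_appendix_A}. The same must be done for $\Gamma^\smvee$, which needs extra care for $R=A_6^\smone$ because of the factor $\frac17$. This is the step I expect to be the main obstacle. If no positive basis exists, I will fall back on the original formulation of \cite{MP89}, where only a free $\Z$-basis with $\Pi$ in the positive span is needed up to a choice of base ring.
\end{itemize}

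For (b), I use that $W(R^\smperp)$ preserves the intersection form. On generators, $r_{\alpha_i}(x)=x+(x\cdot\alpha_i^\smvee)\alpha_i$ with $\alpha_i^\smvee=\varepsilon_i\alpha_i$, and this is an isometry of $(\cdot)$ because $\alpha_i^2=-2/\varepsilon_i$ (or the analogous relation). On each indecomposable component the $\varepsilon_i$ are constant, so the dual reflections act by the same formula on $N_1$ under the identification $N^1\cong N_1$. Thus the isomorphism $W\cong W^\smvee$ of Proposition \ref{prop:W_invariant_pairing} is the identity under this identification. Hence for $\alpha=w(\alpha_i)$ I get $\alpha^\smvee=w(\alpha_i^\smvee)=\varepsilon_i w(\alpha_i)$, so $\varepsilon_\alpha=\varepsilon_i$, with components handled separately via Proposition \ref{pro:decomposable}.

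For (c), I use Proposition \ref{prop:titscone}. Let $x$ satisfy $\delta\cdot x>0$. I need $\langle x,\alpha^\smvee\rangle=-\varepsilon_\alpha\, x\cdot\alpha\le 0$, that is, $x\cdot\alpha\ge0$, for all but finitely many $\alpha\in\Delta(R^\smperp)^\re_+$. The affine structure from Lemma \ref{lem:affine_root_data} and Proposition \ref{prop:rootsafterremoving} gives positive roots of the form $\beta+n\delta_i$ with $\beta$ in a finite set and $n\ge0$. Here $\delta_i$ is the imaginary root of each component, a positive multiple of $\delta$; I will check this via the tables, since $Q(R^\smperp)\cap Q(R)=\Z\delta$ and the null vector of an affine component lies in the radical, which is $\R\delta$. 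Then $x\cdot(\beta+n\delta_i)\to+\infty$, so only finitely many roots fail, and $x\in\mathcal T(R^\smperp)$. For $x\in V(R)$, $x$ is orthogonal to every element of $Q(R^\smperp)$, so $\langle x,\alpha^\smvee\rangle=0$ for all $\alpha$ and again $x\in\mathcal T(R^\smperp)$. The inclusion $\mathcal T(E_8^\smone)\subseteq\mathcal T(R^\smperp)$ then follows from \eqref{eq:tautological_tits_cone}, since $\R\delta\subseteq V(R)$.
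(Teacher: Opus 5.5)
\textbf{Gap: condition (iii) in the decomposable cases.} Your arguments for (b) and (c) are essentially the paper's. So are your checks of conditions (i) and (ii) of (a) and of (iii) in the indecomposable cases. The problem is condition (iii) of Definition \ref{dfn:root_data} for $R\in\{A_5^\smone,A_6^\smone,D_6^\smone\}$. You correctly see that neither $\widetilde\Pi$ nor $\Pi(R^\smperp)$ minus some $\alpha_{i_0}$ works, but you then leave the existence of a suitable basis open. Your fallback does not help. Since $\Pi(R^\smperp)\subseteq Q=\bigoplus\Z\gamma_i$ and $\Gamma$ is linearly independent, asking that $\Pi(R^\smperp)$ lie in the real positive span of a $\Z$-basis $\Gamma$ of $Q$ is equivalent to $\Pi(R^\smperp)\subseteq\bigoplus\N\gamma_i$. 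It is just (iii) restated. As written, (a) is not proved in these three cases.

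\textbf{The missing step.} The paper uses \cite[Lemma 5 on p.\ 688]{MP89}: for a strictly convex polyhedral cone $\mathcal C\subseteq Q\otimes_\Z\R$ there is a $\Z$-basis $v_1,\dots,v_m$ of $Q$ with $\mathcal C\subseteq\sum\R_+v_i$. Taking $\mathcal C\coloneqq\sum_{\alpha\in\Pi(R^\smperp)}\R_+\alpha$ gives $\Pi(R^\smperp)\subseteq\mathcal C\cap Q\subseteq\bigoplus\N v_i$. The cone $\mathcal C$ is strictly convex because, up to scaling, the only linear relation in $\Pi(R^\smperp)$ is $\sum_i\alpha_i=\sum_j\alpha'_j$, which has coefficients of both signs.

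Alternatively, your explicit search succeeds after one unimodular change. In all three cases $\ell=1$. Replace $\alpha_0$ by $\gamma_0\coloneqq\alpha_0-\alpha'_1$ in $\widetilde\Pi$. Then $\{\gamma_0,\alpha_1,\dots,\alpha_k,\alpha'_1\}$ is a $\Z$-basis of $Q$, with $\alpha_0=\gamma_0+\alpha'_1$ and, by \eqref{eq:113a}, $\alpha'_0=\gamma_0+\alpha_1+\dots+\alpha_k$. On the coroot side, write $\alpha_i^\smvee=\frac1N\alpha_i$ with $N\in\{1,7\}$ and $(\alpha'_j)^\smvee=\alpha'_j$. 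The basis $\{\alpha_0^\smvee-(\alpha'_1)^\smvee,\alpha_1^\smvee,\dots,\alpha_k^\smvee,(\alpha'_1)^\smvee\}$ of $Q^\smvee$ works, since $(\alpha'_0)^\smvee=N\big(\alpha_0^\smvee-(\alpha'_1)^\smvee\big)+N\sum_{i\geq1}\alpha_i^\smvee+(N-1)(\alpha'_1)^\smvee$ and all these coefficients are non-negative.
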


\begin{proof}
First note that the second statement of (c) follows from the first statement of (c) and from \eqref{eq:tautological_tits_cone}. In the remainder of the proof we show (a), (b) and the first statement of (c).

\medskip

\emph{Step 1.}
In this step we show (a). The condition (i) in Definition \ref{dfn:root_data} follows from the construction. Further, denote 
$$\mathcal Q\coloneqq \sum_{\alpha\in \Pi(R^\smperp)}\Z \alpha.$$
It is immediate that $\mathcal Q$ is a lattice in $\mathcal Q\otimes_\Z \R$ when $A$ is indecomposable by the discussion in \S\ref{subsec:Rperp1}. If $A$ is decomposable, then, with notation from \S\ref{subsec:Rperp2}, the equation \eqref{eq:113a} implies that $\mathcal Q=\sum\limits_{\alpha\in \widetilde\Pi}\Z \alpha$, hence $\mathcal Q$ is a lattice in $\mathcal Q\otimes_\Z \R$ by the discussion in \S\ref{subsec:Rperp2}, and the condition (ii) in Definition \ref{dfn:root_data} holds.

In order to show that the condition (iii) in Definition \ref{dfn:root_data} holds, we follow verbatim the proof of \cite[Theorem 6 on p.\ 688]{MP89}. Set 
$$\mathcal C \coloneqq \sum_{\alpha\in \Pi(R^\smperp)}\R_+ \alpha\subseteq \mathcal Q\otimes_\Z \R.$$
Since $\mathcal Q$ is a lattice in $\mathcal Q\otimes_\Z \R$ by the previous paragraph, and since $\mathcal C\subseteq\mathcal Q\otimes_\Z \R$ is a strictly convex polyhedral cone, \cite[Lemma 5 on p.\ 688]{MP89} yields that there exists a basis $\{v_1,\dots,v_m\}$ of $\mathcal Q$ such that $\mathcal C \subseteq \sum_{i=1}^m \R_+ v_i$. Therefore,
$$ \Pi(R^\smperp)\subseteq \mathcal C\cap\mathcal Q \subseteq \Big(\sum_{i=1}^m \R_+ v_i\Big)\cap \Big( \bigoplus_{i=1}^m \Z v_i \Big)= \bigoplus_{i=1}^m \N v_i, $$
which verifies the condition (iii) in Definition \ref{dfn:root_data}. This finishes the proof of (a).

\medskip

\emph{Step 2.}
In this step we show (b).

With notation from \S\ref{subsec:Rperp1} and \S\ref{subsec:Rperp2}, it is immediate that
$$\mathscr D\coloneqq \big(B,\Pi,\Pi^\smvee,N^1(X)_\R,N_1(X)_\R,\langle\cdot\, ,\cdot\rangle\big)$$
and
$$\mathscr D'\coloneqq \big(B',\Pi',(\Pi')^\smvee,N^1(X)_\R,N_1(X)_\R,\langle\cdot\, ,\cdot\rangle\big)$$
are sets of root data, since $\Pi$ and $\Pi'$ are linearly independent. By the discussion in \S\ref{subsec:Rperp2}, if $\Pi'\neq\emptyset$, then the set $\Pi(R^\smperp)$ is decomposable with the decomposition $\Pi(R^\smperp)=\Pi\cup\Pi'$; otherwise, $\Pi(R^\smperp)=\Pi$. Therefore, by Proposition \ref{pro:decomposable}(b) we have the disjoint union
$$ \Delta(R^\smperp)^\re = \Delta^\re_{\mathscr D}\cup\Delta^\re_{\mathscr D'}. $$
Thus, if $\alpha\in\Delta(R^\smperp)^\re$, then $\alpha\in\Delta^\re_{\mathscr D}$ or $\alpha\in\Delta^\re_{\mathscr D'}$. 

Assume that $\alpha\in\Delta^\re_{\mathscr D}$; the other case is analogous. Then there exist $w\in W_{\mathscr D}$ and $\alpha_i\in\Pi$ such that $\alpha=w(\alpha_i)$. By \S\ref{subsec:Rperp1} and \S\ref{subsec:Rperp2}, there exists a positive rational number $\varepsilon_i$ such that $\alpha_i^\smvee=\varepsilon_i\alpha_i$, hence
$$\alpha^\smvee=w(\alpha_i^\smvee)=w(\varepsilon_i\alpha_i)=\varepsilon_i w(\alpha_i)=\varepsilon_i\alpha,$$
as desired.

\medskip

\emph{Step 3.}
From now on we show (c). In this step we give a description of the set $\Delta(R^\smperp)^\re_+$.

By Proposition \ref{pro:decomposable}(c) we have the disjoint union
\begin{equation}\label{eq:disjointunion}
\Delta(R^\smperp)^\re_+ = (\Delta^\re_{\mathscr D})_+\cup(\Delta^\re_{\mathscr D'})_+.
\end{equation}
With notation from \S\ref{subsec:removingroot}, denote $\Delta^\circ=(\Delta^\re_{\mathscr D})^\circ$, $(\Delta')^\circ=(\Delta^\re_{\mathscr D'})^\circ$, and
$$S\coloneqq \{ \beta + n\delta \mid \beta\in \Delta^\circ, n\in \Z_{>0} \} \cup \{ \beta' + n\delta \mid \beta'\in (\Delta')^\circ, n\in \Z_{>0}\}.$$
Then \eqref{eq:disjointunion} and Proposition \ref{prop:rootsafterremoving} give
    \begin{equation}\label{eq:union_real_roots}
     \Delta(R^\smperp)^\re_+ = S \cup \Delta^\circ_+ \cup (\Delta')^\circ_+.
    \end{equation}
    
    \medskip
    
    \emph{Step 4.}
In this step we show the first statement of (c).
    
    Fix $y\in\{x\in N^1(X)_\R \mid \delta\cdot x>0\} \cup V(R)$ and assume, for contradiction, that $y\notin \mathcal T(R^\smperp)$. Then by Proposition \ref{prop:titscone} there are infinitely many $\alpha\in \Delta(R^\smperp)^\re_+$ such that $\langle y,\alpha^\smvee \rangle > 0$. Since $\langle y,\alpha^\smvee \rangle={-}y\cdot\alpha^\smvee $, we conclude by (b) that that there are infinitely many $\alpha\in \Delta(R^\smperp)^\re_+$ such that $y\cdot\alpha < 0$. Since $\Delta^\circ_+$ and $(\Delta')^\circ_+$ are finite by Proposition \ref{prop:rootsafterremoving}, by \eqref{eq:union_real_roots} we conclude that
\begin{equation}\label{eq:inifinitelymany}
\text{there are infinitely many }\alpha\in S\text{ such that }y\cdot\alpha < 0.
\end{equation}    

    Now there are two cases. If $y\in\{x\in N^1(X)_\R \mid \delta\cdot x>0\}$, then for each $\beta\in\Delta^\circ\cup(\Delta')^\circ$ and for each $n\gg0$ we have $y\cdot(\beta + n \delta) >0$. Since $\Delta^\circ$ and $(\Delta')^\circ$ are finite by Proposition \ref{prop:rootsafterremoving}, we infer that there exists a positive integer $N$ such that $y\cdot(\beta + n \delta) >0$ for each $\beta\in\Delta^\circ\cup(\Delta')^\circ$ and for each $n\geq N$. But this contradicts \eqref{eq:inifinitelymany}.

Finally, if $y\in V(R)$, then $y\cdot \alpha = 0$ for any $\alpha\in Q(R^\smperp)$. In particular, $y\cdot \alpha =0 $ for any $\alpha\in S$, which again contradicts \eqref{eq:inifinitelymany}. This concludes the proof.
\end{proof}

\subsection{An auxiliary lemma}

We need the following lemma in Section \ref{sec:finiteness}.

\begin{lem}\label{lem:somestuff}
Let $X$ be a Sakai surface of type $R$, and assume that $ R\notin\{A_8^\smone, D_8^\smone, E_8^\smone \} $. Let $\cal{D}_0,\ldots, \cal{D}_z$ be the numerical classes of irreducible components of $D$, and let $\alpha_0,\ldots,\alpha_u,\alpha_0',\dots,\alpha_v'$ be the elements of $\Pi(R^\smperp)$ as in Appendix \ref{sec:Sakai_appendix_A}.\footnote{There exists elements $\alpha_i'$ only when $R\in\{A_5^\smone, A_6^\smone, D_6^\smone\}$.} Denote $\mathcal S:=\{\alpha_1,\dots,\alpha_u,\alpha_1',\dots,\alpha_v',\cal{D}_1,\dots,\cal{D}_z\}$. Then:
\begin{enumerate}[\normalfont (a)]
\item $u+v+z=8$,
\item $\alpha_i\cdot \cal{D}_j = \alpha_k'\cdot \cal{D}_j = \alpha_i\cdot\alpha_k' = 0$ for all $i\geq0$, $j\geq0$ and $k\geq0$,
\item $\alpha_0\cdot \mathcal{E}_9 = \alpha_0'\cdot \mathcal{E}_9 =1$ and $\alpha_i \cdot \mathcal{E}_9 = \alpha_i'\cdot \mathcal{E}_9 = 0$ for $i\geq1$,
\item $\cal{D}_0\cdot \mathcal{E}_9 =1$ and $\cal{D}_j \cdot \mathcal{E}_9 = 0$ for $j\geq 1$,
\item $\alpha_i\cdot \delta = \alpha_i'\cdot \delta = \cal{D}_i \cdot \delta = 0$ for all $i$,
\item the set $\mathcal S$ is linearly independent,
\item $\mathcal S\subseteq \sum_{i=1}^{8} \Z \rho_i $.
\end{enumerate}
\end{lem}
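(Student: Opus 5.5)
The lemma is essentially a finite verification against the explicit lists in Appendix \ref{sec:Sakai_appendix_A}. For each admissible type $R\notin\{A_8^\smone,D_8^\smone,E_8^\smone\}$ (including the starred variants) I would read off the expressions of the classes $\cal{D}_0,\dots,\cal{D}_z$ and of $\alpha_0,\dots,\alpha_u,\alpha_0',\dots,\alpha_v'$ in the tautological basis $\{\mathcal E_0,\dots,\mathcal E_9\}$. I would then check each item using \eqref{eq:intersectnumbers} and \eqref{eq:anticandiv}. To keep the argument uniform, I would first isolate the structural facts that do most of the work, and only then do the case check.

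The counting statement (a) is read off directly from the tables; conceptually, $z+1$ and $u+1$ (plus $v+1$ in the decomposable cases) are the numbers of vertices of the affine diagrams of $R$ and $R^\smperp$, whose finite parts have total rank $8$. Part (b) is the defining property of $Q(R^\smperp)$: every $\alpha_i,\alpha_k'$ lies in $Q(R^\smperp)$, so it is orthogonal to all $\cal{D}_j$. The orthogonality $\alpha_i\cdot\alpha_k'=0$ is the decomposability from \S\ref{subsec:Rperp2}. Part (e) follows because $\delta=\sum a_j\cal{D}_j$ with $\cal{D}_j\cdot\delta=-\cal{D}_j\cdot K_X=0$ (canonical type), and $\alpha_i,\alpha_k'\in Q(R^\smperp)$ are orthogonal to $\delta\in Q(R)$. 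Parts (c) and (d) are pure table computations: in Sakai's blowdown structures the last blown-up point lies on exactly one component $\cal{D}_0$ (of multiplicity $1$ in $\delta$, consistent with $\mathcal E_9\cdot\delta=1$), and the labelling of the appendix is chosen so that exactly the $0$-th simple roots involve $\mathcal E_9$ with coefficient $-1$.

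For (f) and (g) I would argue as follows. By (c), (d) and (e), every element of $\mathcal S$ is orthogonal to both $\delta$ and $\mathcal E_9$. Hence $\mathcal S\subseteq Q(E_8^\smone)\cap \mathcal E_9^\perp$, using $Q(E_8^\smone)=\{x\in N^1(X)\mid x\cdot\delta=0\}$ from \cite{Sak01}. Moreover, $\mathcal E_9^\perp\cap\delta^\perp\cap N^1(X)$ contains $\sum_{i=1}^8\Z\rho_i$ and lies in $\sum_{i=0}^8\Z\mathcal E_i$: the $\mathcal E_9$-coefficient vanishes, and then orthogonality to $\delta$ gives $3a_0=\sum_{i\ge1}a_i$ only up to the $\mathcal E_0$-part. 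More precisely, $\{x:\ x\cdot\delta=x\cdot\mathcal E_9=0\}$ is the real span of $\rho_1,\dots,\rho_8$, by a dimension count of $8$ together with the linear independence of the $\rho_i$. Lemma \ref{lem:integral_elements} then gives (g), since the elements of $\mathcal S$ are integral classes not involving $\mathcal E_9$.

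For (f), I would use orthogonality. The sets $\{\alpha_1,\dots,\alpha_u\}$, $\{\alpha_1',\dots,\alpha_v'\}$ and $\{\cal{D}_1,\dots,\cal{D}_z\}$ are mutually orthogonal by (b), and each has a negative definite intersection matrix, namely minus the finite Cartan matrix obtained by removing the $0$-th vertex of an affine diagram (up to the positive scalings). The Gram matrix of $\mathcal S$ is therefore block diagonal and negative definite, hence nondegenerate, so $\mathcal S$ is linearly independent. Combined with (a), $\mathcal S$ then spans $\sum\R\rho_i$.

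The main obstacle is (c)/(d) together with (a). These genuinely depend on the specific choices in the appendix, so they require running through roughly twenty types, including the starred ones. I would organise the check as a table listing, for each type, the $\mathcal E_9$-coefficient of each listed class, rather than attempting a conceptual proof.
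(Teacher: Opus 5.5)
Your proposal is correct, and for (a)--(e) it follows the paper: a direct check against the tables of Appendix \ref{sec:Sakai_appendix_A}. For (f) you use the same orthogonality from (b) as the paper. Your observation that each of the three blocks has a negative definite Gram matrix is exactly what is needed to pass from ``mutually orthogonal, each block independent'' to ``independent''. The paper's one-line argument leaves that step implicit.

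The genuine difference is in (g).

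\begin{itemize}
\item \emph{The paper's route.} It verifies (g) by a second pass through the tables. Every element of $\mathcal S$ is written as an integral combination of classes $\mathcal E_i-\mathcal E_j$ and $\mathcal E_0-\mathcal E_i-\mathcal E_j-\mathcal E_k$ with indices in $\{1,\dots,8\}$. The element $\alpha_1$ of type $A_7^{(1)}$ needs an explicit decomposition. These classes are then rewritten in terms of $\rho_1,\dots,\rho_8$.
\item \emph{Your route.} You derive (g) from (c)--(e) by linear algebra. The elements of $\mathcal S$ are integral and orthogonal to both $\delta$ and $\mathcal E_9$. These two functionals are independent, and $\rho_1,\dots,\rho_8$ are eight independent vectors in their common kernel, so that kernel is $\sum_{i=1}^8\R\rho_i$. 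Lemma \ref{lem:integral_elements} then gives integrality.
\end{itemize}

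Your route is uniform, explains why (g) holds, and needs no further case inspection. The paper's route is more elementary and self-contained, but it is case-by-case.

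Two small remarks. First, drop the sentence claiming $3a_0=\sum a_i$ ``only up to the $\mathcal E_0$-part''. It is garbled and unnecessary; the dimension count is the actual argument. Second, since you already invoke $Q(E_8^{(1)})=\{x\in N^1(X)\mid x\cdot\delta=0\}=\bigoplus_{i=0}^8\Z\rho_i$, you can finish (g) even faster. We have $\rho_0\cdot\mathcal E_9=1$ and $\rho_i\cdot\mathcal E_9=0$ for $i\ge1$. So orthogonality to $\mathcal E_9$ kills the $\rho_0$-coefficient directly, and Lemma \ref{lem:integral_elements} is not needed.
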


\begin{proof}
Parts (a), (b), (c), (d) and (e) are easily verified. Part (f) follows by using (b) and by observing that the sets $\{\alpha_1,\dots,\alpha_u\}$, $\{\alpha_1',\dots,\alpha_v'\}$ and $\{\cal{D}_1,\dots,\cal{D}_z\}$ are linearly independent.

    Now we show (g). We first claim that any element in $\mathcal S$ is an integral linear combination of elements from the set
    $$\mathcal U:=\{\cal{E}_i-\cal{E}_j\mid 1\leq i<j\leq 8 \} \cup \{\cal{E}_0-\cal{E}_i-\cal{E}_j-\cal{E}_k\mid 1\leq i<j<k\leq 8 \}.$$
    Indeed, this is easy to see by analysing most cases in Appendix \ref{sec:Sakai_appendix_A}. The only case where the claim is not obvious is when $R=A_7^\smone$: then $\alpha_1 = (\mathcal{E}_0-\mathcal{E}_2-\mathcal{E}_3-\mathcal{E}_6)+(\mathcal{E}_0-\mathcal{E}_4-\mathcal{E}_5-\mathcal{E}_6)+(\mathcal{E}_1-\mathcal{E}_2)$.
    
    Next, we claim that any element in $\mathcal U$ is an integral linear combination of elements from the set
    $$\mathcal U':=\{\cal{E}_i-\cal{E}_j\mid 1\leq i<j\leq 8 \}\cup\{\rho_8\}.$$
    Indeed, this follows since for any $1\leq i<j<k\leq 8$ we have $\cal{E}_0-\cal{E}_i-\cal{E}_j-\cal{E}_k = \rho_8+(\cal{E}_1-\cal{E}_i)+(\cal{E}_2-\cal{E}_j)+(\cal{E}_3-\cal{E}_k)$.
   
    Therefore, it suffices to show that $\mathcal U' \subseteq \sum_{i=1}^{8} \Z \rho_i $. But this follows since for all $1\leq i<j\leq 8$ we have $ \cal{E}_i-\cal{E}_j = \rho_i+\dots +\rho_{j-1}\in \sum_{i=1}^{8} \Z \rho_i $.
\end{proof}

\section{Cones of divisors and Weyl groups on Sakai surfaces}

In this section we prove a lemma which relates several cones of divisors on Sakai surfaces, and a lemma which explains how two Weyl groups act on these cones. These results are indispensable for the proof of our main results.

We start with the following lemma which relates several cones introduced in Sections \ref{sec:prelim} and \ref{sec:rootsystemsHalphen}.

\begin{lem}\label{lem:effconeHalpensurface1}
Let $X$ be a Sakai surface of type $R$, and let $D\in|{-}K_X|$. Then:
\begin{enumerate}[\normalfont (a)]
\item $\Eff(X)=\overline{\Eff}(X)$,
\item $\Nef(X)=\Nef^e(X)=\Nef^+(X)$,
\item $\Eff(X)$ is the convex hull of ${\mathcal M}^{\irr}_X\cup\Comp(D)\cup\Delta_{X,D}^{\nod}$,
\item if $X$ is generic, then ${\mathcal M}^{\irr}_X=\mathcal N_X$, and $\Eff(X)$ is the convex hull of $\mathcal N_X\cup\Comp(D)$,
\item $\mathcal{N}_X\subseteq \mathcal M_X \subseteq \mathcal T(E_8^\smone)\subseteq \mathcal{T}(R^\smperp)$,
\item $\Nef(X)\subseteq \mathcal T(E_8^\smone)\subseteq \mathcal{T}(R^\smperp)$.
\end{enumerate}
\end{lem}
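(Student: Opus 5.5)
I would prove the six parts roughly in the order (c), (a), (d), (b), (e), (f). Facts about Sakai surfaces from \cite{LSX26} will be used freely: $X$ is a blowup of $\pr^2$ in nine points, ${-}K_X$ is nef, $K_X^2=0$, and $\rho(X)=10$.

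\emph{Part (c).} Let $\Gamma$ be an irreducible curve with $\Gamma^2<0$. Since ${-}K_X$ is nef, either $K_X\cdot\Gamma<0$ or $K_X\cdot\Gamma=0$.
\begin{enumerate}[(i)]
\item If $K_X\cdot\Gamma<0$, adjunction forces $\Gamma$ to be a $(-1)$-curve.
\item If $K_X\cdot\Gamma=0$, then $\Gamma$ is a $(-2)$-curve. Either it is a component of $D$, or it does not meet $\Supp D$: indeed, $\Gamma\cdot D=0$ and $\Gamma\not\subseteq D$ imply $\Gamma\cap\Supp D=\emptyset$.
\end{enumerate}
Next I claim that every irreducible curve $\Gamma$ with $\Gamma^2\geq0$ lies in the cone generated by the negative curves. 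For this I use Riemann--Roch: $h^0(\Gamma-E)>0$ for suitable $(-1)$-curves $E$, or more simply I write $\Gamma$ as a nonnegative combination using that $\rho\ge3$ and that $-K_X$ is effective with $\kappa=0$. By Lemma \ref{lem:lin_indep_-2curves} and the standard argument, the effective cone of a rational surface with $-K_X$ effective and $\rho\geq3$ is generated by negative curves. I would cite \cite{LH05} or the corresponding statement in \cite{LSX26} for this. This gives (c).

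\emph{Parts (a) and (d).} For (a), $\overline{\Eff}(X)$ is dual to the cone of nef curves. Kleiman plus (c) give $\overline{\Eff}(X)=\overline{\mathrm{conv}}(\cdots)$, so it remains to show that the convex hull in (c) is closed. The generators have bounded "directions modulo $\delta$": every $E\in\mathcal M_X$ satisfies $E\cdot\delta=1$, and the $(-1)$-classes accumulate only toward the ray $\R_+\delta$. Since $\delta=[D]$ is a positive combination of $\Comp(D)$, the limit ray already lies in the hull, so the hull is closed. This accumulation step is the main obstacle. I would try to prove it by writing $E=\mathcal E_9+\alpha+n\delta$-type expressions via translations (\S\ref{subsec:translations1}). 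Normalised by a norm, these converge only to multiples of $\delta$, because $E^2=-1$ and $E\cdot\delta=1$ force the component orthogonal to $\delta$ to grow like $\sqrt{n}$ while the $\delta$ component grows like $n$.

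Part (d) is immediate from Lemma \ref{lem:irredclasses} with $\Delta^{\nod}_{X,D}=\emptyset$, combined with (c).

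\emph{Part (b).} By (a), every nef class pairs nonnegatively with curves, and it is pseudoeffective, hence effective; so $\Nef(X)=\Nef^e(X)$. For $\Nef^+(X)$, I would show that $\Nef(X)$ is locally rational polyhedral away from $\R_+\delta$, being cut out by the rational classes in (c) with only $\delta$ as an accumulation direction. Then every nef class is a limit-free combination of rational nef classes, since $\delta$ itself is rational.

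\emph{Parts (e) and (f).} The inclusion $\mathcal N_X\subseteq\mathcal M_X$ is the definition. Every $E\in\mathcal M_X$ has $E\cdot\delta={-}K_X\cdot E=1>0$, so it lies in $\mathcal T(E_8^\smone)$ by \eqref{eq:tautological_tits_cone}. For a nef class $x\neq0$, we have $x\cdot\delta\geq0$ since $\delta$ is effective. If $x\cdot\delta=0$, then $x^2\geq0$ and $\delta^2=0$, and Hodge index gives $x\in\R\delta$. Hence (f) follows from \eqref{eq:tautological_tits_cone}. The last inclusions in (e) and (f) come from Proposition \ref{prop:subrootdata000}(c); when $R\in\{A_8^\smone,D_8^\smone,E_8^\smone\}$ the term $\mathcal T(R^\smperp)$ is simply omitted.
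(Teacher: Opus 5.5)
Your proposal is correct. For (c)--(f) it is essentially the paper's argument: \cite[Lemma 4.1]{LH05}; the classification of negative curves, which the paper takes from \cite[Lemma 2.5]{LSX26} and you rederive by adjunction; Lemma \ref{lem:irredclasses}; $E\cdot\delta=1$ together with \eqref{eq:tautological_tits_cone}; Hodge index, which the paper cites via the proof of \cite[Lemma 2.2]{LH05}; and Proposition \ref{prop:subrootdata000}(c).

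The genuine difference is in (a) and (b). The paper simply invokes \cite[Theorem 3.1(b)]{LSX26}. You instead prove them by hand from $\mathcal M_X=\{\mathcal E_9-\alpha-\tfrac12\alpha^2\delta\mid\alpha\in Q(E_8)\}$. This parametrisation is available without circularity, either from $\{x\in N^1(X)\mid x\cdot\delta=0\}=Q(E_8^\smone)$ or from Lemma \ref{lem:translationsatE9}. Since $Q(E_8)$ is negative definite, the $\delta$-coefficient grows quadratically in $|\alpha|$, where $|\alpha|^2={-}\alpha^2$. Together with Lemma \ref{lem:lin_indep_-2curves}, the only limit direction of the generators in (c) is then $\R_{\ge0}\delta$, which lies in the cone spanned by $\Comp(D)$.

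To finish (a), state the lemma you use implicitly. Let $K$ be the closure of the normalised generators. An ample class is bounded below by some $\varepsilon>0$ on $K$, so $0\notin\operatorname{conv}(K)$, hence $\R_{\ge0}\operatorname{conv}(K)$ is closed, and it coincides with the hull in (c).

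For $\Nef^+(X)$, make your ``locally rational polyhedral'' claim precise. For $y$ near a nef class $x$ with $x\cdot\delta>0$, say with $y\cdot\delta\ge a>0$, one has uniformly
\[
y\cdot\big(\mathcal E_9-\alpha-\tfrac12\alpha^2\delta\big)\ \ge\ {-}C(1+|\alpha|)+\tfrac a2|\alpha|^2 .
\]
So only finitely many rational inequalities cut out $\Nef(X)$ near $x$, and $x$ lies in a rational polyhedral subcone of $\Nef(X)$. Nef classes with $x\cdot\delta=0$ lie on $\R_{\ge0}\delta$ by your Hodge index argument.

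Your route is longer and specific to Sakai surfaces, but it is self-contained. It also yields the extra fact that $\Nef(X)$ is locally rational polyhedral on $\{x\cdot\delta>0\}$. The paper's citation is a one-liner resting on a general theorem from \cite{LSX26}.

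One correction in (c). Your intermediate claim that every curve with $\Gamma^2\ge0$ lies in the cone spanned by negative curves fails for the types $A_0^\smone$, $A_0^\smonestar$, $A_0^\smonestarstar$. There $D$ is irreducible with $[D]=\delta$. Any nonzero nonnegative combination $y$ of $(-1)$-classes satisfies $y\cdot\delta>0=\delta^2$, so on a generic such surface $\delta$ is not in that cone. The lemma of \cite{LH05} includes $[{-}K_X]$ among its generators, and this is exactly why $\Comp(D)$ appears in (c). With that correction your conclusion stands.
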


\begin{proof}
    Parts (a) and (b) follow from \cite[Theorem 3.1(b)]{LSX26}. If $X$ is generic, then ${\mathcal M}^{\irr}_X=\mathcal N_X$ by Lemma \ref{lem:irredclasses}, hence (d) follows from (c).
    
	Now we show (c). By \cite[Lemma 4.1]{LH05}, the cone $\Eff(X)$ is spanned by the classes of prime divisors on $X$ with negative self-intersection and by $[{-}K_X]$. By \cite[Lemma 2.5]{LSX26}, a prime divisor with negative self-intersection, which is not in $\Comp(D)$, is either a $(-1)$-curve or an element of $\Delta_{X,D}^{\nod}$, which was to be proved.
    
    Next we show (e). We have $\mathcal{N}_X\subseteq\mathcal{M}_X$ by definition. For each $x\in\mathcal M_X$ we have Since $-K_X\cdot x=1$, hence $x\in \cal{T}(E^{\smone}_8)$ by \eqref{eq:tautological_tits_cone}. The last inclusion follows from Proposition \ref{prop:subrootdata000}(c).
    
    Finally, we show (f). Let $x \in \Nef(X)$. Since $-K_X$ is nef, we have $-K_X\cdot x \geq 0$. If $-K_X\cdot x > 0$, then $x\in \cal{T}(E^{\smone}_8)$ by \eqref{eq:tautological_tits_cone}. Now assume that $-K_X \cdot x = 0$. Since $x$ is nef, we have $x^2 \geq 0$, hence $x \in\R[K_X]$ by the proof of \cite[Lemma 2.2]{LH05}. Consequently, $x \in \mathcal{T}(E^\smone_8)$ by \eqref{eq:tautological_tits_cone}. This, together with Proposition \ref{prop:subrootdata000}(c), concludes the proof.
\end{proof}
   	
The next lemma shows several properties of the actions of the Weyl groups $W(E_8^\smone)$ and $W(R^\smperp)$ on several cones which are of interest to us.

\begin{lem}\label{lem:weylpreservesnefgen1}
   Let $X$ be a Sakai surface of type $R$, and let $D\in|{-}K_X|$. Then:
    \begin{enumerate}[\normalfont (a)]
	\item the group $W(E_8^\smone)$ acts transitively on the set $\mathcal M_X$,
    \item $\Cr(X,D)$ preserves $\mathcal M_X$ and $\mathcal{N}_X$,
    \item $W(R^\smperp)$ fixes $\Comp(D)$ pointwise,
    \item for every $\gamma \in \cal N _X$ and every $w \in W(R^\smperp)$ we have that $w(\gamma) \in {\cal N_X}$ if and only if $w(\gamma) \in N^1(X)$.
    \end{enumerate}
    Assume, furthermore, that $R\notin \{A_6^\smone, A_7^\smone, A_8^\smone, D_7^\smone, D_8^\smone, E_8^\smone\}$. Then:
    \begin{enumerate}[\normalfont (a)]
    \item[\normalfont{(e)}] $W(R^\smperp)$ preserves $\mathcal M_X$ and $\mathcal{N}_X$,
    \item[\normalfont{(f)}] for each $\gamma \in \mathcal{N}_X$ there exists $w \in W(R^\smperp)$ such that $ w(\gamma) \in \overline{\mathcal F(R^\smperp)}\cap \mathcal{N}_X $. 
    \end{enumerate}
\end{lem}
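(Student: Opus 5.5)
We treat the six parts in turn, using Lemma \ref{lem:-1classesHalphen} throughout: it describes $\mathcal M_X$ and $\mathcal N_X$ purely numerically, so membership only needs to be checked on integrality, intersection numbers with $K_X$, self-intersection, and intersections with $\Comp(D)$.

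For (a), every element of $W(E_8^\smone)$ is a product of reflections $x\mapsto x+(x\cdot\rho_i)\rho_i$ with $\rho_i^2=-2$ and $\rho_i\cdot K_X=0$. Such maps preserve $N^1(X)$, the intersection form, and fix $\delta=[-K_X]$ by \eqref{eq:orthogonaltodelta}. Hence $W(E_8^\smone)$ preserves $\mathcal M_X$. For transitivity, take $E\in\mathcal M_X$ and consider $\alpha\coloneqq \mathcal E_9-E$. Then $\alpha\cdot\delta=0$, so $\alpha\in Q(E_8^\smone)$ by \cite[Propositions 7 and 8]{Sak01}. Moreover $\alpha^2=-1+2E\cdot\mathcal E_9-1$.

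To handle this, I would write $\alpha=\beta+m\delta$ with $\beta\in Q(E_8)$; this is possible because $Q(E_8^\smone)=Q(E_8)\oplus\Z\delta$. Then compare $E$ with $t_{-\beta}(\mathcal E_9)=\mathcal E_9+\beta-\frac12\beta^2\delta$ from \eqref{eq:117aa}. Imposing $E^2=-1$ forces $m=-\frac12\beta^2$ up to the sign convention, so $E=t_{\gamma}(\mathcal E_9)$ for a suitable $\gamma\in Q(E_8)$, and translations lie in $W(E_8^\smone)$ by Proposition \ref{prop:translations}. I expect the bookkeeping of signs here to be the main routine computation, but it is forced by the single quadratic condition. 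Alternatively, one can cite the classical transitivity of $W(E_8^\smone)$ on exceptional classes.

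Part (b) is immediate from the definitions. A Cremona isometry in $\Cr(X,D)$ preserves the lattice, the intersection form and $K_X$, so it preserves $\mathcal M_X$ by Lemma \ref{lem:-1classesHalphen}(a). It also fixes each $\mathcal D_i$, hence preserves the conditions $E\cdot\mathcal D_i\ge0$ and therefore $\mathcal N_X$. For (c), every simple root $\alpha\in\Pi(R^\smperp)$ lies in $Q(R^\smperp)$, so $\alpha\cdot\mathcal D_i=0$. Each generating reflection $x\mapsto x+(x\cdot\alpha^\smvee)\alpha$ therefore fixes each $\mathcal D_i$, and hence so does $W(R^\smperp)$.

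For (d), the generators of $W(R^\smperp)$ are reflections in vectors $\alpha$ with $\alpha\cdot\delta=0$ and $\alpha^\smvee=\varepsilon\alpha$, and they preserve the intersection form; this uses Proposition \ref{prop:subrootdata000}(b), namely $2\varepsilon\alpha^2=-2\cdot$(normalisation). Consequently $w(\gamma)^2=\gamma^2=-1$, $w(\gamma)\cdot K_X=\gamma\cdot K_X=-1$ since $w(\delta)=\delta$, and $w(\gamma)\cdot\mathcal D_i=\gamma\cdot w^{-1}(\mathcal D_i)=\gamma\cdot\mathcal D_i\ge0$ by (c). So the only obstruction to $w(\gamma)\in\mathcal N_X$ is integrality, and Lemma \ref{lem:-1classesHalphen}(b) gives the equivalence.

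For (e), in the listed cases the coroots are $\alpha^\smvee=\alpha$ with $\alpha\in N^1(X)$ by \S\ref{subsec:Rperp1}–\ref{subsec:Rperp2}; the exceptions are exactly those cases with factors $\frac14,\frac12,\frac17$. The reflections are then integral, so $W(R^\smperp)$ preserves $N^1(X)$, and (d), together with the same argument applied to $\mathcal M_X$, gives (e).

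For (f), let $\gamma\in\mathcal N_X$. By Lemma \ref{lem:effconeHalpensurface1}(e), $\gamma\in\mathcal T(R^\smperp)$. So there is $w'\in W(R^\smperp)$ with $\gamma\in w'(\overline{\mathcal F(R^\smperp)})$, and setting $w=w'^{-1}$ gives $w(\gamma)\in\overline{\mathcal F(R^\smperp)}$. By (e), $w(\gamma)\in\mathcal N_X$ as well.
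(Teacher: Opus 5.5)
Your proposal is correct, and for (b)--(f) it follows the paper's argument. Everything is reduced via Lemma \ref{lem:-1classesHalphen} to integrality plus invariance of the intersection form, of $[K_X]$ and of $\Comp(D)$. Part (e) uses that $\alpha^\smvee=\alpha\in N^1(X)$ in the allowed types, and (f) combines Lemma \ref{lem:effconeHalpensurface1}(e), Proposition \ref{prop:Titsfundchamber} and (e).

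The genuine difference is transitivity in (a). The paper simply cites \cite[Lemma 14]{Sak01}, while you prove it by hand, and your computation goes through. Write $\mathcal E_9-E=\beta+m\delta$ with $\beta\in Q(E_8)$; this is legitimate because $\rho_0$ has label $1$ in $\delta$, so $Q(E_8^\smone)=Q(E_8)\oplus\Z\delta$. Then $E^2=-1+\beta^2-2m$, hence $m=\frac12\beta^2$ and $E=t_\beta(\mathcal E_9)$ exactly by \eqref{eq:117aa}. There is no sign ambiguity, and $t_\beta\in W(E_8^\smone)$ by Proposition \ref{prop:translations}. (Also $(\mathcal E_9-E)^2=-2-2E\cdot\mathcal E_9$, not what you wrote, but you never use it.)

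Your route is more self-contained and yields more. Since $\beta$ is determined by the direct sum decomposition, your argument already proves Lemma \ref{lem:translationsatE9}. The paper instead deduces that lemma afterwards from (a) together with $W\simeq W^\circ\ltimes T$. The paper's route is shorter but outsources the key fact to Sakai.

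Finally, in (d) the reason the reflections are isometries of the intersection form is $\varepsilon_\alpha\alpha^2=-2$, i.e.\ $\langle\alpha,\alpha^\smvee\rangle=2$ (or directly Proposition \ref{prop:W_invariant_pairing}). Your formula there is garbled, though the intent is right.
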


\begin{proof}
Let $w\in W(E_8^\smone)$ and let $x\in \mathcal M_X$. By Proposition \ref{prop:W_invariant_pairing} and by \eqref{eq:orthogonaltodelta} we have $w(x)^2=w(x)\cdot[K_X]={-}1$. As $w$ preserves the lattice $N^1(X)$, we have $w(x)\in N^1(X)$, hence $w(x)\in\mathcal M_X$ by Lemma \ref{lem:-1classesHalphen}(a). Now (a) follows from \cite[Lemma 14]{Sak01}. 

Next we show (b). The first statement in (b) follows as in the previous paragraph. For the second statement, let $x \in \mathcal{N}_X$. Then $x \in \mathcal M_X$ and $x\cdot\mathcal D \geq 0$ for each $\mathcal D\in\Comp(D)$. We have $w(x) \in \mathcal M_X$ by the first statement in (b), and $w(\mathcal D) = \mathcal D $ as $w\in\Cr(X,D)$. Therefore, since $w$ preserves the intersection product, we have $ w(x) \cdot\mathcal D = w(x) \cdot w(\mathcal D) =  x\cdot\mathcal D \geq 0 $, hence $w(x) \in \mathcal{N}_X $, as desired.

Now we show (c). Let $\alpha\in \Pi(R^\smperp)$ and let $\mathcal D\in\Comp(D)$. Then $\alpha\cdot\mathcal D=0$, hence $ r_\alpha(\mathcal D) = \mathcal D + (\mathcal D\cdot \alpha)\alpha = \mathcal D $. Since every element in $W(R^\smperp)$ is a word in the elements $r_\alpha$ for $\alpha\in \Pi(R^\smperp)$, this proves (c).

Next we prove (d). One direction is clear. For the converse, assume that $w(\gamma) \in N^1(X)$. Since $w\in W(R^\smperp)$, the element $w$ fixes the class $[{-}K_X]$, and $w$ fixes $\Comp(D)$ pointwise by (c). Then as in the proof of (b) above we have $w(\gamma) \in {\cal M_X}$ and $ w(\gamma) \cdot\mathcal D \geq 0 $ for any $\mathcal D \in  \Comp(D)$. This shows that $w(\gamma) \in {\cal N_X}$.

Now we prove (e). Since every element in $W(R^\smperp)$ is a word in the elements $r_\alpha$ for $\alpha\in \Pi(R^\smperp)$, it suffices to show that each $r_\alpha$ preserves $\mathcal M_X$ and $\mathcal N_X$. To that end, fix $\alpha\in \Pi(R^\smperp)$. For any $x\in \mathcal M_X$, by Proposition \ref{prop:W_invariant_pairing} we have $ r_\alpha(x)^2 = x^2 = {-}1 $. Moreover, since $\alpha\in \Pi(R^\smperp)$, we have $\alpha\cdot K_X=0$, and thus $ r_\alpha(x)\cdot K_X = x\cdot K_X + (x\cdot \alpha)(\alpha\cdot K_X) = {-}1$. Since $R\notin \{A_6^\smone, A_7^\smone, A_8^\smone, D_7^\smone, D_8^\smone, E_8^\smone\}$, we can check easily that $r_\alpha$ preserves the lattice $N^1(X)$, hence $r_\alpha(x)\in N^1(X)$. By Lemma \ref{lem:-1classesHalphen} we conclude that $r_\alpha(x)\in \mathcal M_X$, which shows the first statement in (e). To show the second statement, note that as in the proof of (b) above, for any $x\in\mathcal N_X$ and any $\mathcal D \in  \Comp(D)$ we have by (c) that $ r_\alpha(x) \cdot\mathcal D \geq 0 $. Hence, $r_\alpha(x) \in \mathcal{N}_X $.
    
Finally, we show (f). Since $\mathcal{N}_X \subseteq \mathcal T(R^\smperp)$ by Lemma \ref{lem:effconeHalpensurface1}(e), we have by Proposition \ref{prop:Titsfundchamber} that there exists $w \in W(R^\smperp)$ such that $ w(\gamma) \in \overline{\mathcal F(R^\smperp)}$. But $ w(\gamma) \in \mathcal N_X$ by (e), which concludes the proof.
\end{proof}

The following corollary will be crucial in Section \ref{sec:finiteness}.

	  	\begin{lem}\label{lem:translationsatE9}
	  	Let $X$ be a Sakai surface, and assume notation from \S\ref{subsec:translations1}. Then for each $\gamma\in \mathcal{M}_X$ there exists a unique $\alpha \in Q(E_8)$ such that $ \gamma=t_\alpha(\mathcal E_9)$.
	  	\end{lem}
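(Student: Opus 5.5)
Uniqueness is exactly Lemma \ref{lem:translationsatE9a}, so the work is in existence. I will prove it directly from the explicit formula \eqref{eq:117aa} rather than through the Weyl group.

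Fix $\gamma\in\mathcal M_X$ and set
$$\alpha\coloneqq \mathcal E_9-\gamma-k\delta,\qquad k\coloneqq \gamma\cdot\mathcal E_9 .$$
The plan is to check three things in turn: that $\alpha\in Q(E_8)$, that $\alpha^2=2k$, and that $t_\alpha(\mathcal E_9)=\gamma$.

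\emph{Membership in $Q(E_8)$.} Clearly $\alpha\in N^1(X)$. We have
$$\alpha\cdot\delta=\mathcal E_9\cdot\delta-\gamma\cdot\delta=1-1=0,$$
using $\delta^2=0$ and $\gamma\cdot\delta=-\gamma\cdot K_X=1$. We also have
$$\alpha\cdot\mathcal E_9=-1-k-k(\delta\cdot\mathcal E_9)\cdot 0\;\ldots$$
so I need to be careful here: directly, $\alpha\cdot\mathcal E_9=\mathcal E_9^2-\gamma\cdot\mathcal E_9-k\,\delta\cdot\mathcal E_9=-1-k-k$, which is not zero. The definition of $\alpha$ therefore has to be adjusted. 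Write instead
$$\alpha\coloneqq \mathcal E_9-\gamma-m\delta$$
and choose $m$ so that $\alpha\cdot\mathcal E_9=0$. This requires $-1-k-m=0$, i.e.\ $m=-1-k$, giving
$$\alpha=\mathcal E_9-\gamma+(1+k)\delta .$$
This $\alpha$ still satisfies $\alpha\cdot\delta=0$.

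By \cite[Propositions 7 and 8]{Sak01}, $Q(E_8^\smone)=\{x\in N^1(X)\mid x\cdot\delta=0\}$, so $\alpha\in Q(E_8^\smone)$. Since $\delta\cdot\mathcal E_9=1$, we have $\rho_0\cdot\mathcal E_9=\mathcal E_8\cdot\mathcal E_9-\mathcal E_9^2=1$, while $\rho_i\cdot\mathcal E_9=0$ for $1\le i\le 8$. Writing $\alpha=\sum_{i=0}^8 a_i\rho_i$, the condition $\alpha\cdot\mathcal E_9=0$ forces $a_0=0$. Hence $\alpha\in\sum_{i=1}^8\Z\rho_i=Q(E_8)$.

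\emph{Computing $\alpha^2$.} Expanding, with $\mathcal E_9^2=\gamma^2=-1$, $\delta^2=0$, $\gamma\cdot\delta=\mathcal E_9\cdot\delta=1$ and $\gamma\cdot\mathcal E_9=k$:
$$\alpha^2=-1-1-2k+2(1+k)(1-1)=-2-2k.$$

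\emph{Verifying $t_\alpha(\mathcal E_9)=\gamma$.} Substituting into \eqref{eq:117aa}:
$$t_\alpha(\mathcal E_9)=\mathcal E_9-\alpha-\tfrac12\alpha^2\delta=\gamma-(1+k)\delta+(1+k)\delta=\gamma.$$
Together with Lemma \ref{lem:translationsatE9a}, this gives existence and uniqueness.

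The only real pitfall is normalising the multiple of $\delta$ so that $\alpha$ is orthogonal to both $\delta$ and $\mathcal E_9$; the rest is routine bookkeeping with intersection numbers.
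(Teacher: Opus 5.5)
Your argument is correct once the false start is removed. The opening choice $\alpha=\mathcal E_9-\gamma-k\delta$ and the announced value $\alpha^2=2k$ are wrong. Only the corrected $\alpha=\mathcal E_9-\gamma+(1+k)\delta$, with $k=\gamma\cdot\mathcal E_9$ and $\alpha^2=-2-2k$, should remain, together with the garbled line before it deleted. The checks $\alpha\cdot\delta=\alpha\cdot\mathcal E_9=0$, $a_0=0$, the value of $\alpha^2$ and $t_\alpha(\mathcal E_9)=\gamma$ are all right.

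Your route differs from the paper's. The paper argues through the Weyl group:
\begin{itemize}
\item Since $\mathcal E_9\in\mathcal M_X$, transitivity of $W(E_8^\smone)$ on $\mathcal M_X$ (Lemma \ref{lem:weylpreservesnefgen1}(a), which rests on \cite[Lemma 14]{Sak01}) gives $\gamma=w(\mathcal E_9)$.
\item The decomposition $W(E_8^\smone)\simeq W(E_8)\ltimes T$ of Proposition \ref{prop:translations} then writes $w=t_\alpha\circ w'$ with $w'\in W(E_8)$ fixing $\mathcal E_9$.
\item Uniqueness is Lemma \ref{lem:translationsatE9a}.
\end{itemize}

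You instead invert \eqref{eq:117aa} directly. Your only non-trivial input is $Q(E_8^\smone)=\{x\in N^1(X)\mid x\cdot\delta=0\}$. Lemma \ref{lem:integral_elements} would serve equally well, since $\alpha\cdot\mathcal E_9=0$ means $\alpha$ has no $\mathcal E_9$-component.

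What your approach buys:
\begin{itemize}
\item It is more elementary and gives an explicit formula for $\alpha$ in terms of $\gamma$.
\item It makes uniqueness automatic. Dotting $\gamma=\mathcal E_9-\alpha-\frac12\alpha^2\delta$ with $\mathcal E_9$ forces $\alpha^2=-2-2\gamma\cdot\mathcal E_9$, and hence determines $\alpha$, so you do not even need Lemma \ref{lem:translationsatE9a}.
\item Combined with $T\subseteq W(E_8^\smone)$ from Proposition \ref{prop:translations}, it recovers transitivity of $W(E_8^\smone)$ on $\mathcal M_X$ as a consequence rather than taking it as an input.
\end{itemize}

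The paper's version is shorter given the cited results and fits the Weyl-group framework used throughout. However, it leans on Sakai's transitivity lemma and Kac's semidirect product decomposition.
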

	  	
	  	\begin{proof}
		Since $\mathcal E_9\in\mathcal M_X$, by Lemma \ref{lem:weylpreservesnefgen1}(a) we have that for each $\gamma\in \mathcal{M}_X$ there exists $w\in W(E_8^\smone)$ with $\gamma=w(\mathcal E_9)$. By Proposition \ref{prop:translations} there exist unique $\alpha \in Q(E_8^\smone)^\circ=Q(E_8)$ and $w' \in W(E_8^\smone)^\circ=W(E_8)$ such that $w =  t_\alpha\circ w'$. As each fundamental reflection $r_{\rho_i}$, for $i\in \{1,\dots,8\}$, fixes $\mathcal E_9$, we have that $w'(\mathcal E_9)=\mathcal E_9$, hence
	    $$ \gamma=w(\mathcal{E}_9) = t_\alpha\big(w'(\mathcal{E}_9)\big) = t_\alpha(\mathcal{E}_9). $$
	    By Lemma \ref{lem:translationsatE9a}, the element $\alpha \in Q(E_8)$ is uniquely determined by $t_\alpha(\mathcal{E}_9)$, which finishes the proof.
	  	\end{proof}

\newpage

\part{}\label{part:2}

Let $X$ be a generic Sakai surface and let $D\in|{-}K_X|$. In this Part \ref{part:2} of the paper, we prove our main results, Theorems \ref{thm:main2} and \ref{thm:mainB}. 

The proof of Theorem \ref{thm:main2} is very technical, but the logic of the proof is quite natural. In order to convey the main ideas of the proof, we now present its main steps.

Let $R$ be the type of $X$. We may assume that $R\notin\{A_8^\smone, D_8^\smone, E_8^\smone\}$, since otherwise the proof of Theorem \ref{thm:main2} is easy, see the proof of Theorem \ref{thm:existencefunddom}(b). We recall now that by Lemma \ref{lem:effconeHalpensurface1}(f) we have
$$\Nef(X) \subseteq \mathcal{T}(R^\smperp),$$
and we know by Proposition \ref{prop:Titsfundchamber} that $\overline{\mathcal F(R^\smperp)}$ is a fundamental domain for the action of $ W(R^\smperp) $ on $\mathcal{T}(R^\smperp)$. Therefore, if one could additionally show that $ \Cr(X,D)=W(R^\smperp) $, then it is natural to expect that the cone $\overline{\mathcal F(R^\smperp)}\cap\Nef(X)$ is a fundamental domain for the action of $ \Cr(X,D) $ on $\Nef(X)$.

This is indeed the case when $R \notin \{A_6^\smone, A_7^\smone,D_7^\smone\}$. Then $\Cr(X,D) = W(R^\smperp)$ by \cite[Theorem 26(a)]{Sak01}, and it is easy to show that $\overline{\mathcal F(R^\smperp)}\cap\Nef(X)$ is a fundamental domain for the action of $ \Cr(X,D) $ on $\Nef(X)$. Thus, the main issue is to prove that the cone $\overline{\mathcal F(R^\smperp)}\cap\Nef(X)$ is rational polyhedral: this is done in Proposition \ref{prop:ratpol1}. The crucial ingredient in the proof is Corollary \ref{cor:finite_(-1)_curves_chamber1}(c), where we show that the classes of only finitely many $(-1)$-curves are contained in $\overline{\mathcal F(R^\smperp)}$; in particular, the group $\Cr(X,D)$ acts on the set of $(-1)$-curves with only finitely many orbits. We then show that the faces of the cone $\overline{\mathcal F(R^\smperp)}\cap\Nef(X)$ are determined by these finitely many $(-1)$-curves and by the finitely many $(-2)$-curves on $X$; here the proof relies crucially on Lemma \ref{lem:actionW}, which exploits the properties of the reflection group $W(R^\smperp)$. Therefore, the cone $\overline{\mathcal F(R^\smperp)}\cap\Nef(X)$  has finitely many faces and is thus rational polyhedral.

When $R \in \{A_6^\smone, A_7^\smone,D_7^\smone\}$, then the situation is much more subtle, and a large part of the proof in Part \ref{part:2} deals with these three exceptional cases. As a first step, we determine a fundamental domain for the action of $\Cr(X,D)$ on $\mathcal{T}(R^\smperp)$. The issue is that in those cases, the group $W(R^\smperp)$ preserves neither the lattice $N^1(X)$ nor the cone $\Nef(X)$, and by \cite[Theorem 26(c)(d)(e)]{Sak01}, the group $\Cr(X,D)$ is a subgroup of $W(R^\smperp)$ of index $2m$, where for each such $R$, the positive integer $m$ is introduced in Setup \ref{setup:allcases} below. Therefore, it is natural to expect that a fundamental domain for the action of $\Cr(X,D)$ on $\mathcal{T}(R^\smperp)$ can be obtained as a suitable union of $2m$ translates of the fundamental chamber $\overline{\mathcal F(R^\smperp)}$. Indeed this is the case: in Lemma \ref{lem:OmegaR} and Proposition \ref{prop:fund_domain_Cr(X,D)_Tits_cone} we show that there exist elements $v_1,\ldots,v_{2m}\in W(R^\smperp)$ such that the rational polyhedral cone
\begin{equation}\label{eq:Omega_union}
\textstyle \Omega_R \coloneqq \bigcup_{i=1}^{2m}v_i\big(\overline{\mathcal F(R^\smperp)}\big)
\end{equation}
is a fundamental domain for the action of $\Cr(X,D)$ on $\mathcal{T}(R^\smperp)$. The main issue again is to prove that the cone $\Omega_R\cap\Nef(X)$ is rational polyhedral: this is done in Proposition \ref{prop:ratpoly}. We follow the same strategy as in the previous paragraph: we first show that there are only finitely many $(-1)$-curves contained in $\Omega_R$; this is established in Theorem \ref{thm:mainthmsection8}. Unlike in the previous paragraph, we cannot apply Lemma \ref{lem:actionW} directly to $\Cr(X,D)$, since now $\Cr(X,D)$ is not generated by reflections. To overcome this problem, we introduce auxiliary reflection subgroups $\widehat{W}_{\mathscr{D}_{k,m-k-1}}\subseteq W(R^\smperp)$ for $k=0,\ldots,m-1$; see Setup \ref{setup:allcases}. A crucial fact, which we prove in Theorem \ref{thm:mainthmsection8}, is that the set of classes of $(-1)$-curves on $X$ decomposes as the union of orbits of these reflection subgroups $\widehat{W}_{\mathscr{D}_{k,m-k-1}}$. Using this decomposition and Lemma \ref{lem:actionW}, we show in Proposition \ref{prop:ratpol_ugly_cases} that the cone $w\big(\overline{\mathcal F(R^\smperp)}\big) \cap \Nef(X)$ is rational polyhedral for every $w \in W(R^\smperp)$.  By \eqref{eq:Omega_union}, it finally follows that $\Omega_R \cap \Nef(X)$ is rational polyhedral as well. 

\medskip

In the following Setup \ref{setup:allcases} we set the notation for Part \ref{part:2}.

\begin{setupI}\label{setup:allcases}
Let $X$ be a Sakai surface of type $R$. We use the notation from Section \ref{sec:rootsystemsHalphen}. By Proposition \ref{prop:subrootdata000} we have a set of root data
$$ \big(A,\Pi(R^\smperp),\Pi(R^\smperp)^\smvee,N^1(X)_\R,N_1(X)_\R,\langle\cdot\, ,\cdot\rangle\big). $$
We set
$$ m\coloneqq \begin{cases}
7, & \text{if }R=A_6^\smone,\\
4, & \text{if }R=A_7^\smone,\\
2, & \text{if }R = D_7^\smone.\\
\end{cases} $$
\begin{enumerate}[\normalfont (a)]
\item Assume that $ R\notin\{A_6^\smone, A_7^\smone, A_8^\smone, D_7^\smone, D_8^\smone, E_8^\smone \} $. If $X$ is generic, by \cite[Theorem 26(a)]{Sak01} we have $\Cr(X,D)=W(R^\smperp)$. We set
$$ \Omega_R\coloneqq \overline{\cal F(R^\smperp)}. $$
\item Assume that $R=A_6^\smone$. We use the notation from \S\ref{subsec:Rperp2}. Then $\Pi(R^\smperp) = \{ \alpha_0,\alpha_1\} \cup \{\alpha'_0,\alpha'_1 \}$, and we have root data $\mathscr D$ and $\mathscr D'$ of affine type $A_1^\smone$. Denote $r_0\coloneqq r_{\alpha_0}$, $r_1\coloneqq r_{\alpha_1}$, $r_0'\coloneqq r_{\alpha_0'}$ and $r_1'\coloneqq r_{\alpha_1'}$. Then, if we define $\sigma$ as in \cite[Theorem 26(c)]{Sak01}, a calculation shows that $\sigma^{14} = (r_0r_1)^7 = (r_0r_1)^m$, hence if $X$ is generic, by loc.\ cit.\ we have\footnote{Note that there is a typo in \cite[Theorem 26(c)]{Sak01}, where $\sigma^{7n}$ should be $\sigma^{14n}$; the reason is that $\sigma^{14}$ is the smallest power of $\sigma$ which is in $\Aut\big(N^1(X)\big)$ and not only in $\Aut\big(N^1(X)_\Q\big)$.}
$$\Cr(X,D)=\big\langle (r_0r_1)^m\big\rangle\cdot \langle r_0',r_1'\rangle \subseteq W(R^\smperp).$$
As in Setup \ref{setup2}, consider the subroot system $\mathscr D_{m,m-1}$ of $\mathscr D$, and set
$$ \Omega_R \coloneqq \overline{\cal F_{{\mathscr D}_{m,m-1}}}\cap \overline{\cal F_{\mathscr D'}}. $$
For every $k\in \{ 0,\dots, m-1\}$, we denote $\widehat W_{\mathscr D_{k,m-k-1}}\coloneqq W_{\mathscr D_{k,m-k-1}}\cdot W_{\mathscr{D}'}$ and $\widehat{\mathcal F}_{\mathscr D_{k,m-k-1}}\coloneqq \overline{\mathcal F_{\mathscr D_{k,m-k-1}}}\cap \overline{\mathcal F_{\mathscr D'}}$.

\item Assume that $R\in\{A_7^\smone,D_7^\smone\}$. We use the notation from \S\ref{subsec:Rperp1}. Then $\Pi(R^\smperp) = \{ \alpha_0,\alpha_1\} $, and we have root data $\mathscr D$ of affine type $A_1^\smone$. Denote $r_0\coloneqq r_{\alpha_0}$ and $r_1\coloneqq r_{\alpha_1}$. If $R=A_7^\smone$ and if we define $\sigma$ as in \cite[Theorem 26(d)]{Sak01}, a calculation shows that $\sigma^8 = (r_1r_0)^4 = (r_1r_0)^m$. If $R=D_7^\smone$ and if we define $\sigma$ and $\sigma'$ as in \cite[Theorem 26(e)]{Sak01}, a calculation shows that $(\sigma\circ \sigma')^4 = (r_1r_0)^2 = (r_1r_0)^m$. Hence, if $X$ is generic, by \cite[Theorem 26(d)(e)]{Sak01} we have
$$\Cr(X,D)=\big\langle (r_0r_1)^m\big\rangle\subseteq W(R^\smperp).$$
As in Setup \ref{setup2}, consider the subroot system $\mathscr D_{m,m-1}$ of $\mathscr D$, and set
$$ \Omega_R \coloneqq \overline{\cal F_{{\mathscr D}_{m,m-1}}}. $$
To ease the notation in the proofs of Lemma \ref{lem:alternative_unique_gamma0} and Proposition \ref{prop:ratpol_ugly_cases}, for every $k\in \{ 0,\dots, m-1\}$, we denote $\widehat W_{\mathscr D_{k,m-k-1}}\coloneqq W_{\mathscr D_{k,m-k-1}}$ and $\widehat{\mathcal F}_{\mathscr D_{k,m-k-1}}
\coloneqq \overline{\mathcal F_{\mathscr D_{k,m-k-1}}}$.
\end{enumerate}
\end{setupI}

\section{Finiteness results}\label{sec:finiteness}

The main goal of this section is to prove several finiteness results which will be crucial in the proof of Theorem \ref{thm:main2}.

The section is of very technical nature. As an illustration of the techniques developed here, we obtain the following result which is of independent interest. Recall the set $\mathcal N_X$ defined in Section \ref{sec:prelim}; if $X$ is a generic Sakai surface, then $\cal N_X$ is precisely the set of $({-}1)$-curves on $X$ by Lemma \ref{lem:effconeHalpensurface1}(d).

\begin{thm}\label{thm:mainofsection7}
    Let $X$ be a generic Sakai surface of type $R$, let $D\in|{-}K_X|$, and assume that $ R\notin\{A_8^\smone, D_8^\smone, E_8^\smone \} $. Let $\cal N_X$ be the set of $({-}1)$-curves on $X$. Then:
\begin{enumerate}[\normalfont (a)]
\item the set $\mathcal{N}_X \cap \overline{\mathcal F(R^\smperp)}$ is finite,
\item if, furthermore, $R\notin\{A_6^\smone, A_7^\smone, D_7^\smone \} $, then $\Cr(X,D)$ acts on $\cal N_X$ with finitely many orbits.
\end{enumerate}
\end{thm}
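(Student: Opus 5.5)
The plan for (a) is to parametrise $\mathcal N_X$ by the root lattice $Q(E_8)$ and show that the chamber inequalities cut out a bounded, hence finite, set of lattice points. By Lemma \ref{lem:translationsatE9}, each $\gamma\in\mathcal N_X\subseteq\mathcal M_X$ is $\gamma=t_\alpha(\mathcal E_9)$ for a unique $\alpha\in Q(E_8)$. By \eqref{eq:117aa},
$$\textstyle \gamma=\mathcal E_9-\alpha-\tfrac12\alpha^2\delta .$$
So it suffices to show that only finitely many $\alpha\in Q(E_8)$ give $\gamma\in\mathcal N_X\cap\overline{\mathcal F(R^\smperp)}$. I will translate both membership conditions into linear inequalities on $\alpha$, using Lemma \ref{lem:somestuff}(c)--(e) and the relations $\alpha\cdot\delta=0$ and $\alpha^\smvee=\varepsilon_\alpha\alpha$ (Proposition \ref{prop:subrootdata000}(b)).

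Membership in $\overline{\mathcal F(R^\smperp)}$ means $\gamma\cdot\beta\ge0$ for every simple root $\beta$ of $R^\smperp$. This gives
$$\alpha\cdot\alpha_0\le1,\qquad \alpha\cdot\alpha_i\le 0\ (i\ge1),$$
and the same for the primed roots when they exist. The condition $\gamma\cdot\mathcal D_j\ge0$ from $\mathcal N_X$ gives
$$\alpha\cdot\mathcal D_0\le1,\qquad \alpha\cdot\mathcal D_j\le0\ (j\ge1).$$
Next I get lower bounds. In each affine component (the $\alpha$'s, the $\alpha'$'s, and the $\mathcal D$'s), $\delta$ is a positive combination $\delta=\sum a_i\alpha_i$ of the simple roots with $a_0=1$ (type Aff~1 labels, checked against the labelling in Appendix \ref{sec:Sakai_appendix_A}). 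Since $\alpha\cdot\delta=0$, we get
$$\alpha\cdot\alpha_0=-\sum_{i\ge1}a_i\,\alpha\cdot\alpha_i\ \ge 0 .$$
Combining this with $\alpha\cdot\alpha_0\le1$ gives $0\le -\sum_{i\ge1}a_i\,\alpha\cdot\alpha_i\le1$. Since every summand is nonnegative and every $a_i>0$, each $\alpha\cdot\alpha_i$ lies in a bounded interval. The same argument applies to the $\alpha'_k$ and to the $\mathcal D_j$.

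Hence $\alpha\cdot s$ is bounded for every $s\in\mathcal S$. By Lemma \ref{lem:somestuff}(a)(f)(g), $\mathcal S$ consists of $8$ linearly independent vectors in $\sum_{i=1}^8\R\rho_i$, which is the real span of $Q(E_8)$. So $\alpha\mapsto(\alpha\cdot s)_{s\in\mathcal S}$ is injective on this $8$-dimensional space, because the intersection form on the $E_8$ part is negative definite, hence nondegenerate. Therefore the admissible $\alpha$ lie in a bounded subset of the lattice $Q(E_8)$, which is finite. Distinct $\gamma$ come from distinct $\alpha$, so (a) follows.

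For (b), Setup \ref{setup:allcases}(a) gives $\Cr(X,D)=W(R^\smperp)$. Lemma \ref{lem:weylpreservesnefgen1}(f) shows that every $W(R^\smperp)$-orbit in $\mathcal N_X$ meets $\mathcal N_X\cap\overline{\mathcal F(R^\smperp)}$, which is finite by (a). So there are only finitely many orbits.

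I expect the main obstacle to be the bookkeeping in the lower-bound step. I must verify case by case from Appendix \ref{sec:Sakai_appendix_A} that $\delta$ is a positive combination of each component's simple roots in which the distinguished root ($\alpha_0$, $\alpha'_0$, or $\mathcal D_0$) appears with a positive coefficient. For the rescaled coroots in types $A_7^\smone$ and $D_7^\smone$, I must also check that the inequalities keep the right signs; this holds because the rescaling factors are positive.
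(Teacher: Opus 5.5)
Your proposal is correct and is essentially the paper's argument. The paper also writes each class in $\mathcal N_X$ as $x_\alpha=\mathcal E_9-\alpha-\frac12\alpha^2\delta$ with a unique $\alpha\in Q(E_8)$, and turns membership in $\mathcal N_X\cap\overline{\mathcal F(R^\smperp)}$ into the inequalities you list. It then uses $\alpha\cdot\delta=0$ to confine the values $\alpha\cdot s$, $s\in\mathcal S$, to a finite set, and concludes with the injective map $\mathbf v(\alpha)=(\alpha\cdot s)_{s\in\mathcal S}$ of Lemma~\ref{lem:map_v}; its injectivity comes from invertibility of the block Gram matrix $M_R$, which is your nondegeneracy argument. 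Part (b) is deduced exactly as you propose, from $\Cr(X,D)=W(R^\smperp)$ together with Lemma~\ref{lem:weylpreservesnefgen1}(e)(f) and Proposition~\ref{prop:Titsfundchamber}.
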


If $X$ is a generic Sakai surface as in Theorem \ref{thm:mainofsection7}(b), then $\Cr(X,D)=W(R^\smperp)$ by \cite[Theorem 26(a)]{Sak01}. Therefore, Theorem \ref{thm:mainofsection7} follows immediately from Corollary \ref{cor:finite_(-1)_curves_chamber1}(c)(d).

In the remainder of this section, we consider Sakai surfaces of type $ R\notin\{A_8^\smone, D_8^\smone, E_8^\smone \} $ and we assume Setup \ref{setup:allcases}. As in \S\ref{subsec:translations1}, if we remove the vertex corresponding to $\rho_0$ from the Dynkin diagram of $E_8^\smone$, we obtain the finite Dynkin diagram $E_8$.

As in Appendix \ref{sec:Sakai_appendix_A}, we denote by $\cal{D}_0,\ldots, \cal{D}_z$ the classes of an irreducible component of $D$, and by $\alpha_0,\ldots,\alpha_u,\alpha_0',\dots,\alpha_v'$ the elements of $\Pi(R^\smperp)$. Then by Lemma \ref{lem:somestuff}(a)(f)(g) we have that $u+v+z=8$ and that
$$(\alpha_1,\ldots,\alpha_u,\alpha_1',\dots,\alpha_v',\cal{D}_1,\dots,\cal{D}_z)$$
is an ordered basis of $Q(E_8)\otimes_\Z\R$.

Since each of the sets $\{\alpha_1,\dots,\alpha_u\}$, $\{\alpha_1',\dots,\alpha_v'\}$ and $\{\cal{D}_1,\dots,\cal{D}_z\}$ is a root basis of a root system of finite type, \cite[Proposition 4.7(b)]{Kac90} implies that the corresponding intersection matrices
    $$ B_R\coloneqq \big(\alpha_i\cdot \alpha_j\big)_{i,j=1}^u, \quad B_R'\coloneqq \big(\alpha'_i\cdot \alpha'_j\big)_{i,j=1}^v \quad\text{and}\quad C_R\coloneqq \big(\cal{D}_i\cdot \cal{D}_j\big)_{i,j=1}^z $$
    are invertible. Set
    \begin{equation}\label{eq:matrix_M_R}
      M_R \coloneqq B_R\oplus B_R'\oplus C_R.
    \end{equation}

\begin{lem}\label{lem:map_v}
    Assume notation as above. We view each $\alpha\in Q(E_8) $ as a row of coordinates in the basis $(\alpha_1,\ldots,\alpha_u,\alpha_1',\dots,\alpha_v',\cal{D}_1,\dots,\cal{D}_z)$ of the real vector space $Q(E_8)\otimes_\Z\R$. Then:
\begin{enumerate}[\normalfont (a)]
\item the map
    $$ \mathbf{v} \colon  Q(E_8) \to \Z^8, \qquad \alpha  \mapsto \alpha \cdot M_R $$
   is well defined and injective,
\item $\mathbf{v}(\alpha) = (\alpha\cdot \alpha_1,\ldots, \alpha\cdot \alpha_u,\alpha\cdot\alpha_1',\ldots,\alpha\cdot \alpha_v',\alpha \cdot \cal{D}_1,\dots,\alpha \cdot \cal{D}_z)$ for each $\alpha\in Q(E_8)$.
\end{enumerate}    
\end{lem}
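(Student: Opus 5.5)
We prove (b) first, since it is a purely linear-algebraic identity, and then deduce (a) from it. Write an element $\alpha\in Q(E_8)\otimes_\Z\R$ as $\alpha=\sum_i x_i\alpha_i+\sum_k y_k\alpha_k'+\sum_j z_j\cal{D}_j$ in the ordered basis $(\alpha_1,\ldots,\alpha_u,\alpha_1',\dots,\alpha_v',\cal{D}_1,\dots,\cal{D}_z)$. By Lemma \ref{lem:somestuff}(b) these three blocks are mutually orthogonal for the intersection product. Hence, for instance,
$$\alpha\cdot\alpha_j=\sum_{i=1}^u x_i(\alpha_i\cdot\alpha_j)=(x_1,\dots,x_u)\,B_R\,e_j,$$
and analogously for $\alpha_k'$ (with $B_R'$) and for $\cal{D}_j$ (with $C_R$). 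Reading off these identities column by column gives $(x,y,z)\cdot M_R=\big(\alpha\cdot\alpha_1,\dots,\alpha\cdot\cal{D}_z\big)$, which is exactly (b). Note that (b) holds for every real vector, not only for lattice points; here we use that $M_R$ is symmetric, so row versus column conventions cause no trouble.

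For well-definedness in (a), we must check that $\mathbf{v}(\alpha)\in\Z^8$ whenever $\alpha\in Q(E_8)$. By (b), each entry of $\mathbf{v}(\alpha)$ is an intersection number of $\alpha$ with one of the classes $\alpha_i$, $\alpha_k'$ or $\cal{D}_j$. These classes all lie in $N^1(X)$: they are integral by Lemma \ref{lem:somestuff}(g), or directly from Appendix \ref{sec:Sakai_appendix_A}. Also $\alpha\in Q(E_8)\subseteq N^1(X)$. The intersection form is integral on $N^1(X)$ by \eqref{eq:intersectnumbers} and \eqref{eq:picgroup}, so all entries of $\mathbf{v}(\alpha)$ are integers. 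The target dimension is $8$ because $u+v+z=8$ by Lemma \ref{lem:somestuff}(a).

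For injectivity, $\mathbf{v}$ is the restriction to $Q(E_8)$ of the linear map $\R^8\to\R^8$ given by right multiplication by $M_R$. This map is invertible, since $B_R$, $B_R'$ and $C_R$ are invertible (as noted above via \cite[Proposition 4.7(b)]{Kac90}), and so $M_R$ is invertible as their direct sum. The coordinate map $Q(E_8)\to\R^8$ is injective because the eight classes form a basis of $Q(E_8)\otimes_\Z\R$ (Lemma \ref{lem:somestuff}(f)(g)). Composing the two injective maps shows that $\mathbf{v}$ is injective.

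The only point needing care is the orthogonality of the three blocks, which is what makes $M_R$ block diagonal. Lemma \ref{lem:somestuff}(b) provides it, including the indices $\geq1$ used here, so no real obstacle remains.
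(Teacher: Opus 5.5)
Your proof is correct and follows essentially the same route as the paper. You expand $\alpha$ in the given basis, use the block orthogonality of Lemma~\ref{lem:somestuff}(b) to get (b), read off integrality of $\mathbf{v}(\alpha)$ from (b) as intersection numbers of integral classes, and get injectivity from invertibility of $M_R$ — spelling out, where the paper does not, the integrality of the intersection form and the injectivity of the coordinate map.
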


\begin{proof}
Fix $\alpha \in Q(E_8)$. Then there exist rational numbers $r_{\alpha,1},\dots,r_{\alpha,u}$, $r_{\alpha,1}',\dots,r_{\alpha,v}'$ and $s_{\alpha,1},\dots,s_{\alpha,z}$ such that
$$\alpha = \sum\limits_{i=1}^u r_{\alpha,i} \alpha_i + \sum\limits_{i=1}^v r_{\alpha,i}' \alpha_i' + \sum\limits_{i=1}^z s_{\alpha,i} \cal{D}_i.$$
    Then by Lemma \ref{lem:somestuff}(b) we obtain
    $$ \textstyle \alpha \cdot  \alpha_i=\sum\limits_{j=1}^u r_{\alpha,j} \alpha_i\cdot\alpha_j,\quad \alpha \cdot  \alpha_i'=\sum\limits_{j=1}^v r_{\alpha,j}'\alpha_i'\cdot\alpha_j', \quad \alpha \cdot \cal{D}_i=\sum\limits_{j=1 }^z s_{\alpha,j} \cal{D}_i\cdot \cal{D}_j. $$
     These equalities give (b). In particular, this shows that $\mathbf{v}(\alpha)\in\Z^8$, hence the map $\mathbf{v}$ is well defined. The injectivity of $\mathbf{v}$ follows since $M_R$ is invertible.
\end{proof}

The following proposition is the main technical result of the section.

\begin{prop}\label{prop:finmanytranslat}
   Let $X$ be a Sakai surface of type $R$, and assume that $ R\notin\{A_8^\smone, D_8^\smone, E_8^\smone \} $. Assume Setup \ref{setup:allcases} and Setup \ref{setup2}. For each $\alpha \in Q(E_8)$ denote
   $$ x_\alpha\coloneqq \textstyle \mathcal{E}_9 - \alpha - \big(\frac{1}{2}\alpha^2\big)\delta, $$
   and note that $x_\alpha\in N^1(X)$ since $\alpha^2$ is an even integer. Recall that we have the injective map $\mathbf{v} \colon  Q(E_8) \to \Z^8$ from Lemma \ref{lem:map_v}. Then the following statements hold.
\begin{enumerate}[\normalfont (a)]
\item The set $\big\{\alpha \in Q(E_8)\mid x_\alpha \in \mathcal{N}_X\cap\overline{\mathcal F(R^\smperp)}\big\}$ is finite.
\item If $R \in\{A_6^\smone, A_7^\smone, D_7^\smone\} $, then for any $w\in W_{\mathscr{D}}$ the set
$$\mathcal G_R(w)\coloneqq \big\{\alpha \in Q(E_8)\mid x_\alpha \in \mathcal{N}_X\cap w\big(\overline{\mathcal F(R^\smperp)}\big)\big\}$$
is finite. 
\item If $R \in\{A_7^\smone, D_7^\smone\} $, then for any $w\in W_{\mathscr{D}}$ we have:
\begin{enumerate}[\normalfont (a)]
	\item[$\mathrm{(c)}_1$] $\mathcal G_R(w)=\big\{\alpha \in Q(E_8)\mid x_\alpha \in \mathcal{N}_X\cap \big( w(F_0)\cup w(F_1)\big)\big\}$,\footnote{Recall the definitions of $F_0$ and $F_1$ from Setup \ref{setup2}.}
	\item[$\mathrm{(c)}_2$] there exist distinct integers $m_{w,0}$ and $m_{w,1}$ with the following property: for each $\alpha \in Q(E_8)$ and each $i\in\{0,1\}$ we have
	$$x_\alpha \in \mathcal{N}_X\cap w(F_i)\quad\Longleftrightarrow\quad\mathbf{v}(\alpha)\in\{m_{w,i}\}\times\Z^7.$$
	\end{enumerate}
\item If $R = A_6^\smone $, then for any $w\in W_{\mathscr{D}}$ we have:
\begin{enumerate}[\normalfont (a)]
	\item[$\mathrm{(d)}_1$] $ \mathcal G_R(w)=\big\{\alpha \in Q(E_8)\mid x_\alpha \in \mathcal{N}_X\cap \big( w(F_0)\cup w(F_1)\big)\cap\overline{\cal F_{\mathscr D'}} \big\} $,
		\item[$\mathrm{(d)}_2$] there exist distinct integers $n_{w,0}$ and $n_{w,1}$ with the following property: for each $\alpha \in Q(E_8)$ and each $i\in\{0,1\}$ we have
		$$x_\alpha \in \mathcal{N}_X\cap w(F_i)\cap\overline{\cal F_{\mathscr D'}}\quad\Longleftrightarrow\quad\mathbf{v}(\alpha)\in\{n_{w,i}\}\times\Z^7.$$
		\end{enumerate}
\end{enumerate}   
\end{prop}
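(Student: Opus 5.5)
The plan is to translate membership $x_\alpha\in\mathcal N_X\cap w(\overline{\mathcal F(R^\smperp)})$ into explicit integrality and inequality conditions on the coordinate vector $\mathbf v(\alpha)=(\alpha\cdot\alpha_1,\dots,\alpha\cdot\alpha_u,\alpha\cdot\alpha_1',\dots,\alpha\cdot\alpha_v',\alpha\cdot\cal D_1,\dots,\alpha\cdot\cal D_z)$ from Lemma \ref{lem:map_v}(b), and then show that these conditions confine $\mathbf v(\alpha)$ to a bounded subset of $\Z^8$. Injectivity of $\mathbf v$ then gives finiteness. The basic computation uses Lemma \ref{lem:somestuff}(b)--(e). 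For $\beta\in\Pi(R^\smperp)\cup\Comp(D)$ we have $\beta\cdot\delta=0$, so
$$x_\alpha\cdot\beta=\mathcal E_9\cdot\beta-\alpha\cdot\beta.$$
For the simple elements with index $\geq1$ this is just $-\alpha\cdot\beta$. For $\alpha_0,\alpha_0',\cal D_0$ it equals $1-\alpha\cdot\beta$, and since $\alpha_0$, $\alpha_0'$, $\cal D_0$ are $\delta$ minus the corresponding highest-root combinations of the indices $\geq1$, the value $\alpha\cdot\alpha_0$ (and similarly for $\alpha_0'$, $\cal D_0$) is a fixed nonnegative integer combination of the coordinates $\alpha\cdot\alpha_i$ for $i\geq1$. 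By Lemma \ref{lem:weylpreservesnefgen1} and Lemma \ref{lem:translationsatE9}, every element of $\mathcal N_X$ is of the form $x_\alpha$.

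For (a), the membership $x_\alpha\in\overline{\mathcal F(R^\smperp)}$ means $x_\alpha\cdot\alpha_i\geq0$ and $x_\alpha\cdot\alpha_i'\geq0$ for all $i$, by Proposition \ref{prop:subrootdata000}(b). Similarly, $x_\alpha\in\mathcal N_X$ gives $x_\alpha\cdot\cal D_j\geq0$ for all $j$. For the indices $\geq1$ this yields $\alpha\cdot\alpha_i\leq0$, $\alpha\cdot\alpha_i'\leq0$ and $\alpha\cdot\cal D_j\leq0$. The index-$0$ inequality $1-\alpha\cdot\alpha_0\geq0$ unwinds to $\sum_i a_i(-\alpha\cdot\alpha_i)\leq 1$ (up to the sign convention for $\alpha_0$), with positive labels $a_i$; the same holds for the primed system and for $D$. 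Hence every coordinate of $\mathbf v(\alpha)$ lies in a bounded interval, and (a) follows. I will check the sign of $\alpha_0=\delta-\sum a_i\alpha_i$ case by case from Appendix \ref{sec:Sakai_appendix_A}; this is the routine part.

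For (b), write $w\in W_{\mathscr D}$. Membership in $w(\overline{\mathcal F(R^\smperp)})$ means $x_\alpha\cdot w(\beta)\geq0$ for all $\beta\in\Pi(R^\smperp)$. Here $\mathscr D$ is of type $A_1^\smone$, with $\alpha_0+\alpha_1=\delta$ (up to the coroot scaling), so $w(\alpha_1)=\pm\alpha_1+n\delta$ and $w(\alpha_0)=\delta-w(\alpha_1)$ for some integer $n$, using Proposition \ref{prop:rootsafterremoving}. Since $x_\alpha\cdot\delta=1$, the two $\mathscr D$-conditions become $\pm x_\alpha\cdot\alpha_1+n\geq0$ and $1-n\mp x_\alpha\cdot\alpha_1\geq0$. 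So $\alpha\cdot\alpha_1$ is confined to an interval of length $1$, containing exactly two integers, which I expect to be the integers $m_{w,0},m_{w,1}$ (resp.\ $n_{w,0},n_{w,1}$). The $\mathscr D'$ and $\Comp(D)$ conditions bound the remaining coordinates exactly as in (a), which gives (b).

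For (c) and (d), I take $F_0$ and $F_1$ from Setup \ref{setup2} to be the two chambers of the subsystem $\mathscr D_{m,m-1}$ meeting $\overline{\mathcal F(R^\smperp)}$, defined by adding the wall $\alpha_1\cdot x=\mathrm{const}$; equivalently, the two lattice slices above. Then $\mathrm{(c)}_1$ and $\mathrm{(d)}_1$ say that no integral point lies strictly between these slices. The difficult part is $\mathrm{(c)}_2$ and $\mathrm{(d)}_2$: showing that on each slice $\mathbf v(\alpha)\in\{m\}\times\Z^7$ is \emph{equivalent} to $x_\alpha\in\mathcal N_X\cap w(F_i)$, with no further constraints on the other coordinates. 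This needs care, since the coroot scalings $\frac14$, $\frac12$, $\frac17$ make $\alpha_1$ non-primitive in the lattice. I would first fix the first basis vector of $\mathbf v$ and check the cases $R=A_7^\smone,D_7^\smone,A_6^\smone$ explicitly from Appendix \ref{sec:Sakai_appendix_A}. I expect this explicit matching of the slices $F_i$ with fixed first coordinates to be the main obstacle.
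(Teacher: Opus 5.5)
Your treatment of (a) and (b) is correct and is essentially the paper's argument. Membership is reduced to sign conditions on the coordinates of $\mathbf v(\alpha)$ via Lemma \ref{lem:somestuff}. For $w\in W_{\mathscr D}$ the paper describes $w(\overline{\mathcal F_{\mathscr D}})$ explicitly by Lemma \ref{lem:r1r0_n}(g), which amounts to your $w(\alpha_1)=\pm\alpha_1+n\delta$, $w(\alpha_0)=\delta-w(\alpha_1)$.

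The gap is in (c) and (d), and it starts with a misreading of Setup \ref{setup2}. $F_0=F^0_{0,0}$ and $F_1=F^1_{0,0}$ are not chambers of $\mathscr D_{m,m-1}$; they are the two open walls of $\overline{\mathcal F_{\mathscr D}}$, so that
\[
w(F_0)=\{x\mid x\cdot w(\alpha_0)=0,\ x\cdot w(\alpha_1)>0\},\qquad w(F_1)=\{x\mid x\cdot w(\alpha_1)=0,\ x\cdot w(\alpha_0)>0\}.
\]
With the correct definition, what (c) and (d) need follows in one line from your own computation for (b):
\begin{itemize}
\item For $\alpha\in\mathcal G_R(w)$, the pairings $x_\alpha\cdot w(\alpha_0)$ and $x_\alpha\cdot w(\alpha_1)$ are non-negative integers with sum $x_\alpha\cdot\delta=1$. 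So exactly one of them vanishes and the other equals $1$.
\item Hence $x_\alpha\in w(F_0)\cup w(F_1)$.
\item Which wall it lies on is decided by which of the two admissible values (distinct and consecutive) the first coordinate $\alpha\cdot\alpha_1$ of $\mathbf v(\alpha)$ takes.
\item For $A_6^\smone$ one also intersects with $\overline{\mathcal F_{\mathscr D'}}$ via Proposition \ref{pro:decomposable}(f).
\end{itemize}
This is exactly Steps 3 and 5 of the paper, phrased there through Lemma \ref{lem:equalities_of_faces}. No case-by-case check is involved.

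What you single out as the difficult part is false when read literally, as you propose to prove it: that $\mathbf v(\alpha)\in\{m\}\times\Z^7$ is equivalent to $x_\alpha\in\mathcal N_X\cap w(F_i)$ with no further constraints on the other coordinates. So the explicit verification you plan cannot succeed. Here is a counterexample:
\begin{itemize}
\item By Lemma \ref{lem:somestuff}(b)(g), $\mathcal D_1\in Q(E_8)$ and $\mathcal D_1\cdot\alpha_1=0$, so together with any $\alpha$ the slice also contains $\alpha-N\mathcal D_1$.
\item By Lemma \ref{lem:somestuff}(d)(e), $x_{\alpha-N\mathcal D_1}\cdot\mathcal D_1=-\alpha\cdot\mathcal D_1-2N<0$ for $N\gg0$, so $x_{\alpha-N\mathcal D_1}\notin\mathcal N_X$.
\item For $A_6^\smone$, the constraint $\alpha\cdot\alpha_1'\in\{0,-1\}$ coming from $\overline{\mathcal F_{\mathscr D'}}$ also intervenes.
\end{itemize}
The meaningful content of $\mathrm{(c)}_2$ and $\mathrm{(d)}_2$ is the version restricted to $\alpha\in\mathcal G_R(w)$, which is the one-liner above. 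This is what the paper's proof establishes (``the first coordinate of $\mathbf v(\alpha)$ decides whether $x_\alpha$ belongs to $w(F_0)$ or to $w(F_1)$''), and what Corollary \ref{cor:finiteness} uses.

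Likewise, the coroot scalings $\frac14,\frac12,\frac17$ are irrelevant here, since only signs of the pairings $x_\alpha\cdot w(\alpha_j)$ enter. Also, $\alpha_1$ is in fact primitive, e.g.\ $\mathcal E_4-\mathcal E_5+\mathcal E_6-\mathcal E_7$ for $D_7^\smone$. The genuine lattice question, namely which points of $\mathcal S_R(w)$ lie in $\mathbf v\big(Q(E_8)\big)$, is the content of Lemma \ref{lem:aux_finiteness}, not of this proposition.
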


\begin{proof}
	  We follow the notation introduced before Lemma \ref{lem:map_v}. Note also that, by Appendix \ref{sec:Sakai_appendix_A}, there exist positive integers $p_1,\dots,p_u,p_1',\dots,p_v',q_1,\dots,q_z$ such that
      \begin{equation}\label{eq:114c}
      \delta = \alpha_0+p_1 \alpha_1+\ldots+p_u\alpha_u = \alpha_0+p_1' \alpha_1'+\ldots+p_v'\alpha_v'
      \end{equation}
      and
      \begin{equation}\label{eq:114d}
      \delta = \cal{D}_0+q_1 \cal{D}_1+\ldots+q_z \cal{D}_z.
      \end{equation}

\medskip

\emph{Step 1.}
	In this step we show part (a).
	
	Set
	\begin{align*}
	\mathcal S_R\coloneqq \Big\{&(a_1,\ldots,a_u,b_1,\ldots,b_v,c_1,\ldots,c_z)\in\Z_{\leq0}^8\ \big| \\
	&\textstyle \sum\limits_{i=1}^u p_ia_i\in\{0,-1\},\ \sum\limits_{i=1}^v p_i'b_i\in\{0,-1\},\ \sum\limits_{i=1}^z q_i c_i\in\{0,-1\}\Big\}.
	\end{align*}
		We claim that for any $\alpha \in Q(E_8)$,
		\begin{equation}\label{eq:xalpha}
		x_\alpha \in \mathcal{N}_X\cap\overline{\mathcal F(R^\smperp)}\quad\Longleftrightarrow\quad \mathbf{v}(\alpha)\in\mathcal S_R.		
		\end{equation}
	    Since the map $\mathbf{v}$ is injective, the claim immediately shows (a).
	    
	    We now prove the claim \eqref{eq:xalpha}. Fix $\alpha \in Q(E_8)$. Recall that $x_{\alpha} \in N^1(X)$, and we have $x_{\alpha}^2=K_X\cdot x_{\alpha} = {-}1 $ by \eqref{eq:6677}. Therefore, by Lemma \ref{lem:-1classesHalphen} we have that $x_{\alpha} \in {\cal N}_X$ if and only if $x_{\alpha} \cdot \cal{D}_j \geq 0$ for all $j$, which, by Lemma \ref{lem:somestuff}(d)(e), is equivalent to
	    \begin{equation}\label{eq:curve1a}
	    \alpha \cdot  \cal{D}_0  \leq 1 \quad\text{and}\quad \alpha \cdot  \cal{D}_j  \leq 0 \text{ for } j \geq 1. 
	    \end{equation}
	   Similarly, by the definition of $\overline{\mathcal F(R^\smperp)}$ and by Lemma \ref{lem:somestuff}(c)(e), we have that $x_{\alpha} \in \overline{\mathcal F(R^\smperp)}$ is equivalent to
	    \begin{equation}\label{eq:curve1b}
	    \alpha \cdot  \alpha_0 \leq 1,\quad \alpha \cdot  \alpha_0' \leq 1, \quad\text{and}\quad \alpha \cdot  \alpha_i \leq 0,\quad \alpha \cdot  \alpha_i' \leq 0 \quad \text{for } i \geq1.
	    \end{equation}
	    Moreover, by \eqref{eq:6677} and \eqref{eq:114c} we have 
	    \begin{equation}\label{eq:curve0a}
	    \textstyle \alpha \cdot  \alpha_0 = {-} \sum\limits_{i=1}^u p_1\alpha \cdot  \alpha_i,\quad \alpha \cdot  \alpha_0' = {-} \sum\limits_{i=1}^v p_1'\alpha \cdot  \alpha_i',
	    \end{equation}
		and by \eqref{eq:6677} and \eqref{eq:114d} we have 
	    \begin{equation}\label{eq:curve1}
	    \textstyle\alpha \cdot  \cal{D}_0 = {-} \sum\limits_{i=1}^z q_1 \alpha \cdot  \cal{D}_i.
	    \end{equation}
	Now it follows from relations \eqref{eq:curve1a}, \eqref{eq:curve1b}, \eqref{eq:curve0a} and \eqref{eq:curve1} that
	$$\alpha \cdot  \alpha_0,\,\alpha \cdot  \alpha_0',\,\alpha \cdot \cal{D}_0\in\{0,1\}.$$
	All this, together with Lemma \ref{lem:map_v}(b), shows \eqref{eq:xalpha} and finishes the proof of (a).

\medskip

\emph{Step 2.}
	In this step we show (b) assuming that $R \in\{A_7^\smone, D_7^\smone\} $.
	
In this case, by Appendix \ref{sec:Sakai_appendix_A} we have $\Pi(R^\smperp) = \{ \alpha_0,\alpha_1\}$ and $\Comp (D) = \{ \cal{D}_0,\ldots,\cal{D}_7\}$. The equation \eqref{eq:114c} becomes
 \begin{equation}\label{eq:AD7-cases}
     \delta = \alpha_0+\alpha_1,
 \end{equation}
 	    and by Setup \ref{setup:allcases} we have
	    \begin{equation}\label{eq:666777}
		\textstyle \alpha_0^\smvee = \frac{1}{m}\alpha_0,\quad \alpha_1^\smvee = \frac{1}{m}\alpha_1,\quad c = \alpha_0^\smvee + \alpha_1^\smvee = \frac{1}{m}(\alpha_0 + \alpha_1) = \frac{1}{m} \delta,   
		\end{equation}
		where $c$ is the canonical central element of the affine root system of type $A_1^\smone$ associated to the roots $\{\alpha_0,\alpha_1\}$.  With notation from \eqref{eq:114d}, by Appendix \ref{sec:Sakai_appendix_A} we have
		$$(q_1,\dots,q_7)\coloneqq \begin{cases}
		(1,1,1,1,1,1,1), & \textrm{if } R=A_7^\smone,\\
		(1,2,2,2,2,1,1), & \textrm{if } R=D_7^\smone.
		\end{cases}$$

 Fix $w \in W(R^\smperp)$ and let $k$ be the length of $w$. Set
	$$ \mathcal S_R(w)\coloneqq \{-k-1,-k\}\times\Big\{(c_1,\ldots,c_7)\in\Z_{\leq0}^7\ \big| \ \textstyle \sum\limits_{i=1}^7 q_i c_i\in\{0,-1\}\Big\}$$ 
	if $w$ starts with $r_0$, and set
	$$ \mathcal S_R(w)\coloneqq \{k-1,k\}\times\Big\{(c_1,\ldots,c_7)\in\Z_{\leq0}^7\ \big| \ \textstyle \sum\limits_{i=1}^7 q_i c_i\in\{0,-1\}\Big\}$$
	if $w$ starts with $r_1$. We claim that for any $\alpha \in Q(E_8)$,
		\begin{equation}\label{eq:xalpha1}
		\alpha \in \mathcal{G}_R(w)\quad\Longleftrightarrow\quad \mathbf{v}(\alpha)\in\mathcal S_R(w).
		\end{equation}
	    Since the map $\mathbf{v}$ is injective, the claim immediately shows (b) when $R \in\{A_7^\smone, D_7^\smone\} $.

	    We now prove the claim \eqref{eq:xalpha1}. Fix $\alpha \in Q(E_8)$. As in Step 1 we have that \eqref{eq:curve1a} and \eqref{eq:curve1} hold. From these two we obtain that
	    \begin{equation}\label{eq:9089}
		\alpha \cdot \cal{D}_0\in\{0,1\}.	    
	    \end{equation}

	    Using the notation from Setup \ref{setup1}, assume first that $w$ starts with $r_0$. Then by Lemma \ref{lem:r1r0_n}(g) we have 
	    $$ w\big(\overline{\cal F(R^\smperp)}\big) = \big\{x\in V\mid \langle x , kc+\alpha_0^\smvee \rangle \leq 0,\ \langle x, (k-1) c+\alpha_0^\smvee \rangle \geq 0 \big\}. $$
		Therefore, by \eqref{eq:666777} we have that $x_{\alpha}\in w\big(\overline{\cal F(R^\smperp)}\big)$ is equivalent to
	    $$ x_{\alpha}\cdot (k\delta+\alpha_0) \geq 0\quad\text{and}\quad x_{\alpha}\cdot\big((k-1) \delta+\alpha_0\big) \leq 0, $$
		which, by \eqref{eq:6677}, \eqref{eq:AD7-cases} and by Lemma \ref{lem:somestuff}(c)(e), is equivalent to
	    $$ \alpha \cdot  \alpha_1 \geq {-}k-1\quad\text{and}\quad \alpha \cdot  \alpha_1 \leq {-}k. $$
	    Therefore, since $\alpha$ and $\alpha_1$ are integral classes, we conclude that this is equivalent to
	    \begin{equation}\label{eq:6677a}
	    \alpha \cdot  \alpha_1 \in\{-k-1,-k\}.
	    \end{equation}

	    If $w$ starts with $r_1$, then similarly as above, by symmetry we have 
	    $$ w\big(\overline{\cal F(R^\smperp)}\big) = \big\{x\in V\mid \langle x , kc+\alpha_1^\smvee \rangle \leq 0,\ \langle x, (k-1) c+\alpha_1^\smvee \rangle \geq 0 \big\}. $$
		Therefore, $x_{\alpha}\in w\big(\overline{\cal F(R^\smperp)}\big)$ is equivalent to
	    $$ x_{\alpha}\cdot (k\delta+\alpha_1) \geq 0\quad\text{and}\quad x_{\alpha}\cdot\big((k-1) \delta+\alpha_1\big) \leq 0, $$
		which, by \eqref{eq:6677}, \eqref{eq:AD7-cases} and by Lemma \ref{lem:somestuff}(c)(e), is equivalent to
	    $$ \alpha \cdot  \alpha_1 \leq k\quad\text{and}\quad \alpha \cdot  \alpha_1 \geq k-1. $$
	    Therefore, since $\alpha$ and $\alpha_1$ are integral classes, we conclude that this is equivalent to
	    \begin{equation}\label{eq:6677aa}
	    \alpha \cdot  \alpha_1 \in\{k-1,k\}.
	    \end{equation}
		From Lemma \ref{lem:map_v}(b), \eqref{eq:curve1}, \eqref{eq:9089}, \eqref{eq:6677a} and \eqref{eq:6677aa} we obtain \eqref{eq:xalpha1}.

\medskip

\emph{Step 3.}
		In this step we show (c). We use notation from Step 2. Recall the definitions of $F_{k,\ell}^0$ and $F_{k,\ell}^1$ from Setup \ref{setup2}.
		
		If $w$ starts with $r_0$, then by Step 2 we have $\alpha \cdot  \alpha_1 \in\{-k-1,-k\}$. If $\alpha \cdot  \alpha_1={-}k-1$, then
	    $$\langle x_{\alpha} , kc+\alpha_0^\smvee \rangle = 0\quad\text{and}\quad \big\langle x_{\alpha}, c \big\rangle < 0, $$
	    hence $x_{\alpha}\in F^0_{k,0}$ by Lemma \ref{lem:equalities_of_faces}(a). Analogously, if $\alpha \cdot  \alpha_1={-}k$, then $x_{\alpha}\in F^0_{k-1,0}$. This shows that $x_{\alpha}\in w(F_0)\cup w(F_1)$ by Lemma \ref{lem:equalities_of_faces}(c), and that the first coordinate of $\mathbf{v}(\alpha)$ decides whether $x_{\alpha}$ belongs to $w(F_0)$ or to $w(F_1)$ by Lemma \ref{lem:map_v}(b).
	    
	    If $w$ starts with $r_1$, then by Step 2 we have $\alpha \cdot  \alpha_1 \in\{k-1,k\}$. If $\alpha \cdot  \alpha_1=k-1$, then
	    $$\big\langle x_{\alpha} , (k-1) c+\alpha_1^\smvee \big\rangle = 0\quad\text{and}\quad \big\langle x_{\alpha}, c \big\rangle < 0, $$
	    hence $x_{\alpha}\in F^1_{0,k-1}$ by Lemma \ref{lem:equalities_of_faces}(b). Analogously, if $\alpha \cdot  \alpha_1=k$, then $x_{\alpha}\in F^1_{0,k}$. This shows that $x_{\alpha}\in w(F_0)\cup w(F_1)$ by Lemma \ref{lem:equalities_of_faces}(d), and that the first coordinate of $\mathbf{v}(\alpha)$ decides whether $x_{\alpha}$ belongs to $w(F_0)$ or to $w(F_1)$ by Lemma \ref{lem:map_v}(b). This finishes the proof of (c).

	\medskip
	
	\emph{Step 4.}
	In this step we show (b) assuming that $R=A_6^\smone$.
	
In this case, by Appendix \ref{sec:Sakai_appendix_A} we have $\Pi(R^\smperp) = \{ \alpha_0,\alpha_1,\alpha_0',\alpha_1'\}$ and $\Comp (D) = \{ \cal{D}_0,\ldots,\cal{D}_6\}$. The equation \eqref{eq:114c} becomes
 \begin{equation}\label{eq:A6-case}
     \delta = \alpha_0+\alpha_1 = \alpha_0'+\alpha_1',
 \end{equation}
  	    and by Setup \ref{setup:allcases} we have
	    \begin{equation}\label{eq:666777aaa}
		\textstyle \alpha_0^\smvee = \frac{1}{m}\alpha_0,\quad \alpha_1^\smvee = \frac{1}{m}\alpha_1,\quad c = \alpha_0^\smvee + \alpha_1^\smvee = \frac{1}{m}(\alpha_0 + \alpha_1) = \frac{1}{m} \delta,    
		\end{equation}
		where $c$ is the canonical central element of the affine root system of type $A_1^\smone$ associated to the roots $\{\alpha_0,\alpha_1\}$. With notation from \eqref{eq:114d}, by Appendix \ref{sec:Sakai_appendix_A} we have
		$$ (q_1,\dots,q_6)\coloneqq (1,1,1,1,1,1). $$

 	Fix $w  \in W_{\mathscr D}$ and let $k$ be the length of $w$. Set
		$$ \mathcal S_R(w)\coloneqq \{-k-1,-k\}\times\{0,-1\}\times\Big\{(c_1,\ldots,c_6)\in\Z_{\leq0}^6\ \big| \ \textstyle \sum\limits_{i=1}^6 c_i\in\{0,-1\}\Big\} $$
		if $w$ starts with $r_0$, and set
		$$ \mathcal S_R(w)\coloneqq \{k-1,k\}\times\{0,-1\}\times \Big\{(c_1,\ldots,c_6)\in\Z_{\leq0}^6\ \big| \ \textstyle \sum\limits_{i=1}^6 c_i\in\{0,-1\}\Big\} $$
		if $w$ starts with $r_1$. We claim that for any $\alpha \in Q(E_8)$,
		\begin{equation}\label{eq:xalpha2}
		\alpha \in \mathcal{G}_R(w)\quad\Longleftrightarrow\quad \mathbf{v}(\alpha)\in\mathcal S_R(w).
		\end{equation}
	    Since the map $\mathbf{v}$ is injective, the claim immediately shows (b) when $R=A_6^\smone$.

	    We now prove the claim \eqref{eq:xalpha2}. Fix $\alpha \in Q(E_8)$. As in Step 2 we have that
	    \begin{equation}\label{eq:9089a}
		\alpha \cdot \cal{D}_0\in\{0,1\},
	    \end{equation}
	    and \eqref{eq:curve1} becomes
	    \begin{equation}\label{eq:curve1aaa}
	    \textstyle\alpha \cdot  \cal{D}_0 = {-} \sum\limits_{i=1}^6 \alpha \cdot  \cal{D}_i.
	    \end{equation}

	    Using the notation from Setup \ref{setup1}, assume first that $w$ starts with $r_0$. Then by Lemma \ref{lem:r1r0_n}(g) and Proposition \ref{pro:decomposable}(f) we have
	    \begin{align*}
	    w\big(\overline{\cal F(R^\smperp)}\big) = \big\{x\in V\mid\ &\langle x , kc+\alpha_0^\smvee \rangle \leq 0,\ \langle x, (k-1) c+\alpha_0^\smvee \rangle \geq 0,\\
	    &\langle x , (\alpha_0')^\smvee \rangle \leq 0,\ \langle x, (\alpha_1')^\smvee \rangle\leq0  \big\}.
	    \end{align*}
		Then as in Step 2, by using \eqref{eq:A6-case} and \eqref{eq:666777aaa} we obtain that $x_{\alpha} \in w\big(\overline{\cal F(R^\smperp)}\big)$ is equivalent to
	    \begin{equation}\label{eq:6677a1}
	    \alpha \cdot  \alpha_1 \in \{-k-1,-k\}\quad\text{and}\quad \alpha \cdot  \alpha_1'\in\{0,-1\}.
	    \end{equation}
	    If $w$ starts with $r_1$, then analogously we obtain that $x_{\alpha} \in w\big(\overline{\cal F(R^\smperp)}\big)$ is equivalent to
	    \begin{equation}\label{eq:6677a2}
	    \alpha \cdot  \alpha_1 \in \{k-1,k\}\quad\text{and}\quad \alpha \cdot  \alpha_1'\in\{0,-1\}.
	    \end{equation}
		From Lemma \ref{lem:map_v}(b), \eqref{eq:9089a}, \eqref{eq:curve1aaa}, \eqref{eq:6677a1} and \eqref{eq:6677a2} we obtain \eqref{eq:xalpha2}.
		
		\medskip

\emph{Step 5.}
In this step we show (d). We use notation from Step 4.

Denote
$$\mathcal G_R'(w)\coloneqq\big\{\alpha \in Q(E_8)\mid x_\alpha \in \mathcal{N}_X\cap \big( w(F_0)\cup w(F_1)\big)\cap\overline{\cal F_{\mathscr D'}} \big\}.$$
Fix $\alpha\in\mathcal G_R(w)$. Similarly as in Step 3, by Step 4 we have that $ x_{\alpha}\in w(F_0)\cup w(F_1) $, and one can read off whether $x_{\alpha}$ belongs to $w(F_0)$ or to $w(F_1)$ by looking at the first coordinate of $\mathbf{v}(\alpha)$. Thus, since $x_{\alpha}\in w\big(\overline{\cal F(R^\smperp)}\big)\subseteq \overline{\cal F_{\mathscr D'}}$ by Proposition \ref{pro:decomposable}(f), we obtain $\mathcal G_R(w)\subseteq\mathcal G_R'(w)$. The converse inclusion follows since $\big( w(F_0)\cup w(F_1)\big)\cap\overline{\cal F_{\mathscr D'}}\subseteq w\big(\overline{\cal F(R^\smperp)}\big)$ by Proposition \ref{pro:decomposable}(f).
	    \end{proof}

The first important corollary of Proposition \ref{prop:finmanytranslat} is the following result.

\begin{cor}\label{cor:finite_(-1)_curves_chamber1}
   Let $X$ be a Sakai surface of type $R$. For each $\alpha\in Q(E_8)$ denote $ x_\alpha\coloneqq \textstyle \mathcal{E}_9 - \alpha - \big(\frac{1}{2}\alpha^2\big)\delta $. Then:
\begin{enumerate}[\normalfont (a)]
\item for each $\gamma\in \mathcal{M}_X$ there exists a unique $\alpha \in Q(E_8)$ such that $ \gamma=x_\alpha$,
\item for each subset $\mathcal Q\subseteq \mathcal M_X$ we have $|\mathcal Q|=\big|\big\{\alpha \in Q(E_8)\mid x_\alpha \in \mathcal{Q}\big\}\big|$,
\item if $R\notin\{A_8^\smone,D_8^\smone,E_8^\smone\} $, then the set $\mathcal{N}_X \cap \overline{\mathcal F(R^\smperp)}$ is finite,
\item if $R\notin \{A_6^\smone, A_7^\smone, A_8^\smone, D_7^\smone, D_8^\smone, E_8^\smone\} $ and if $D\in|{-}K_X|$, then $W(R^\smperp)$ acts on $\cal N_X$ with finitely many orbits.
\end{enumerate}   
\end{cor}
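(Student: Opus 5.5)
Parts (a) and (b) follow directly from Lemma \ref{lem:translationsatE9}, combined with the identity \eqref{eq:117aa}. Formula \eqref{eq:117aa} shows that $t_\alpha(\mathcal E_9)=x_\alpha$ for every $\alpha\in Q(E_8)$. So Lemma \ref{lem:translationsatE9} says precisely that every $\gamma\in\mathcal M_X$ equals $x_\alpha$ for a unique $\alpha\in Q(E_8)$, which is (a). For (b), the assignment $\alpha\mapsto x_\alpha$ is injective on $Q(E_8)$ by Lemma \ref{lem:translationsatE9a}. By (a) its image contains $\mathcal M_X$. Restricted to the preimage of $\mathcal Q$, it is therefore a bijection onto $\mathcal Q$, and the two cardinalities agree (whether finite or infinite).

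Part (c) is then a direct consequence of Proposition \ref{prop:finmanytranslat}(a). Since $\mathcal N_X\subseteq\mathcal M_X$, apply (b) with $\mathcal Q\coloneqq\mathcal N_X\cap\overline{\mathcal F(R^\smperp)}$. This identifies $|\mathcal Q|$ with the cardinality of $\{\alpha\in Q(E_8)\mid x_\alpha\in\mathcal N_X\cap\overline{\mathcal F(R^\smperp)}\}$, which is finite by Proposition \ref{prop:finmanytranslat}(a). One point to check is that Proposition \ref{prop:finmanytranslat} is stated under Setup \ref{setup:allcases} and Setup \ref{setup2}. For part (a) of that proposition, however, only the description of $\overline{\mathcal F(R^\smperp)}$ and the root data of \S\ref{subsec:Rperp0} are used, and these are available for every $R\notin\{A_8^\smone,D_8^\smone,E_8^\smone\}$.

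For (d), I would use the excluded list of types. Under that assumption, Lemma \ref{lem:weylpreservesnefgen1}(e) shows that $W(R^\smperp)$ preserves $\mathcal N_X$, so $W(R^\smperp)$ acts on $\mathcal N_X$. By Lemma \ref{lem:weylpreservesnefgen1}(f), every orbit meets the set $\mathcal N_X\cap\overline{\mathcal F(R^\smperp)}$. That set is finite by (c), so there are only finitely many orbits.

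I expect no step to be a real obstacle, since the genuine work sits in Proposition \ref{prop:finmanytranslat}. The only delicate point is bookkeeping: checking that the hypotheses of the cited lemmas (the type exclusions and the lattice preservation in Lemma \ref{lem:weylpreservesnefgen1}(e)) match those in (c) and (d).
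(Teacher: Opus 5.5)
Your proposal is correct and follows the paper's proof essentially step for step: (a) from Lemma~\ref{lem:translationsatE9} and \eqref{eq:117aa}, (b) by counting through a bijection, (c) via (b) and Proposition~\ref{prop:finmanytranslat}(a), and (d) by moving each element of $\mathcal N_X$ into the finite set $\mathcal N_X\cap\overline{\mathcal F(R^\smperp)}$. The only difference is cosmetic: for (d) you cite Lemma~\ref{lem:weylpreservesnefgen1}(f) directly, whereas the paper re-derives that statement inline from Lemma~\ref{lem:effconeHalpensurface1}(e), Lemma~\ref{lem:weylpreservesnefgen1}(e) and Proposition~\ref{prop:Titsfundchamber}.
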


\begin{proof}
Part (a) follows from Lemma \ref{lem:translationsatE9} and from \eqref{eq:117aa}. Part (b) follows immediately from (a). If $R\notin\{A_8^\smone,D_8^\smone,E_8^\smone\} $, by (b) we have
	    $$ \big|\mathcal{N}_X\cap\overline{\mathcal F(R^\smperp)}\big|=\big|\big\{\alpha \in Q(E_8)\mid x_\alpha \in \mathcal{N}_X\cap\overline{\mathcal F(R^\smperp)}\big\}\big|, $$
	    which is finite by Proposition \ref{prop:finmanytranslat}(a). This proves (c).

Now we show (d). We have $\mathcal N_X\subseteq\mathcal T(R^\smperp)$ by Lemma \ref{lem:effconeHalpensurface1}(e), and $W(R^\smperp)$ preserves $\mathcal{N}_X$ by Lemma \ref{lem:weylpreservesnefgen1}(e). Therefore, by Proposition \ref{prop:Titsfundchamber}, for every $x\in\mathcal N_X$ there exists $w\in W(R^\smperp)$ such that $w(x)\in \mathcal{N}_X\cap\overline{\mathcal F(R^\smperp)}$. Then (d) follows immediately from (c).
	   \end{proof}

The second important corollary of Proposition \ref{prop:finmanytranslat} is the following.

\begin{cor}\label{cor:finiteness}
    Let $X$ be a Sakai surface of type $R \in \{A_6^\smone,A_7^\smone,D_7^\smone \}$. Assume Setup \ref{setup:allcases} and Setup \ref{setup2}. Let $w \in W_{\mathscr{D}}$. Then the following statements hold.
\begin{enumerate}[\normalfont (a)]
\item We have $ \big|\cal N_X \cap w \big( \overline{\cal F(R^\smperp)} \big)\big|=2 $.
\item If $R\in \{ A_7^\smone, D_7^\smone \}$, then:
\begin{enumerate}[\normalfont (a)]
\item[$\mathrm{(b)}_1$] $ \cal N_X \cap w \big( \overline{\cal F(R^\smperp)} \big) = \cal N_X \cap \big( w(F_0)\cup w(F_1)\big) $,
\item[$\mathrm{(b)}_2$] $\big|\cal N_X \cap w(F_0)\big|=1$ and $\big|\cal N_X \cap w(F_1)\big|=1$,
\item[$\mathrm{(b)}_3$] if $F$ is a face of $\overline{\mathcal F_{\mathscr D_{k,\ell}}}$ for some non-negative integers $k$ and $\ell$, then $\big|\cal N_X \cap F\big|=1$.
\end{enumerate}
\item If $R=A_6^\smone$, then:
\begin{enumerate}[\normalfont (a)]
\item[$\mathrm{(c)}_1$] $ \cal N_X \cap w \big( \overline{\cal F(R^\smperp)} \big) = \cal N_X \cap \big( w(F_0)\cup w(F_1)\big)\cap \overline{\cal{F}_{\mathscr{D}'}} $,
\item[$\mathrm{(c)}_2$] $\big|\cal N_X \cap w(F_0)\cap \overline{\cal{F}_{\mathscr{D}'}}\big|=1$ and $\big|\cal N_X \cap w(F_1)\cap \overline{\cal{F}_{\mathscr{D}'}}\big|=1$,
\item[$\mathrm{(c)}_3$] if $F$ is a face of $\overline{\mathcal F_{\mathscr D_{k,\ell}}}$ for some non-negative integers $k$ and $\ell$, then $\big|\cal N_X \cap F\cap \overline{\cal{F}_{\mathscr{D}'}}\big|=1$.
\end{enumerate}
\end{enumerate}    
\end{cor}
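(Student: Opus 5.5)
The plan is to reduce every statement to a count of lattice points, using Corollary \ref{cor:finite_(-1)_curves_chamber1}(b) together with the explicit descriptions from the proof of Proposition \ref{prop:finmanytranslat}. By Corollary \ref{cor:finite_(-1)_curves_chamber1}(b), $\big|\cal N_X\cap w\big(\overline{\cal F(R^\smperp)}\big)\big|=|\mathcal G_R(w)|$. Since $\mathbf v$ is injective, it then suffices to count the elements of $\mathcal S_R(w)\cap\mathbf v\big(Q(E_8)\big)$. For this I would first show that $\mathbf v$ is actually a bijection onto $\Z^8$. Its inverse is given by $M_R^{-1}$, so I need $\mathbf v(Q(E_8))=\Z^8$. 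By Lemma \ref{lem:somestuff}(g) and Lemma \ref{lem:integral_elements}, the set $\mathcal S$ lies in $Q(E_8)$. Because $Q(E_8)$ is unimodular, I expect that $\mathcal S$ spans a finite-index sublattice and that the dual-basis vectors also lie in $Q(E_8)$. Concretely, I would check that for each coordinate vector $e_j\in\Z^8$ the element with coordinates $e_jM_R^{-1}$ lies in $\sum_{i=1}^8\Z\mathcal E_i\cap V$, and hence in $Q(E_8)$ by Lemma \ref{lem:integral_elements}. The cleanest route is probably the unimodularity of $E_8$: the map $\alpha\mapsto(\alpha\cdot s)_{s\in\mathcal S}$ is the pairing with the lattice spanned by $\mathcal S$, and it is surjective onto $\Z^8$ because $Q(E_8)$ is self-dual and $\mathcal S$ is primitive, i.e.\ extendable to a basis. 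I expect this surjectivity check to be the main obstacle. If needed, I would verify it case by case from Appendix \ref{sec:Sakai_appendix_A}, by checking that the $\Z$-span of $\mathcal S$ is saturated in $Q(E_8)$.

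Granting surjectivity, part (a) becomes a direct count. For $R\in\{A_7^\smone,D_7^\smone\}$, $\mathcal S_R(w)$ is a product of two sets. The first factor has two choices of first coordinate, $\{-k-1,-k\}$ or $\{k-1,k\}$. The second factor is the set of $(c_1,\dots,c_7)\in\Z_{\le0}^7$ with $\sum q_ic_i\in\{0,-1\}$, and this has exactly one element. Indeed, the sum $0$ forces all $c_i=0$. The sum $-1$ would need some $c_i=-1$ with $q_i=1$, and here the count depends on how many $q_i$ equal $1$. This seemingly gives more than one solution, which conflicts with the claimed total of $2$. So I must recheck the descriptions. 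Presumably the condition $\alpha\cdot\cal D_0\in\{0,1\}$ must be paired correctly with the condition $\alpha\cdot\alpha_1$, and the ambient constraint $x_\alpha^2=-1$ is automatic. More likely, $\cal N_X$-membership combined with the chamber conditions singles out exactly one admissible $c$ for each first coordinate. I would resolve this by recomputing, for $A_7^\smone$, how $\mathbf v$ relates to the intersections with $\cal D_j$, with careful attention to signs.

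Assuming the count yields exactly one element for each first coordinate, parts $\mathrm{(b)}_1$ and $\mathrm{(b)}_2$ follow from Proposition \ref{prop:finmanytranslat}$\mathrm{(c)}_1$ and $\mathrm{(c)}_2$. Each slice $\{m_{w,i}\}\times\Z^7$ contributes one element, and the integers $m_{w,0}\ne m_{w,1}$ give disjointness. For $\mathrm{(b)}_3$, every face of $\overline{\cal F_{\mathscr D_{k,\ell}}}$ is of the form $w(F_0)$ or $w(F_1)$ for suitable $w\in W_{\mathscr D}$, by Lemma \ref{lem:equalities_of_faces}, so $\mathrm{(b)}_2$ applies. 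The case $A_6^\smone$ goes identically, using $\mathrm{(d)}_1$, $\mathrm{(d)}_2$ and Proposition \ref{pro:decomposable}(f).
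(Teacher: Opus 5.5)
There is a genuine gap, and it sits exactly at the step you flagged as the main obstacle: the map $\mathbf v\colon Q(E_8)\to\Z^8$ is \emph{not} surjective, so the count cannot be carried out as you set it up.

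\textbf{Why $\mathbf v$ is not surjective.} The set $\mathcal S$ consists of $8$ linearly independent vectors of $Q(E_8)$ (Lemma \ref{lem:somestuff}(f)(g)). It therefore spans a full-rank sublattice $L$ with Gram matrix $M_R$. Since $Q(E_8)$ is unimodular, $[Q(E_8):L]^2=|\det M_R|$. From Appendix \ref{sec:Sakai_appendix_A} one gets
\begin{itemize}
\item $|\det M_R|=8\cdot 8$ for $A_7^\smone$ (note $\alpha_1^2=-8$),
\item $|\det M_R|=4\cdot 4$ for $D_7^\smone$,
\item $|\det M_R|=14\cdot 2\cdot 7$ for $A_6^\smone$.
\end{itemize}
So $L$ has index $8$, $4$ and $14$ respectively. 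In particular $\mathcal S$ is not primitive and does not extend to a basis. Identifying $\Z^8$ with $L^*$ via $\mathbf v$, the image $\mathbf v(Q(E_8))$ is a sublattice of $\Z^8$ of that same index.

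\textbf{What your sanity check was detecting.} The second factor of $\mathcal S_R(w)$ does not have one element. It has $8$ elements for $A_7^\smone$ (all $q_i=1$), and $4$ for $D_7^\smone$ ($c=0$, or exactly one of $c_1,c_6,c_7$ equal to $-1$). For $A_6^\smone$ the factor $\{0,-1\}\times\{c\}$ has $14$ elements. Surjectivity would therefore give $16$, $8$ and $28$ classes, contradicting (a). The discrepancy is not a sign problem, and no geometric condition is missing. By the proof of Proposition \ref{prop:finmanytranslat}, membership in $\mathcal N_X$ and in the chamber is already completely encoded in $\mathcal S_R(w)$.

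\textbf{The missing idea: a lattice-membership test.} One has
$\mathbf v(\mathcal G_R(w))=\{\beta\in\mathcal S_R(w)\mid \beta M_R^{-1}\in Q(E_8)\}$.
By Lemma \ref{lem:integral_elements}, this condition is equivalent to $\beta M_R^{-1}A_R\in\Z^9$, where $A_R$ records the $\mathcal E_0,\dots,\mathcal E_8$-coordinates of the elements of $\mathcal S$. One must then check by explicit computation that, for each of the two admissible first coordinates, exactly one candidate survives.

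For $A_7^\smone$ the condition reduces to
\[
-a+5c_1+2c_2+7c_3+4c_4+c_5+6c_6+3c_7\equiv 0 \pmod 8,
\]
and the eight candidates for $(c_1,\dots,c_7)$ realise each residue class modulo $8$ exactly once. The analogous congruences are modulo $4$ for $D_7^\smone$, and modulo $14$ for $A_6^\smone$, where the parity of $a$ also fixes $b$. This is the content of Lemma \ref{lem:aux_finiteness}, on which the paper's proof rests.

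Once this is in place, your derivations of $\mathrm{(b)}_1$, $\mathrm{(b)}_2$, $\mathrm{(b)}_3$ and their $A_6^\smone$ analogues from Proposition \ref{prop:finmanytranslat} and Lemma \ref{lem:equalities_of_faces} are fine and agree with the paper.
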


\begin{proof}
We first note that (a) follows from $\mathrm{(b)}_1$, $\mathrm{(b)}_2$, $\mathrm{(c)}_1$ and $\mathrm{(c)}_2$.

Now we show that $\mathrm{(b)}_3$ follows from $\mathrm{(b)}_2$. To that end, assume first that $F=F_{k,\ell}^0$. If $k=0$, then $F=F_0$ by Lemma \ref{lem:equalities_of_faces}(a), hence $\mathrm{(b)}_3$ is an immediate consequence of $\mathrm{(b)}_2$ when $w$ is the identity. If $k>0$, then by Lemma \ref{lem:equalities_of_faces}(c) there exists $u\in W_{\mathscr D}$ of length $k$ such that $F=u(F_j)$, hence $\mathrm{(b)}_3$ is an immediate consequence of $\mathrm{(b)}_2$ for $w =   u$. If $F=F_{k,\ell}^1$, then the proof is analogous, by replacing Lemma \ref{lem:equalities_of_faces}(a) by Lemma \ref{lem:equalities_of_faces}(b), and by replacing Lemma \ref{lem:equalities_of_faces}(c) by Lemma \ref{lem:equalities_of_faces}(d).

The proof that $\mathrm{(c)}_3$ follows from $\mathrm{(c)}_2$ is analogous to the previous paragraph.

In the remainder of the proof we show $\mathrm{(b)}_1$, $\mathrm{(b)}_2$, $\mathrm{(c)}_1$ and $\mathrm{(c)}_2$. For each $\alpha\in Q(E_8)$ denote $ x_\alpha\coloneqq \textstyle \mathcal{E}_9 - \alpha - \big(\frac{1}{2}\alpha^2\big)\delta $, and set
$$ \mathcal G_R(w)\coloneqq \big\{\alpha \in Q(E_8)\mid x_\alpha \in \mathcal{N}_X\cap w\big(\overline{\mathcal F(R^\smperp)}\big)\big\}. $$

\medskip

\emph{Step 1.}
In this step we show $\mathrm{(b)}_1$ and $\mathrm{(c)}_1$.

Note first that by Corollary \ref{cor:finite_(-1)_curves_chamber1}(b) we have
\begin{equation}\label{eq:45678}
\big|\mathcal{N}_X\cap w \big( \overline{\cal F(R^\smperp)} \big)\big|=|\mathcal G_R(w)|.
\end{equation}

Assume first that $R\in \{ A_7^\smone, D_7^\smone \}$. By Corollary \ref{cor:finite_(-1)_curves_chamber1}(b) we have
\begin{align}\label{eq:firstset}
\big|\cal N_X &\cap \big( w(F_0)\cup w(F_1)\big)\big| \\
&= \big|\big\{\alpha \in Q(E_8)\mid x_\alpha \in \mathcal{N}_X\cap \big( w(F_0)\cup w(F_1)\big)\big\}\big|.\notag
\end{align}
We now deduce from \eqref{eq:45678}, \eqref{eq:firstset} and from Proposition \ref{prop:finmanytranslat}$\mathrm{(c)}_1$ that
$$ \big|\cal N_X \cap w \big( \overline{\cal F(R^\smperp)} \big)\big| = \big|\cal N_X \cap \big( w(F_0)\cup w(F_1)\big)\big| , $$
and this cardinality is finite by Proposition \ref{prop:finmanytranslat}(b). Since $ \cal N_X \cap \big( w(F_0)\cup w(F_1)\big)\subseteq \cal N_X \cap w \big( \overline{\cal F(R^\smperp)} \big) $, part $\mathrm{(b)}_1$ follows.

Now assume that $R=A_6^\smone$. By Corollary \ref{cor:finite_(-1)_curves_chamber1}(b) we have 
\begin{align}\label{eq:secondset}
\big|\cal N_X &\cap \big( w(F_0)\cup w(F_1)\big)\cap \overline{\cal{F}_{\mathscr{D}'}}\big| \\
&= \big|\big\{\alpha \in Q(E_8)\mid x_\alpha \in \mathcal{N}_X\cap \big( w(F_0)\cup w(F_1)\big)\cap \overline{\cal{F}_{\mathscr{D}'}}\big\}\big|.\notag
\end{align}
We now deduce from \eqref{eq:45678}, \eqref{eq:secondset} and from Proposition \ref{prop:finmanytranslat}$\mathrm{(d)}_1$ that
$$ \big|\cal N_X \cap w \big( \overline{\cal F(R^\smperp)} \big)\big| = \big|\cal N_X \cap \big( w(F_0)\cup w(F_1)\big)\cap \overline{\cal{F}_{\mathscr{D}'}}\big|, $$
and this cardinality is finite by Proposition \ref{prop:finmanytranslat}(b). Since 
$$ \cal N_X \cap \big( w(F_0)\cup w(F_1)\big)\cap \overline{\cal{F}_{\mathscr{D}'}}\subseteq \cal N_X \cap w(\mathcal F_{\mathscr D})\cap \overline{\cal{F}_{\mathscr{D}'}} = \cal N_X \cap w \big( \overline{\cal F(R^\smperp)} \big) $$
by Proposition \ref{pro:decomposable}(f), part $\mathrm{(c)}_1$ follows.

\medskip

\emph{Step 2.}
In this step we show $\mathrm{(b)}_2$ and $\mathrm{(c)}_2$. By \eqref{eq:firstset}, \eqref{eq:secondset} and by Proposition \ref{prop:finmanytranslat}(c)(d), it suffices to prove that
$$\mathbf{v}\big(\mathcal G_R(w)\big)=\{\beta_1,\beta_2\},$$
where the first coordinate of $\beta_1$ is different from the first coordinate of $\beta_2$. Here, $ \mathbf{v} $ is the injective map from Lemma \ref{lem:map_v}. 

In this step we view elements of the $\Q$-vector space $Q(E_8)\otimes_\Z\Q$ as rows of coordinates in the basis $(v_1,\dots,v_8)\coloneqq (\alpha_1,\ldots,\alpha_u,\alpha_1',\dots,\alpha_v',\cal{D}_1,\dots,\cal{D}_z)$, and we identify it in this way with $\Q^8$. By Lemma \ref{lem:somestuff}(g), for each $i=1,\dots 8$, there exists $(a_{i,0},\dots,a_{i,8})\in\Z^9$ such that $v_i=a_{i,0}\mathcal E_0+\dots+a_{i,8}\mathcal E_8$. Let $A_R$ be the $8\times 9$ matrix whose $i$-th row is the vector $(a_{i,0},\dots,a_{i,8})$. Recall the set $\mathcal S_R(w)$ defined in Steps 2 and 4 of the proof of Proposition \ref{prop:finmanytranslat}, and let $M_R$ be the invertible $8\times8$ matrix defined in \eqref{eq:matrix_M_R}.

The map $\mathbf{v}$ induces the bijective map
$$ \mathbf{v}_\Q \colon  Q(E_8)\otimes_\Z\Q \to \Q^8, \qquad \alpha  \mapsto \alpha \cdot M_R. $$
For each $\beta\in\mathcal S_R(w)$, define
$$\alpha_{\beta} \coloneqq \mathbf{v}_\Q^{-1}(\beta)=\beta\cdot M_R^{-1}\in Q(E_8)\otimes_\Z\Q\simeq\Q^8$$
and 
	\begin{equation}\label{eq:xbetas}
	(x_{\beta,0},\dots,x_{\beta,8})\coloneqq \alpha_{\beta}\cdot A_R\in\Q^9.
	\end{equation}
	Then by the definition of $A_R$, $(x_{\beta,0},\dots,x_{\beta,8})$ is precisely the vector of coefficients of $\alpha_\beta$, when written as a linear combination of the elements $\mathcal E_0,\dots,\mathcal E_8$. Therefore, Lemma \ref{lem:integral_elements} yields
\begin{equation}\label{eq:equiv}
\alpha_\beta\in Q(E_8)\quad\Longleftrightarrow\quad (x_{\beta,0},\dots,x_{\beta,8})\in\Z^9.
\end{equation}

Now, by the relations \eqref{eq:xalpha1} and \eqref{eq:xalpha2} from the proof of Proposition \ref{prop:finmanytranslat} we have $ \mathcal G_R(w)=\big\{\alpha_\beta\mid \beta\in\mathcal S_R(w)\big\}\cap Q(E_8) $, hence \eqref{eq:xbetas} and \eqref{eq:equiv} give
	$$ \mathcal G_R(w)=\big\{\alpha_\beta\mid \beta\in\mathcal S_R(w)\text{ and }\beta\cdot M_R^{-1}\cdot A_R\in\Z^9\big\}. $$
In other words,
$$ \mathbf{v}\big(\mathcal G_R(w)\big)=\{ \beta\in\mathcal S_R(w) \mid \beta\cdot M_R^{-1}\cdot A_R\in\Z^9\}.$$
Then the result follows from Lemma \ref{lem:aux_finiteness}.
\end{proof}

\section{Orbits in exceptional cases}\label{sec:orbits}

To prove Theorem \ref{thm:main2} when the surface $X$ is of type $A_6^\smone$, $A_7^\smone$ or $D_7^\smone$, the finiteness of the number of orbits of $\Cr(X,D)$ on $\mathcal N_X$ is not sufficient. We need a finer decomposition of the set of $(-1)$-curves with respect to the subgroups $\widehat W_{\mathscr D_{k,m-k-1}}$ from Setup \ref{setup:allcases}, which is the main result of this section. 

\begin{thm}\label{thm:mainthmsection8}
    Let $X$ be a generic Sakai surface of type $A_6^\smone$, $A_7^\smone$ or $D_7^\smone$, and let $D\in|{-}K_X|$. Assume Setup \ref{setup:allcases}, and let $\cal N_X$ be the set of $({-}1)$-curves on $X$. Then:
    \begin{enumerate}[\normalfont (a)]
    	\item $\Omega_R\cap\mathcal N_X$ is finite,
        \item $\Cr(X,D)$ acts on $\cal N_X$ with finitely many orbits,
        \item for each $k\in \{ 0,\dots, m-1\}$ there exists a finite subset $\mathcal R_k\subseteq\mathcal N_X$ such that
$$ \textstyle \cal N_X = \bigcup_{k=0}^{m-1} \widehat W_{\mathscr D_{k,m-k-1}} \cdot \mathcal R_k. $$
    \end{enumerate} 
\end{thm}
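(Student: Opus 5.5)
The plan is to derive all three statements from Corollary \ref{cor:finiteness}, together with the description of $\Omega_R$ and of the subgroups $\widehat W_{\mathscr D_{k,m-k-1}}$ given in Setup \ref{setup:allcases} and Setup \ref{setup2}. We use the fact (to be established in Lemma \ref{lem:OmegaR}) that $\Omega_R$ is a finite union $\bigcup_{i=1}^{2m}v_i\big(\overline{\mathcal F(R^\smperp)}\big)$ with $v_i\in W_{\mathscr D}$. Part (a) then follows immediately from Corollary \ref{cor:finiteness}(a): each $\mathcal N_X\cap v_i\big(\overline{\mathcal F(R^\smperp)}\big)$ has exactly two elements, so $|\Omega_R\cap\mathcal N_X|\leq 4m$.

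For (b), we first record that $\mathcal N_X\subseteq\mathcal T(R^\smperp)$ by Lemma \ref{lem:effconeHalpensurface1}(e). Next we show that $\Omega_R$ is a fundamental domain, or at least a set of orbit representatives, for the action of $\Cr(X,D)$ on $\mathcal T(R^\smperp)$; this is the content of Proposition \ref{prop:fund_domain_Cr(X,D)_Tits_cone}, which may be used. Concretely, $\Cr(X,D)$ has index $2m$ in $W(R^\smperp)$, and the $v_i$ are coset representatives. Hence every $x\in\mathcal N_X$ can be written as $x=g(y)$ with $g\in\Cr(X,D)$ and $y\in\Omega_R$. Since $\Cr(X,D)$ preserves $\mathcal N_X$ by Lemma \ref{lem:weylpreservesnefgen1}(b), we get $y\in\Omega_R\cap\mathcal N_X$. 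Part (a) then gives finitely many orbits.

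Part (c) is the subtle step, because the groups $\widehat W_{\mathscr D_{k,m-k-1}}$ are generated by reflections that do not preserve the lattice $N^1(X)$, so they need not preserve $\mathcal N_X$ a priori. The plan is as follows.
\begin{enumerate}[(1)]
\item Take $\mathcal R_k:=\mathcal N_X\cap\widehat{\mathcal F}_{\mathscr D_{k,m-k-1}}$. This set is finite because $\overline{\mathcal F_{\mathscr D_{k,m-k-1}}}$ is a union of finitely many chambers $w\big(\overline{\mathcal F_{\mathscr D}}\big)$, and Corollary \ref{cor:finiteness}(a) applies to each of them.
\item For the inclusion ``$\supseteq$'', use the explicit affine $A_1^\smone$ description in Setup \ref{setup2}. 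The simple coroots of $\mathscr D_{k,m-k-1}$ should be of the form $k c+\alpha_0^\smvee$ and $(m-k-1)c+\alpha_1^\smvee$, and the corresponding reflections are conjugates $u r_j u^{-1}$ of $r_0,r_1$. Then check, via Lemma \ref{lem:weylpreservesnefgen1}(d), that these reflections map the unique element of $\mathcal N_X$ lying on the relevant face (Corollary \ref{cor:finiteness}$\mathrm{(b)}_3$, $\mathrm{(c)}_3$) to an integral class. I would try to show more strongly that the two walls of each chamber of $\mathscr D_{k,m-k-1}$ each carry exactly one $(-1)$-class, and that the reflection swaps the classes on adjacent chambers, so the orbit stays inside $\mathcal N_X$.
\item For the inclusion ``$\subseteq$'', take $x\in\mathcal N_X$ and locate it in a chamber $w\big(\overline{\mathcal F_{\mathscr D}}\big)$, with $w=(r_0r_1)^{j}\cdots$. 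The length of $w$ modulo $m$ selects the index $k$.
\item Then use Proposition \ref{prop:Titsfundchamber} for $\mathscr D_{k,m-k-1}$, applied within $\mathcal T$, to move $x$ into $\widehat{\mathcal F}_{\mathscr D_{k,m-k-1}}$.
\end{enumerate}
The main obstacle is step (2) together with the bookkeeping in step (3): choosing $k$ so that the face carrying $x$, by Corollary \ref{cor:finiteness}$\mathrm{(b)}_2$, is a wall of a chamber of $\mathscr D_{k,m-k-1}$, and verifying integrality under that subgroup. For $R=A_6^\smone$, the factor $W_{\mathscr D'}$ is handled separately, since it preserves $N^1(X)$ (its coroots equal its roots), and Proposition \ref{pro:decomposable}(f) lets it commute with everything above.
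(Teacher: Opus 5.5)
Your parts (a) and (b) follow the paper's argument exactly. The gap is in part (c), step (1). With $\mathcal R_k\coloneqq\mathcal N_X\cap\widehat{\mathcal F}_{\mathscr D_{k,m-k-1}}$ the asserted equality is false, because $\widehat W_{\mathscr D_{k,m-k-1}}\cdot\mathcal R_k\not\subseteq\mathcal N_X$.

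\begin{itemize}
\item By Proposition \ref{prop:WDkl}(b), $\overline{\mathcal F_{\mathscr D_{k,m-k-1}}}$ is a union of $m$ chambers of $W_{\mathscr D}$. So for $m\geq 2$ it has internal walls $\{\langle x,jc+\alpha_0^\smvee\rangle=0\}$ with $k-m<j<k$.
\item Each internal wall carries exactly one $(-1)$-class $\eta$, by Corollary \ref{cor:finiteness}. This $\eta$ lies in the interior of the big chamber, hence in your $\mathcal R_k$.
\item Since $\langle\eta,c\rangle=-\frac1m$, you get $\langle\eta,kc+\alpha_0^\smvee\rangle=\frac{j-k}{m}\notin\Z$. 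Hence $r_{k\delta+\alpha_0}(\eta)=\eta-\frac{j-k}{m}(k\delta+\alpha_0)$ is in general not integral.
\item Concretely, take $R=D_7^\smone$, $m=2$, $k=0$. The unique $\eta\in\mathcal N_X\cap F_1$ satisfies $\langle\eta,\alpha_0^\smvee\rangle=\langle\eta,c+\alpha_1^\smvee\rangle=-\frac12$, so $\eta\in\mathcal F_{\mathscr D_{0,1}}$. But $r_0(\eta)=\eta+\frac12\alpha_0\notin N^1(X)$, since $\alpha_0=3\mathcal E_0-\mathcal E_1-\dots$ is primitive.
\end{itemize}

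Your step (2) verifies integrality only for classes on the two boundary faces, where the pairings with the simple coroots are $0$ and $-1$. That is exactly where integrality holds; for interior classes it fails. This is the whole point of the paper's remark that $W(R^\smperp)$ does not preserve $N^1(X)$.

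The fix is to take only the boundary-face classes:
\[
\mathcal R_k\coloneqq\mathcal N_X\cap\big(F^0_{k,m-k-1}\cup F^1_{k,m-k-1}\big),
\]
intersected with $\overline{\mathcal F_{\mathscr D'}}$ when $R=A_6^\smone$. This is the paper's choice. It is finite by Corollary \ref{cor:finiteness}$\mathrm{(b)}_3$, $\mathrm{(c)}_3$, and your step (2) then becomes the paper's Step 2.

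With this $\mathcal R_k$, your plan for the inclusion $\mathcal N_X\subseteq\bigcup_k\widehat W_{\mathscr D_{k,m-k-1}}\cdot\mathcal R_k$ works, and it is a genuinely different route from the paper's.
\begin{itemize}
\item By Corollary \ref{cor:finiteness}$\mathrm{(b)}_1$, $\mathrm{(c)}_1$, any $x\in\mathcal N_X$ (first moved by $W_{\mathscr D'}$ in the $A_6^\smone$ case) lies on a wall $\{\langle x,jc+\alpha_0^\smvee\rangle=0\}$.
\item Choose $k\equiv j\pmod m$. It is this wall index $j$ that must be reduced mod $m$, not the length of $w$.
\item Then $x$ pairs integrally with both simple coroots of $\mathscr D_{k,m-k-1}$. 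Since the Cartan matrix is integral, the whole $W_{\mathscr D_{k,m-k-1}}$-orbit of $x$ is integral. By Lemma \ref{lem:weylpreservesnefgen1}(d) it therefore lies in $\mathcal N_X$.
\item Since $\mathcal T_{\mathscr D_{k,m-k-1}}=\mathcal T_{\mathscr D}$, you can move $x$ into $\overline{\mathcal F_{\mathscr D_{k,m-k-1}}}$. There its two pairings are non-positive integers summing to $-1$, so it lies on a boundary face, i.e.\ in $\mathcal R_k$.
\end{itemize}

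The paper instead proceeds as follows. It moves $x$ into $\Omega_R$ by $\Cr(X,D)$, and places it on a face of index at most $m$ via Lemmas \ref{lem:OmegaR} and \ref{lem:equalities_of_faces}. It treats index $m$ separately, writing the class as $r_0(\eta)$ or $r_1(\zeta)$ and using uniqueness on faces. Finally it transports back using $\Cr(X,D)\subseteq\widehat W_{\mathscr D_{k,m-k-1}}$ (Proposition \ref{prop:fund_domain_G}(a)). Your corrected route avoids both that index-$m$ case analysis and Proposition \ref{prop:fund_domain_Cr(X,D)_Tits_cone}.
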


We prove Theorem \ref{thm:mainthmsection8} at the end of the section.

We start with the following lemmas, which characterise the set $\Omega_R$ as the finite union of translates of the negative fundamental chamber $\overline{\cal F(R^\smperp)}$, as well as a fundamental domain for the action of $\Cr(X,D)$ on $\cal T(R^\smperp)$.

\begin{lem}\label{lem:OmegaR}
     Let $X$ be a generic Sakai surface of type $R$ which is different from $ A_8^\smone$, $D_8^\smone$ and $E_8^\smone$. Assume Setup \ref{setup:allcases}. Then there exist finitely many elements $u_1,\dots,u_{2m}\in W_{\mathscr D}$ of length at most $m$, such that
$$ \textstyle \Omega_R = \bigcup_{j=1}^{2m} u_j\big(\overline{\cal F(R^\smperp)}\big) . $$
\end{lem}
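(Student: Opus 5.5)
The idea is to reduce to an elementary description of the chambers of an affine root system of type $A_1^\smone$. There are two cases.

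\emph{Case (a) of Setup \ref{setup:allcases}.} Here $\Omega_R=\overline{\cal F(R^\smperp)}$, so the statement holds trivially: take every $u_j$ to be the identity (the number $2m$ plays no role, and repetitions are allowed).

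\emph{Case $R\in\{A_7^\smone,D_7^\smone\}$.} Then $\mathscr D$ is of type $A_1^\smone$ with $\alpha_0^\smvee+\alpha_1^\smvee=c$. For $n\in\Z$, I will work with the half-spaces and hyperplanes
$$H_n^{\le}\coloneqq\{x\mid\langle x,nc+\alpha_0^\smvee\rangle\le0\},\qquad H_n\coloneqq\{x\mid\langle x,nc+\alpha_0^\smvee\rangle=0\}.$$
Since $\ell c+\alpha_1^\smvee=(\ell+1)c-\alpha_0^\smvee$, every wall appearing in Setup \ref{setup2} is one of the $H_n$. In particular,
$$\overline{\cal F(R^\smperp)}=\big\{x\mid \langle x,\alpha_0^\smvee\rangle\le0\le\langle x,-c+\alpha_0^\smvee\rangle\big\}$$
is the slab between $H_0$ and $H_{-1}$. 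Likewise, from the definition of the subroot system $\mathscr D_{m,m-1}$ in Setup \ref{setup2}, I expect
$$\overline{\cal F_{\mathscr D_{m,m-1}}}=\big\{x\mid \langle x,mc+\alpha_0^\smvee\rangle\le0,\ \langle x,mc-\alpha_0^\smvee\rangle\le0\big\},$$
the slab between $H_m$ and $H_{-m}$. Here I am reading the simple coroots of $\mathscr D_{m,m-1}$ as $mc+\alpha_0^\smvee$ and $(m-1)c+\alpha_1^\smvee$, consistent with the faces $F^0_{k,\ell},F^1_{k,\ell}$.

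Next I use Lemma \ref{lem:r1r0_n}(g). If $w\in W_{\mathscr D}$ has length $k$ and starts with $r_0$, it gives that $w(\overline{\cal F(R^\smperp)})$ is the slab between $H_{k-1}$ and $H_k$. By the symmetry $\alpha_0\leftrightarrow\alpha_1$, if $w$ starts with $r_1$, then $w(\overline{\cal F(R^\smperp)})$ is the slab between $H_{-k}$ and $H_{-k-1}$. I therefore take the following elements, each of length at most $m$:
\begin{itemize}
\item $u_1=\mathrm{id}$;
\item the alternating words starting with $r_0$ of lengths $1,\dots,m$;
\item the alternating words starting with $r_1$ of lengths $1,\dots,m-1$.
\end{itemize}
There are $1+m+(m-1)=2m$ of them, and their images of $\overline{\cal F(R^\smperp)}$ are exactly the $2m$ consecutive slabs between $H_{-m}$ and $H_m$.

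It remains to show that the union of these consecutive closed slabs equals the big slab; one inclusion is clear. For the other, note that every point $x$ of the big slab satisfies $\langle x,c\rangle\le0$, since $2mc$ is the sum of the two defining coroots. There are two subcases.
\begin{itemize}
\item If $\langle x,c\rangle<0$, then $n\mapsto\langle x,nc+\alpha_0^\smvee\rangle$ is strictly decreasing. So $x$ lies between $H_{n-1}$ and $H_n$ for some $n$ with $-m<n\le m$, that is, in one of the small slabs.
\item If $\langle x,c\rangle=0$, then $\langle x,\alpha_0^\smvee\rangle\le0$ and $\langle x,-\alpha_0^\smvee\rangle\le0$ force $\langle x,\alpha_0^\smvee\rangle=0$. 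Hence $x$ lies in $\overline{\cal F(R^\smperp)}$ itself.
\end{itemize}

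\emph{Case $R=A_6^\smone$.} Here $\Omega_R=\overline{\cal F_{\mathscr D_{m,m-1}}}\cap\overline{\cal F_{\mathscr D'}}$. For $u\in W_{\mathscr D}$, Proposition \ref{pro:decomposable}(f) together with passing to closures gives
$$u\big(\overline{\cal F(R^\smperp)}\big)=u\big(\overline{\cal F_{\mathscr D}}\big)\cap\overline{\cal F_{\mathscr D'}}.$$
Intersecting the equality $\overline{\cal F_{\mathscr D_{m,m-1}}}=\bigcup_j u_j\big(\overline{\cal F_{\mathscr D}}\big)$ from the previous case with $\overline{\cal F_{\mathscr D'}}$ then gives the claim.

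\emph{Main obstacle.} The delicate step is bookkeeping, not a new idea: matching the exact sign conventions and indices of Lemma \ref{lem:r1r0_n}(g) and of the definition of $\mathscr D_{k,\ell}$ in Setup \ref{setup2}, so that the slabs are indeed indexed as above and $2m$ elements of length at most $m$ suffice.
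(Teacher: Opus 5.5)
Your proposal is correct and takes essentially the paper's route. The paper chooses the same $2m$ words $w_{0,0},\dots,w_{0,m},w_{1,1},\dots,w_{1,m-1}$, obtains $\overline{\mathcal F_{\mathscr D_{m,m-1}}}=\bigcup_j u_j(\overline{\mathcal F_{\mathscr D}})$ by citing Proposition \ref{prop:WDkl}(b), and for $R=A_6^\smone$ intersects with $\overline{\mathcal F_{\mathscr D'}}$ via Proposition \ref{pro:decomposable}(f), exactly as you do. The only difference is that you derive that decomposition directly from Lemma \ref{lem:r1r0_n}(g) with your slab argument, whereas the paper proves it by induction using Lemma \ref{lem:simplelemma}(b).
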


\begin{proof}
If $ R\notin\{A_6^\smone, A_7^\smone, D_7^\smone \} $, then the lemma follows immediately by the definition of $\Omega_R$.

In the remainder of the proof we assume that $ R\in\{A_6^\smone, A_7^\smone, D_7^\smone \} $. By Proposition \ref{prop:WDkl}(b), there are finitely many elements $u_1,\dots,u_{2m}\in W_{\mathscr D}$ of length at most $m$, such that
$$ \textstyle\overline{\cal F_{\mathscr D_{m,m-1}}} = \bigcup_{j=1}^{2m} u_j(\overline{\cal F_{\mathscr D}}) . $$
This immediately settles the lemma if $R\in \{A_7^\smone, D_7^\smone \}$.

Finally, we assume that $R=A_6^\smone$. Since $u_j \in W_{\mathscr D} \subseteq W(R^\smperp)$ for all $j$, by Proposition \ref{pro:decomposable}(f) we obtain
$$ \Omega_R = \textstyle \overline{\cal F_{{\mathscr D}_{m, m-1}}}\cap \overline{\cal{F}_{\mathscr{D}'}} =  \big( \bigcup_{j=1}^{2m} u_j(\overline{\cal F_{\mathscr D}}) \big) \cap \overline{\cal{F}_{\mathscr{D}'}} = \textstyle\bigcup_{j=1}^{2m} u_j\big(\overline{\cal F(R^\smperp)}\big), $$
as desired.
\end{proof}

\begin{prop}\label{prop:fund_domain_Cr(X,D)_Tits_cone}
     Let $X$ be a generic Sakai surface of type $R$ which is different from $ A_8^\smone$, $D_8^\smone$ and $E_8^\smone$. Assume Setup \ref{setup:allcases}, and let $D\in|{-}K_X|$. Then $\Omega_R$ is a fundamental domain for the action of $\Cr(X,D)$ on $\cal T(R^\smperp)$.
\end{prop}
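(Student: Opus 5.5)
We split into two cases according to Setup \ref{setup:allcases}.

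\emph{Case 1: $R\notin\{A_6^\smone,A_7^\smone,D_7^\smone\}$.} Here $\Cr(X,D)=W(R^\smperp)$ and $\Omega_R=\overline{\cal F(R^\smperp)}$. The statement is then exactly Proposition \ref{prop:Titsfundchamber} applied to the root data $R^\smperp$ of Proposition \ref{prop:subrootdata000}(a). We only need to note that $\cal F(R^\smperp)\neq\emptyset$; this holds because $\Pi(R^\smperp)$ (or $\widetilde\Pi$ in the decomposable case) is linearly independent, so the defining inequalities can be satisfied.

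\emph{Case 2: $R\in\{A_6^\smone,A_7^\smone,D_7^\smone\}$.} Write $G\coloneqq\Cr(X,D)$. By Setup \ref{setup:allcases}, $G=\langle (r_0r_1)^m\rangle$, or $G=\langle (r_0r_1)^m\rangle\cdot W_{\mathscr D'}$ when $R=A_6^\smone$. We first treat the affine $A_1^\smone$ factor $\mathscr D$. The group $W_{\mathscr D}$ is infinite dihedral, with elements $(r_0r_1)^j$ and $(r_0r_1)^jr_0$ for $j\in\Z$. The subgroup $H\coloneqq\langle (r_0r_1)^m\rangle$ is infinite cyclic of index $2m$. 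A set of coset representatives for $W_{\mathscr D}/H$ is given by the $2m$ reduced words of length at most $m$ which start with $r_0$ or $r_1$ and are "balanced", i.e.\ of the form $(r_0r_1)^j$ and $(r_0r_1)^jr_0$ for $0\le j<m$, rewritten symmetrically. We take these to be exactly the elements $u_1,\dots,u_{2m}$ of Lemma \ref{lem:OmegaR}, equivalently of Proposition \ref{prop:WDkl}(b), so that $\overline{\cal F_{\mathscr D_{m,m-1}}}=\bigcup_j u_j(\overline{\cal F_{\mathscr D}})$.

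The key step is to check that these $u_j$ form a complete and irredundant set of coset representatives for $H$. Concretely, we plan to do this via the length and first-letter description of Lemma \ref{lem:r1r0_n}(g): the chamber $u(\overline{\cal F_{\mathscr D}})$ is the strip between the hyperplanes $\langle x,kc+\alpha_0^\smvee\rangle=0$ and $\langle x,(k-1)c+\alpha_0^\smvee\rangle=0$, or the analogous strip with $\alpha_1$. Then $(r_0r_1)^m$ acts on the Tits cone $\cal T_{\mathscr D}$ as a shift of these strips by $2m$ chambers. We expect this to be the main obstacle: one has to be careful with the direction and parity conventions, and Proposition \ref{prop:WDkl} should supply precisely this information.

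Granting the coset description, the rest is formal. For \emph{covering}, take $x\in\cal T(R^\smperp)$. By Proposition \ref{prop:Titsfundchamber} there is $w\in W(R^\smperp)$ with $x\in w(\overline{\cal F(R^\smperp)})$. In the $A_6^\smone$ case, write $w=w_1w_2$ with $w_1\in W_{\mathscr D}$ and $w_2\in W_{\mathscr D'}$ using Proposition \ref{pro:decomposable}(a). Next write $w_1=h u_j$ with $h\in H$. Then $x\in h w_2 u_j(\overline{\cal F(R^\smperp)})$, and $hw_2\in G$ with $u_j(\overline{\cal F(R^\smperp)})\subseteq\Omega_R$ by Lemma \ref{lem:OmegaR}.

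For the \emph{interior condition}, suppose $g\in G$ and $g(\Int\Omega_R)\cap\Int\Omega_R\neq\emptyset$. Since $\Omega_R$ is a finite union of closed chambers whose interiors are open and pairwise disjoint, an open set meeting $\Int\Omega_R$ meets the interior of some chamber. Hence there exist $i,j$ with $g u_i(\cal F(R^\smperp))\cap u_j(\cal F(R^\smperp))\neq\emptyset$. By simple transitivity of $W(R^\smperp)$ on chambers (Proposition \ref{prop:Titsfundchamber}) this gives $gu_i=u_j$. Projecting to the $W_{\mathscr D}$ factor, we get $u_j u_i^{-1}\in H$, and since the $u_j$ are distinct coset representatives, $i=j$. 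Hence $g=1$. This simple transitivity argument also settles Case 1 directly.
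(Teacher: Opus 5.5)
Your proof is correct, and in Case 2 it takes a different route from the paper; Case 1 is the same.

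\emph{Comparison.} The paper never uses cosets. For $A_7^\smone$ and $D_7^\smone$ it quotes Proposition \ref{prop:fund_domain_G}(c), whose proof goes as follows:
\begin{enumerate}[(i)]
\item it realises $\overline{\mathcal{F}_{\mathscr D_{m,m-1}}}$ as the closed chamber of the reflection subgroup $W_{\mathscr D_{m,m-1}}\ni (r_0r_1)^{2m}$;
\item it obtains covering from $\mathcal{T}_{\mathscr D}=\mathcal{T}_{\mathscr D_{0,m-1}}$ and Lemma \ref{lem:Wklcalculation}(b)(c);
\item it handles odd powers of $(r_0r_1)^m$ by a parity argument.
\end{enumerate}
For $A_6^\smone$ the paper treats the two factors separately: it first moves into $\overline{\mathcal{F}_{\mathscr D'}}$ by $W_{\mathscr D'}$, then applies a power of $(r_0r_1)^m$, using Proposition \ref{pro:decomposable}(e) and Lemma \ref{lem:wandw'invariant}(a).

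You use only three ingredients: simple transitivity of $W(R^\smperp)$ on its chambers, Lemma \ref{lem:OmegaR}, and the fact that the $2m$ elements $u_j$ form a transversal of $H$ in $W_{\mathscr D}$. This is shorter and uniform across all cases. The paper's detour pays off later, because the subroot systems $\mathscr D_{k,\ell}$ are exactly what Section \ref{sec:orbits} and Proposition \ref{prop:ratpol_ugly_cases} need.

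\emph{The step you call the main obstacle.} This is a two-line computation. Put $t\coloneqq r_0r_1$. Then $r_0tr_0=t^{-1}$, so $H$ is normal, and $r_1=t^{-1}r_0$. Hence
\[
w_{0,2a}=t^a,\quad w_{0,2a+1}=t^ar_0,\quad w_{1,2a}=t^{-a},\quad w_{1,2a+1}=t^{-a-1}r_0.
\]
So the elements in Proposition \ref{prop:WDkl}(b) for $(k,\ell)=(m,m-1)$ are:
\begin{enumerate}[(i)]
\item $t^a$ with $-\lfloor (m-1)/2\rfloor\le a\le\lfloor m/2\rfloor$;
\item $t^ar_0$ with $-\lfloor m/2\rfloor\le a\le\lfloor (m-1)/2\rfloor$.
\end{enumerate}
In each family the exponents are $m$ consecutive integers. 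Moreover $t^aH=t^bH$, resp.\ $t^ar_0H=t^br_0H$, holds if and only if $m\mid a-b$. So every element $t^a$ or $t^ar_0$ of $W_{\mathscr D}$ lies in exactly one coset $u_jH$.

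\emph{Three small repairs.}
\begin{enumerate}[(i)]
\item ``Projecting to the $W_{\mathscr D}$ factor'' needs $W_{\mathscr D}\cap W_{\mathscr D'}=\{1\}$, which Proposition \ref{pro:decomposable}(a) does not state. You can avoid it: from $g=hw'$ and $gu_i=u_j$ you get $w'=h^{-1}u_ju_i^{-1}\in W_{\mathscr D}$. Since $w'$ preserves $\mathcal{F}_{\mathscr D}$ by Proposition \ref{pro:decomposable}(e), Proposition \ref{prop:Titsfundchamber} for $\mathscr D$ gives $w'=1$. Then $u_ju_i^{-1}\in H$, so $i=j$ and $g=1$.
\item Record the trivial inclusion $G\cdot\Omega_R\subseteq\mathcal{T}(R^\smperp)$.
\item In the decomposable case, $\mathcal{F}(R^\smperp)\neq\emptyset$ does not follow from linear independence of $\widetilde\Pi$ alone, because $x\cdot\alpha_0'$ is then forced by \eqref{eq:113a}. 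Take $x\cdot\alpha_i=1$ for all $i$ and $x\cdot\alpha_j'$ small for $j\ge1$. You need this nonemptiness to identify the interior of $u_j\big(\overline{\mathcal{F}(R^\smperp)}\big)$ with $u_j\big(\mathcal{F}(R^\smperp)\big)$.
\end{enumerate}
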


\begin{proof}
If $ R\notin\{A_6^\smone, A_7^\smone, D_7^\smone \} $, then by \cite[Theorem 26(a)]{Sak01} we have $\Cr(X,D)=W(R^\smperp)$, and $\Omega_R=\overline{\cal F(R^\smperp)}$ by the definition of $\Omega_R$. Then the lemma follows immediately by Proposition \ref{prop:Titsfundchamber}. If $R \in \{A_7^\smone, D_7^\smone \}$, then the result follows immediately from Proposition \ref{prop:fund_domain_G}(c) and by Setup \ref{setup:allcases}.

Thus, in the remainder of the proof we assume that $ R = A_6^\smone$. In particular, we have $m=7$ and
$$\Omega_R = \overline{\cal F_{{\mathscr D}_{m,m-1}}}\cap \overline{\cal F_{\mathscr D'}},$$
and set
$$g\coloneqq (r_0r_1)^m\in W_{\mathscr D}.$$

\medskip

\emph{Step 1.}
By Lemma \ref{lem:OmegaR} there are finitely many elements $u_1,\dots,u_{2m}\in W_{\mathscr D}$ such that
$$ \textstyle \Omega_R = \bigcup_{j=1}^{2m} u_j\big(\overline{\cal F(R^\smperp)}\big)\subseteq \cal T(R^\smperp) . $$
This, together with $\Cr(X,D) \subseteq W(R^\smperp)$, proves that
$$ \Cr(X,D) \cdot \Omega_R \subseteq \cal T(R^\smperp). $$

\medskip

\emph{Step 2.}
In this step, we show that
$$ \cal T (R^\smperp) \subseteq \Cr(X,D) \cdot \Omega_R. $$
Together with Step 1, this will show that $ \Cr(X,D) \cdot \Omega_R = \cal T(R^\smperp) $.

Let $x \in \cal T (R^\smperp)$. By Corollary \ref{cor:Titssubcone} we have $ \cal T (R^\smperp)\subseteq \cal{T}_{\mathscr{D}'}$, hence there exists an element $w' \in W_{\mathscr D'}$ such that $w'(x) \in \overline{\cal F_{\mathscr D'}}$. Clearly we have $w'(x) \in \cal T(R^\smperp)$, since $W_{\mathscr D'} \subseteq W(R^\smperp)$. Since $ \cal T (R^\smperp)\subseteq \cal{T}_{\mathscr{D}}$ again by Corollary \ref{cor:Titssubcone}, we conclude that $w'(x) \in \cal{T}_{\mathscr{D}}\cap\overline{\cal F_{\mathscr D'}}$.

Then by Proposition \ref{prop:fund_domain_G}(c) there exists $k \in \Z$ such that $g^k w'(x) \in \overline{\cal F_{{\mathscr D}_{m,m-1}}}\cap g^k (\overline{\cal F_{\mathscr D'}})$. Since $g^k (\overline{\cal F_{\mathscr D'}})=\overline{\cal F_{\mathscr D'}}$ by Proposition \ref{pro:decomposable}(e), we obtain
$$ g^k\big(w'(x)\big) \in \overline{\cal F_{{\mathscr D}_{m,m-1}}} \cap \overline{\cal F_{\mathscr D'}} = \Omega_R. $$
As $g^k w' \in \langle g\rangle \cdot W_{\mathscr D'} = \Cr(X,D)$ by Setup \ref{setup:allcases}, we conclude.

\medskip

\emph{Step 3.}
	Assume that there exists $w \in \Cr(X,D)$ such that 
	$$ w \big(\operatorname{Int}(\Omega_R)\big) \cap \operatorname{Int}(\Omega_R) \neq \emptyset. $$
    We will show in this step that then $w$ is necessarily the identity. This, together with Steps 1 and 2, will finish the proof of the proposition.

Note that $\operatorname{Int}(\Omega_R)=\cal F_{{\mathscr D}_{m,m-1}}\cap\cal F_{{\mathscr D'}}$. Since $\langle g\rangle \cdot W_{\mathscr D'} = \Cr(X,D)$ by Setup \ref{setup:allcases}, there exist $n \in \Z$ and $w' \in W_{\mathscr D'}$ such that
$$ w = g^n w' = w' g^n, $$
where the last equality follows from Proposition \ref{pro:decomposable}(a). Lemma \ref{lem:wandw'invariant}(a) gives
$$ w (\cal F_{{\mathscr D}_{m,m-1}}) = g^n w' (\cal F_{{\mathscr D}_{m,m-1}}) = g^n (\cal F_{{\mathscr D}_{m,m-1}}), $$
and by Proposition \ref{pro:decomposable}(e) we have
$$ w (\cal F_{{\mathscr D'}}) = w'g^n (\cal F_{{\mathscr D'}}) = w' (\cal F_{{\mathscr D'}}). $$
Therefore,
\begin{align*}
\emptyset&\neq w \big(\operatorname{Int}(\Omega_R)\big) \cap \operatorname{Int}(\Omega_R) = \big( w (\cal F_{{\mathscr D}_{m,m-1}}) \cap w (\cal F_{{\mathscr D'}}) \big) \cap \big( \cal F_{{\mathscr D}_{m,m-1}}\cap\cal F_{{\mathscr D'}} \big) \\
&= \big(g^n (\cal F_{{\mathscr D}_{m,m-1}}) \cap\cal F_{{\mathscr D}_{m,m-1}}\big)\cap \big(w' (\cal F_{{\mathscr D'}}) \cap\cal F_{{\mathscr D'}}\big).
\end{align*}
In particular, we conclude that $g^n (\cal F_{{\mathscr D}_{m,m-1}}) \cap\cal F_{{\mathscr D}_{m,m-1}}\neq\emptyset$ and $w' (\cal F_{{\mathscr D'}}) \cap\cal F_{{\mathscr D'}}\neq\emptyset$. Thus, $g^n$ is the identity by Proposition \ref{prop:fund_domain_G}(c), and $w'$ is the identity by Proposition \ref{prop:Titsfundchamber}. This concludes the proof.
\end{proof}

The following result is the main technical result of this section.

\begin{lem}\label{lem:alternative_unique_gamma0}
    Let $X$ be a generic Sakai surface of type  $A_6^\smone$, $A_7^\smone$ or $D_7^\smone$. Assume Setup \ref{setup:allcases}. Then for each $k\in \{ 0,\dots, m-1\}$ there exists a finite subset $\mathcal R_k\subseteq\mathcal N_X\cap \widehat{\mathcal F}_{\mathscr D_{k,m-k-1}}$ such that
$$ \textstyle \cal N_X = \bigcup_{k=0}^{m-1} \widehat W_{\mathscr D_{k,m-k-1}} \cdot \mathcal R_k. $$
\end{lem}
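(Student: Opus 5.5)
We first reduce to the rank-one piece $\mathscr D$ of type $A_1^\smone$. In the case $R=A_6^\smone$ we move everything into $\overline{\cal F_{\mathscr D'}}$. Since $\cal N_X\subseteq\cal T(R^\smperp)\subseteq\cal T_{\mathscr D'}$ by Lemma \ref{lem:effconeHalpensurface1}(e) and Corollary \ref{cor:Titssubcone}, every $\gamma\in\cal N_X$ has some $w'\in W_{\mathscr D'}$ with $w'(\gamma)\in\overline{\cal F_{\mathscr D'}}$. Moreover $w'(\gamma)\in\cal N_X$: the reflections $r_{\alpha_j'}$ satisfy $(\alpha_j')^\smvee=\alpha_j'$, so they preserve $N^1(X)$, and Lemma \ref{lem:weylpreservesnefgen1}(d) applies. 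Since $W_{\mathscr D'}\subseteq\widehat W_{\mathscr D_{k,m-k-1}}$ for every $k$, it suffices to treat classes in $\cal N_X\cap\overline{\cal F_{\mathscr D'}}$. In the cases $A_7^\smone,D_7^\smone$ nothing needs to be done at this stage.

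Next we locate such a class $\gamma$ in a chamber of $W_{\mathscr D}$. By Proposition \ref{prop:titscone} (or Lemma \ref{lem:effconeHalpensurface1}(e)) and Proposition \ref{prop:Titsfundchamber}, there is $w\in W_{\mathscr D}$ with $\gamma\in w\big(\overline{\cal F(R^\smperp)}\big)$. For $A_6^\smone$ this uses Proposition \ref{pro:decomposable}(f) together with $\gamma\in\overline{\cal F_{\mathscr D'}}$. By Corollary \ref{cor:finiteness}$\mathrm{(b)}_1$,$\mathrm{(b)}_2$ (respectively $\mathrm{(c)}_1$,$\mathrm{(c)}_2$), $\gamma$ is the unique element of $\cal N_X$ lying on one of the two walls $w(F_0)$, $w(F_1)$. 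The walls of the $W_{\mathscr D}$-chambers in the infinite dihedral picture are indexed by a single integer $n$ along the line. Let $\gamma_n$ denote the unique $(-1)$-class on the $n$-th wall (intersected with $\overline{\cal F_{\mathscr D'}}$ for $A_6^\smone$). Then every element of $\cal N_X\cap\overline{\cal F_{\mathscr D'}}$ equals some $\gamma_n$.

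The key step is to cover the integers $n$ by finitely many orbits of the subgroups $W_{\mathscr D_{k,m-k-1}}$. By Setup \ref{setup2} and Proposition \ref{prop:WDkl}, $\mathscr D_{k,m-k-1}$ is the $A_1^\smone$-subsystem with simple roots of the form $\alpha_0+k\delta$ and $\alpha_1+(m-k-1)\delta$, suitably normalised. Its chamber $\overline{\cal F_{\mathscr D_{k,m-k-1}}}$ is the union of $m$ consecutive $W_{\mathscr D}$-chambers, and its walls are two $W_{\mathscr D}$-walls, say with indices $n_k$ and $n_k+m$ (mod $2m$ pattern). The group $W_{\mathscr D_{k,m-k-1}}$ is generated by reflections in root hyperplanes, so it permutes the $W_{\mathscr D}$-walls by affine reflections of the index line. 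Its orbit on walls therefore consists of the walls whose index lies in two residue classes modulo $2m$. Letting $k$ range over $0,\dots,m-1$, these residue classes exhaust $\Z/2m$. We verify this using the explicit formula for the faces $F^0_{k,\ell},F^1_{k,\ell}$ in Lemma \ref{lem:equalities_of_faces}.

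We also need that the reflections map the $(-1)$-class on one wall to the $(-1)$-class on the image wall. The reflections in $W_{\mathscr D_{k,m-k-1}}$ have coroots $\frac1m(\alpha_0+k\delta)$ and similar, but their reflections still preserve $N^1(X)$. The reason is that for $\gamma$ on the reflecting hyperplane the image is $\gamma$ itself. More generally, integrality follows because these reflections lie in the subgroup $W_{\mathscr D_{k,m-k-1}}$, which preserves the lattice; this should come from Setup \ref{setup2}, and it is the main obstacle to check. Granting integrality, Lemma \ref{lem:weylpreservesnefgen1}(d) shows the image lies in $\cal N_X$, and uniqueness on each wall identifies it with $\gamma_{n'}$. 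Finally we set $\mathcal R_k$ to be the at most two classes $\gamma_n$ lying on the two walls of $\widehat{\mathcal F}_{\mathscr D_{k,m-k-1}}$, which by Corollary \ref{cor:finiteness}$\mathrm{(b)}_3$/$\mathrm{(c)}_3$ exist and are unique. These sets are finite, contained in $\cal N_X\cap\widehat{\mathcal F}_{\mathscr D_{k,m-k-1}}$, and by the above their $\widehat W_{\mathscr D_{k,m-k-1}}$-orbits cover $\cal N_X$.
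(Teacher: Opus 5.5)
Your route is genuinely different from the paper's. The paper moves a class into the fundamental domain $\Omega_R$ of $\Cr(X,D)$ (Proposition \ref{prop:fund_domain_Cr(X,D)_Tits_cone}, Lemma \ref{lem:OmegaR}), treats chambers of length at most $m$ by hand, and then uses $\Cr(X,D)\subseteq\widehat W_{\mathscr D_{k,m-k-1}}$. You instead index the walls as $H_n=\{x\mid\langle x,nc+\alpha_0^\smvee\rangle=0,\ \langle x,c\rangle<0\}$, so that $F^0_{k,m-k-1}=H_k$ and $F^1_{k,m-k-1}=H_{k-m}$, and $W_{\mathscr D_{k,m-k-1}}$ acts by $n\mapsto 2k-n$ and $n\mapsto 2k-2m-n$. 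The resulting orbits $k+2m\Z$ and $k+m+2m\Z$ do exhaust $\Z$ as $k$ runs over $0,\dots,m-1$, and this would bypass Sakai's description of $\Cr(X,D)$. Your reduction to $\overline{\mathcal F_{\mathscr D'}}$ for $A_6^\smone$ is also fine.

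However, the step everything rests on is justified by a false claim: $W_{\mathscr D_{k,m-k-1}}$ does \emph{not} preserve $N^1(X)$. For example, $r_0\in W_{\mathscr D_{0,m-1}}$, and $\langle\mathcal E_9,\alpha_0^\smvee\rangle=-\frac1m$ by Lemma \ref{lem:somestuff}(c). Hence $r_0(\mathcal E_9)=\mathcal E_9+\frac1m\alpha_0\notin N^1(X)$, although $\mathcal E_9\in\mathcal N_X$. More conceptually, $r_0\in W_{\mathscr D_{0,m-1}}$ and $r_1\in W_{\mathscr D_{m-1,0}}$, so if all these groups preserved the lattice, so would $W_{\mathscr D}$, which is exactly what fails for these three types. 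Nothing in Setup \ref{setup2} gives integrality. Your remark that a class on a reflecting hyperplane is fixed controls only one of the two generators. Without integrality you cannot apply Lemma \ref{lem:weylpreservesnefgen1}(d). You therefore get neither $s(\gamma)\in\mathcal N_X$, which you need to identify $s(\gamma)$ with the unique class on the image wall, nor the inclusion $\widehat W_{\mathscr D_{k,m-k-1}}\cdot\mathcal R_k\subseteq\mathcal N_X$ that the equality in the statement requires.

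The missing idea is that integrality holds only along the specific orbits, and this is what singles out the subgroup $W_{\mathscr D_{k,m-k-1}}$. Since $\gamma\cdot\delta=1$ and $c=\frac1m\delta$, every $\gamma\in\mathcal N_X$ has $\langle\gamma,c\rangle=-\frac1m$. So for $\gamma$ on $H_n$ one gets $\langle\gamma,kc+\alpha_0^\smvee\rangle=\frac{n-k}{m}$ and $\langle\gamma,(m-k-1)c+\alpha_1^\smvee\rangle=-1-\frac{n-k}{m}$. These are integers exactly when $n\equiv k\pmod m$, and for $\gamma\in\mathcal R_k$ (that is, $n\in\{k,k-m\}$) they are $0$ and $-1$ in some order. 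Because the Cartan matrix of $\mathscr D_{k,m-k-1}$ is integral, induction on word length then gives
$W_{\mathscr D_{k,m-k-1}}\cdot\gamma\subseteq\gamma+\Z(k\delta+\alpha_0)+\Z\big((m-k-1)\delta+\alpha_1\big)\subseteq N^1(X)$.
This is Step 2 of the paper's proof.

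With this inserted, your argument closes. Lemma \ref{lem:weylpreservesnefgen1}(d) gives $s(\gamma)\in\mathcal N_X$ for $s\in W_{\mathscr D_{k,m-k-1}}$. For $A_6^\smone$ the image stays in $\overline{\mathcal F_{\mathscr D'}}$ by Proposition \ref{pro:decomposable}(e). Uniqueness on walls (Corollary \ref{cor:finiteness}$\mathrm{(b)}_3$, $\mathrm{(c)}_3$) identifies the image. Finally, your observation that $W_{\mathscr D'}$ preserves $N^1(X)$ handles the $W_{\mathscr D'}$ factor.
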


\begin{proof}
By Setup \ref{setup:allcases}, the canonical central element of the affine root system of type $A_1^\smone$ associated to the roots $\{\alpha_0,\alpha_1\}$ is
$$\textstyle c=\alpha_0^\smvee+\alpha_1^\smvee=\frac1m(\alpha_0+\alpha_1)=\frac1m\delta.$$
Then for every $\gamma\in\mathcal N_X$ we have $\langle \gamma, mc \rangle={-}m\gamma \cdot c={-}\gamma \cdot \delta={-}1$, hence
\begin{equation}\label{eq:gamma}
\textstyle \langle \gamma, c \rangle={-}\frac{1}{m}\quad\text{for every }\gamma\in\mathcal N_X.
\end{equation}

We prove the lemma in several steps.

\medskip

\emph{Step 1.}
   In this step we construct the sets $\mathcal R_k$.
   
For $k\in\{0,\dots,m-1\}$, and with notation from Setup \ref{setup2}, denote
   $$ \mathcal R_k\coloneqq \begin{cases}
   \cal N_X \cap (F_{k,m-k-1}^0\cup F_{k,m-k-1}^1),& \text{if } R\in\{A_7^\smone,D_7^\smone\},\\
   \cal N_X \cap (F_{k,m-k-1}^0\cup F_{k,m-k-1}^1)\cap \overline{\mathcal F_{\mathscr D'}},& \text{if } R=A_6^\smone.
   \end{cases} $$
   Then by Corollary \ref{cor:finiteness}$\mathrm{(b)}_3$\mbox{}$\mathrm{(c)}_3$, the set $\mathcal R_k$ is finite, and clearly $\mathcal R_k\subseteq\mathcal N_X\cap \widehat{\mathcal F}_{\mathscr D_{k,m-k-1}}$.
   
   \medskip
      
      \emph{Step 2.}
   Fix $k\in \{ 0, \dots, m-1 \}$. In this step we show that
   $$ W_{\mathscr D_{k,m-k-1}} \cdot \mathcal R_k \subseteq \cal{N}_X. $$
By Lemma \ref{lem:weylpreservesnefgen1}(d), it suffices to show that $ W_{\mathscr D_{k,m-k-1}} \cdot \mathcal R_k \subseteq N^1(X) $.

To that end, fix $\gamma\in \mathcal R_k$. Since $\gamma \in \cal N_X$, by \eqref{eq:gamma} we have
$$ {-}1 = \langle \gamma, m c \rangle = \langle \gamma, kc + \alpha_0^\smvee \rangle + \big\langle \gamma, (m-k-1)c + \alpha_1^\smvee\big\rangle, $$
and since $\gamma$ lies on $F_{k,m-1}^0$ or on $F_{k,m-1}^1$ by construction, the equation above yields that either
\begin{equation}\label{eq:gamma_intersect_c1a}
\langle \gamma, kc + \alpha_0^\smvee \rangle=0\quad\text{and}\quad \big\langle\gamma, (m-k-1)c + \alpha_1^\smvee \big\rangle = {-}1,
\end{equation}
or
\begin{equation}\label{eq:gamma_intersect_c1b}
\langle \gamma, kc + \alpha_0^\smvee \rangle={-}1\quad\text{and}\quad \big\langle\gamma, (m-k-1)c + \alpha_1^\smvee \big\rangle = 0.
\end{equation}
If we have \eqref{eq:gamma_intersect_c1a}, then $ r_{k\delta + \alpha_0}(\gamma) = \gamma\in N^1(X)$ and $ r_{(m-k-1)\delta + \alpha_1 }(\gamma) = \gamma + (m-k-1)\delta + \alpha_1 \in N^1(X)$. Similarly, if we have \eqref{eq:gamma_intersect_c1b}, then $ r_{k\delta + \alpha_0}(\gamma) = \gamma+k\delta+\alpha_0\in N^1(X)$ and $ r_{(m-k-1)\delta + \alpha_1 }(\gamma) = \gamma \in N^1(X)$. Since $W_{\mathscr D_{k,m-k-1}} = \big\langle r_{k\delta + \alpha_0}, r_{(m-k-1)\delta + \alpha_1 }\big\rangle$ and
$$ \quad W_{\mathscr{D}_{k,m-k-1}}\cdot\{ k\delta + \alpha_0, (m-k-1)\delta + \alpha_1 \}\subseteq N^1(X), $$
we deduce that $ W_{\mathscr D_{k,m-k-1}} \cdot \{ \gamma \}\subseteq N^1(X)$, as desired.

\medskip

\emph{Step 3.}
Fix $k\in \{ 0, \dots, m-1 \}$. In this step we show that
   $$ \widehat W_{\mathscr D_{k,m-k-1}} \cdot \mathcal R_k \subseteq \cal{N}_X. $$

	Indeed, this follows immediately from Step 2 if $R\in\{A_7^\smone,D_7^\smone\}$. If $R=A_6^\smone$, note that $\widehat W_{\mathscr D_{k,m-k-1}}=W_{\mathscr{D}'}\cdot W_{\mathscr D_{k,m-k-1}}$ by Proposition \ref{prop:WDkl}(a) and Proposition \ref{pro:decomposable}(a). Therefore, since $W_{\mathscr{D}'}\subseteq \Cr(X,D)$ by Setup \ref{setup:allcases}, by Step 2 and by Lemma \ref{lem:weylpreservesnefgen1}(b) we have
	$$ \widehat W_{\mathscr D_{k,m-k-1}} \cdot \mathcal R_k = (W_{\mathscr{D}'}\cdot W_{\mathscr D_{k,m-k-1}})\cdot\mathcal R_k\subseteq W_{\mathscr{D}'}\cdot \mathcal N_X\subseteq \cal{N}_X. $$

\medskip

\emph{Step 4.}
In this step we assume that $R\in \{ A_7^\smone, D_7^\smone \}$. Let $v \in W_{\mathscr D}$ be a word of length $\ell$ such that $\ell\leq m$. In this step we show that
$$ \textstyle \cal N_X \cap v\big(\overline{\cal F(R^\smperp)}\big) \subseteq \bigcup_{k=0}^{m-1} W_{\mathscr D_{k,m-k-1}} \cdot \mathcal R_k. $$

To that end, fix $\gamma\in\cal N_X \cap v\big(\overline{\cal F(R^\smperp)}\big)$. Then by Corollary \ref{cor:finiteness}$\mathrm{(b)}_1$ we have $\gamma\in\cal N_X \cap  \big(v(F_0)\cup v(F_1)\big)$. If $\ell = 0$, then $v$ is the identity, hence Lemma \ref{lem:equalities_of_faces}(a)(b) gives
\begin{align*}
\gamma &\in  \cal N_X \cap (F_0 \cup F_1) = \cal N_X \cap (F^0_{0,m-1} \cup F^1_{m-1,0}) \subseteq \mathcal{R}_0 \cup \mathcal{R}_{m-1} \\
 &\subseteq \bigcup_{k=0}^{m-1} W_{\mathscr D_{k,m-k-1}} \cdot \mathcal R_k.
\end{align*}

Therefore, we may assume that $\ell >0$. By Lemma \ref{lem:equalities_of_faces}(c)(d) we have $ v(F_0)\cup v(F_1)\subseteq F_{\ell,0}^0\cup F_{\ell-1,0}^0\cup F_{0,\ell}^1\cup F_{0,\ell-1}^1 $, and thus,
\begin{equation}\label{eq:8765}
\gamma\in\cal N_X \cap (F_{\ell,0}^0\cup F_{\ell-1,0}^0\cup F_{0,\ell}^1\cup F_{0,\ell-1}^1).
\end{equation} 

Assume first that $\ell<m$. Then by \eqref{eq:8765} and by Lemma \ref{lem:equalities_of_faces}(a)(b) we have
\begin{align*}
\gamma&\in\cal N_X \cap (F_{\ell,m-\ell-1}^0\cup F_{\ell-1,m-\ell}^0\cup F_{m-\ell-1,\ell}^1\cup F_{m-\ell,\ell-1}^1)\\
&\textstyle \subseteq \mathcal R_\ell\cup\mathcal R_{\ell-1}\cup\mathcal R_{m-\ell-1}\cup\mathcal R_{m-\ell}\subseteq \bigcup_{k=0}^{m-1} W_{\mathscr D_{k,m-k-1}} \cdot \mathcal R_k.
\end{align*}
Now assume that $\ell=m$. Then \eqref{eq:8765} becomes
$$ \gamma\in\cal N_X \cap (F_{m,0}^0\cup F_{m-1,0}^0\cup F_{0,m}^1\cup F_{0,m-1}^1).$$
If $\gamma\in\cal N_X \cap (F_{m-1,0}^0\cup F_{0,m-1}^1)$, then $\gamma\in\mathcal R_{m-1}\cup\mathcal R_0\subseteq \bigcup_{k=0}^{m-1} W_{\mathscr D_{k,m-k-1}} \cdot \mathcal R_k$.
Thus, we may assume that $\gamma\in\cal N_X \cap (F_{m,0}^0\cup F_{0,m}^1)$. 

Assume first that $\gamma\in\cal N_X \cap F_{m,0}^0$. By Corollary \ref{cor:finiteness}$\mathrm{(b)}_3$ there exists $\eta\in \mathcal N_X\cap F_{0,m-1}^1$, and in particular, $\eta\in\mathcal R_0$. Since $r_0\in W_{\mathscr D_{0,m-1}}$, by Step 2 we have
\begin{equation}\label{eq:67890}
r_0(\eta)\in W_{\mathscr D_{0,m-1}}\cdot\mathcal R_0\subseteq\mathcal N_X.
\end{equation}
By Proposition \ref{prop:W_invariant_pairing} and since $\eta\in F^1_{0,m-1}$, we have 
$$ \big\langle r_0(\eta), mc + \alpha_0^\smvee\big\rangle = \big\langle \eta, r_0(mc + \alpha_0^\smvee)\big\rangle = \big\langle \eta, (m-1)c + \alpha_1^\smvee \big\rangle = 0, $$
and by Proposition \ref{prop:W_invariant_pairing}, by \eqref{eq:gamma} and since $\eta\in F^1_{0,m-1}$, we have
$$ \big\langle r_0(\eta), \alpha_1^\smvee\big\rangle = \big\langle \eta, r_0(\alpha_1^\smvee)\big\rangle =\textstyle\langle \eta, c + \alpha_0^\smvee \rangle = \langle \eta, c \rangle + \langle \eta, \alpha_0^\smvee \rangle  < 0. $$
Therefore, $r_0(\eta)\in F^0_{m,0}$, hence $r_0(\eta)\in\mathcal N_X\cap F^0_{m,0}$ by \eqref{eq:67890}. As $|\mathcal N_X\cap F^0_{m,0}|=1$ by Corollary \ref{cor:finiteness}$\mathrm{(b)}_3$, we conclude that $\gamma=r_0(\eta)$, hence \eqref{eq:67890} yields
$$\textstyle \gamma=r_0(\eta)\in W_{\mathscr D_{0,m-1}}\cdot\mathcal R_0\subseteq \bigcup_{k=0}^{m-1} W_{\mathscr D_{k,m-k-1}} \cdot \mathcal R_k.$$

Finally, assume that $\gamma\in\cal N_X \cap F_{0,m}^1$. By Corollary \ref{cor:finiteness}$\mathrm{(b)}_3$ there exists $\zeta\in \mathcal N_X\cap F_{m-1,0}^0$, and in particular, $\zeta\in\mathcal R_{m-1}$. Since $r_1\in W_{\mathscr D_{m-1,0}}$, by Step 2 we have
\begin{equation}\label{eq:67890a}
r_1(\zeta)\in W_{\mathscr D_{m-1,0}}\cdot\mathcal R_{m-1}\subseteq\mathcal N_X.
\end{equation}
By Proposition \ref{prop:W_invariant_pairing} and since $\zeta\in F^0_{m-1,0}$, we have 
$$ \big\langle r_1(\zeta), mc + \alpha_1^\smvee\big\rangle = \big\langle \zeta, r_1(mc + \alpha_1^\smvee)\big\rangle = \big\langle \zeta, (m-1)c + \alpha_0^\smvee \big\rangle = 0, $$
and by Proposition \ref{prop:W_invariant_pairing}, by \eqref{eq:gamma} and since $\zeta\in F^0_{m-1,0}$, we have
$$ \big\langle r_1(\zeta), \alpha_0^\smvee\big\rangle = \big\langle \zeta, r_1(\alpha_0^\smvee)\big\rangle =\textstyle\langle \zeta, c + \alpha_1^\smvee \rangle = \langle \zeta, c \rangle + \langle \zeta, \alpha_1^\smvee \rangle  < 0. $$
Therefore, $r_1(\zeta)\in F^1_{0,m}$, hence $r_1(\zeta)\in\mathcal N_X\cap F^1_{0,m}$ by \eqref{eq:67890a}. As $|\mathcal N_X\cap F^1_{0,m}|=1$ by Corollary \ref{cor:finiteness}$\mathrm{(b)}_3$, we conclude that $\gamma=r_1(\zeta)$, hence \eqref{eq:67890a} yields
$$\textstyle \gamma=r_1(\zeta)\in W_{\mathscr D_{m-1,0}}\cdot\mathcal R_{m-1}\subseteq \bigcup_{k=0}^{m-1} W_{\mathscr D_{k,m-k-1}} \cdot \mathcal R_k,$$
as desired.

\medskip

\emph{Step 5.}
In this step we assume that $R=A_6^\smone$. Let $v \in W_{\mathscr D}$ be a word of length $\ell$ such that $\ell\leq m$. In this step we show that
$$ \textstyle \cal N_X \cap v\big(\overline{\cal F(R^\smperp)}\big) \subseteq \bigcup_{k=0}^{m-1} \widehat W_{\mathscr D_{k,m-k-1}} \cdot \mathcal R_k. $$

To that end, fix $\gamma\in\cal N_X \cap v\big(\overline{\cal F(R^\smperp)}\big)$. Then by Corollary \ref{cor:finiteness}$\mathrm{(c)}_1$ we have $\gamma\in\cal N_X \cap  \big(v(F_0)\cup v(F_1)\big)\cap \overline{\cal{F}_{\mathscr{D}'}}$. If $\ell = 0$, then we conclude as in Step 4. Thus, we may assume that $\ell>0$, and as in Step 4 we have
\begin{equation}\label{eq:8765b}
\gamma\in\cal N_X \cap (F_{\ell,0}^0\cup F_{\ell-1,0}^0\cup F_{0,\ell}^1\cup F_{0,\ell-1}^1)\cap \overline{\cal{F}_{\mathscr{D}'}}.
\end{equation} 

If $\ell<m$, then similarly as in Step 4, \eqref{eq:8765b} yields
\begin{align*}
\gamma&\in\cal N_X \cap (F_{\ell,m-\ell-1}^0\cup F_{\ell-1,m-\ell}^0\cup F_{m-\ell-1,\ell}^1\cup F_{m-\ell,\ell-1}^1)\cap \overline{\cal{F}_{\mathscr{D}'}}\\
&\textstyle \subseteq \mathcal R_\ell\cup\mathcal R_{\ell-1}\cup\mathcal R_{m-\ell-1}\cup\mathcal R_{m-\ell}\subseteq \bigcup_{k=0}^{m-1} \widehat W_{\mathscr D_{k,m-k-1}} \cdot \mathcal R_k.
\end{align*}

Now assume that $\ell=m$. If $\gamma\in\cal N_X \cap (F_{m-1,0}^0\cup F_{0,m-1}^1)\cap \overline{\cal{F}_{\mathscr{D}'}}$, then $\gamma\in\mathcal R_{m-1}\cup\mathcal R_0\subseteq \bigcup_{k=0}^{m-1} \widehat W_{\mathscr D_{k,m-k-1}} \cdot \mathcal R_k$.
Thus, we may assume that $\gamma\in\cal N_X \cap (F_{m,0}^0\cup F_{0,m}^1)\cap \overline{\cal{F}_{\mathscr{D}'}}$.

Assume first that $\gamma\in\cal N_X \cap F_{m,0}^0\cap \overline{\cal{F}_{\mathscr{D}'}}$. Then the proof is similar to Step 4. By Corollary \ref{cor:finiteness}$\mathrm{(c)}_3$ there exists $\eta\in \mathcal N_X\cap F_{0,m-1}^1\cap \overline{\cal{F}_{\mathscr{D}'}}$, and in particular, $\eta\in\mathcal R_0$. Since $r_0\in \widehat W_{\mathscr D_{0,m-1}}$, by Step 3 we have
\begin{equation}\label{eq:67890bb}
r_0(\eta)\in \widehat W_{\mathscr D_{0,m-1}}\cdot\mathcal R_0\subseteq\mathcal N_X.
\end{equation}
As in Step 4 we show that $r_0(\eta)\in F^0_{m,0}$. Since $r_0\in W_{\mathscr D}$ and $\eta\in \overline{\cal{F}_{\mathscr{D}'}}$, we have $r_0(\eta)\in \overline{\cal{F}_{\mathscr{D}'}}$ by Proposition \ref{pro:decomposable}(e), hence $r_0(\eta)\in\mathcal N_X\cap F^0_{m,0}\cap \overline{\cal{F}_{\mathscr{D}'}}$ by \eqref{eq:67890bb}. As $|\mathcal N_X\cap F^0_{m,0}\cap \overline{\cal{F}_{\mathscr{D}'}}|=1$ by Corollary \ref{cor:finiteness}$\mathrm{(c)}_3$, we conclude that $\gamma=r_0(\eta)$, hence \eqref{eq:67890bb} yields
$$\textstyle \gamma=r_0(\eta)\in \widehat W_{\mathscr D_{0,m-1}}\cdot\mathcal R_0\subseteq \bigcup_{k=0}^{m-1} \widehat W_{\mathscr D_{k,m-k-1}} \cdot \mathcal R_k.$$

Finally, assume that $\gamma\in\cal N_X \cap F_{0,m}^1\cap \overline{\cal{F}_{\mathscr{D}'}}$. Then the proof is analogous to the last paragraph of Step 4.

\medskip

   \emph{Step 6.}
   Finally, in this step we show that
   $$ \textstyle \cal N_X \subseteq \bigcup_{k=0}^{m-1} \widehat W_{\mathscr D_{k,m-k-1}} \cdot \mathcal R_k. $$
This, together with Step 3, will finish the proof of the lemma.

Let $\beta \in \cal N_X$. Then $\beta\in \cal T (R^\smperp)$ by Lemma \ref{lem:effconeHalpensurface1}(e), hence by Proposition \ref{prop:fund_domain_Cr(X,D)_Tits_cone} there exists an element $w \in \Cr(X,D)$ such that $w(\beta) \in \Omega_R$. Moreover, as $\Cr(X,D)$ preserves $\cal N_X$ by Lemma \ref{lem:weylpreservesnefgen1}(b), we obtain
\begin{equation}\label{eq:move_to_fund_dom1}
    w(\beta) \in \Omega_R \cap \cal N_X.
\end{equation}

By Lemma \ref{lem:OmegaR} there are finitely many elements $u_1,\dots,u_{2m}\in W_{\mathscr D}$ of length at most $m$, such that
$$ \textstyle \Omega_R = \bigcup_{j=1}^{2m} u_j \big(\overline{\cal F(R^\smperp)}\big). $$
By this equality and by Steps 4 and 5 we have $ \Omega_R \cap \cal{N}_X \subseteq \textstyle \bigcup_{k=0}^{m-1} \widehat W_{\mathscr D_{k,m-k-1}} \cdot \mathcal R_k$, hence \eqref{eq:move_to_fund_dom1} gives
$$ \textstyle w(\beta) \in \bigcup_{k=0}^{m-1} \widehat W_{\mathscr D_{k,m-k-1}} \cdot \mathcal R_k. $$
Since $w^{-1}\in \Cr(X,D)\subseteq \widehat W_{\mathscr D_{k,m-k-1}}$ for each $k\in\{0,\dots,m-1\}$ by Setup \ref{setup:allcases} and by Proposition \ref{prop:fund_domain_G}(a), we conclude that $\beta \in \bigcup_{k=0}^{m-1} \widehat W_{\mathscr D_{k,m-k-1}} \cdot \mathcal R_k$, as desired.
\end{proof}

Finally, we can prove Theorem \ref{thm:mainthmsection8}.

\begin{proof}[Proof of Theorem \ref{thm:mainthmsection8}]
Part (c) follows immediately from Lemma \ref{lem:alternative_unique_gamma0}. In the remainder of the proof we show (a) and (b).

By Lemma \ref{lem:OmegaR} there are finitely many elements $u_1,\dots,u_{2m}\in W_{\mathscr D}$ of length at most $m$, such that $ \textstyle \Omega_R = \bigcup_{j=1}^{2m} u_j \big(\overline{\cal F(R^\smperp)}\big) $, hence
$$ \textstyle \mathcal N_X\cap\Omega_R = \bigcup_{j=1}^{2m} \big(\mathcal N_X\cap u_j \big(\overline{\cal F(R^\smperp)}\big)\big). $$
Therefore, the set $\mathcal N_X\cap\Omega_R$ is finite by Corollary \ref{cor:finiteness}(a), which shows (a).

Now, we have $\mathcal N_X\subseteq\mathcal T(R^\smperp)$ by Lemma \ref{lem:effconeHalpensurface1}(e), and the group $\Cr(X,D)$ preserves $\mathcal{N}_X$ by Lemma \ref{lem:weylpreservesnefgen1}(b). Therefore, by Proposition \ref{prop:fund_domain_Cr(X,D)_Tits_cone}, for every $x\in\mathcal N_X$ there exists $w\in \Cr(X,D)$ such that $w(x)\in \mathcal{N}_X\cap\Omega_R$. Since the set $\mathcal N_X\cap\Omega_R$ is finite by (a), part (b) immediately follows.
\end{proof}

\section{Proof of Theorem \ref{thm:main2}}\label{sec:proofA}

In this section we prove our first main result, Theorem \ref{thm:main2}.

We will construct a fundamental domain for the action of the group of Cremona isometries explicitly: in the notation from Setup \ref{setup:allcases}, the desired fundamental domain is $\Omega_R\cap\Nef(X)$. First we show in Proposition \ref{prop:ratpoly} that this cone is rational polyhedral. Then we show in Theorem \ref{thm:existencefunddom} that this cone is a desired fundamental domain.

We start with the following two technical propositions, which essentially prove Proposition \ref{prop:ratpoly}. The proof of the following proposition is partly inspired by the proof of \cite[Proposition 4.8]{Li25}.

\begin{prop}\label{prop:ratpol1}
   Let $X$ be a generic Sakai surface of type $R$, and assume that $ R\notin\{A_6^\smone, A_7^\smone, A_8^\smone, D_7^\smone, D_8^\smone, E_8^\smone \} $. Then the cone $ \overline{\mathcal F(R^\smperp)}\cap \Nef(X) $ is rational polyhedral.
\end{prop}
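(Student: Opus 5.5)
The plan is to show that
$$\overline{\mathcal F(R^\smperp)}\cap \Nef(X)=\big\{x\in\overline{\mathcal F(R^\smperp)}\mid x\cdot\gamma\geq0 \text{ for all }\gamma\in\mathcal S\big\},\qquad \mathcal S\coloneqq\big(\mathcal N_X\cap\overline{\mathcal F(R^\smperp)}\big)\cup\Comp(D).$$
The set $\mathcal S$ is finite by Corollary \ref{cor:finite_(-1)_curves_chamber1}(c), and $\overline{\mathcal F(R^\smperp)}$ is cut out by the finitely many integral inequalities $x\cdot\alpha\geq0$ for $\alpha\in\Pi(R^\smperp)$. Since every element of $\mathcal S$ lies in $N^1(X)$, the right-hand side is then a finite intersection of rational half-spaces, and it lies inside the strictly convex cone $\Nef(X)$. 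Hence it is a rational polyhedral cone.

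The inclusion ``$\subseteq$'' is clear. For the reverse, let $x$ lie in the right-hand side. Since $X$ is generic, Lemma \ref{lem:effconeHalpensurface1}(d) shows that $\Eff(X)$ is spanned by $\mathcal N_X\cup\Comp(D)$. So it suffices to show $x\cdot\gamma\geq0$ for every $\gamma\in\mathcal N_X$; nonnegativity on $\Comp(D)$ holds by assumption.

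Fix $\gamma\in\mathcal N_X$. Because $\mathcal N_X\subseteq\mathcal T(R^\smperp)$ and $W(R^\smperp)$ preserves $\mathcal N_X$ in our case (Lemma \ref{lem:weylpreservesnefgen1}(e),(f)), there is $w\in W(R^\smperp)$ with $\gamma'\coloneqq w(\gamma)\in\mathcal N_X\cap\overline{\mathcal F(R^\smperp)}\subseteq\mathcal S$. Set $u\coloneqq w^{-1}$, so $\gamma=u(\gamma')$. By the $W$-invariance of the pairing (Proposition \ref{prop:W_invariant_pairing}), together with the identification of coroots with positive multiples of roots from Proposition \ref{prop:subrootdata000}(b), the form $x\cdot y=-\langle x,y\rangle$ is $W(R^\smperp)$-invariant. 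This gives
$$x\cdot\gamma=u^{-1}(x)\cdot\gamma' .$$
Instead, apply Lemma \ref{lem:actionW} to $\gamma'\in\overline{\mathcal F(R^\smperp)}$. It says that $u(\gamma')-\gamma'=\sum_{\alpha\in\Pi(R^\smperp)}c_\alpha\alpha$ with all $c_\alpha\geq0$. Therefore
$$x\cdot\gamma=x\cdot\gamma'+\sum_{\alpha\in\Pi(R^\smperp)} c_\alpha\,(x\cdot\alpha).$$
Here $x\cdot\gamma'\geq0$ because $\gamma'\in\mathcal S$, and $x\cdot\alpha\geq0$ because $x\in\overline{\mathcal F(R^\smperp)}$ (recall $\alpha^\smvee$ is a positive multiple of $\alpha$ and the sign convention $\langle x,\alpha^\smvee\rangle\leq0$). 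Hence $x\cdot\gamma\geq0$, so $x$ is nef.

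The main point to check carefully is the sign bookkeeping in this last step, in particular that Lemma \ref{lem:actionW} is applied with the fundamental chamber of the full set of root data $R^\smperp$, including the decomposable cases of \S\ref{subsec:Rperp2}. This is fine because that lemma is stated for arbitrary root data. Finally, rationality is immediate from the above: all defining vectors are integral, namely $\Pi(R^\smperp)$, $\Comp(D)$ and the finitely many classes in $\mathcal N_X\cap\overline{\mathcal F(R^\smperp)}$. A polyhedral cone cut out by finitely many rational inequalities is rational polyhedral.
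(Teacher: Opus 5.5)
Your proof is correct and is essentially the paper's argument: the same finite set $\mathcal S$, the same cone $\overline{\mathcal F(R^\smperp)}\cap\{x\mid x\cdot\gamma\geq0 \text{ for all } \gamma\in\mathcal S\}$, and the same reduction to $\mathcal N_X$ via Lemma \ref{lem:weylpreservesnefgen1}(f) and Lemma \ref{lem:actionW}. The only difference is that you apply Lemma \ref{lem:actionW} to the $(-1)$-class $\gamma'$ rather than to $x$. The paper instead writes $w^{-1}(x)=x+y$ and uses $y\cdot\zeta\geq0$. Your choice makes the $W$-invariance of the intersection form unnecessary, so the sentence deriving $x\cdot\gamma=u^{-1}(x)\cdot\gamma'$ can be dropped. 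Likewise, the remark about lying in the strictly convex cone $\Nef(X)$ is not needed for rational polyhedrality.
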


\begin{proof}
	Denote
$$ \Xi\coloneqq \overline{\mathcal F(R^\smperp)}\cap \Nef(X). $$
    By Corollary \ref{cor:finite_(-1)_curves_chamber1}(c) the set
    $$\mathcal S\coloneqq \big(\overline{\mathcal F(R^\smperp)}\cap \mathcal{N}_X\big)\cup \Comp(D)$$
    is finite, and denote
$$ \mathcal P\coloneqq \overline{\mathcal F(R^\smperp)}\cap \{x\in N^1(X)_\R \mid x\cdot \gamma\geq 0\text{ for any }\gamma\in\mathcal S\}.$$
Then $\mathcal{P}$ is a rational polyhedral cone as it is the intersection of two rational polyhedral cones.

Since $\mathcal N_X\subseteq\Eff(X)$, we have that $\mathcal S\subseteq\Eff(X)$, and therefore $\Nef(X)\subseteq \{x\in N^1(X)_\R \mid x\cdot \gamma\geq 0\text{ for any }\gamma\in\mathcal S\}$. This implies that $\Xi\subseteq \mathcal P$.

In the remainder of the proof we show that $ \mathcal P\subseteq\Xi $. This will then imply that $\Xi=\mathcal P$, and therefore prove the proposition.
    
Let $x\in \mathcal P$. In particular,
\begin{equation}\label{eq:909}
x\cdot \gamma\geq0 \quad\text{for all }\gamma\in\mathcal S.
\end{equation}
To show that $x \in \Xi$, it suffices to show that $x$ is nef. Therefore, by Lemma \ref{lem:effconeHalpensurface1}(d), we have to show that $x\cdot \beta \geq 0$ for any $\beta \in \mathcal{N}_X\cup\Comp(D)$. By \eqref{eq:909} it follows immediately that $x\cdot\beta\geq0$ for any $\beta\in\Comp(D)$.

Thus, it remains to show that $x\cdot \beta \geq 0$ for any $\beta\in \mathcal{N}_X$. To that end, fix $\beta\in \mathcal{N}_X$. 
By Lemma \ref{lem:weylpreservesnefgen1}(f) there exist $w\in W(R^\smperp)$ and $\zeta\in (\overline{\mathcal F(R^\smperp)}\cap \mathcal{N}_X\big) \subseteq \mathcal S$ such that $\beta = w(\zeta)$. Since $x\in \overline{\mathcal F(R^\smperp)}$, by Lemma \ref{lem:actionW} we can write $w^{-1}(x) = x + y$, where $y$ is a nonnegative linear combination of the elements from $\Pi(R^\smperp)$. Since $\zeta\in\overline{\mathcal F(R^\smperp)}$, we have $y\cdot \zeta \geq 0$ by the definition of $\overline{\mathcal F(R^\smperp)}$. Therefore, by Proposition \ref{prop:W_invariant_pairing} and by \eqref{eq:909} we have
    $$ x \cdot \beta = x \cdot w(\zeta) = w^{-1}(x)\cdot \zeta = x\cdot \zeta + y\cdot \zeta \geq 0 . $$
as desired.
\end{proof}

\begin{prop}\label{prop:ratpol_ugly_cases}
   Let $X$ be a generic Sakai surface whose type $R$ is $A_6^\smone$, $A_7^\smone$ or $D_7^\smone$. Assume Setup \ref{setup:allcases}. Then for any element $w\in W_{\mathscr{D}}$, the cone $ w\big(\overline{\cal F(R^\smperp)}\big) \cap  \Nef(X) $ is rational polyhedral.
\end{prop}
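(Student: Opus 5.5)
We follow the strategy of Proposition \ref{prop:ratpol1}, replacing the group $W(R^\smperp)$ (which does not preserve $\mathcal N_X$ in these cases) by the reflection subgroups $\widehat W_{\mathscr D_{k,m-k-1}}$ from Theorem \ref{thm:mainthmsection8}(c).

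Fix $w\in W_{\mathscr D}$ and set $\Xi_w\coloneqq w\big(\overline{\cal F(R^\smperp)}\big)\cap\Nef(X)$. By Corollary \ref{cor:finiteness}(a), the set $\mathcal N_X\cap w\big(\overline{\cal F(R^\smperp)}\big)$ is finite. Let $\mathcal R_0,\dots,\mathcal R_{m-1}$ be the finite sets from Lemma \ref{lem:alternative_unique_gamma0}, and define the finite set
$$\mathcal S_w\coloneqq\big(\mathcal N_X\cap w(\overline{\cal F(R^\smperp)})\big)\cup\Comp(D)\cup\textstyle\bigcup_{k}\mathcal R_k,$$
and the cone
$$\mathcal P_w\coloneqq w\big(\overline{\cal F(R^\smperp)}\big)\cap\{x\mid x\cdot\gamma\ge0 \text{ for all }\gamma\in\mathcal S_w\}.$$
This cone is rational polyhedral, and $\Xi_w\subseteq\mathcal P_w$ because $\mathcal S_w\subseteq\Eff(X)$. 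It therefore suffices to show that every $x\in\mathcal P_w$ is nef. By Lemma \ref{lem:effconeHalpensurface1}(d), this reduces to checking $x\cdot\beta\ge0$ for each $\beta\in\mathcal N_X$.

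Take $\beta\in\mathcal N_X$. By Lemma \ref{lem:alternative_unique_gamma0}, we can write $\beta=u(\zeta)$ with $u\in\widehat W_{\mathscr D_{k,m-k-1}}$ and $\zeta\in\mathcal R_k\subseteq\widehat{\mathcal F}_{\mathscr D_{k,m-k-1}}$. We would like to apply Lemma \ref{lem:actionW} to the root data $\mathscr D_{k,m-k-1}$ (together with $\mathscr D'$ when $R=A_6^\smone$). That lemma requires the point being moved to lie in the closed chamber $\widehat{\mathcal F}_{\mathscr D_{k,m-k-1}}$, whereas $x$ only lies in $w(\overline{\cal F(R^\smperp)})$. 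The natural fix is to replace $x$ by $v(x)$ for a suitable $v\in\widehat W_{\mathscr D_{k,m-k-1}}$ with $v(x)\in\widehat{\mathcal F}_{\mathscr D_{k,m-k-1}}$. Such a $v$ exists by Proposition \ref{prop:Titsfundchamber}, since $x\in\mathcal T(R^\smperp)\subseteq\mathcal T_{\mathscr D_{k,m-k-1}}$. Then $v$ is applied to $\zeta$ as well.

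The main obstacle is showing that the conditions $x\cdot\gamma\ge0$ for $\gamma\in\mathcal S_w$ survive this change of chamber. I would argue as follows.
\begin{enumerate}[(1)]
\item The chamber $w(\overline{\cal F(R^\smperp)})$ is contained in a single chamber $v^{-1}\big(\widehat{\mathcal F}_{\mathscr D_{k,m-k-1}}\big)$. This holds because $\mathscr D_{k,m-k-1}$ is a subroot system, so its chambers are unions of $W(R^\smperp)$-chambers (Proposition \ref{prop:WDkl}). Consequently $v$ depends only on $w$ and $k$, not on $x$.
\item Hence there are only finitely many relevant pairs $(v,k)$. We enlarge $\mathcal S_w$ to include the finite sets $v^{-1}(\mathcal R_k)$. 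These sets lie in $\mathcal N_X$ by Step 3 of the proof of Lemma \ref{lem:alternative_unique_gamma0}, so the inclusion $\mathcal S_w\subseteq\Eff(X)$, and with it $\Xi_w\subseteq\mathcal P_w$, is preserved.
\end{enumerate}

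With this in place, write $\beta=v^{-1}u'(\zeta)$ with $u'\in\widehat W_{\mathscr D_{k,m-k-1}}$. Lemma \ref{lem:actionW} gives $u'^{-1}(v(x))=v(x)+y$, where $y$ is a nonnegative combination of the simple roots of $\widehat W_{\mathscr D_{k,m-k-1}}$. Since $\zeta$ lies in the closed chamber, $y\cdot\zeta\ge0$. Using Proposition \ref{prop:W_invariant_pairing}, we obtain
$$x\cdot\beta=v(x)\cdot u'(\zeta)=v(x)\cdot\zeta+y\cdot\zeta\ge x\cdot v^{-1}(\zeta)\ge0,$$
where the last inequality holds because $v^{-1}(\zeta)\in\mathcal S_w$. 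This gives $\mathcal P_w=\Xi_w$, which is rational polyhedral.
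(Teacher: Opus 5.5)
Your proof is correct and is essentially the paper's argument: your $v^{-1}$ is the paper's $u_k\in W_{\mathscr D_{k,m-k-1}}$, supplied by Proposition \ref{prop:WDkl} through the decomposition $w(\overline{\mathcal F_{\mathscr D}})=\bigcap_k u_k(\overline{\mathcal F_{\mathscr D_{k,m-k-1}}})$, and your added test set $v^{-1}(\mathcal R_k)$ is exactly the paper's $\mathcal S_k=u_k(\mathcal R_k)$, to which Lemma \ref{lem:actionW} is then applied in the same way. The other pieces you put into $\mathcal S_w$, namely $\mathcal N_X\cap w(\overline{\mathcal F(R^\smperp)})$ and $\bigcup_k\mathcal R_k$, are harmless but not needed.
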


\begin{proof}
We fix $w\in  W_{\mathscr D}$, and denote
$$ \Xi \coloneqq w\big(\overline{\cal F(R^\smperp)}\big) \cap  \Nef(X). $$
Let $ m $ be as in Setup \ref{setup:allcases}. For every $k\in \{ 0,\dots, m-1\}$ we use the notation $\widehat W_{\mathscr D_{k,m-k-1}}$ and $\widehat{\mathcal F}_{\mathscr D_{k,m-k-1}}$ from Setup \ref{setup:allcases}.

\medskip

\emph{Step 1.}
By Proposition \ref{prop:WDkl}(d), for every $k\in \{0,\dots, m-1\}$ there exists $u_k\in W_{\mathscr{D}_{k,m-k-1}}$ such that 
$$ \textstyle w \big( \overline{\cal{F}_{\mathscr{D}}} \big) = \bigcap_{k=0}^{m-1} u_k \big(\overline{\cal{F}_{\mathscr{D}_{k,m-k-1}}}\big) , $$
hence by Proposition \ref{pro:decomposable}(e)(f) we have
\begin{align}\label{eq:intersect_two_chambers11}
    w \big( &\overline{\cal F(R^\smperp)} \big) = \textstyle w(\overline{\cal{F}_{\mathscr{D}}} )  \cap \overline{\cal{F}_{\mathscr{D}'}} = \bigcap_{k=0}^{m-1} \big(u_k (\overline{\cal{F}_{\mathscr{D}_{k,m-k-1}}})\cap \overline{\cal{F}_{\mathscr{D}'}}\big) \\
    &= \textstyle \bigcap_{k=0}^{m-1} \big(u_k (\overline{\cal{F}_{\mathscr{D}_{k,m-k-1}}})\cap u_k(\overline{\cal{F}_{\mathscr{D}'}})\big) = \bigcap_{k=0}^{m-1} u_k (\widehat{\mathcal F}_{\mathscr D_{k,m-k-1}}) .\notag
\end{align}
By Lemma \ref{lem:alternative_unique_gamma0}, for each $k\in\{ 0,\dots, m-1\}$ there exists a finite subset $\mathcal R_k\subseteq\mathcal N_X\cap \widehat{\mathcal F}_{\mathscr D_{k,m-k-1}}$ such that
\begin{equation}\label{eq:999o}
\textstyle \cal N_X = \bigcup_{k=0}^{m-1} \widehat W_{\mathscr D_{k,m-k-1}} \cdot \mathcal R_k.
\end{equation}
Set
$$ \mathcal S_k \coloneqq u_k(\mathcal R_k), $$
and denote
    $$\textstyle \mathcal S\coloneqq \bigcup_{k=0}^{m-1}\mathcal S_k\cup \Comp(D).$$
Then $\mathcal S$ is finite, and $\mathcal S\subseteq \cal{N}_X\cup \Comp(D)$ by \eqref{eq:999o}.

\medskip

\emph{Step 2.}
   Denote
$$ \mathcal P\coloneqq w\big(\overline{\mathcal{F}(R^\smperp)}\big) \cap \{x\in N^1(X)_\R \mid x\cdot \gamma\geq 0\text{ for any }\gamma\in\mathcal S\}.$$
Then $\mathcal{P}$ is a rational polyhedral cone as it is the intersection of two rational polyhedral cones.
    
Since $\mathcal N_X\subseteq\Eff(X)$, we have that $\mathcal S\subseteq\Eff(X)$, and therefore $\Nef(X)\subseteq \{x\in N^1(X)_\R \mid x\cdot \gamma\geq 0\text{ for any }\gamma\in\mathcal S\}$. This implies that $\Xi\subseteq \mathcal P$.

\medskip

\emph{Step 3.}
In the remainder of the proof we show that $\mathcal P\subseteq\Xi$. Together with Step 2, this will then imply that $\Xi=\mathcal P$, and therefore prove the proposition.

Let $x\in \mathcal P$. In particular,
\begin{equation}\label{eq:909aa}
x\cdot \gamma\geq0 \quad\text{for all }\gamma\in\mathcal S.
\end{equation}
To show that $x \in \Xi$, it suffices to show that $x$ is nef. Therefore, by Lemma \ref{lem:effconeHalpensurface1}(d), we have to show that $x\cdot \beta \geq 0$ for any $\beta\in \mathcal{N}_X\cup\Comp(D)$. By \eqref{eq:909aa} it follows immediately that $x\cdot\beta\geq0$ for any $\beta\in\Comp(D)$.

Thus, it remains to show that $x\cdot \beta \geq 0$ for any $\beta\in \mathcal{N}_X$. We will show this in the next step.

\medskip

\emph{Step 4.}
Fix $\beta\in \mathcal{N}_X$. By \eqref{eq:999o} there exist $\ell\in\{0,\dots,m-1\}$, $w_\ell\in \widehat W_{\mathscr{D}_{\ell,m-\ell-1}}$ and $\zeta\in\mathcal R_\ell$ such that 
\begin{equation}\label{eq:beta_translate}
\beta = w_\ell ( \zeta) .
\end{equation}
It follows from \eqref{eq:intersect_two_chambers11} that $x \in u_\ell( \widehat{\mathcal F}_{\mathscr D_{\ell,m-\ell-1}}) $, hence $(u_\ell)^{-1}(x) \in \widehat{\mathcal F}_{\mathscr D_{\ell,m-\ell-1}} $. Note that when $R=A_6^\smone$, the set $\widehat{\mathcal F}_{\mathscr D_{\ell,m-\ell-1}}$ is the negative fundamental chamber and $\widehat W_{\mathscr{D}_{\ell,m-\ell-1}}$ is the Weyl group of the set of root data from Lemma \ref{lem:wandw'invariant}(b). Therefore, by Lemma \ref{lem:actionW} applied to $w_\ell^{-1}u_\ell\in \widehat W_{\mathscr{D}_{\ell,m-\ell-1}}$ and $(u_\ell)^{-1}(x) \in \widehat{\mathcal F}_{\mathscr D_{\ell,m-\ell-1}} $, we obtain 
\begin{equation}\label{eq:sum_x_and_y}
(w_\ell)^{-1}(x) = (u_\ell)^{-1}(x) + y,
\end{equation}
where $y$ is a nonnegative linear combination of $\ell\delta + \alpha_0$ and $(m-\ell-1)\delta + \alpha_1 $ when $R\in\{A_7^\smone,D_7^\smone\}$, and $y$ is a nonnegative linear combination of $\ell\delta + \alpha_0$, $(m-\ell-1)\delta + \alpha_1 $, $\alpha_0'$ and $\alpha_1'$ when $R=A_6^\smone$. Since $\zeta\in\mathcal R_\ell\subseteq\widehat{\mathcal F}_{\mathscr D_{\ell,m-\ell-1}}$, we have
\begin{equation}\label{eq:9090a}
y\cdot \zeta \geq 0.
\end{equation}
Furthermore, since $u_\ell(\zeta)\in\mathcal S_\ell\subseteq\mathcal S$, by Proposition \ref{prop:W_invariant_pairing} and \eqref{eq:909aa} we have
\begin{equation}\label{eq:9090b}
(u_\ell)^{-1}(x) \cdot \zeta = x \cdot u_\ell (\zeta) \geq 0.
\end{equation}
Therefore, by \eqref{eq:beta_translate}, \eqref{eq:sum_x_and_y}, \eqref{eq:9090a} and \eqref{eq:9090b} and by Proposition \ref{prop:W_invariant_pairing} we obtain
    $$ x \cdot \beta = x \cdot w_\ell (\zeta) = (w_\ell)^{-1}(x)\cdot \zeta = (u_\ell)^{-1}(x)\cdot \zeta + y\cdot \zeta \geq 0, $$
as desired.
\end{proof}

As a consequence of the previous two propositions, we can prove the first main technical result of the section.

\begin{prop}\label{prop:ratpoly}
     Let $X$ be a generic Sakai surface of type $R$ which is different from $ A_8^\smone$, $D_8^\smone$ and $E_8^\smone$. Assume Setup \ref{setup:allcases}. Then the cone $ \Omega_R \cap \Nef(X) $ is rational polyhedral.
\end{prop}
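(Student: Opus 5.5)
The plan is to split into the two cases of Setup \ref{setup:allcases} and reduce each to one of the two preceding propositions.

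If $R\notin\{A_6^\smone, A_7^\smone, D_7^\smone\}$ (and $R\notin\{A_8^\smone,D_8^\smone,E_8^\smone\}$ by hypothesis), then by definition $\Omega_R=\overline{\mathcal F(R^\smperp)}$. Hence $\Omega_R\cap\Nef(X)=\overline{\mathcal F(R^\smperp)}\cap\Nef(X)$, which is rational polyhedral by Proposition \ref{prop:ratpol1}. Nothing more is needed in this case.

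If $R\in\{A_6^\smone, A_7^\smone, D_7^\smone\}$, I will invoke Lemma \ref{lem:OmegaR}. It gives elements $u_1,\dots,u_{2m}\in W_{\mathscr D}$ with $\Omega_R=\bigcup_{j=1}^{2m}u_j\big(\overline{\cal F(R^\smperp)}\big)$. Intersecting with $\Nef(X)$ yields
$$\textstyle \Omega_R\cap\Nef(X)=\bigcup_{j=1}^{2m}\Big(u_j\big(\overline{\cal F(R^\smperp)}\big)\cap\Nef(X)\Big).$$
By Proposition \ref{prop:ratpol_ugly_cases}, each term on the right is a rational polyhedral cone. Each such cone is spanned by finitely many lattice vectors, so the union is contained in the cone spanned by all of these finitely many vectors.

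The one point that needs an argument is that this union is itself convex; otherwise "rational polyhedral" would fail even though each piece is. For this I use that $\Omega_R$ is convex. In case (c) of Setup \ref{setup:allcases}, $\Omega_R=\overline{\cal F_{\mathscr D_{m,m-1}}}$ is the closure of a negative fundamental chamber, which is an intersection of closed half-spaces. In case (b), $\Omega_R$ is the intersection of two such closed chambers. Both are therefore convex cones cut out by finitely many rational half-spaces, and $\Nef(X)$ is a convex cone. So $\Omega_R\cap\Nef(X)$ is a convex cone equal to a finite union of rational polyhedral cones.

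A convex cone that is a finite union of cones generated by finite sets $G_j$ is generated by $\bigcup_j G_j$: it contains every $G_j$ and is convex, so it contains their span, and conversely it equals the union of the spans. Hence $\Omega_R\cap\Nef(X)$ is spanned by finitely many rational vectors, which proves the proposition.
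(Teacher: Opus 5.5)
Your proposal is correct and is essentially the paper's proof: the same case split, Proposition~\ref{prop:ratpol1} for the non-exceptional types, and for $R\in\{A_6^{(1)},A_7^{(1)},D_7^{(1)}\}$ the decomposition from Lemma~\ref{lem:OmegaR}, combined with Proposition~\ref{prop:ratpol_ugly_cases} and the convexity of $\Omega_R\cap\Nef(X)$. The one addition is your explicit check that $\Omega_R$ is an intersection of closed half-spaces, which the paper leaves implicit.
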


\begin{proof}
If $ R\notin\{A_6^\smone, A_7^\smone, A_8^\smone, D_7^\smone, D_8^\smone, E_8^\smone \} $, then the result follows from Proposition \ref{prop:ratpol1}.

Now assume that $ R\in\{A_6^\smone, A_7^\smone, D_7^\smone \} $. By Lemma \ref{lem:OmegaR} there are finitely many elements $u_1,\dots,u_{2m}\in W_{\mathscr D}$ such that
$$ \textstyle \Omega_R = \bigcup_{j=1}^{2m} u_j\big(\overline{\cal F(R^\smperp)}\big) . $$
By Proposition \ref{prop:ratpol_ugly_cases}, for each $j$ the cone $ \mathcal C_j\coloneqq u_j\big(\overline{\cal F(R^\smperp)}\big) \cap \Nef(X) $ is rational polyhedral. As $\Omega_R \cap  \Nef(X)$ is a convex cone, and since it is the union of the rational polyhedral cones $\mathcal C_1,\dots,\mathcal C_{2m}$, it is generated by the finitely many generators of the cones $\mathcal C_1,\dots,\mathcal C_{2m}$. This finishes the proof.
\end{proof}

Now we can finally show that on a generic Sakai surface $X$ with $D\in|{-}K_X|$, there exists a rational polyhedral fundamental domain for the cone $\Nef^e(X)$ under the action of the group $\Cr(X,D)$. 

\begin{thm}\label{thm:existencefunddom}
     Let $X$ be a Sakai surface of type $R$, and let $D\in|{-}K_X|$. Then the following statements hold.
  \begin{enumerate}[\normalfont (a)]
  \item If $X$ is generic, then there exists a rational polyhedral fundamental domain for the cone $\Nef^e(X)$ under the action of the group $\Cr(X,D)$.
  \item If $R\in\{A_8^\smone, D_8^\smone, E_8^\smone\}$, then  the cone $\Nef^e(X)$ is rational polyhedral and there exists a rational polyhedral fundamental domain for $\Nef^e(X)$ under the action of the group $\Cr(X,D)$.
\end{enumerate}
 \end{thm}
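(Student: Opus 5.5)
For part (a), I will show that $\Omega_R\cap\Nef(X)$ is the desired fundamental domain. Recall that $\Nef^e(X)=\Nef(X)$ by Lemma \ref{lem:effconeHalpensurface1}(b). The cone $\Omega_R\cap\Nef(X)$ is rational polyhedral by Proposition \ref{prop:ratpoly}, so only the two fundamental domain properties remain. These transfer from $\cal T(R^\smperp)$ to $\Nef(X)$ via the following observations:
\begin{itemize}
\item[(i)] $\Nef(X)\subseteq\cal T(R^\smperp)$ by Lemma \ref{lem:effconeHalpensurface1}(f).
\item[(ii)] $\Cr(X,D)$ preserves $\Eff(X)$ by definition, and hence preserves its dual cone $\Nef(X)$, since Cremona isometries preserve the intersection form.
\item[(iii)] $\Omega_R$ is a fundamental domain for $\Cr(X,D)$ on $\cal T(R^\smperp)$ by Proposition \ref{prop:fund_domain_Cr(X,D)_Tits_cone}.
\end{itemize}

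The covering property is then immediate. Given $x\in\Nef(X)$, pick $g\in\Cr(X,D)$ with $g(x)\in\Omega_R$; by (ii) we have $g(x)\in\Omega_R\cap\Nef(X)$.

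For the interior condition, suppose $g\big(\Int(\Omega_R\cap\Nef(X))\big)\cap\Int(\Omega_R\cap\Nef(X))\neq\emptyset$. Since $\Int(\Omega_R\cap\Nef(X))\subseteq\Int(\Omega_R)\cap\Int(\Nef(X))$, we get $g(\Int\Omega_R)\cap\Int\Omega_R\neq\emptyset$, and hence $g=\id$ by (iii). One checks here that $\Amp(X)\cap\Int(\Omega_R)\neq\emptyset$, so the intersection has nonempty interior: by (iii) and (i), translates of $\Omega_R$ cover the open cone $\Amp(X)$, so some translate meets it in an open set, and we then pull back by (ii). This part is routine.

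For part (b), when $R\in\{A_8^\smone,D_8^\smone,E_8^\smone\}$, Appendix \ref{sec:Sakai_appendix_A} shows that $\Comp(D)$ together with $\delta$ spans $Q(E_8^\smone)\otimes\R$, and that $Q(R^\smperp)$ has finite index over $\Z\delta$ (or is $\Z\delta$). I will argue that $\cal M_X$ is then finite up to the relevant data, and consequently that $\Eff(X)$ is rational polyhedral. The argument goes as follows. By Corollary \ref{cor:finite_(-1)_curves_chamber1}(a), $\gamma=x_\alpha$ for a unique $\alpha\in Q(E_8)$. In these cases $\Comp(D)\setminus\{\cal D_0\}$ spans $Q(E_8)\otimes\R$, so the conditions $\alpha\cdot\cal D_0\le 1$ and $\alpha\cdot\cal D_j\le 0$, together with the relation \eqref{eq:curve1}, force $\mathbf v(\alpha)$ into a finite set, exactly as in Step 1 of Proposition \ref{prop:finmanytranslat} with $u=v=0$, $z=8$. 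Hence $\cal N_X$ is finite, and there are at most finitely many $(-2)$-curves by Lemma \ref{lem:lin_indep_-2curves}. It follows from Lemma \ref{lem:effconeHalpensurface1}(c) that $\Eff(X)$ is rational polyhedral, so $\Nef(X)$, being its dual, is rational polyhedral as well.

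Next, I will show that $\Cr(X,D)$ is finite. Any element fixes $\Comp(D)$ and $K_X$, hence acts on the finite-rank orthogonal complement, which is negative definite modulo $\delta$. More simply, it permutes the finite set $\cal M_X^{\irr}\cup\Comp(D)\cup\Delta^{\nod}_{X,D}$, which spans $N^1(X)_\R$ since it generates the full-dimensional cone $\Eff(X)$. So $\Cr(X,D)$ embeds into a finite permutation group. The standard construction then gives a rational polyhedral fundamental domain for a finite group acting on a rational polyhedral cone: take a Dirichlet-type domain $\{x\in\Nef(X)\mid (x-g x)\cdot h\ge 0 \ \forall g\}$ for a rational ample class $h$ with trivial stabilizer. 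This requires $h$ to have trivial stabilizer; such an $h$ exists by a genericity argument, since only finitely many proper fixed subspaces must be avoided.

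I expect the main obstacle to be the finiteness of $\cal N_X$ in part (b), where I need to verify from the appendix that the $D$-component data force $\mathbf v(\alpha)$ into a bounded set. In particular, the case where $\cal D_j$, $j\ge1$, fail to span $Q(E_8)\otimes\R$ for $A_8^\smone$ needs checking. If that fails, I would instead show directly that $\Nef(X)$ is spanned by finitely many classes, using the linear independence of the components.
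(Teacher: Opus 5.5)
Your part (a) is the paper's argument: rational polyhedrality comes from Proposition \ref{prop:ratpoly}, covering from Proposition \ref{prop:fund_domain_Cr(X,D)_Tits_cone} together with $\Nef(X)\subseteq\mathcal T(R^\smperp)$, and the interior condition is inherited from $\Omega_R$. Your extra check that $\Amp(X)\cap\Int(\Omega_R)\neq\emptyset$ is not required by the definition of fundamental domain used here. You should say explicitly, as the paper does, that generic surfaces of type $A_8^\smone$, $D_8^\smone$ or $E_8^\smone$ are covered by (b), since $\Omega_R$ and Proposition \ref{prop:ratpoly} are only available for the other types.

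For (b) you take a genuinely different route. The paper notes that $\mathcal D_0,\dots,\mathcal D_8$ are nine $(-2)$-curves, hence all of them by Lemma \ref{lem:lin_indep_-2curves}, with $[{-}K_X]$ a positive combination of them. It then simply cites \cite[Proposition 4.3(d)]{LH05} for rational polyhedrality of $\Eff(X)$ and \cite[Corollary 6.6]{Laz13} for the fundamental domain.

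You instead reprove both ingredients inside the paper. First, the encoding of Step 1 of Proposition \ref{prop:finmanytranslat}, with basis $(\mathcal D_1,\dots,\mathcal D_8)$, sends $\mathcal N_X$ injectively into the finite set $\{c\in\Z_{\le0}^8\mid \sum_j q_jc_j\in\{0,-1\}\}$. So there are finitely many $(-1)$-curves by Lemma \ref{lem:irredclasses}, and Lemma \ref{lem:effconeHalpensurface1}(c) gives rational polyhedrality. Second, $\Cr(X,D)$ is finite, and a Dirichlet domain for a rational $h$ with trivial stabiliser works; the trivial stabiliser is exactly what makes the interior condition hold.

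The case you flag as the main obstacle is fine. In all three types, $\mathcal D_0=\mathcal E_8-\mathcal E_9$ has coefficient $1$ in $\delta$, and $\mathcal D_1,\dots,\mathcal D_8$ are orthogonal to $\delta$ and $\mathcal E_9$. They form a root basis of finite type $A_8$, $D_8$ or $E_8$, so they are linearly independent in the $8$-dimensional space $Q(E_8)\otimes_\Z\R$, and $C_R$ is invertible.

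For the permutation step, it is cleanest to argue as follows. $\Cr(X,D)$ fixes $\Comp(D)$ and maps $\mathcal N_X$ into itself by Lemma \ref{lem:weylpreservesnefgen1}(b). Since $\Delta^{\nod}_{X,D}=\emptyset$ in these types, $\mathcal N_X\cup\Comp(D)$ generates the full-dimensional cone $\Eff(X)$.

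The paper's route is two lines given the citations. Yours is self-contained, makes the finiteness of $\Cr(X,D)$ explicit and describes the $(-1)$-curves concretely. The cost is redoing a lattice computation and the finite-group fundamental domain construction.
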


\begin{proof}
We recall first that $\Nef^e(X)=\Nef(X)=\Nef^+(X)$ by Lemma \ref{lem:effconeHalpensurface1}(b). We will use this without explicit mention throughout the proof.

We first show (b). If $R\in\{A_8^\smone, D_8^\smone, E_8^\smone\}$, then by Appendix \ref{sec:Sakai_appendix_A} and by Lemma \ref{lem:lin_indep_-2curves}, the classes $\cal{D}_0,\ldots,\cal{D}_8$ are the only classes of $(-2)$-curves on $X$, and $[{-}K_X]$ is the sum of $\mathcal D_i$ with positive coefficients. Therefore, the cone $\Eff(X)$ is rational polyhedral by \cite[Proposition 4.3(d)]{LH05}, hence its dual cone $\Nef(X)$ is also rational polyhedral. Now (b) follows by \cite[Corollary 6.6]{Laz13}.

In the remainder of the proof we show (a). By (b) we may assume that $ R\notin\{A_8^\smone, D_8^\smone, E_8^\smone \} $. Assume Setup \ref{setup:allcases}, and set
$$ \Xi \coloneqq \Omega_R \cap \Nef(X). $$
Then $\Xi$ is a rational polyhedral cone by Proposition \ref{prop:ratpoly}. We will show that $\Xi$ is a fundamental domain for the cone $\Nef(X)$ under the action of the group $\Cr(X,D)$.

Let $x \in \Nef(X)$. By Lemma \ref{lem:effconeHalpensurface1}(f) we have $\Nef(X) \subseteq \mathcal T(R^\smperp)$, hence by Proposition \ref{prop:fund_domain_Cr(X,D)_Tits_cone} there exists $w \in \Cr(X,D)$ such that $w(x) \in \Omega_R$. Since $\Cr(X,D)$ preserves $\Nef(X)$, we conclude that $w(x) \in \Xi$. This implies that $\Nef(X) \subseteq \Cr(X,D) \cdot \Xi$, and the converse inclusion is trivial. Thus,
$$ \Nef(X) = \Cr(X,D) \cdot \Xi. $$

Finally, let $g\in\Cr(X,D)$ be such that $\operatorname{Int}\big(g(\Xi)\big)\cap\operatorname{Int}(\Xi)\neq\emptyset$. Since 
$$ \operatorname{Int}\big(g(\Xi)\big)\cap\operatorname{Int}(\Xi) = \operatorname{Int}\big(g(\Omega_R)\big)\cap\operatorname{Int}(\Omega_R)\cap\Amp(X), $$
we obtain that $\operatorname{Int}\big(g(\Omega_R)\big)\cap\operatorname{Int}\big(\Omega_R\big)\neq\emptyset$. By Proposition \ref{prop:fund_domain_Cr(X,D)_Tits_cone}, this implies that $g$ is the identity, which concludes the proof.
\end{proof}

Now we can show Theorem \ref{thm:main2}.

\begin{proof}[Proof of Theorem \ref{thm:main2}]
The first part of the statement follows from \cite[Proposition B.4]{LSX26}, whereas the second part follows from Theorem \ref{thm:existencefunddom}.
\end{proof}

\section{Proof of Theorem \ref{thm:mainB}}

In this section we prove our second main result, Theorem \ref{thm:mainB}.

\begin{proof}[Proof of Theorem \ref{thm:mainB}]
We prove this result in several steps. 

\medskip

\emph{Step 1.}
In this step we construct a non-generic Sakai surface $X$ of type $E^{\smone}_7$.

 In \cite[Appendix B]{Sak01}, Sakai gives an explicit realisation of $R$-surfaces of type $E^{\smone}_7$ as blowups of $\pr^2$ at nine points $p_1,\ldots,p_9$. Following Sakai's construction, these surfaces form a family depending on three complex parameters $ a_0$, $a_1$ and $s$, with $ a_0+a_1 \neq 0$. Choose the parameters 
    $$(a_0,a_1,s) := (1,0,1) .$$
    According to \cite[p.\ 187]{Sak01}, in this construction the condition $a_1 =0$ means that the point $p_5$ is infinitely near to $p_4$. Thus, this defines an $R$-surface $X$ of type $E^{\smone}_7$ with the blowup configuration as in the figure below, see \cite[pp.\ 186-187]{Sak01}.\footnote{In alignment with Appendix \ref{sec:Sakai_appendix_A}, we exchange $\mathcal D_0$ and $\mathcal D_6$.} Moreover, by \cite[Lemma B.3(b)]{LSX26}, the surface $X$ is a Sakai surface of type $E^{\smone}_7$.
    
\begin{center}
\scalebox{0.9}
{\begin{tikzpicture}[>=stealth]

\draw[ultra thick] (-1.6,-2) -- (0.16,0.2);
\draw[ultra thick] (-0.13,0.2) -- (1.3,-2);

\draw (0,0) circle (0.12);

\node at (-0.8,-1.8) {$x=0$};
\node at (1.8,-1.8) {$z=0$};

\fill (-0.95,-1.2) circle (2pt);
\node[left] at (-1.2,-1.2) {$p_4$};

\draw (-0.95,-1.2) circle (0.12);
\node[left] at (-2,-1.2) {$p_5$};

\draw[->] (-2.1,-1.2) -- (-1.75,-1.2);

\node at (0.7,0) {$p_1$};
\node at (1.5,0) {$p_2$};
\node at (2.3,0) {$p_3$};
\node at (3.1,0) {$p_6$};

\draw[<-] (0.9,0) -- (1.3,0);
\draw[<-] (1.7,0) -- (2.1,0);
\draw[<-] (2.5,0) -- (2.9,0);
\draw[<-] (3.3,0) -- (3.7,0);

\node at (1.5,-0.5) {$p_7$};
\node at (2.3,-0.5) {$p_8$};
\node at (3.1,-0.5) {$p_9$};

\draw[<-] (0.9,-0.5) -- (1.3,-0.5);
\draw[<-] (1.7,-0.5) -- (2.1,-0.5);
\draw[<-] (2.5,-0.5) -- (2.9,-0.5);

\draw[very thick,<-] (3.5,-1) -- (4.5,-1);

\begin{scope}[shift={(6.8,0)}]

\draw (-1.4,-2) -- (-1.4,1);
\draw (-0.4,-2) -- (-0.4,1);
\draw[ultra thick] (0.6,-2) -- (0.6,1);
\draw (1.6,-2) -- (1.6,1);
\draw[ultra thick] (2.6,-2) -- (2.6,1);
\draw (3.6,-2) -- (3.6,1);

\draw (-0.8,0) -- (2.0,0);

\draw (-1,0) -- (-2.3,0);

\draw (2.2,0) -- (3.8,0);

\draw (-1.8,-1) -- (0.2,-1);
\draw (1,-1) -- (3.0,-1);

\node at (-1.7,-1.8) {$\mathcal{D}_0$};
\node at (-0.7,-1.8) {$\mathcal{D}_4$};
\node at (0.3,-1.8) {$\mathcal{D}_7$};
\node at (1.3,-1.8) {$\mathcal{D}_2$};
\node at (2.3,-1.8) {$\mathcal{D}_6$};
\node at (-0.9,-0.8) {$\mathcal D_5$};
\node at (2.1,-0.8) {$\mathcal D_1$};
\node at (0.1,0.2) {$\mathcal D_3$};
\node at (-1.8,0.2) {$\mathcal E_9$};
\node at (3.2,0.2) {$\mathcal E_5$};
\node at (3.3,-1.8) {$\alpha_1$};

\end{scope}
\end{tikzpicture}}
\end{center}

Since $p_5$ is infinitely near to $p_4$, the class $\cal{E}_4 - \cal{E}_5$ is the class of a $({-2})$-curve on $X$. Let $D\in|{-}K_X|$. By Appendix \ref{sec:Sakai_appendix_A}, this $({-}2)$-curve is not a component of $D$, hence $\cal{E}_4 - \cal{E}_5 \in \Delta^{\nod}_{X,D}$ by \cite[Lemma 2.5]{LSX26}. Therefore, $X$ is a non-generic Sakai surface.

\medskip

\emph{Step 2.} In this step we show that $ \Delta^{\nod}_{X,D} =  \{\cal{E}_4 - \cal{E}_5\}$.

First note that by Appendix \ref{sec:Sakai_appendix_A} we have $\Comp(D) = \{ \mathcal{D}_0,\ldots, \mathcal{D}_7\}$, where each $\mathcal{D}_i \in N^1(X)$ is the class of a  $({-}2)$-curve. Hence, $X$ contains nine distinct classes  $ \mathcal{D}_0,\ldots, \mathcal{D}_7,\alpha_1$ of $({-}2)$-curves. By Lemma \ref{lem:lin_indep_-2curves}, there are no other classes of $({-}2)$-curves on $X$, which proves the claim.

\medskip

\emph{Step 3.} 
In this step we show that $\Cr(X,D) = \{ \id \}$. 

By \S\ref{subsec:Rperp1} and Appendix \ref{sec:Sakai_appendix_A}, the set of root data $R^{\smperp}$ is of affine type $A^{\smone}_1$, with $\Pi(R^{\smperp}) = \{ \alpha_0, \alpha_1\}$ and $W(R^\smperp ) = \langle r_0 , r_1 \rangle$. Since $\alpha_1=\cal{E}_4 - \cal{E}_5$, we have $\Delta^{\nod}_{X,D} = \{ \alpha_1 \}$ by Step 2. It follows from \cite[Theorem 26(a)]{Sak01} that $\Cr(X,D)$ is the stabiliser of the element $\alpha_1$ in $W(R^{\smperp})$.

Let $w\in W(R^{\smperp})$. By Lemma \ref{lem:Wklcalculation}(c), there exists $n\in \Z$ such that $w\in\big\{(r_0r_1)^n, r_1(r_0r_1)^n\big\}$. If $w=(r_0r_1)^n$, then by Lemma \ref{lem:r1r0_n}(a) we have
\begin{equation}\label{eq:elementsW}
    w(\alpha_1) = \alpha_1 - 2n \delta, 
\end{equation}
hence $w(\alpha_1)=\alpha_1$ if and only if $n=0$. If $w=r_1(r_0r_1)^n$, then by Lemma \ref{lem:r1r0_n}(c)(d) we have
\begin{equation}\label{eq:elementsW1}
     w(\alpha_1) = r_{n\delta+\alpha_1} (\alpha_1) = \alpha_0 - (2n+1)\delta = -2n \alpha_0 - (2n+1)\alpha_1,
\end{equation}
hence $w(\alpha_1)\neq\alpha_1$. This shows the claim.

\medskip

\emph{Step 4.}
In this step we show that $|\Cr(X)|\leq2$.

Let $\sigma \in \Cr(X)$. Let $\alpha\in \Eff(X)$ be the numerical class of a $({-}2)$-curve on $X$. Since $\sigma(\alpha)\in\Eff(X)$ by the definition of the group $\Cr(X)$, there exists an effective $\R$-divisor $G$ on $X$ such that $\sigma(\alpha)=[G]$. Since $\alpha^2<0$, the ray $\R_{\geq0}\alpha$ is an extremal ray of the cone $\Eff(X)$ by \cite[Lemma 2.4]{LSX26}, and therefore, the ray $\R_{\geq0}\sigma(\alpha)$ is an extremal ray of the cone $\Eff(X)$. This forces the support of $G$ to be irreducible, so
we can write $G = fF$, where $f > 0$ and $F$ is a prime divisor on $X$. Note that $f$ is a rational number since $\sigma(\alpha)\in N^1(X)$ by the definition of the group $\Cr(X)$. Since $\sigma$ preserves the intersection form on $X$, we have
$$f^2F^2 = G^2 = \sigma(\alpha)^2 = \alpha^2 = {-}2.$$
This is only possible if $f=1$ and $F^2={-}2$. Therefore, $\sigma(\alpha)$ is the numerical class of a $({-}2)$-curve.

Therefore, by Step 2 we have
$$\big\{\sigma(\mathcal D_0),\dots,\sigma(\mathcal D_7),\sigma(\mathcal E_4-\mathcal E_5)\big\}=\{\mathcal D_0,\dots,\mathcal D_7,\mathcal E_4-\mathcal E_5\}.$$
Moreover, since $\sigma$ preserves the intersection form on $X$, we have
$$\sigma(\mathcal D_i)\cdot\sigma(\mathcal E_4-\mathcal E_5)=\mathcal D_i\cdot(\mathcal E_4-\mathcal E_5)=0\quad\text{for each }i=0,\dots,7.$$
Therefore, $\sigma(\mathcal E_4-\mathcal E_5)$ is an element of the set $\{\mathcal D_0,\dots,\mathcal D_7,\mathcal E_4-\mathcal E_5\}$ which is orthogonal to every other element in that set. But this forces $\sigma(\cal{E}_4 - \cal{E}_5) = \cal{E}_4 - \cal{E}_5$. Consequently, $\sigma$ permutes the components $\mathcal{D}_0,\ldots, \mathcal{D}_7$. Since $\sigma$ preserves the intersection form on $X$, this permutation induces an automorphism of the associated Dynkin diagram of type $E^\smone_7$. 

Therefore, we obtain the group homomorphism $ \varphi\colon \Cr(X) \rightarrow \Aut(E^\smone_7)$, whose kernel is $\{ \sigma \in \Cr(X) \mid \sigma(\mathcal{D}_i) = \mathcal{D}_i \text{ for } i=0,\ldots,7\} = \Cr(X,D)$. Since $\Cr(X,D) = \{ \id \}$ by Step 3, the map $\varphi$ is injective. It is easy to see from the shape of the Dynkin diagram of type $E^\smone_7$ in \cite[p.\ 44]{Kac90} that $\Aut(E^\smone_7) \simeq \Z/2\Z$, which shows the claim.

\medskip

\emph{Step 5.}
In this step, we show that the cone $\Nef(X)$ is not rational polyhedral.

By duality, it suffices to show that the cone $\overline{\Eff}(X)$ is not rational polyhedral. Since the numerical classes of distinct $({-}1)$-curves span distinct extremal rays of $\overline{\Eff}(X)$ by \cite[Lemma 2.4]{LSX26}, it suffices to show that $X$ contains infinitely many numerical classes of $({-}1)$-curves. For every $n\geq 1$, set
$$\cal G_n \coloneqq \cal{E}_5 - (n-1)\alpha_1 + n(n-1)\delta \in N^1(X).$$
It suffices to show that $\{\mathcal{G}_n\}_{n\geq1}$ is a sequence of distinct numerical classes of $({-}1)$-curves on $X$.

By Appendix \ref{sec:Sakai_appendix_A} we have
\begin{equation}\label{eq:severalequalities}
\mathcal E_5\cdot\alpha_1=\mathcal{E}_5\cdot\delta=1,\quad\alpha_1^2={-}2,\quad \alpha_1\cdot\delta=\delta^2=0, 
\end{equation}
and
\begin{equation}\label{eq:severalequalities1}
\alpha_1\cdot\mathcal D_i=\delta\cdot\mathcal D_i=0\quad\text{and}\quad \cal{E}_5 \cdot \cal{D}_i \geq 0\quad\text{for each }\cal{D}_i\in \Comp (D).
\end{equation}
We first show that each $\mathcal G_n$ is the class of a $({-}1)$-curve. Fix $n\geq1$. By Lemma \ref{lem:irredclasses} we have to show that $\mathcal G_n\in \mathcal M_X$, and that $\mathcal G_n\cdot C \geq 0$ for all $C \in \Comp(D) \cup \Delta^{\nod}_{X,D}$. To that end, by \eqref{eq:severalequalities} we have $\cal{G}_n \cdot \delta = 1$ and
$$\cal{G}_n^2 = {-}1 - 2(n-1)-2(n-1)^2+ 2n(n-1) = {-}1,$$
hence $\cal G_n\in\mathcal M_X$ by Lemma \ref{lem:-1classesHalphen}(a). By \eqref{eq:severalequalities1} we have $\cal G_n \cdot \cal{D}_i = \cal{E}_5 \cdot \cal{D}_i \geq 0$ for each $\cal{D}_i\in \Comp (D)$, and by \eqref{eq:severalequalities} we have
\begin{equation}\label{eq:anequality}
\cal G_n \cdot \alpha_1 = 1 +2(n-1) = 2n -1 > 0.
\end{equation}
Since $\alpha_1=\cal{E}_4 - \cal{E}_5$, we have $\Delta^{\nod}_{X,D} = \{ \alpha_1 \}$ by Step 2, which finishes the proof that $\mathcal G_n$ is the class of a $({-}1)$-curve.

Finally, for distinct positive integers $n$ and $n'$ we have by \eqref{eq:anequality} that $\cal G_n \cdot \alpha_1 \neq \cal G_{n'}\cdot \alpha_1$, hence $\cal G_n \neq \cal G_{n'}$, as desired.

\medskip

\emph{Step 6.}
Let $G\subseteq\GL\big(N^1(X)_\R\big)$ be any group which preserves the intersection form on $N^1(X)_\R$ and the cone $\Eff(X)$. Then by \cite[Proposition A.1]{LSX26} we have $G \subseteq \Cr(X)$, thus $G$ is finite by Step 4. Since the cone $\Nef(X)$ is not rational polyhedral by Step 5, and since $\Nef(X)=\Nef^e(X)$ by Lemma \ref{lem:effconeHalpensurface1}(b), we conclude that there does not exist a rational polyhedral fundamental domain for the action of $G$ on $\Nef(X)$. This concludes the proof.
\end{proof}

\newpage

\appendix

\section{Root data of type $A_1^\smone$}\label{sec:appendix_cyclic_subgroup}

In this appendix we collect several results on sets of root data of type $A_1^\smone$ which we extensively use in Part \ref{part:2} of this paper.

\begin{setup}\label{setup1}
Throughout this appendix, we fix a root data of affine type
$$\mathscr D = \big(A,\Pi,\Pi^\smvee,V,V^*,\langle\cdot\, ,\cdot\rangle\big)$$
such that $ \Pi = \{\alpha_0, \alpha_1\} $, and the matrix
\begin{equation}\label{eq:CartanA_1}
A\coloneqq \begin{pmatrix}
2 & -2 \\
-2 & 2
\end{pmatrix}
\end{equation}
is of affine type $A^\smone_1$. Then there is a root system in the classical sense which is compatible with this set of root data. Thus, as in Theorem \ref{thm:imaginary_roots}(b), we have the imaginary root
$$\delta = \alpha_0 + \alpha_1,$$
and as in \S\ref{subsec:normalised} we have the canonical central element
$$c = \alpha_0^\smvee + \alpha_1^\smvee.$$
As in \S\ref{subsec:roots} we denote by $W_{\mathscr D}$ the Weyl group of $\mathscr D$ with generators 
$$ r_0\coloneqq r_{\alpha_0} \quad\text{and}\quad r_1\coloneqq r_{\alpha_1}. $$
As usual, we denote by $\cal F_{\mathscr D}$ the negative fundamental chamber of $\mathscr D$ and by $\cal T_{\mathscr D}$ the negative Tits cone of $\mathscr D$.
\end{setup}

With this setup, it is easy to check that
\begin{equation}\label{eq:orthogonalinA_1}
\langle\delta,\alpha_0^\smvee\rangle = \langle\delta,\alpha_1^\smvee\rangle = \langle\delta,c\rangle = \langle\alpha_0,c\rangle = \langle\alpha_1,c\rangle = 0.
\end{equation}
By Proposition \ref{prop:rootsafterremoving} we have
\begin{equation}\label{eq:positiverootsA_1}
(\Delta_{\mathscr D}^\re)_+=\{\alpha_0 + \ell\delta, \alpha_1 + \ell\delta\mid \ell\in\Z_{\geq0}\}.
\end{equation}

We start with the following lemma which is used throughout the appendix.

\begin{lem}\label{lem:veeinA_1}
Assume Setup \ref{setup1}. Then for any $\ell\in\Z$ we have
$$(\alpha_0+\ell\delta)^\smvee=\alpha_0^\smvee+\ell c\quad\text{and}\quad (\alpha_1+\ell\delta)^\smvee=\alpha_1^\smvee+\ell c.$$
\end{lem}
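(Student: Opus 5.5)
The natural approach is to realise $\alpha_0+\ell\delta$ and $\alpha_1+\ell\delta$ as images of simple roots under explicit Weyl group elements. Then I use the $W_{\mathscr D}$-equivariance of the bijection $\alpha\mapsto\alpha^\smvee$ (which holds for root data by the identification $W_{\mathscr D}\simeq W^\smvee_{\mathscr D}$ of Proposition \ref{prop:W_invariant_pairing}). The dual side works symmetrically: $c=\alpha_0^\smvee+\alpha_1^\smvee$ plays the role of $\delta$. The dual Cartan matrix is $A^T=A$, so the dual reflections satisfy $r_0^\smvee(\alpha_0^\smvee)=-\alpha_0^\smvee$ and $r_0^\smvee(\alpha_1^\smvee)=\alpha_1^\smvee+2\alpha_0^\smvee$, with the analogous formulas for $r_1^\smvee$.

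The first step is to compute the actions of $r_0,r_1$ on $V$ and of $r_0^\smvee,r_1^\smvee$ on $V^*$ restricted to the relevant spans. From $\langle\alpha_i,\alpha_j^\smvee\rangle=a_{ij}$ we get $r_0(\alpha_0)=-\alpha_0$ and $r_0(\alpha_1)=\alpha_1+2\alpha_0$. By \eqref{eq:orthogonalinA_1} we have $r_i(\delta)=\delta$, and dually $r_i^\smvee(c)=c$ because $\langle\alpha_i,c\rangle=0$. Consequently $r_1(\alpha_0+\ell\delta)=\alpha_0+2\alpha_1+\ell\delta=-\alpha_0+(\ell+2)\delta$, and $r_0(-\alpha_0+\ell\delta)=\alpha_0+\ell\delta$. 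Composing, $r_0r_1(\alpha_0+\ell\delta)=\alpha_0+(\ell+2)\delta$, and similarly $r_1(\alpha_0+\ell\delta)=-(\alpha_0-(\ell+2)\delta)$. The same computation on the dual side, with $c$ in place of $\delta$, gives $r_0^\smvee r_1^\smvee(\alpha_0^\smvee+\ell c)=\alpha_0^\smvee+(\ell+2)c$ and $r_1^\smvee(\alpha_0^\smvee+\ell c)=-\alpha_0^\smvee+(\ell+2)c$.

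Now I argue by induction on $\ell$, in both directions. The base cases are $\ell=0$, where $(\alpha_0)^\smvee=\alpha_0^\smvee$ by definition, and $\ell=-1$. For $\ell=-1$, note that $\alpha_0-\delta=-\alpha_1$. Since $(-\alpha)^\smvee=-\alpha^\smvee$ (because $r_i(\alpha_i)=-\alpha_i$ and $r_i^\smvee(\alpha_i^\smvee)=-\alpha_i^\smvee$), we get $(-\alpha_1)^\smvee=-\alpha_1^\smvee=\alpha_0^\smvee-c$, as required. Applying $r_0r_1$ and its inverse, together with equivariance, shifts $\ell$ by $\pm2$, so every $\ell\in\Z$ is covered. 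The statement for $\alpha_1+\ell\delta$ follows by the symmetry $0\leftrightarrow1$ of the setup, or directly from $\alpha_1+\ell\delta=-(\alpha_0-(\ell+1)\delta)$ and the identity just proved.

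The only potentially delicate point is justifying that $(\cdot)^\smvee$ is well defined and $W$-equivariant in the root-data setting rather than the classical one. For this I will invoke Proposition \ref{prop:W_invariant_pairing} together with the definition of coroots in \cite{MP89}. The remaining steps are routine bookkeeping.
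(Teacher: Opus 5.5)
Your proof is correct, but it takes a different route from the paper's.

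\textbf{The paper's route.} The paper passes to a classical realisation $(U,\Pi,\Pi^\smvee)$ compatible with $\mathscr D$ and puts a standard invariant form on it. By Lemma \ref{lem:lenthsofroots}, all real roots have the same squared length. By the proof of \cite[Proposition 5.10(b)]{Kac90}, the coroot map is then the restriction of a single linear map, namely a fixed multiple of $\nu^{-1}$. This gives $\big((\ell+1)\alpha_0+\ell\alpha_1\big)^\smvee=(\ell+1)\alpha_0^\smvee+\ell\alpha_1^\smvee$ for all $\ell$ simultaneously.

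\textbf{Your route.} You stay inside the root-data framework. You use only the $W_{\mathscr D}$-equivariance of $\alpha\mapsto\alpha^\smvee$ and the rank-two identity $r_0r_1(\alpha_0+\ell\delta)=\alpha_0+(\ell+2)\delta$ together with its dual analogue for $c$. The two base cases $\ell=0$ and $\ell=-1$ cover both parities. I checked the reflection formulas, the identity $-\alpha_1^\smvee=\alpha_0^\smvee-c$, and the reduction $\alpha_1+\ell\delta=-\big(\alpha_0-(\ell+1)\delta\big)$; all are right.

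\textbf{Comparison.} The paper's argument is shorter and explains conceptually why the coroot map is linear here (equal root lengths). However, it tacitly identifies root-data coroots with the classical coroots of $U$, and it leans on Kac's length machinery. Yours is elementary and self-contained. The one ingredient you flag, well-definedness and equivariance of $(\cdot)^\smvee$ via \cite{MP89}, is exactly what the paper itself uses elsewhere, for instance in Step 2 of Proposition \ref{prop:subrootdata000} and in Lemma \ref{lem:r1r0_n}(g). Your argument also anticipates the computation in Lemma \ref{lem:r1r0_n}(a) without depending on it, so there is no circularity.
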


\begin{proof}
By definition, there is a realisation $(U,\Pi,\Pi^\smvee)$ of the generalised Cartan matrix $A$ in the classical sense which is compatible with this set of root data. We can define a standard bilinear form on $U$ associated to $\mathscr D$. Since all real roots of $\mathscr D$ have the same squared length by Lemma \ref{lem:lenthsofroots}, we have
$$\big((\ell+1)\alpha_0+\ell\alpha_1\big)^\smvee=(\ell+1)\alpha_0^\smvee+\ell\alpha_1^\smvee$$
by the proof of \cite[Proposition 5.10(b)]{Kac90}. Therefore,
$$(\alpha_0+\ell\delta)^\smvee=\big((\ell+1)\alpha_0+\ell\alpha_1\big)^\smvee=(\ell+1)\alpha_0^\smvee+\ell\alpha_1^\smvee=\alpha_0^\smvee+\ell c.$$
This shows the first equality; the second one is proved analogously.
\end{proof}

\subsection{Fundamental reflections}

We need a few lemmas calculating actions of fundamental reflections $r_0$ and $r_1$ as in Setup \ref{setup1}, as well as actions of reflections of the form $r_{k\delta+\alpha_0}$ and $r_{\ell\delta+\alpha_1}$ for non-negative integers $k$ and $\ell$. We will often use the following formulas, which follow by easy calculations: for any $x\in V$ we have
$$ r_1 r_0 (x) = x- \langle x ,\alpha_0^\smvee\rangle \delta - \langle x ,c\rangle \alpha_1,\quad r_0r_1 (x) = x- \langle x ,\alpha_1^\smvee\rangle \delta - \langle x ,c\rangle \alpha_0 , $$
and
$$ r_{\ell\delta + \alpha_0} (x) = x - \langle x , \ell c + \alpha_0^\smvee \rangle ( \ell\delta + \alpha_0 ) ,\quad  r_{\ell \delta + \alpha_1} (x) = x - \langle x , \ell c + \alpha_1^\smvee \rangle ( \ell \delta + \alpha_1 ). $$

\begin{lem}\label{lem:r1r0_n}
	Assume Setup \ref{setup1}. Let $k$ and $\ell$ be integers. Let $m$ be a non-negative integer and let $v\in W_{\mathscr D}$ be the unique word of length $m$ which starts with $r_0$. Then:
	\begin{enumerate}[\normalfont (a)]
	\item $ (r_0r_1)^\ell(\alpha_0) = \alpha_0 +2\ell \delta $, $(r_0r_1)^\ell(\alpha_1) = \alpha_1 - 2\ell \delta $,
	\item $ r_{\ell\delta + \alpha_0}(\alpha_0) = \alpha_1-(2\ell+1)\delta$, $r_{\ell\delta + \alpha_0}(\alpha_1) = \alpha_0+(2\ell+1)\delta $,
	\item $ r_{\ell\delta + \alpha_1}(\alpha_0) = \alpha_1+(2\ell+1)\delta$, $r_{\ell\delta + \alpha_1}(\alpha_1) = \alpha_0-(2\ell+1)\delta $,
    \item $ r_{\ell\delta + \alpha_0} = (r_0 r_1)^\ell r_0 $ , $ r_{\ell\delta+\alpha_1} = r_1 (r_0 r_1)^\ell $,
	\item $r_{k\delta+\alpha_0} r_{\ell\delta + \alpha_1} = (r_0 r_1)^{k+\ell+1}$, $r_{\ell\delta + \alpha_1} r_{k\delta+\alpha_0} = (r_1 r_0)^{k+\ell+1}$,
	\item $ \big\{v(\alpha_0),v(\alpha_1)\big\} = \{\alpha_0 + m\delta, \alpha_1 - m\delta\} $,
	\item $ v(\overline{\cal F_{\mathscr D}}) = \big\{x\in V\mid \langle x , mc+\alpha_0^\smvee \rangle \leq 0,\ \langle x, (m-1) c+\alpha_0^\smvee \rangle \geq 0 \big\} $.
	\end{enumerate}		
\end{lem}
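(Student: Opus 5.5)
The plan is to do everything by direct computation from the reflection formulas displayed just before the lemma, together with \eqref{eq:orthogonalinA_1}, Lemma \ref{lem:veeinA_1} and the Cartan matrix \eqref{eq:CartanA_1}. The values we need are $\langle\alpha_0,\alpha_0^\smvee\rangle=\langle\alpha_1,\alpha_1^\smvee\rangle=2$, $\langle\alpha_0,\alpha_1^\smvee\rangle=\langle\alpha_1,\alpha_0^\smvee\rangle=-2$, and $\langle\alpha_i,c\rangle=\langle\delta,\cdot^\smvee\rangle=0$.

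For (a), first compute $r_0r_1(\alpha_0)=\alpha_0-\langle\alpha_0,\alpha_1^\smvee\rangle\delta=\alpha_0+2\delta$ and $r_0r_1(\alpha_1)=\alpha_1-2\delta$. Since $r_0r_1$ fixes $\delta$, induction on $\ell\ge0$ gives the formula for nonnegative $\ell$. For negative $\ell$, invert the relation.

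Parts (b) and (c) are direct substitutions into $r_{\ell\delta+\alpha_0}(x)=x-\langle x,\ell c+\alpha_0^\smvee\rangle(\ell\delta+\alpha_0)$. For instance, $\langle\alpha_0,\ell c+\alpha_0^\smvee\rangle=2$, so $r_{\ell\delta+\alpha_0}(\alpha_0)=-\alpha_0-2\ell\delta=\alpha_1-(2\ell+1)\delta$. The remaining three identities follow the same way.

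For (d), both sides are linear, so it suffices to compare them on $\alpha_0,\alpha_1$ and on a complement. To avoid choosing a complement, compare instead the actions on all of $V$ via the formulas:
\[
(r_0r_1)^\ell r_0(\alpha_0)=(r_0r_1)^\ell(-\alpha_0)=-\alpha_0-2\ell\delta,
\]
which matches (b). Better, one uses the identity $w r_\beta w^{-1}=r_{w(\beta)}$ (property (c) of \cite[p.\ 669]{MP89}) with $w=(r_0r_1)^{j}$: the element $(r_0r_1)^j$ sends $\alpha_0$ to $\alpha_0+2j\delta$. This handles even $\ell$, since $(r_0r_1)^{j}r_0(r_0r_1)^{-j}=(r_0r_1)^{2j}r_0$. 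For odd $\ell=2j+1$, conjugate $r_1$ instead, using $(r_0r_1)^jr_0(\alpha_1)=\alpha_0+(2j+1)\delta$, which follows from (a) and (b). This gives $r_{\ell\delta+\alpha_0}=(r_0r_1)^jr_0r_1r_0(r_0r_1)^{-j}=(r_0r_1)^{\ell}r_0$. The $\alpha_1$ case is symmetric.

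Part (e) is then immediate from (d): $(r_0r_1)^kr_0\cdot r_1(r_0r_1)^\ell=(r_0r_1)^{k+\ell+1}$, and similarly for the other order.

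For (f), write $v=(r_0r_1)^{j}$ if $m=2j$, or $v=(r_0r_1)^jr_0$ if $m=2j+1$, and apply (a) and (b). For example, in the odd case $v(\alpha_1)=(r_0r_1)^j(\alpha_0+2\delta)=\alpha_0+m\delta$ and $v(\alpha_0)=\alpha_1-m\delta$.

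For (g), use Proposition \ref{prop:W_invariant_pairing}: $v(\overline{\mathcal F_{\mathscr D}})=\{x\mid\langle x,v(\alpha_i^\smvee)\rangle\le0,\ i=0,1\}$. The coroot version of (f), via the $W$-equivariant bijection $\alpha\mapsto\alpha^\smvee$ and Lemma \ref{lem:veeinA_1}, gives $\{v(\alpha_0^\smvee),v(\alpha_1^\smvee)\}=\{mc+\alpha_0^\smvee,\,\alpha_1^\smvee-mc\}$. Finally, $\alpha_1^\smvee-mc=-((m-1)c+\alpha_0^\smvee)$ because $c=\alpha_0^\smvee+\alpha_1^\smvee$, which turns the second inequality into $\langle x,(m-1)c+\alpha_0^\smvee\rangle\ge0$. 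The main delicate point is the bookkeeping in (d) and (f) for the parity of $m$ and for negative exponents; everything else is routine substitution.
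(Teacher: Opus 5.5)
Your proof is correct. For (a)--(c) and (e)--(g) it is essentially the paper's argument:
\begin{itemize}
\item direct substitution into the displayed reflection formulas for (a)--(c);
\item the parity split $v=(r_0r_1)^j$ or $v=(r_0r_1)^jr_0$ for (f);
\item transporting the two walls of $\overline{\cal F_{\mathscr D}}$ with Proposition~\ref{prop:W_invariant_pairing} and Lemma~\ref{lem:veeinA_1} for (g).
\end{itemize}

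The genuine difference is in (d). The paper proves $r_{\ell\delta+\alpha_0}=(r_0r_1)^\ell r_0$ by induction on $\ell\geq0$. It writes $(r_0r_1)^{\ell+1}r_0=r_{\ell\delta+\alpha_0}\circ r_1r_0$ and expands both sides on an arbitrary $x\in V$ using the explicit formulas. Negative $\ell$ then needs a separate manipulation, $(r_0r_1)^{-\ell}r_0=r_1(r_0r_1)^{\ell-1}=r_{(\ell-1)\delta+\alpha_1}=r_{\alpha_0-\ell\delta}$, which uses the second formula of (d).

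You instead use $wr_\beta w^{-1}=r_{w(\beta)}$. This holds here because $\langle w(x),w(y)\rangle=\langle x,y\rangle$ and $w(\beta^\smvee)=w(\beta)^\smvee$; it is the same property from \cite{MP89} that the paper invokes in the proof of Lemma~\ref{lem:length}. You apply it with $w=(r_0r_1)^j$ or $w=(r_0r_1)^jr_0$. This needs only (a) and $r_0(\alpha_1)=\alpha_0+\delta$, and it covers all $\ell\in\Z$ at once, since conjugation by $r_0$ inverts $r_0r_1$. The cost is the parity bookkeeping. Your route is more conceptual and avoids the coordinate expansion; the paper's is self-contained from the displayed formulas. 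Your abandoned first idea, checking (d) only on $\alpha_0,\alpha_1$, would indeed not suffice, since these do not span $V$.

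One slip in (f): you wrote $v(\alpha_1)=(r_0r_1)^j(\alpha_0+2\delta)$. Since $r_0(\alpha_1)=\alpha_1+2\alpha_0=\alpha_0+\delta$, this should be $(r_0r_1)^j(\alpha_0+\delta)$, as you correctly used in (d). As written it would give $\alpha_0+(m+1)\delta$ rather than $\alpha_0+m\delta$.
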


\begin{proof}
First we prove (a).  We only show the first equality as the second one is analogous. The equality is clear for $\ell=0$. Assume that it holds for some non-negative integer $\ell$. Then by the formulas before this lemma and by \eqref{eq:orthogonalinA_1} we have
\begin{align*}
	(r_0r_1)^{\ell+1}(\alpha_0) &= (r_0r_1)(\alpha_0+2\ell \delta)=\alpha_0+2\ell \delta\\
	&- \langle \alpha_0+2\ell \delta ,\alpha_1^\smvee\rangle \delta - \langle \alpha_0+2\ell \delta ,c\rangle \alpha_0=\alpha_0+2(\ell+1) \delta,
\end{align*}
as desired. This deals with non-negative integers. For non-positive integers the proof is similar, by noticing that $(r_0r_1)^{-\ell}=(r_1r_0)^\ell$.

Parts (b) and (c) follow from the formulas before this lemma and by \eqref{eq:orthogonalinA_1}.

	Next we show (d). For $\ell=0$ the statements are clear. Now assume that the first formula in (d) holds for some non-negative integer $\ell$. Let $x\in V$. Then by induction, by the formulas before this lemma, by Lemma \ref{lem:veeinA_1}, and by \eqref{eq:orthogonalinA_1} we have
    \begin{align*}
    (r_0 &r_1)^{\ell+1} r_0 (x) = r_{\ell\delta + \alpha_0} \big( r_1 r_0 (x)\big) = r_{\ell\delta+ \alpha_0} \big( x- \langle x,\alpha_0^\smvee\rangle \delta - \langle x , c\rangle \alpha_1 \big) \\
	&= x- \langle x,\alpha_0^\smvee\rangle \delta - \langle x , c\rangle \alpha_1 - \big\langle x- \langle x,\alpha_0^\smvee\rangle \delta - \langle x , c\rangle \alpha_1 ,\ell c+ \alpha_0^\smvee\big\rangle (\ell\delta+ \alpha_0)\\
	&= x - \big\langle x , (\ell+1) c + \alpha_0^\smvee \big\rangle \big( (\ell+1)\delta + \alpha_0 \big) = r_{(\ell+1)\delta + \alpha_0}(x).
	\end{align*}
    This shows the first formula for non-negative integers, and the proof of the second formula for non-negative integers is analogous. Further, for a non-negative integer $\ell$, by the second formula for non-negative integers we have
    \begin{align*}
	(r_0r_1)^{-\ell}r_0&=(r_1r_0)^\ell r_0=(r_1r_0)^{\ell-1} r_1=r_1(r_0r_1)^{\ell-1}\\
	&=r_{(\ell-1)\delta+\alpha_1}=r_{\ell\delta-\alpha_0}=r_{\alpha_0-\ell\delta}.
    \end{align*}
	This shows the first formula for all integers $\ell$, and the proof of the second formula is analogous.

	Part (e) follows immediately from (d).
	
	Now we show (f). If $m$ is even, then $v=(r_0r_1)^{m/2}$, hence by (a) we have $ v(\alpha_0) = \alpha_0 + m\delta $ and $ v(\alpha_1) = \alpha_1 - m\delta $. If $m$ is odd, then $v=(r_0r_1)^{(m-1)/2}r_0=r_{\frac{m-1}{2}\delta+\alpha_0}$ by (d), hence by (b) we have $ v(\alpha_0) = \alpha_1 -m\delta $ and $ v(\alpha_1) = \alpha_0 + m\delta $, as desired.
	
	Finally, part (f), together with Proposition \ref{prop:W_invariant_pairing} and Lemma \ref{lem:veeinA_1}, gives
\begin{align*}
v(\overline{\cal F_{\mathscr D}}) &= \big\{v(y)\mid y\in V,\ \langle y , \alpha_0^\smvee \rangle \leq 0,\  \langle y, \alpha_1^\smvee  \rangle \leq 0 \big\}\\
&= \big\{x\in V\mid \langle x , v(\alpha_0)^\smvee \rangle \leq 0,\ \langle x, v(\alpha_1)^\smvee \rangle \leq 0 \big\}\\
&= \big\{x\in V\mid \langle x , mc+\alpha_0^\smvee \rangle \leq 0,\ \langle x, -mc+\alpha_1^\smvee \rangle \leq 0 \big\}\\
&= \big\{x\in V\mid \langle x , mc+\alpha_0^\smvee \rangle \leq 0,\ \langle x, (m-1) c+\alpha_0^\smvee \rangle \geq 0 \big\},
\end{align*}
where for the last equality we used that $-mc+\alpha_1^\smvee = {-}\big((m-1) c+\alpha_0^\smvee\big) $. This shows (g) and finishes the proof.
\end{proof}

\subsection{Subroot systems}

From now on, we work additionally in the following setup.

\begin{setup}\label{setup2}
Assume Setup \ref{setup1}. For any non-negative integers $k$ and $\ell$ set
$$ \Pi_{k,\ell} \coloneqq \big\{ k\delta+\alpha_0, \ell\delta + \alpha_1 \big\}\quad\text{and}\quad \Pi_{k,\ell}^\smvee \coloneqq \big\{ kc+\alpha_0^\smvee, \ell c + \alpha_1^\smvee \big\}. $$
By \eqref{eq:positiverootsA_1} we have
\begin{equation}\label{eq:Pikell}
\Pi_{k,\ell} \subseteq (\Delta_{\mathscr D}^\re)_+.
\end{equation}
Then by Lemma \ref{lem:veeinA_1} and with the matrix $A$ as in \eqref{eq:CartanA_1}, it is easy to check that 
$$\mathscr D_{k,\ell}\coloneqq \big(A,\Pi_{k,\ell},\Pi_{k,\ell}^\smvee,V,V^*,\langle\cdot\, ,\cdot\rangle\big)$$
is a subroot system of $\mathscr D$, and it is also an affine root system of type $A_1^\smone$. As usual, we denote by $W_{\mathscr D_{k,\ell}}$, $\cal{F}_{\mathscr D_{k,\ell}}$ and $\cal{T}_{\mathscr D_{k,\ell}}$ the Weyl group, the negative fundamental chamber and the negative Tits cone of $\mathscr D_{k,\ell}$. Define the \emph{faces} of $\overline{\cal{F}_{\mathscr D_{k,\ell}}}$ by
    \begin{align*}
	F_{k,\ell}^0 &\coloneqq \big\{ x\in V \mid \langle x, kc +\alpha_0^\smvee\rangle = 0,\ \langle x, \ell c+\alpha_1^\smvee \rangle <0 \big\},\\
	F_{k,\ell}^1 &\coloneqq \big\{ x\in V \mid \langle x, kc+\alpha_0^\smvee\rangle < 0,\ \langle x, \ell c+\alpha_1^\smvee \rangle  = 0 \big\}.    
    \end{align*}
    When $k=\ell=0$, we denote by $F_0\coloneqq F_{0,0}^0$ and $F_1\coloneqq F_{0,0}^1$ the faces of $\overline{\cal{F}_{\mathscr D}}$.
\end{setup}

\begin{lem}\label{lem:Wklcalculation}
Assume Setup \ref{setup2}. Let $k$ and $\ell$ be non-negative integers. Then the following statements hold.
\begin{enumerate}[\normalfont (a)]
\item If $k\geq1$, then $ r_0(\overline{\cal F_{\mathscr D_{k,\ell}}}) = \overline{\cal{F}_{\mathscr D_{\ell+1,k-1}}} $, and if $\ell\geq1$, then $ r_1(\overline{\cal F_{\mathscr D_{k,\ell}}}) =\overline{\cal{F}_{\mathscr D_{\ell-1,k+1}}} $.
\item We have
$$ \overline{\cal F_{{\mathscr D}_{\ell+1, \ell}}} = \overline{\cal F_{\mathscr D_{0,\ell}}}\cup r_{0}(\overline{\cal F_{\mathscr D_{0,\ell}}}) \quad\text{and}\quad (r_1r_0)^{\ell+1}(\overline{\cal F_{{\mathscr D}_{\ell+1, \ell}}} )=r_{\ell\delta + \alpha_1} (\overline{\cal F_{{\mathscr D}_{\ell+1, \ell}}} ) . $$
\item If $w\in W_{\mathscr D_{k,\ell}}$, then there exists $n\in\Z$ such that
$$ w\in\big\{(r_0r_1)^{n(k+\ell+1)},r_{\ell\delta + \alpha_1} (r_0r_1)^{n(k+\ell+1)}\big\}. $$
    \item Let $w\in W_{\mathscr D_{k,\ell}}$. If $k=0$, then there exists $u \in W_{\mathscr D_{0,\ell}}$ such that 
    $$ r_0 w( \overline{\cal{F}_{\mathscr D_{0,\ell}}})=u( \overline{\cal{F}_{\mathscr D_{0,\ell}}}). $$
    If $k\geq1$, then there exists $u\in W_{\mathscr D_{\ell+1,k-1}}$ such that
$$ r_0w( \overline{\cal{F}_{\mathscr D_{k,\ell}}})=u ( \overline{\cal{F}_{\mathscr D_{\ell+1,k-1}}}). $$
\item Let $w\in W_{\mathscr D_{k,\ell}}$. If $\ell = 0$, then there exists $u\in W_{\mathscr D_{k,0}}$ such that
$$ r_1w ( \overline{\cal{F}_{\mathscr D_{k,0}}})=u( \overline{\cal{F}_{\mathscr D_{k,0}}}). $$
If $\ell\geq1$, then there exists $u\in W_{\mathscr D_{\ell-1,k+1}}$ such that
$$ r_1w( \overline{\cal{F}_{\mathscr D_{k,\ell}}})=u( \overline{\cal{F}_{\mathscr D_{\ell-1,k+1}}}). $$
\end{enumerate}
\end{lem}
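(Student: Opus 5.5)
The whole lemma reduces to how $r_0$ and $r_1$ act on the two simple roots of $\mathscr D_{k,\ell}$, and on its coroots via Lemma \ref{lem:veeinA_1}. The main tools are Lemma \ref{lem:r1r0_n}(b)(c) and Proposition \ref{prop:W_invariant_pairing}. For any $u\in W_{\mathscr D}$ we have
$$u(\overline{\cal F_{\mathscr D_{k,\ell}}})=\{x\mid \langle x,u(kc+\alpha_0^\smvee)\rangle\le0,\ \langle x,u(\ell c+\alpha_1^\smvee)\rangle\le0\}.$$
So each set identity becomes a computation of the images of the simple coroots.

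For (a), take $k\ge1$. By Lemma \ref{lem:r1r0_n}(b) with $\ell=0$, we have $r_0(\alpha_0)=-\alpha_0$ and $r_0(\alpha_1)=\alpha_1+2\alpha_0=\alpha_0+\delta$. Also $r_0(\delta)=\delta$. Hence $r_0(k\delta+\alpha_0)=(k-1)\delta+\alpha_1$ and $r_0(\ell\delta+\alpha_1)=(\ell+1)\delta+\alpha_0$, and the same holds for the coroots. Thus $r_0$ maps $\Pi_{k,\ell}$ onto $\Pi_{\ell+1,k-1}$, and the chambers correspond. The statement for $r_1$ is symmetric.

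For (b), the description above gives
$$r_0(\overline{\cal F_{\mathscr D_{0,\ell}}})=\{\langle x,\alpha_0^\smvee\rangle\ge0,\ \langle x,(\ell+1)c+\alpha_0^\smvee\rangle\le0\}.$$
On the other hand, $\overline{\cal F_{\mathscr D_{0,\ell}}}=\{\langle x,\alpha_0^\smvee\rangle\le0,\ \langle x,\ell c+\alpha_1^\smvee\rangle\le0\}$. The target is $\{\langle x,(\ell+1)c+\alpha_0^\smvee\rangle\le 0,\ \langle x,\ell c+\alpha_1^\smvee\rangle\le0\}$. The identity $(\ell+1)c+\alpha_0^\smvee=\alpha_0^\smvee+(\ell+1)c$ lets us apply Lemma \ref{lem:simplelemma}(b) with $\alpha=\alpha_0^\smvee$, $\beta=\ell c+\alpha_1^\smvee$ and $\gamma=(\ell+1)c+\alpha_0^\smvee$. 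For this we need $\gamma=p\alpha+q\beta$ with $p,q>0$. Indeed, $c=\alpha_0^\smvee+\alpha_1^\smvee$ gives $\gamma=\alpha_0^\smvee+(\ell+1)(\alpha_0^\smvee+\alpha_1^\smvee)$. The condition $\langle x,\beta\rangle\le0$ appears twice and must be checked carefully. If the signs do not match directly, I would instead cut the target cone by the hyperplane $\langle x,\alpha_0^\smvee\rangle=0$ and check each half, which is elementary. I expect this bookkeeping to be the only delicate point.

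The second identity in (b) follows from Lemma \ref{lem:r1r0_n}(d)(e): $(r_1r_0)^{\ell+1}=r_{\ell\delta+\alpha_1}r_0$. It then remains to check that $r_0$ preserves $\overline{\cal F_{\mathscr D_{\ell+1,\ell}}}$. By (a) with $k=\ell+1$, $r_0$ maps it to $\overline{\cal F_{\mathscr D_{\ell+1,\ell}}}$ itself.

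For (c), $W_{\mathscr D_{k,\ell}}$ is generated by $s=r_{k\delta+\alpha_0}$ and $t=r_{\ell\delta+\alpha_1}$, and it is infinite dihedral. So every element is $(st)^n$ or $t(st)^n$; note that $(st)^n s=t\,(ts)^{-n}\cdots$ rewrites into this form. By Lemma \ref{lem:r1r0_n}(e), $st=(r_0r_1)^{k+\ell+1}$, which gives the claim.

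For (d) and (e), write $r_0w=(r_0wr_0^{-1})r_0$. Conjugating by $r_0$ maps the reflection $r_\beta$ to $r_{r_0(\beta)}$. By (a), conjugation therefore sends $W_{\mathscr D_{k,\ell}}$ to $W_{\mathscr D_{\ell+1,k-1}}$ when $k\ge1$. Take $u=r_0wr_0$. Then
$$r_0w(\overline{\cal F_{\mathscr D_{k,\ell}}})=u\,r_0(\overline{\cal F_{\mathscr D_{k,\ell}}})=u(\overline{\cal F_{\mathscr D_{\ell+1,k-1}}}).$$
When $k=0$, $r_0$ itself lies in $W_{\mathscr D_{0,\ell}}$, so we take $u=r_0w$. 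Part (e) is the symmetric argument with $r_1$.
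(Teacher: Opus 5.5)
Your proof is correct. For (a), the first identity in (b), and (c) you follow the paper's argument: images of the simple coroots under $r_0$ via Proposition \ref{prop:W_invariant_pairing}, then Lemma \ref{lem:simplelemma}(b), then the dihedral normal form together with Lemma \ref{lem:r1r0_n}(e). Two loose ends are easy to close.
In (b), $(\ell+1)c+\alpha_0^\smvee=2\alpha_0^\smvee+(\ell c+\alpha_1^\smvee)$, so $p=2$, $q=1$ and no fallback is needed. The paper's displayed identity at this step has $\alpha_0^\smvee$ where $2\alpha_0^\smvee$ is needed, which is harmless.
In (c), the rewriting is $(st)^ns=t(st)^{-(n+1)}$, and you do not need the group to be infinite dihedral.

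The genuine difference is in (d) and (e). The paper uses the normal form from (c) and treats $w=(r_0r_1)^{n(k+\ell+1)}$ and $w=r_{\ell\delta+\alpha_1}(r_0r_1)^{n(k+\ell+1)}$ separately. It checks identities such as $r_0w=w^{-1}r_0$ and $r_0w=(r_0r_1)^{n(k+\ell+1)}r_{(\ell+1)\delta+\alpha_0}r_0$ by hand.

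You instead observe that $r_0$ maps $\Pi_{k,\ell}$ and $\Pi_{k,\ell}^\smvee$ onto $\Pi_{\ell+1,k-1}$ and $\Pi_{\ell+1,k-1}^\smvee$. Hence $r_0r_\beta r_0=r_{r_0(\beta)}$ for $\beta\in\Pi_{k,\ell}$, so $r_0W_{\mathscr D_{k,\ell}}r_0=W_{\mathscr D_{\ell+1,k-1}}$, and $u=r_0wr_0$ works uniformly. This is shorter and independent of (c). It also explains why the statement holds: $r_0$ is an isomorphism of root data $\mathscr D_{k,\ell}\to\mathscr D_{\ell+1,k-1}$, so it carries chambers to chambers and Weyl groups to Weyl groups. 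The paper's commutation identities are special cases of this conjugation, and both routes produce the same element $u=r_0wr_0$.

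There is a smaller difference in the second identity of (b). You get the $r_0$-invariance of $\overline{\mathcal F_{\mathscr D_{\ell+1,\ell}}}$ directly from (a) with $k=\ell+1$. The paper reads it off from the decomposition $\overline{\mathcal F_{\mathscr D_{0,\ell}}}\cup r_0(\overline{\mathcal F_{\mathscr D_{0,\ell}}})$; the difference is only cosmetic.
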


\begin{proof}
We start by noting that $W_{\mathscr D_{k,\ell}}=\langle r_{k\delta+\alpha_0},r_{\ell\delta + \alpha_1}\rangle$.

\medskip

\emph{Step 1.}
In this step we show (a).

Since $ r_0(kc + \alpha_0^\smvee) = (k-1)c + \alpha_1^\smvee $ and $ r_0 ( \ell c + \alpha_1^\smvee ) = (\ell + 1)c + \alpha_0^\smvee $, by Lemma \ref{lem:veeinA_1} we have
\begin{align*}
&r_0(\overline{\cal F_{\mathscr D_{k,\ell}}}) = \big\{r_0(y)\mid y\in V,\ \langle y , kc + \alpha_0^\smvee \rangle \leq 0,\  \langle y, \ell c + \alpha_1^\smvee \rangle \leq 0 \big\}\\
&= \big\{x\in V\mid \langle x , r_0(kc + \alpha_0^\smvee) \rangle \leq 0,\ \langle x, r_0 ( \ell c + \alpha_1^\smvee ) \rangle \leq 0 \big\}=\overline{\cal{F}_{\mathscr D_{\ell+1,k-1}}},
\end{align*}
which gives the first equality in (a). The second equality follows similarly.

\medskip

\emph{Step 2.}
In this step we show (b).

As in Step 1, we have
\begin{align*}
r_0(\overline{\cal F_{\mathscr D_{0,\ell}}}) &=\big\{ x\in V \mid \langle x,{-}c+\alpha_1^\smvee\rangle \leq 0,\ \langle x, (\ell+1)c+\alpha_0^\smvee \rangle \leq 0 \big\} \\
&=\big\{ x\in V \mid \langle x,\alpha_0^\smvee\rangle \geq 0,\ \langle x, (\ell+1)c+\alpha_0^\smvee \rangle \leq 0 \big\}.
\end{align*}
Now the first equality in (b) follows from Lemma \ref{lem:simplelemma}(b) since $\textstyle (\ell+1)c+\alpha_0^\smvee= \alpha_0^\smvee +(\ell c+\alpha_1^\smvee)$.
    
    For the second equality, denote $g\coloneqq (r_1r_0)^{\ell+1}$. By Lemma \ref{lem:r1r0_n}(d) we have $r_{\ell\delta + \alpha_1}=gr_0$ and $g=r_{\ell\delta + \alpha_1} r_0$. Thus, by applying the first equality in (b) twice, we obtain 
\begin{align*}
	g(\overline{\cal F_{{\mathscr D}_{\ell+1, \ell}}} )&=g\big(\overline{\cal F_{\mathscr D_{0,\ell}}}\cup r_{0}(\overline{\cal F_{\mathscr D_{0,\ell}}})\big)=g(\overline{\cal F_{\mathscr D_{0,\ell}}})\cup gr_0(\overline{\cal F_{\mathscr D_{0,\ell}}})\\
	&=r_{\ell\delta + \alpha_1} r_0(\overline{\cal F_{\mathscr D_{0,\ell}}})\cup r_{\ell\delta + \alpha_1}(\overline{\cal F_{\mathscr D_{0,m-1}}})\\
	&=r_{\ell\delta + \alpha_1}\big(r_{0}(\overline{\cal F_{\mathscr D_{0,\ell}}})\cup \overline{\cal F_{\mathscr D_{0,\ell}}}\big)=r_{\ell\delta + \alpha_1}(\overline{\cal F_{{\mathscr D}_{\ell+1, \ell}}} ),
\end{align*}
as desired.

\medskip

\emph{Step 3.}
In this step we show (c).

As $w$ is a word in $r_{k\delta+\alpha_0}$ and $r_{\ell\delta + \alpha_1}$, there exists $m\in\Z_{\geq0}$ such that
\begin{align*}
w\in\big\{&(r_{k\delta+\alpha_0} r_{\ell\delta + \alpha_1})^m,(r_{k\delta+\alpha_0} r_{\ell\delta + \alpha_1})^m r_{k\delta+\alpha_0},\\
&(r_{\ell\delta + \alpha_1} r_{k\delta+\alpha_0})^m,r_{\ell\delta + \alpha_1} (r_{k\delta+\alpha_0} r_{\ell\delta + \alpha_1})^m\big\} .
\end{align*}
Since $(r_{\ell\delta + \alpha_1} r_{k\delta+\alpha_0})^m=(r_{k\delta+\alpha_0} r_{\ell\delta + \alpha_1})^{-m}$ and $(r_{k\delta+\alpha_0} r_{\ell\delta + \alpha_1})^m r_{k\delta+\alpha_0}=r_{\ell\delta + \alpha_1} (r_{k\delta+\alpha_0} r_{\ell\delta + \alpha_1})^{-(m+1)}$, it follows that there exists $n\in\Z$ such that $ w\in\big\{(r_{k\delta+\alpha_0} r_{\ell\delta + \alpha_1})^n,r_{\ell\delta + \alpha_1} (r_{k\delta+\alpha_0} r_{\ell\delta + \alpha_1})^n\big\}$. We conclude by Lemma \ref{lem:r1r0_n}(e).
 
\medskip

\emph{Step 4.}
    In this step we show (d).
    
    First note that $r_0\in W_{\mathscr D_{0,\ell}}$, hence the first equation in (d) follows by setting $u \coloneqq r_0w$. Now we prove the second equation. By (c) there exists $n\in\Z$ such that $ w\in \big\{ (r_0r_1)^{n(k+\ell+1)},  r_{\ell\delta + \alpha_1} (r_0r_1)^{n(k+\ell+1)} \big\} $.
    
    Assume first that $ w = (r_0r_1)^{n(k+\ell+1)}$. It is easy to check that $r_0 w = w^{-1} r_0$, hence by (a) we have
    $$ r_0 w ( \overline{\cal{F}_{\mathscr D_{k,\ell}}}) = w^{-1} r_0 ( \overline{\cal{F}_{\mathscr D_{k,\ell}}}) = w^{-1} (\overline{\cal{F}_{\mathscr D_{\ell+1,k-1}}}). $$
    We obtain (d) by setting $u\coloneqq w^{-1}$.

    Now assume that $w=r_{\ell\delta + \alpha_1} (r_0r_1)^{n(k+\ell+1)}$. Then by Lemma \ref{lem:r1r0_n}(d) it is easy to check that $r_0 w = (r_0 r_1)^{n(k+\ell+1)} r_{(\ell+1)\delta+\alpha_0} r_0 $. Therefore, (a) gives
    \begin{align*}
        r_0 w ( \overline{\cal{F}_{\mathscr{D}_{k,\ell}}} ) &= (r_0 r_1)^{n(k+\ell+1)} r_{(\ell+1)\delta+\alpha_0} r_0 ( \overline{\cal{F}_{\mathscr{D}_{k,\ell}}} )\\
        & = (r_0 r_1)^{n(k+\ell+1)} r_{ (\ell+1)\delta + \alpha_0} ( \overline{\cal{F}_{\mathscr{D}_{\ell+1,k-1}}}).
    \end{align*}
    Clearly $r_{ (\ell+1)\delta + \alpha_0} \in  W_{\mathscr{D}_{\ell+1,k-1}}$, and Lemma \ref{lem:r1r0_n}(e) yields $(r_0 r_1)^{n(k+\ell+1)} \in W_{\mathscr{D}_{\ell+1,k-1}}$. Part (d) now follows by setting $u\coloneqq (r_0 r_1)^{n(k+\ell+1)} r_{ (\ell+1)\delta + \alpha_0}$.

\medskip

\emph{Step 5.}
    Finally, in this step we show (e).
    
    First note that $r_1\in W_{\mathscr D_{k,0}}$, hence the first equation in (e) follows by setting $u \coloneqq r_1w$. Now we prove the second equation. By (c) there exists $n\in\Z$ such that $ w\in \big\{ (r_0r_1)^{n(k+\ell+1)},  r_{\ell\delta + \alpha_1} (r_0r_1)^{n(k+\ell+1)} \big\} $.
    
    Assume first that $ w = (r_0r_1)^{n(k+\ell+1)}$. It is easy to check that $r_1 w = w^{-1} r_1$, hence by (a) we have
    $$ r_1 w ( \overline{\cal{F}_{\mathscr D_{k,\ell}}}) = w^{-1} r_1 ( \overline{\cal{F}_{\mathscr D_{k,\ell}}}) = w^{-1} (\overline{\cal{F}_{\mathscr D_{\ell-1,k+1}}}). $$
    We obtain (e) by setting $u\coloneqq w^{-1}$.

    Now assume that $w=r_{\ell\delta + \alpha_1} (r_0r_1)^{n(k+\ell+1)}$. Then by Lemma \ref{lem:r1r0_n}(d) it is easy to check that $r_1 w = (r_0 r_1)^{n(k+\ell+1)} r_{(\ell-1)\delta+\alpha_0} r_1 $. Therefore, (a) gives
    \begin{align*}
        r_1 w ( \overline{\cal{F}_{\mathscr{D}_{k,\ell}}} ) &= (r_0 r_1)^{n(k+\ell+1)} r_{(\ell-1)\delta+\alpha_0} r_1 ( \overline{\cal{F}_{\mathscr{D}_{k,\ell}}} ) \\
        &= (r_0 r_1)^{n(k+\ell+1)} r_{(\ell-1)\delta+\alpha_0} ( \overline{\cal{F}_{\mathscr{D}_{\ell-1,k+1}}} ).
    \end{align*}
    Clearly $r_{(\ell-1)\delta+\alpha_0} \in W_{\mathscr{D}_{\ell-1,k+1}}$, and Lemma \ref{lem:r1r0_n}(e) yields $(r_0 r_1)^{n(k+\ell+1)} \in W_{\mathscr{D}_{\ell-1,k+1}}$. Part (e) now follows by setting $u\coloneqq (r_0 r_1)^{n(k+\ell+1)} r_{ (\ell-1)\delta + \alpha_0}$.
\end{proof}

\subsection{Negative fundamental chambers}
We need throughout the paper the following precise description of the negative fundamental domains $\overline{\cal{F}_{\mathscr D_{k,\ell}}}$ and negative Tits cones $\cal{T}_{\mathscr D_{k,\ell}}$.

\begin{prop}\label{prop:WDkl}
Assume Setup \ref{setup2}. Let $k$ and $\ell$ be non-negative integers. Then the following statements hold.
\begin{enumerate}[\normalfont (a)]
\item We have $W_{\mathscr D_{k,\ell}} \subseteq W_{\mathscr{D}} $.
\item For each $(i,j)\in\{0,1\}\times\Z_{\geq0}$ denote by $w_{i,j}$ the unique reduced word in $W_{\mathscr D}$ which has length $j$ and which starts with $r_i$. Then
$$ \textstyle \overline{\cal F_{\mathscr D_{k,\ell}}} = \bigcup_{j=0}^k w_{0,j}(\overline{\cal F_{\mathscr D}}) \cup  \bigcup_{j=1}^{\ell} w_{1,j}(\overline{\cal F_{\mathscr D}}) . $$
\item We have $\mathcal{T}_{\mathscr D_{k,\ell}} =  \mathcal{T}_{\mathscr D}$.
\item For each non-negative integer $m$ we have
$$ \textstyle \overline{\cal F_{\mathscr D}} = \bigcap_{p = 0}^m \overline{\cal F_{\mathscr D_{p,m-p}}}. $$
\item Let $w\in W_{\mathscr D}$ and let $m$ be a non-negative integer. Then for each $p\in\{0,\dots,m\}$ there exists $u_p\in W_{\mathscr D_{p,m-p}}$ such that 
    $$ \textstyle w ( \overline{\mathcal F_{\mathscr{D}}} ) = \bigcap_{p = 0}^m u_p (\overline{\cal{F}_{\mathscr{D}_{p,m-p}}}). $$
\end{enumerate}
\end{prop}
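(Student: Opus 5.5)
Throughout I use Lemmas \ref{lem:veeinA_1} and \ref{lem:r1r0_n}: every positive real root is $\alpha_0+j\delta$ or $\alpha_1+j\delta$ with $j\geq0$, and the associated coroots are $jc+\alpha_0^\smvee$ and $jc+\alpha_1^\smvee$. The chamber $\overline{\cal F_{\mathscr D_{k,\ell}}}$ is cut out by the two inequalities $\langle x,kc+\alpha_0^\smvee\rangle\leq0$ and $\langle x,\ell c+\alpha_1^\smvee\rangle\leq0$.

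For (a), Lemma \ref{lem:r1r0_n}(d) gives $r_{k\delta+\alpha_0}=(r_0r_1)^kr_0$ and $r_{\ell\delta+\alpha_1}=r_1(r_0r_1)^\ell$. Both generators of $W_{\mathscr D_{k,\ell}}$ therefore lie in $W_{\mathscr D}$.

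For (b), I would induct on $k+\ell$. The base case $k=\ell=0$ is trivial. For the step I use Lemma \ref{lem:simplelemma}(b), as in the proof of Lemma \ref{lem:Wklcalculation}(b), which was exactly this decomposition for $(\ell+1,\ell)$. Suppose $k\geq1$. The identity $kc+\alpha_0^\smvee=\alpha_0^\smvee+\big((k-1)c+(\alpha_0^\smvee+\alpha_1^\smvee)-\alpha_0^\smvee\big)$ lets me write $kc+\alpha_0^\smvee=\big((k-1)c+\alpha_0^\smvee\big)+c$, where $c=\alpha_0^\smvee+\alpha_1^\smvee$. A more convenient form is $kc+\alpha_0^\smvee=(k-1)c+\alpha_0^\smvee+c$, a positive combination of $(k-1)c+\alpha_0^\smvee$ and $\big((k-1)c+\alpha_1^\smvee\big)$ plus further terms. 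Rather than pursue that, I would write $\overline{\cal F_{\mathscr D_{k,\ell}}}=\overline{\cal F_{\mathscr D_{k-1,\ell}}}\cup C_k$, where $C_k$ is the cone
\[
\big\{x\mid\langle x,(k-1)c+\alpha_0^\smvee\rangle\geq0,\ \langle x,kc+\alpha_0^\smvee\rangle\leq0\big\}.
\]
This requires checking that on $C_k$ the inequality $\langle x,\ell c+\alpha_1^\smvee\rangle\leq0$ is automatic. That holds because $\langle x,c\rangle\leq0$ there, so $\ell c+\alpha_1^\smvee=(\ell+k)c-(kc+\alpha_0^\smvee)+\ldots$; the precise manipulation is an instance of Lemma \ref{lem:simplelemma}. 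By Lemma \ref{lem:r1r0_n}(g), $C_k=w_{0,k}(\overline{\cal F_{\mathscr D}})$. The induction in $k$, and symmetrically in $\ell$ via the $r_1$-analogue of (g), then gives (b). I expect the main obstacle to be the careful sign bookkeeping in this step, specifically verifying that the slabs $C_j$ for $j\leq k$ and the slabs for $j\leq\ell$ together exactly fill $\overline{\cal F_{\mathscr D_{k,\ell}}}$ with no overlap issues. If the direct route gets messy, I would fall back on decomposing $x$ in the basis dual to $\alpha_0^\smvee,c$ on the two-dimensional quotient.

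For (c), Corollary \ref{cor:Titssubcone} together with (a) gives $\mathcal T_{\mathscr D}\subseteq\mathcal T_{\mathscr D_{k,\ell}}$. For the reverse inclusion I use Proposition \ref{prop:titscone}. The positive real roots of $\mathscr D_{k,\ell}$ are $\alpha_0+j\delta$ and $\alpha_1+j\delta$ for $j$ in infinite arithmetic progressions. If $x\in\mathcal T_{\mathscr D_{k,\ell}}$, then $\langle x,jc+\alpha_i^\smvee\rangle\leq0$ for infinitely many $j$, which forces $\langle x,c\rangle\leq0$, with equality only when $\langle x,\alpha_i^\smvee\rangle\leq0$ for both $i$. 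In either case $\langle x,jc+\alpha_i^\smvee\rangle\leq0$ for all but finitely many $j\geq0$, so $x\in\mathcal T_{\mathscr D}$.

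For (d), the inclusion $\subseteq$ follows from (b), which exhibits $\overline{\cal F_{\mathscr D}}=w_{0,0}(\overline{\cal F_{\mathscr D}})$ inside each $\overline{\cal F_{\mathscr D_{p,m-p}}}$. For $\supseteq$, I take $p=0$ and $p=m$; their intersection imposes $\langle x,\alpha_0^\smvee\rangle\leq0$ and $\langle x,\alpha_1^\smvee\rangle\leq0$, which already defines $\overline{\cal F_{\mathscr D}}$. For (e), I induct on the length of $w$ using Lemma \ref{lem:Wklcalculation}(d)(e). Left multiplication by $r_0$ (respectively $r_1$) sends $u_p(\overline{\cal F_{\mathscr D_{p,m-p}}})$ to a chamber of the form $u'(\overline{\cal F_{\mathscr D_{p',m-p'}}})$, where $p'=m-p+1$ (respectively $p'=m-p-1$) and the boundary cases $p=0$ (respectively $p=m$) stay fixed. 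This map permutes the index set $\{0,\dots,m\}$. Since $r_i$ is a bijection it commutes with intersections, so (d) propagates to all of $W_{\mathscr D}$.
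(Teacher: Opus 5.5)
Your proposal is correct. For (a), (b), (d) and (e) it follows essentially the paper's proof:
\begin{itemize}
\item (a) comes from Lemma \ref{lem:r1r0_n}(d).
\item (b) cuts the chamber into slabs, identifies them with $w_{0,j}(\overline{\cal F_{\mathscr D}})$ and $w_{1,j}(\overline{\cal F_{\mathscr D}})$ via Lemma \ref{lem:r1r0_n}(g), and glues them with Lemma \ref{lem:simplelemma}(b). You peel one slab at a time off $\overline{\cal F_{\mathscr D_{k,\ell}}}$. The paper instead first computes the wedge $\bigcup_{j\leq k}w_{0,j}(\overline{\cal F_{\mathscr D}})$, transports it by $r_1$, and glues once. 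This is only a reorganisation.
\item The reverse inclusion in (d) takes $p=0$ and $p=m$.
\item (e) is an induction on length using Lemma \ref{lem:Wklcalculation}(d)(e).
\end{itemize}

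The genuine difference is in (c). The paper deduces $\mathcal T_{\mathscr D_{k,\ell}}\subseteq\mathcal T_{\mathscr D}$ from (a) and (b): every $W_{\mathscr D_{k,\ell}}$-translate of $\overline{\cal F_{\mathscr D_{k,\ell}}}$ is a finite union of $W_{\mathscr D}$-translates of $\overline{\cal F_{\mathscr D}}$. You instead apply Proposition \ref{prop:titscone} directly. The positive roots of $\mathscr D_{k,\ell}$ are $\alpha_i+j\delta$ with $j$ in progressions of step $k+\ell+1$, with coroots $jc+\alpha_i^\smvee$, and your sign analysis of $\langle x,c\rangle$ is correct.
\begin{itemize}
\item Your route is independent of (b). It effectively identifies $\mathcal T_{\mathscr D}=\{\langle x,c\rangle<0\}\cup\{\langle x,\alpha_0^\smvee\rangle=\langle x,\alpha_1^\smvee\rangle=0\}$.
\item The paper's route avoids any case analysis and reuses (b).
\end{itemize}
Similarly, for $\subseteq$ in (d) you use (b) rather than Lemma \ref{lem:equivalence_chamber_inclusion}; either works.

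One thing must be repaired in (b).
\begin{itemize}
\item Your preliminary identities are garbled: the first right-hand side equals $kc$, not $kc+\alpha_0^\smvee$.
\item Rewriting $\ell c+\alpha_1^\smvee$ through $kc+\alpha_0^\smvee$ produces terms of opposite sign on $C_k$. Use $\ell c+\alpha_1^\smvee=(k+\ell)c-\big((k-1)c+\alpha_0^\smvee\big)$ instead.
\item More importantly, you never address the less obvious inclusion $\overline{\cal F_{\mathscr D_{k-1,\ell}}}\subseteq\overline{\cal F_{\mathscr D_{k,\ell}}}$.
\end{itemize}
All of this is settled by Lemma \ref{lem:simplelemma}(b) with $\alpha=(k-1)c+\alpha_0^\smvee$, $\beta=\ell c+\alpha_1^\smvee$ and $\gamma=kc+\alpha_0^\smvee$, once you check
\[
kc+\alpha_0^\smvee=\tfrac{k+\ell+1}{k+\ell}\big((k-1)c+\alpha_0^\smvee\big)+\tfrac{1}{k+\ell}\big(\ell c+\alpha_1^\smvee\big).
\]
Both coefficients are positive for $k\geq1$, and the step in $\ell$ is symmetric.
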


\begin{proof}
Part (a) follows from Lemma \ref{lem:r1r0_n}(d). For (d), note first that $ \overline{\cal F_{\mathscr D}} \subseteq \bigcap_{p = 0}^m \overline{\cal F_{\mathscr D_{p,m-p}}} $ by Lemma \ref{lem:equivalence_chamber_inclusion}. Conversely, let $x \in  \bigcap_{p = 0}^m \overline{\cal F_{\mathscr D_{p,m-p}}}$. Since $x \in \overline{\cal F_{\mathscr D_{0,m}}}$, we have $\langle x, \alpha_0^\smvee \rangle \leq 0$, and since $x \in \overline{\cal F_{\mathscr{D}_{m,0}}}$, we have $\langle x, \alpha_1^\smvee \rangle \leq 0$. Therefore, $x \in \overline{\cal F_{\mathscr D}}$ by definition.

Next we show that (b) implies (c). We have $\mathcal{T}_{\mathscr D}\subseteq \mathcal{T}_{\mathscr D_{k,\ell}}$ by \eqref{eq:Pikell} and by Corollary \ref{cor:Titssubcone}, hence it suffices to prove that $\mathcal{T}_{\mathscr D_{k,\ell}} \subseteq \mathcal{T}_{\mathscr D}$. To that end, for each $w \in W_{\mathscr{D}_{k,\ell}}$ we have $w\in W_{\mathscr D}$ by (a), hence (b) gives
$$ \textstyle w(\overline{\cal F_{\mathscr D_{k,\ell}}})  = \bigcup_{j=0}^k ww_{0,j}(\overline{\cal F_{\mathscr D}}) \cup  \bigcup_{j=1}^{\ell} ww_{1,j}(\overline{\cal F_{\mathscr D}}) \subseteq \cal{T}_{\mathscr D}. $$
Therefore, $ \cal{T}_{\mathscr D_{k,\ell}} = \bigcup_{w \in W_{\mathscr{D}_{k,\ell}}} w(\overline{\cal F_{\mathscr D_{k,\ell}}}) \subseteq \cal{T}_{\mathscr D}$, as desired.

It remains to prove (b) and (e). We do this in the following four steps.

\medskip

\emph{Step 1.}
In this step we show that 
$$ \textstyle\bigcup_{j=0}^k w_{0,j}(\overline{\cal F_{\mathscr D}}) = \big\{ x\in V \mid \langle x , \alpha_1^\smvee \rangle \leq 0, \ \langle x, k c+\alpha_0^\smvee  \rangle \leq 0 \big\}. $$

The proof is by induction on $k$. For $k = 0$ the claim is clear. Now assume that the claim holds for some integer $k\geq 0$. Then by the induction hypothesis and by Lemma \ref{lem:r1r0_n}(f) we have
\begin{align*}
\textstyle\bigcup_{j=0}^{k+1} w_{0,j}(\overline{\cal F_{\mathscr D}}) & = \textstyle \bigcup_{j=0}^{k} w_{0,j}(\overline{\cal F_{\mathscr D}}) \cup w_{0,k+1}(\overline{\cal F_{\mathscr D}}) \\
& = \big\{ x\in V \mid \langle x , \alpha_1^\smvee \rangle \leq 0, \ \langle x, k c+\alpha_0^\smvee  \rangle \leq 0 \big\} \\
&\quad\ \cup \big\{x\in V\mid \langle x , (k+1)c+\alpha_0^\smvee \rangle \leq 0,\ \langle x, k c+\alpha_0^\smvee \rangle \geq 0 \big\}\\
&=\big\{ x\in V \mid \langle x , \alpha_1^\smvee \rangle \leq 0, \  \big\langle x, (k+1)c +\alpha_0^\smvee  \big\rangle \leq 0 \big\},
\end{align*}
where the last equality follows from Lemma \ref{lem:simplelemma}(b) since $ (k +1)c+\alpha_0^\smvee= \frac{1}{k+1} \alpha_1^\smvee+\frac{k+2}{k+1} (k c+\alpha_0^\smvee) $. This proves the claim.

\medskip

\emph{Step 2.}
In this step we show that for $\ell\geq1$ we have
$$ \textstyle\bigcup_{j=1}^{\ell} w_{1,j}(\overline{\cal F_{\mathscr D}}) = \big\{ x\in V \mid \langle x , \alpha_1^\smvee \rangle \geq 0, \ \langle x, \ell c+\alpha_1^\smvee \rangle \leq 0 \big\}. $$

To that end, as clearly $w_{1,j}= r_1 w_{0,j-1}$ for $j \geq 1$, we deduce from Step 1 and from Proposition \ref{prop:W_invariant_pairing} that 
\begin{align*}
\textstyle \bigcup_{j=1}^{\ell} w_{1,j}(\overline{\cal F_{\mathscr D}}) & = \textstyle\bigcup_{j=0}^{\ell -1} r_1w_{0,j}(\overline{\cal F_{\mathscr D}}) = r_1\big(\bigcup_{j=0}^{\ell -1 
} w_{0,j}(\overline{\cal F_{\mathscr D}})\big) \\
& = r_1\big( \big\{ x\in V \mid \langle x , \alpha_1^\smvee \rangle \leq 0, \ \langle x, (\ell -1) c+\alpha_0^\smvee \rangle \leq 0 \big\}\big) \\
& = \big\{ x\in V \mid \langle x , r_1(\alpha_1^\smvee) \rangle \leq 0, \ \langle x, r_1((\ell -1) c+\alpha_0^\smvee) \rangle \leq 0 \big\}\\
& = \big\{ x\in V \mid \langle x , \alpha_1^\smvee\rangle \geq 0, \ \langle x, \ell c+\alpha_1^\smvee \rangle \leq 0 \big\}, 
\end{align*}
where in the last equality we used that $r_1(\alpha_1^\smvee)={-}\alpha_1^\smvee$ and $r_1\big((\ell -1) c+\alpha_0^\smvee\big)=\ell c+\alpha_1^\smvee$. This proves the claim.

\medskip

\emph{Step 3.}
In this step we finish the proof of (b).

If $\ell=0$, then by Step 1 we have
$$\textstyle \bigcup_{j=0}^{k} w_{0,j}(\overline{\cal F_{\mathscr D}}) = \big\{ x\in V \mid \langle x , \alpha_1^\smvee \rangle \leq 0, \ \langle x, k c+\alpha_0^\smvee  \rangle \leq 0 \big\} = \overline{\cal F_{\mathscr D_{k,\ell}}},$$
where the last equality follows by the definition of the negative fundamental chamber. Now assume that $\ell\geq1$. By Steps 1 and 2, the set $\bigcup_{j=0}^{k} w_{0,j}(\overline{\cal F_{\mathscr D}}) \cup \bigcup_{j=1}^{\ell} w_{1,j}(\overline{\cal F_{\mathscr D}}) $ is equal to the set
\begin{align*}
\big\{ x\in V \mid \langle &x , \alpha_1^\smvee \rangle \leq 0, \ \langle x, k c+\alpha_0^\smvee  \rangle \leq 0 \big\}\\
&\,\,\cup \big\{ x\in V \mid \langle x , \alpha_1^\smvee \rangle \geq 0, \ \langle x, \ell c+\alpha_1^\smvee \rangle \leq 0 \big\}.
\end{align*}
Since $\ell c+\alpha_1^\smvee= \frac{\ell + k+1}{k+1} \alpha_1^\smvee+\frac{\ell}{k+1} (k c+\alpha_0^\smvee) $, by Lemma \ref{lem:simplelemma}(b) the set above is equal to the set $ \big\{ x\in V \mid \langle x, k c+\alpha_0^\smvee  \rangle \leq 0 , \ \big\langle x, \ell c+\alpha_1^\smvee \big\rangle \leq 0 \big\} $, which is $\overline{\cal F_{\mathscr D_{k,\ell}}}$ by the definition of the negative fundamental chamber.

\medskip

\emph{Step 4.}
In this step we prove (e).

    Let $j$ be the length of $w$. The proof is by induction on $j$. If $j = 0$, then we conclude by (d). 
    
     Now assume that the result holds for all elements of $W_{\mathscr D}$ of length $j-1$. There exist an index $i\in \{0,1\}$ and an element $w'\in W_{\mathscr D}$ of length $j-1$ such that $ w = r_i w' $. By induction, for each $p\in\{0,\dots,m\}$ there exists $u_p'\in W_{\mathscr D_{p,m-p}}$ such that $ w' ( \overline{\mathcal F_{\mathscr D}} ) = \bigcap_{p = 0}^m u'_p (\overline{\cal{F}_{\mathscr D_{p,m-p}}}) $, hence
    \begin{equation}\label{eq:009h}
    \textstyle w( \overline{\mathcal F_{\mathscr D}} ) =  \bigcap_{p = 0}^m r_i u'_p (\overline{\cal{F}_{\mathscr D_{p,m-p}}}).    
    \end{equation}
    
    Assume first that $i=0$. By Lemma \ref{lem:Wklcalculation}(d) there exists $u_0\in W_{\mathscr D_{0,m}}$ such that $ r_0 u'_0\big( \overline{\cal{F}_{\mathscr D_{0,m}}}\big)=u_0\big( \overline{\cal{F}_{\mathscr D_{0,m}}}\big) $, and for each $p\in\{1,\dots,m\}$ there exists $u_{m-p+1}\in W_{\mathscr D_{m-p+1,p-1}}$ such that $ r_0 u'_p\big( \overline{\cal{F}_{\mathscr D_{p,m-p}}}\big)=u_{m-p+1}\big( \overline{\cal{F}_{\mathscr D_{m-p+1,p-1}}}\big)$. Therefore, \eqref{eq:009h} gives $ w ( \overline{\mathcal F_{\mathscr{D}}} ) = \bigcap_{p = 0}^m u_p (\overline{\cal{F}_{\mathscr{D}_{p,m-p}}}) $, as desired.
	
    Assume now that $i=1$. By Lemma \ref{lem:Wklcalculation}(e) there exists $u_m\in W_{\mathscr D_{m,0}}$ such that $ r_1 u'_m\big( \overline{\cal{F}_{\mathscr D_{m,0}}}\big)=u_m\big( \overline{\cal{F}_{\mathscr D_{m,0}}}\big) $, and for each $p\in\{0,\dots,m-1\}$ there exists $u_{m-p-1}\in W_{\mathscr D_{m-p-1,p+1}}$ such that $ r_1u'_p\big( \overline{\cal{F}_{\mathscr D_{p,m-p}}}\big)=u_{m-p-1}\big( \overline{\cal{F}_{\mathscr D_{m-p-1,p+1}}}\big)$. We conclude as in the previous paragraph.
\end{proof}

\subsection{A fundamental domain}

The following result is fundamental for the proof of Theorem \ref{thm:main2}.

\begin{prop}\label{prop:fund_domain_G}
	Assume Setup \ref{setup2}. Let $m$ be a positive integer. Then:
\begin{enumerate}[\normalfont (a)]
\item for any $k\in \{0,\dots,m-1\}$ we have $ (r_0r_1)^m\subseteq W_{\mathscr D_{k,m-k-1}}$,
\item $(r_0r_1)^{2m}\in W_{\mathscr D_{m,m-1}}$,
\item $ \overline{\cal F_{{\mathscr D}_{m, m-1}}} $ is a fundamental domain for the action of $\big\langle (r_0r_1)^m \big\rangle $ on $\cal{T}_{\mathscr D}$. 
\end{enumerate}
\end{prop}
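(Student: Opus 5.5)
Parts (a) and (b) are pure word identities in the infinite dihedral group $W_{\mathscr D}=\langle r_0,r_1\rangle$, and I will get them from Lemma \ref{lem:r1r0_n}(e). For (a), fix $k\in\{0,\dots,m-1\}$ and put $\ell\coloneqq m-k-1\geq0$. Lemma \ref{lem:r1r0_n}(e) gives
$$r_{k\delta+\alpha_0}\,r_{\ell\delta+\alpha_1}=(r_0r_1)^{k+\ell+1}=(r_0r_1)^m,$$
and the left-hand side is a product of the two generators of $W_{\mathscr D_{k,m-k-1}}$. Hence $(r_0r_1)^m\in W_{\mathscr D_{k,m-k-1}}$, and so is the cyclic group it generates. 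For (b), apply the same identity with $(k,\ell)=(m,m-1)$, which gives $r_{m\delta+\alpha_0}r_{(m-1)\delta+\alpha_1}=(r_0r_1)^{2m}$, and this lies in $W_{\mathscr D_{m,m-1}}$.

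For (c), write $g\coloneqq(r_0r_1)^m$ and $\Omega\coloneqq\overline{\mathcal F_{\mathscr D_{m,m-1}}}$. By Proposition \ref{prop:WDkl}(b), $\Omega$ is the union of the $2m$ chambers $w_{0,j}(\overline{\mathcal F_{\mathscr D}})$ for $0\le j\le m$ and $w_{1,j}(\overline{\mathcal F_{\mathscr D}})$ for $1\le j\le m-1$. These are exactly the chambers $w(\overline{\mathcal F_{\mathscr D}})$ with $w$ ranging over the set $S$ of reduced words of length at most $m$, minus the single word $w_{1,m}$.

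The key claim is that $S$ is a set of coset representatives for $\langle g\rangle$ in $W_{\mathscr D}$. In the dihedral group, every element is $(r_0r_1)^n$ or $(r_0r_1)^n r_0$ for a unique $n\in\Z$. The rotations $(r_0r_1)^n$ with $n\equiv0,\dots,m-1\pmod m$ give the even-length representatives, and the reflections $(r_0r_1)^nr_0$ give the odd-length ones. Concretely I will write each reduced word of length at most $m$ in this normal form, check that the residues of $n$ modulo $m$ hit each class exactly once, and check that the only duplication $w_{0,m}\equiv w_{1,m}$ modulo $\langle g\rangle$ is removed by dropping $w_{1,m}$. This bookkeeping is the step I expect to need the most care, since the parity of $m$ changes which words are rotations.

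Granting the claim, $\langle g\rangle\cdot\Omega=\bigcup_{w\in W_{\mathscr D}}w(\overline{\mathcal F_{\mathscr D}})=\mathcal T_{\mathscr D}$.

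For disjointness of interiors, suppose $g^n(\operatorname{Int}\Omega)\cap\operatorname{Int}\Omega\neq\emptyset$. Then some open chamber $g^n u(\mathcal F_{\mathscr D})$ with $u\in S$ meets $\Omega$ in an interior point, and $\Omega$ is a union of closed chambers. Since distinct chambers have disjoint interiors by Proposition \ref{prop:Titsfundchamber}, this gives $g^nu=u'$ for some $u'\in S$. The coset claim then forces $n=0$.

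Alternatively, and more cleanly, I could use Lemma \ref{lem:Wklcalculation}(b), which says $\Omega=\overline{\mathcal F_{\mathscr D_{0,m-1}}}\cup r_0(\overline{\mathcal F_{\mathscr D_{0,m-1}}})$, to cross-check the count of $2m$ chambers.
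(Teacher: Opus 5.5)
Your argument is correct. For (a) and (b) it is exactly the paper's argument (Lemma \ref{lem:r1r0_n}(e)), but for (c) you take a genuinely different route.

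\textbf{The paper's route.} The paper never splits $\overline{\mathcal F_{\mathscr D_{m,m-1}}}$ into chambers of $\mathscr D$. For the covering it combines:
\begin{itemize}
\item $\mathcal T_{\mathscr D}=\mathcal T_{\mathscr D_{0,m-1}}$ (Proposition \ref{prop:WDkl}(c));
\item the fact that $\overline{\mathcal F_{\mathscr D_{0,m-1}}}$ is a fundamental domain for $W_{\mathscr D_{0,m-1}}$;
\item the dichotomy $w\in\{g^n,r_{(m-1)\delta+\alpha_1}g^n\}$ of Lemma \ref{lem:Wklcalculation}(c);
\item the two-piece decomposition of Lemma \ref{lem:Wklcalculation}(b).
\end{itemize}
For disjointness it splits on the parity of the exponent. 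Part (b) places even powers of $g$ in $W_{\mathscr D_{m,m-1}}$. Odd powers are handled with $g^{-1}(\overline{\mathcal F_{\mathscr D_{m,m-1}}})=r_{(m-1)\delta+\alpha_1}(\overline{\mathcal F_{\mathscr D_{m,m-1}}})$ and the fact that $g$ has infinite order.

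\textbf{Your route.} You reduce everything to two facts. First, Proposition \ref{prop:Titsfundchamber} for $\mathscr D$ itself. Second, the statement that your $2m$ words form a transversal of the normal, index-$2m$ subgroup $\langle g\rangle$.

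The bookkeeping you were worried about works out. Put $t\coloneqq r_0r_1$. Then $w_{0,2i}=t^i$, $w_{1,2i}=t^{-i}$, $w_{0,2i+1}=t^ir_0$ and $w_{1,2i+1}=t^{-i-1}r_0$. So the rotations in your set are $t^i$ for the $m$ consecutive exponents $-\lfloor (m-1)/2\rfloor\le i\le\lfloor m/2\rfloor$. The reflections are $t^ir_0$ for the $m$ consecutive exponents $-\lfloor m/2\rfloor\le i\le\lfloor (m-1)/2\rfloor$. Each family therefore hits every residue mod $m$ exactly once, whatever the parity of $m$.

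\textbf{One point to make explicit.} In the disjointness step, state that a nonempty open set contained in a finite union of closed chambers meets one of the open chambers $u'(\mathcal F_{\mathscr D})$. This holds because $\operatorname{Int}\big(u'(\overline{\mathcal F_{\mathscr D}})\big)=u'(\mathcal F_{\mathscr D})$, and a finite union of closed sets with empty interior has empty interior. With this, your appeal to disjointness of chambers is a valid use of Proposition \ref{prop:Titsfundchamber}. Then $g^nu=u'$ with $u,u'$ in a transversal gives $g^n=\mathrm{id}$ directly.

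\textbf{Comparison.} Your route is more uniform, does not use (b) at all, and shows why exactly $2m$ chambers appear: $2m$ is the index of $\langle g\rangle$. The paper's route avoids explicit coset representatives by working with the subroot systems $\mathscr D_{0,m-1}$ and $\mathscr D_{m,m-1}$. That is also why (b) is part of the statement.
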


\begin{proof}
By Lemma \ref{lem:r1r0_n}(e) we have
$$ (r_0r_1)^m = r_{k\delta + \alpha_0}r_{(m-k-1)\delta + \alpha_1} \in W_{\mathscr D_{k,m-k-1}} $$
and
$$ (r_0r_1)^{2m} = r_{m\delta + \alpha_0}r_{(m-1)\delta + \alpha_1} \in W_{\mathscr D_{m,m-1}}, $$
which gives (a) and (b). In the remainder of the proof we show (c). Set $ g\coloneqq (r_0r_1)^m $.

	\medskip

    \emph{Step 1.}
    In this step we show that for any $x\in \cal T_{\mathscr D}$ there exists $s \in \Z$ such that 
    $$ x \in g^s(\overline{\cal F_{{\mathscr D}_{m, m-1}}}). $$

    To that end, note first that by Lemma \ref{lem:Wklcalculation}(b) we have
    \begin{equation}\label{eq:union}
    \overline{\cal F_{{\mathscr D}_{m, m-1}}} = \overline{\cal F_{\mathscr D_{0,m-1}}}\cup r_{0}(\overline{\cal F_{\mathscr D_{0,m-1}}})
    \end{equation}
    Let $x \in \mathcal{T}_{\mathscr D}$. Since $\cal T_{\mathscr D} = \cal{T}_{\mathscr D_{0,m-1}} $ by Proposition \ref{prop:WDkl}(c), by Proposition \ref{prop:Titsfundchamber} there exist $w \in W_{\mathscr D_{0,m-1}}$ and $y \in \overline{\mathcal F_{\mathscr D_{0,m-1}}}$ such that $ x=w (y) $. By Lemma \ref{lem:Wklcalculation}(c) there exists $n\in \Z$ such that $ w\in\big\{g^n, r_{(m-1)\delta+\alpha_1} g^n \big\} $. 
    
    If $w = g^n$, then by \eqref{eq:union} we have $ x=g^n (y)\in g^n(\overline{\mathcal F_{\mathscr D_{0,m-1}}})\subseteq g^n(\overline{\cal F_{{\mathscr D}_{m, m-1}}})$, as desired. Otherwise, we have $w = r_{(m-1)\delta+\alpha_1} g^n $. By Lemma \ref{lem:r1r0_n}(d) we easily calculate that $ g^{n+1}=r_0w^{-1}$, hence by \eqref{eq:union} we have
    $$ g^{n+1}(x) = r_{0}w^{-1} (x) = r_{0} (y) \in r_{0}\big(\overline{\cal{F}_{\mathscr D_{0,m-1}}}\big) \subseteq \overline{\cal F_{{\mathscr D}_{m, m-1}}}, $$
    which finishes the proof of the claim.
    
    \medskip

    \emph{Step 2.}
	Assume that there exists an integer $\ell$ such that
    $$g^\ell \big( \operatorname{Int}(\overline{\cal F_{{\mathscr D}_{m, m-1}}})\big) \cap \operatorname{Int}(\overline{\cal F_{{\mathscr D}_{m, m-1}}})\neq\emptyset.$$
    We will show in this step that then necessarily $g^\ell$ is the identity. This, together with Step 1, will finish the proof of the lemma.

	Assume first that $\ell$ is even. Then by (b) we have $ g^\ell \in W_{\mathscr D_{m,m-1}}$. As $\overline{\cal F_{{\mathscr D}_{m, m-1}}}$ is a fundamental domain for the action of $W_{\mathscr D_{m,m-1}}$ on $\cal T_{\mathscr D_{m,m-1}}$ by Proposition \ref{prop:Titsfundchamber}, this implies that $g^\ell$ is the identity.
    
    Now assume that $\ell$ is odd. Then there exists an integer $\ell'$ such that $\ell=2\ell'-1$. Since $g^{-1}=(r_1r_0)^m$, by Lemma \ref{lem:Wklcalculation}(b) we have 
    $$g^{-1} (\overline{\cal F_{{\mathscr D}_{m, m-1}}})=(r_1r_0)^m (\overline{\cal F_{{\mathscr D}_{m, m-1}}})=r_{(m-1)\delta+\alpha_1}(\overline{\cal F_{{\mathscr D}_{m, m-1}}}),$$
    and therefore,
    \begin{align*}
    g^\ell \big( \operatorname{Int}(\overline{\cal F_{{\mathscr D}_{m, m-1}}})\big) &= g^{2\ell'}g^{-1} \big( \operatorname{Int}(\overline{\cal F_{{\mathscr D}_{m, m-1}}})\big)\\
    &=g^{2\ell'}r_{(m-1)\delta+\alpha_1}\big( \operatorname{Int}(\overline{\cal F_{{\mathscr D}_{m, m-1}}})\big).    
    \end{align*}
    Therefore, the assumption of this step becomes
    $$g^{2\ell'}r_{(m-1)\delta+\alpha_1} \big( \operatorname{Int}(\overline{\cal F_{{\mathscr D}_{m, m-1}}})\big) \cap \operatorname{Int}(\overline{\cal F_{{\mathscr D}_{m, m-1}}})\neq\emptyset.$$
     Since $g^{2\ell'}\in W_{\mathscr D_{m,m-1}}$ by (b), and since clearly $r_{(m-1)\delta+\alpha_1}\in W_{\mathscr D_{m,m-1}}$, we conclude that $g^{2\ell'}r_{(m-1)\delta+\alpha_1}\in W_{\mathscr D_{m,m-1}}$. Then Proposition \ref{prop:Titsfundchamber} implies that $g^{2\ell'}r_{(m-1)\delta+\alpha_1}$ is the identity, hence $g^{2\ell'}=r_{(m-1)\delta+\alpha_1}$ as $(r_{(m-1)\delta+\alpha_1})^{-1}=r_{(m-1)\delta+\alpha_1}$. This is a contradiction since $g$ has infinite order by Lemma \ref{lem:r1r0_n}(a), whereas $r_{(m-1)\delta+\alpha_1}$ has order $2$.
\end{proof}

\subsection{Auxiliary results}

We use the following result very often.

\begin{lem}\label{lem:equalities_of_faces}
    Assume Setup \ref{setup2}. Let $k$ and $\ell$ be non-negative integers. Let $w_{0,k}\in W_{\mathscr D}$ be the unique word which has length $k$ and which starts with $r_0$, and let $w_{1,\ell}\in W_{\mathscr D}$ be the unique word which has length $\ell$ and which starts with $r_1$. Then:
    \begin{enumerate}[\normalfont(a)]
    \item $F_{k,\ell}^0 = \{ x\in V \mid \langle x, kc + \alpha_0^\smvee \rangle = 0, \langle x, c  \rangle < 0 \}$, and in particular, the set $F_{k,\ell}^0$ does not depend on $\ell$,
    \item $F_{k,\ell}^1 = \{ x\in V \mid \langle x, \ell c + \alpha_1^\smvee \rangle = 0, \langle x, c  \rangle < 0 \}$, and in particular, the set $F_{k,\ell}^1$ does not depend on $k$,
     \item if $k>0$, then $\big\{ w_{0,k} (F_0), w_{0,k} (F_1)\big\} = \{ F_{k-1,\ell}^0, F_{k,\ell}^0 \}$,
     \item if $\ell>0$, then $\big\{ w_{1,\ell} (F_0), w_{1,\ell} (F_1)\big\} = \{ F_{k,\ell}^1, F_{k,\ell-1}^1 \}$.
    \end{enumerate}
\end{lem}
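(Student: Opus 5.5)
The plan is to verify all four statements directly from the definitions of $F^0_{k,\ell}$ and $F^1_{k,\ell}$ in Setup \ref{setup2}, using the identity
$$(kc+\alpha_0^\smvee)+(\ell c+\alpha_1^\smvee)=(k+\ell+1)c,$$
which holds since $c=\alpha_0^\smvee+\alpha_1^\smvee$. For (d) we will also need Lemma \ref{lem:r1r0_n}(f)(g) together with Proposition \ref{prop:W_invariant_pairing}.

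For (a), assume $\langle x,kc+\alpha_0^\smvee\rangle=0$. The identity above then gives
$$\langle x,\ell c+\alpha_1^\smvee\rangle=(k+\ell+1)\langle x,c\rangle.$$
Since $k+\ell+1>0$, the condition $\langle x,\ell c+\alpha_1^\smvee\rangle<0$ is equivalent to $\langle x,c\rangle<0$. This gives the claimed description, and it visibly does not involve $\ell$. Part (b) follows in the same way by exchanging the roles of $\alpha_0$ and $\alpha_1$.

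For (c), write $v=w_{0,k}$. By Proposition \ref{prop:W_invariant_pairing}, applying $v$ to $F_i$ replaces the defining functionals $\alpha_0^\smvee,\alpha_1^\smvee$ by $v(\alpha_0^\smvee),v(\alpha_1^\smvee)$. By Lemma \ref{lem:r1r0_n}(f) and Lemma \ref{lem:veeinA_1},
$$\{v(\alpha_0^\smvee),v(\alpha_1^\smvee)\}=\{kc+\alpha_0^\smvee,\ -kc+\alpha_1^\smvee\},$$
and $-kc+\alpha_1^\smvee=-\big((k-1)c+\alpha_0^\smvee\big)$. Each $v(F_i)$ is therefore cut out by one equality and one strict inequality. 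It has one of two forms:
\begin{enumerate}[\normalfont (i)]
\item $\langle x,kc+\alpha_0^\smvee\rangle=0$ together with $\langle x,(k-1)c+\alpha_0^\smvee\rangle>0$;
\item $\langle x,(k-1)c+\alpha_0^\smvee\rangle=0$ together with $\langle x,kc+\alpha_0^\smvee\rangle<0$.
\end{enumerate}

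The main step is to convert the strict inequality in each case into $\langle x,c\rangle<0$. In case (i), the equality gives
$$\langle x,(k-1)c+\alpha_0^\smvee\rangle=-\langle x,c\rangle,$$
so the inequality is equivalent to $\langle x,c\rangle<0$. By (a) this set is $F^0_{k,\ell}$. In case (ii), the equality gives
$$\langle x,kc+\alpha_0^\smvee\rangle=\langle x,c\rangle,$$
so again the inequality is $\langle x,c\rangle<0$, and by (a) the set is $F^0_{k-1,\ell}$. Here $k-1\ge0$ because $k>0$. The sign bookkeeping, namely which chamber wall carries the equality after the sign flip, is the only real obstacle. It is handled by tracking the minus sign in $-kc+\alpha_1^\smvee$ carefully.

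For (d), we argue symmetrically using $w_{1,\ell}$. The analogue of Lemma \ref{lem:r1r0_n}(f) for words starting with $r_1$ is obtained by exchanging the indices $0$ and $1$, and it gives
$$\{w_{1,\ell}(\alpha_0^\smvee),w_{1,\ell}(\alpha_1^\smvee)\}=\{\ell c+\alpha_1^\smvee,\ -\big((\ell-1)c+\alpha_1^\smvee\big)\}.$$
The same two-case argument, now using (b) in place of (a), identifies $w_{1,\ell}(F_0)$ and $w_{1,\ell}(F_1)$ with $F^1_{k,\ell}$ and $F^1_{k,\ell-1}$.
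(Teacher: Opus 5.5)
Your proposal is correct and follows essentially the same route as the paper. For (a) you add the two functionals to get $(k+\ell+1)c$; for (c) and (d) you transport the defining functionals using Proposition~\ref{prop:W_invariant_pairing}, Lemma~\ref{lem:r1r0_n}(f) and Lemma~\ref{lem:veeinA_1}, and then use $-kc+\alpha_1^\smvee=-\big((k-1)c+\alpha_0^\smvee\big)$. The differences are cosmetic: the paper first rewrites $F_0$ via (a) and uses $w_{0,k}(c)=c$, so the strict inequality is already $\langle x,c\rangle<0$, whereas you convert it afterwards and treat $F_0$ and $F_1$ together; also, Lemma~\ref{lem:r1r0_n}(g) is never actually needed.
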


\begin{proof}
Consider $x\in V$. If $\langle x, kc + \alpha_0^\smvee \rangle = 0$ and $\langle x, \ell c + \alpha_1^\smvee \rangle < 0$, then by adding these two together, we obtain $\big\langle x, (k+\ell+1)c \big\rangle < 0$, or equivalently, $\langle x,c  \rangle < 0$. If $\langle x, kc + \alpha_0^\smvee \rangle = 0$ and $\langle x, c \rangle < 0$, then by running the previous argument backwards we obtain $\langle x, \ell c + \alpha_1^\smvee \rangle < 0$. This shows that (a), and we obtain (b) analogously.
    
    Now assume that $k>0$. We will show that $ w_{0,k} (F_0) \in \{ F_{k-1,\ell}^0, F_{k,\ell}^0 \}$, which will prove a half of (c). The rest of (c) and (d) is proved analogously.
    
    To that end, Lemma \ref{lem:r1r0_n}(f) gives $ \big\{w_{0,k}(\alpha_0),w_{0,k}(\alpha_1)\big\} = \{\alpha_0 + k\delta, \alpha_1 - k\delta\}$, and in particular, $w_{0,k}(c)=c$. Assume, first, that $w_{0,k}(\alpha_0)=\alpha_0 + k\delta$. This, together with (a), with Proposition \ref{prop:W_invariant_pairing} and with Lemma \ref{lem:veeinA_1}, gives
\begin{align*}
w_{0,k}(F_0) &= \big\{w_{0,k}(y)\mid y\in V,\ \langle y , \alpha_0^\smvee \rangle = 0,\  \langle y, c \rangle < 0 \big\}\\
&= \big\{x\in V\mid \langle x , w_{0,k}(\alpha_0)^\smvee \rangle = 0,\ \langle x, w_{0,k}(c) \rangle < 0 \big\}\\
&= \big\{x\in V\mid \langle x , \alpha_0^\smvee + kc \rangle = 0,\ \langle x, c \rangle < 0 \big\} = F_{k,\ell}^0.
\end{align*}
Now assume that $w_{0,k}(\alpha_0)=\alpha_1 - k\delta$. As above, together with (a), with Proposition \ref{prop:W_invariant_pairing} and with Lemma \ref{lem:veeinA_1}, this gives
\begin{align*}
w_{0,k}(F_0) &= \big\{x\in V\mid \langle x , w_{0,k}(\alpha_0)^\smvee \rangle = 0,\ \langle x, w_{0,k}(c) \rangle < 0 \big\}\\
&= \big\{x\in V\mid \langle x , \alpha_1^\smvee-kc \rangle = 0,\ \langle x, c \rangle < 0 \big\} \\
&= \big\{x\in V\mid \langle x , \alpha_0^\smvee+(k-1) c \rangle = 0,\ \langle x, c \rangle < 0 \big\} = F_{k-1,\ell}^0,
\end{align*}
where for the last equality we used that $\alpha_1^\smvee-kc = {-}\big((k-1) c+\alpha_0^\smvee\big) $. That concludes the proof.
\end{proof}

The next lemma is needed in Sections \ref{sec:orbits} and \ref{sec:proofA}.

\begin{lem}\label{lem:wandw'invariant}
Let $X$ be a Sakai surface of type $A_6^\smone$. Assume Setup \ref{setup:allcases}, and recall that $m=7$. Then:
\begin{enumerate}[\normalfont (a)]
\item for any $w' \in W_{\mathscr{D}'}$ we have $ w'( \cal F_{{\mathscr D}_{m,m-1}}) = \cal F_{{\mathscr D}_{m,m-1}} $,
\item for $k\in\{0,\dots,m-1\}$ and
$$\widehat\Pi_{k,m-k-1}\coloneqq \big\{ k\delta+\alpha_0, (m-k-1)\delta + \alpha_1,\alpha_0',\alpha_1' \big\},$$
the $6$-tuple 
$$\widehat{\mathscr D}_{k,m-k-1}\coloneqq \big(B,\widehat\Pi_{k,m-k-1},\widehat\Pi_{k,m-k-1}^\smvee,N^1(X)_\R,N_1(X)_\R,\langle\cdot\, ,\cdot\rangle\big)$$
is a set of root data, where $B$ is the associated matrix given by the bilinear pairing between $\widehat\Pi_{k,m-k-1}$ and $\widehat\Pi_{k,m-k-1}^\smvee$.
\end{enumerate}
\end{lem}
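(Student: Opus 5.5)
For (a), I will use that $W_{\mathscr D'}$ is generated by $r_0'=r_{\alpha_0'}$ and $r_1'=r_{\alpha_1'}$, so it suffices to check that each of these reflections maps $\cal F_{{\mathscr D}_{m,m-1}}$ onto itself. By definition,
$$\cal F_{{\mathscr D}_{m,m-1}}=\big\{x\in N^1(X)_\R\mid \langle x, mc+\alpha_0^\smvee\rangle<0,\ \langle x,(m-1)c+\alpha_1^\smvee\rangle<0\big\}.$$
The decomposition $\Pi(R^\smperp)=\Pi\cup\Pi'$ from \S\ref{subsec:Rperp2} gives $\langle\gamma,(\gamma')^\smvee\rangle=\langle\gamma',\gamma^\smvee\rangle=0$ for $\gamma\in\Pi$ and $\gamma'\in\Pi'$. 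Since $c=\alpha_0^\smvee+\alpha_1^\smvee$, the dual reflections $r_{\alpha_i'}^\smvee$ fix $\alpha_0^\smvee$, $\alpha_1^\smvee$ and $c$. In particular they fix $mc+\alpha_0^\smvee$ and $(m-1)c+\alpha_1^\smvee$. By Proposition \ref{prop:W_invariant_pairing} we have $\langle r(x),y\rangle=\langle x,r^{-1}(y)\rangle$, so the same computation as in the proof of Proposition \ref{pro:decomposable}(e) shows that $r_{\alpha_i'}(\cal F_{{\mathscr D}_{m,m-1}})=\cal F_{{\mathscr D}_{m,m-1}}$. Then (a) follows by writing $w'$ as a word in $r_0',r_1'$.

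For (b), I will verify the three conditions of Definition \ref{dfn:root_data}, following Step 1 of the proof of Proposition \ref{prop:subrootdata000}.

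Condition (i) holds by construction once we show that $B$ is a generalised Cartan matrix. Lemma \ref{lem:veeinA_1} gives the coroots $(k\delta+\alpha_0)^\smvee=kc+\alpha_0^\smvee$ and $((m-k-1)\delta+\alpha_1)^\smvee=(m-k-1)c+\alpha_1^\smvee$. Using \eqref{eq:orthogonalinA_1} and the orthogonality of $\Pi$ and $\Pi'$, the matrix $B$ is block diagonal: one block is the $A_1^\smone$ Cartan matrix \eqref{eq:CartanA_1} for the pair $\{k\delta+\alpha_0,(m-k-1)\delta+\alpha_1\}$, and the other is the $A_1^\smone$ matrix $B'$ for $\{\alpha_0',\alpha_1'\}$. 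So $B$ is a symmetric generalised Cartan matrix.

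Condition (ii) concerns the lattice
$$\widehat{\mathcal Q}\coloneqq\Z(k\delta+\alpha_0)+\Z((m-k-1)\delta+\alpha_1)+\Z\alpha_0'+\Z\alpha_1'.$$
The first two generators add up to $m\delta$. By \eqref{eq:A6-case}, $\delta=\alpha_0'+\alpha_1'$, so there is a linear dependency. To handle it, I will use \eqref{eq:113a} together with the linear independence of $\widetilde\Pi=\{\alpha_0,\alpha_1,\alpha_1'\}$. Expressing all four generators in the basis $\widetilde\Pi$ shows that $\widehat{\mathcal Q}$ is contained in the lattice $\Z\alpha_0+\Z\alpha_1+\Z\alpha_1'$, since $\delta=\alpha_0+\alpha_1$ and $\alpha_0'=\delta-\alpha_1'$. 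A finitely generated subgroup of a lattice is free, so condition (ii) holds, and $\widehat{\mathcal Q}$ is a lattice in $\widehat{\mathcal Q}\otimes_\Z\R$. The same argument applies to the coroot lattice $\widehat{\mathcal Q}^\smvee$ in $N_1(X)_\R$, using the corresponding dual relations. Here I should double-check that these relations hold; they should, since $c$ is a positive rational multiple of $\delta$ and $\alpha_i^\smvee=\frac17\alpha_i$.

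Condition (iii) asks for a basis of $\widehat{\mathcal Q}$ whose nonnegative span contains the generators. The cone $\sum\R_+\gamma$ over $\gamma\in\widehat\Pi_{k,m-k-1}$ lies in the strictly convex cone spanned by $\alpha_0,\alpha_1,\alpha_1'$ and $\alpha_0'$. I expect this cone to be strictly convex because all generators have nonnegative coordinates with respect to the positive system of $\Pi(R^\smperp)$. Granting this, I will apply \cite[Lemma 5 on p.\ 688]{MP89} exactly as in Step 1 of Proposition \ref{prop:subrootdata000} to obtain a basis $\{v_i\}$ of $\widehat{\mathcal Q}$ with $\widehat\Pi_{k,m-k-1}\subseteq\bigoplus\N v_i$, and do the same on the coroot side.

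The main obstacle is confirming strict convexity in condition (iii). The generators satisfy the dependency $(k\delta+\alpha_0)+((m-k-1)\delta+\alpha_1)=m(\alpha_0'+\alpha_1')$, which has positive coefficients on both sides, so I must check that no nonnegative combination vanishes. I will handle this by pairing with a linear functional that is positive on all four generators. A suitable functional is $-\mathcal E_9\cdot(\,\cdot\,)$ shifted by a multiple of $d$. Alternatively, any $x\in\cal F(R^\smperp)$ works, using Lemma \ref{lem:equivalence_chamber_inclusion}: all four generators are positive roots of $R^\smperp$, so $\langle x,\cdot\rangle$ is negative on each of them.
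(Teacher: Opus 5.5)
Your proposal is correct and follows the paper's route. Part (a) is the argument of Proposition~\ref{pro:decomposable}(e). In (b) you check (ii) by placing $\widehat{\mathcal Q}$ inside the rank-$3$ lattice $\Z\alpha_0+\Z\alpha_1+\Z\alpha_1'$; the paper instead notes that $\widehat\Pi_{k,m-k-1}\setminus\{k\delta+\alpha_0\}$ is linearly independent and already spans $\widehat{\mathcal Q}$ over $\Z$. You check (iii) via \cite[Lemma 5 on p.\ 688]{MP89} exactly as in Step~1 of Proposition~\ref{prop:subrootdata000}, and you also make explicit the coroot side and the strict convexity that the paper leaves implicit.

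For the strict convexity, use your second functional, $y\mapsto x\cdot y$ for any $x\in\mathcal F(R^\smperp)$. This chamber is nonempty, since you can prescribe $x\cdot\alpha_0=x\cdot\alpha_1=x\cdot\alpha_1'=1$, which forces $x\cdot\alpha_0'=1$. Do not use the functional built from $\mathcal E_9$ and $d$: it vanishes on $\alpha_1'$.
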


\begin{proof}
By Setup \ref{setup:allcases}, the canonical central element of the affine root system of type $A_1^\smone$ associated to the roots $\{\alpha_0,\alpha_1\}$ is
$$\textstyle c=\alpha_0^\smvee+\alpha_1^\smvee=\frac1m(\alpha_0+\alpha_1)=\frac1m\delta,$$
and $\cal F_{{\mathscr D}_{m,m-1}}=\big\{ x\in V \mid \langle x, mc +\alpha_0^\smvee\rangle < 0,\ \langle x, (m-1) c+\alpha_1^\smvee \rangle <0 \big\}$. Then the proof of (a) is analogous to that of Proposition \ref{pro:decomposable}(e).

Now we show (b). The condition (i) in Definition \ref{dfn:root_data} follows from the construction. Denote $\widetilde\Pi_{k,m-k-1}\coloneqq \widehat\Pi_{k,m-k-1}\setminus\{k\delta+\alpha_0\}$. Then it is easy to check that the set $\widetilde\Pi_{k,m-k-1}$ is linearly independent in $N^1(X)_\R$ and that
$$ k\delta+\alpha_0 = m\alpha_0'+m\alpha_1' - \big((m-k-1)\delta+\alpha_1). $$ If we denote $\mathcal Q\coloneqq \sum_{\alpha\in \widehat\Pi_{k,m-k-1}}\Z \alpha$, then the previous equation implies that $\mathcal Q=\sum_{\alpha\in \widetilde\Pi_{k,m-k-1}}\Z \alpha$, hence $\mathcal Q$ is a lattice in $\mathcal Q\otimes_\Z \R$, and the condition (ii) in Definition \ref{dfn:root_data} holds. The proof that the condition (iii) in Definition \ref{dfn:root_data} holds is the same as the second paragraph of Step 1 of the proof of Proposition \ref{prop:subrootdata000}, by replacing $\Pi(R^\smperp)$ by $\widehat\Pi_{k,m-k-1}$. This finishes the proof.
\end{proof}

\section{A computational lemma}\label{sec:appendix_computations}

The goal of this section is to prove the following auxiliary lemma, which completes the proof of Corollary \ref{cor:finiteness}.

\begin{lem}\label{lem:aux_finiteness}
Let $X$ be a Sakai surface of type $R\in \{A_6^\smone, A_7^\smone, D_7^\smone \}$. Assume Setup \ref{setup:allcases}. Let $w\in W_{\mathscr{D}}$. With the notation from Step 2 of the proof of Corollary \ref{cor:finiteness}, we have
    $$\{ \beta\in\mathcal S_R(w) \mid \beta\cdot M_R^{-1}\cdot A_R\in\Z^9\} = \{ \beta_1,\beta_2\},$$
where the first coordinate of $\beta_1$ is different from the first coordinate of $\beta_2$.
\end{lem}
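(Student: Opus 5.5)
This is a finite-type, integrality computation, and I would carry it out case by case for $R\in\{A_7^\smone, D_7^\smone, A_6^\smone\}$, using the explicit classes $\alpha_0,\alpha_1,(\alpha_0',\alpha_1'),\mathcal D_0,\dots,\mathcal D_z$ from Appendix \ref{sec:Sakai_appendix_A} written in the basis $\mathcal E_0,\dots,\mathcal E_8$. The first step is to write down $A_R$ and $M_R=B_R\oplus B_R'\oplus C_R$ explicitly and compute $M_R^{-1}A_R$ once and for all. Here $B_R=(\alpha_1^2)$ is a $1\times 1$ block equal to $(-8)$ for $A_7^\smone$ and $(-4)$ for $D_7^\smone$, since $\alpha_1^\smvee=\frac1m\alpha_1$ forces $\alpha_1^2=-2m$. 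For $A_6^\smone$ we have $B_R=(-14)$ and $B_R'=(-2)$. Finally $C_R$ is minus the Cartan matrix of $A_7$, $D_7$ or $A_6$ respectively. The inverses of these finite Cartan matrices are classical, with determinants $8$, $4$ and $7$. So the denominators in $M_R^{-1}$ are controlled by $m$ (together with $2m$), which is exactly what should make the integrality condition pick out a single residue class.

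Next I parametrise $\mathcal S_R(w)$. Write $\beta=(b,c_1,\dots,c_z)$ for $R\in\{A_7^\smone,D_7^\smone\}$, and $\beta=(b,b',c_1,\dots,c_6)$ for $A_6^\smone$. Here $b\in\{t-1,t\}$ with $t=\pm k$, up to sign and shift according to whether $w$ starts with $r_0$ or $r_1$. We also have $b'\in\{0,-1\}$, the $c_i\le 0$, and $\sum q_ic_i\in\{0,-1\}$. Since all $q_i>0$ and all $c_i\le0$, the last condition forces either all $c_i=0$, or exactly one $c_j=-1$ with $q_j=1$ and the rest zero. Hence $\mathcal S_R(w)$ is an explicit finite set: two choices of $b$, at most two of $b'$, and at most $1+\#\{j: q_j=1\}$ choices of $(c_i)$, which gives $9$ for $A_7$, $4$ for $D_7$ and $7$ for $A_6$.

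The core step is the integrality test. The vector $\beta M_R^{-1}A_R$ is $b$ times a fixed rational row $\frac1{2m}u$, plus possibly $b'$ times a fixed row, plus (when some $c_j=-1$) minus the $j$-th row of $C_R^{-1}$ pushed through $A_R$, so it decomposes linearly in the entries of $\beta$. I would compute these finitely many rational rows explicitly. The aim is to show that the fractional part of $\beta M_R^{-1}A_R$ modulo $\Z^9$ defines a map from $\{$choices of $(b',c)\}$ to $(\frac1{m}\Z/\Z)^9$ (or to $\frac1{2m}$-fractions), and that this map is injective with image matching the classes of $-b\cdot(\text{row for }\alpha_1)$ as $b$ runs through residues. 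Then for each of the two consecutive values of $b$ there is exactly one admissible $(b',c)$, which gives two solutions $\beta_1,\beta_2$ with first coordinates $t-1\neq t$. Both the existence and the uniqueness of a solution for each $b$ should follow from the fact that the two values of $b$ are consecutive integers, together with the observation that the relevant discriminant group $Q(E_8)/(\text{span of }\mathcal S)$ is cyclic of order $m$ (or $2m$), generated by the $b$-direction.

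The main obstacle I expect is this last bookkeeping: verifying, in each of the three cases, that the fractional parts coming from the $c_j$ choices (and from $b'$ for $A_6$) hit each residue class of the $b$-part exactly once. This has to hold for the two consecutive values of $b$ in question, and it must not depend on $k$. I would first try to prove it conceptually. Since $Q(E_8)$ is unimodular, the sublattice $\Z\alpha_1\oplus Q(A_{z})$ (resp. $\Z\alpha_1\oplus\Z\alpha_1'\oplus Q(A_6)$) has index equal to $\sqrt{\det}$, and the glue group identifies the discriminant groups: for $A_7$, $\Z/8$ with $\Z/8$; for $D_7$, $\Z/4$ with $\Z/4$; for $A_6$, $\Z/14\times\Z/2$ with $\Z/7\times\Z/4$, suitably matched. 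The minimal-vector representatives of $A_z^*/A_z$ are exactly the fundamental weights with $q_j=1$ plus $0$, which correspond to our allowed $c$-choices. If that conceptual argument fails, I would fall back on explicit matrix computation in each case.
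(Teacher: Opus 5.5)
Your fallback is exactly the paper's proof. The paper writes down $M_R$ and $A_R$ and computes $M_R^{-1}A_R$. It then finds integer relations among the coordinates showing that $\beta M_R^{-1}A_R\in\Z^9$ is equivalent to integrality of a single coordinate. This reduces everything to one congruence; for $A_6^\smone$ it is $\frac1{14}(a+7b+10c_1+6c_2+16c_3+12c_4+8c_5+4c_6)\in\Z$. Parity fixes $b$, and the $c_j$-coefficients run through distinct nonzero residues, so the residue of $a$ fixes $c$.

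Your conceptual route is genuinely different, and it does work once set up correctly.
\begin{itemize}
\item By Step 2 of the proof of Corollary~\ref{cor:finiteness}, the condition says $\alpha_\beta\in Q(E_8)$. Equivalently, the class of $\alpha_\beta$ in $L^*/L$ lies in the glue group $Q(E_8)/L$, where $L$ is the span of $\alpha_1,\dots,\alpha_u,\alpha_1',\dots,\alpha_v',\mathcal D_1,\dots,\mathcal D_z$.
\item $Q(E_8)$ is unimodular, and both $\Z\alpha_1$ and $N\coloneqq\alpha_1^\perp\cap Q(E_8)$ are primitive. Hence the glue group is the graph of an isomorphism from $D(\Z\alpha_1)\cong\Z/2m$ to $D(N)$.
\item The $b$-coordinate fixes the class in $D(\Z\alpha_1)$.
\item The admissible choices form a complete set of representatives of $D(N)$. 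These are $c=0$ or $c=-e_j$ with $q_j=1$ (that is, $0$ and minus the minuscule members of the dual basis of $\mathcal D_1,\dots,\mathcal D_z$), together with $b'\in\{0,-1\}$.
\end{itemize}
So each value of $b$ admits exactly one $\beta$, uniformly in $k$. This explains the paper's numerical observation: the residues of the $c_j$-coefficients are the images of the minuscule weights under the glue map. It also avoids inverting $M_R$, at the cost of importing standard gluing theory. The paper's computation is more pedestrian but self-contained.

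Fix the following before relying on the conceptual route.
\begin{itemize}
\item For $A_7^\smone$ there are $1+7=8$ admissible $c$-vectors, not $9$. This matters: the count must equal $|D(Q(A_7))|=8$, or uniqueness fails.
\item For $A_6^\smone$, your matching ``$\Z/14\times\Z/2$ with $\Z/7\times\Z/4$'' is wrong; no $\Z/4$ occurs. The correct split is $\Z\alpha_1$, with discriminant $\Z/14$ since $\alpha_1^2=-14$, against $N=\Z\alpha_1'\oplus Q(A_6)$, with discriminant $\Z/2\times\Z/7\cong\Z/14$.
\item You must justify the primitivity used above. $\alpha_1$ is primitive in $Q(E_8)$ because it has a coefficient $\pm1$ in the $\mathcal E_i$ and $Q(E_8)$ is saturated in $\bigoplus_{i\le 8}\Z\mathcal E_i$ (Lemma~\ref{lem:integral_elements}). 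Then $|D(N)|=2m$. The rest of $L$ lies in $N$, has rank $7$, and has discriminant of the same order $2m$ in all three cases, so it equals $N$.
\item That the two $b$-values are consecutive plays no role. Only their distinctness matters, and it gives the differing first coordinates.
\end{itemize}
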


\begin{proof}
Let $k$ be the length of $w$, and assume that $w$ starts with $r_0$; the lemma is proved analogously when $w$ starts with $r_1$. We distinguish three cases.

\medskip

\emph{Step 1.}
In this step we prove the lemma when $R=A_6^\smone$.

Recall from Step 4 of the proof of Proposition \ref{prop:finmanytranslat} that
\begin{align}\label{eq:Sw_A6}
\mathcal S_R(w)\coloneqq &\{-k,-k-1\}\times\{0,-1\}\\
\times&\Big\{(c_1,\ldots,c_6)\in\Z_{\leq0}^6\ \big| \ \textstyle \sum\limits_{i=1}^6 c_i\in\{0,-1\}\Big\}.\notag
\end{align}
By Appendix \ref{sec:Sakai_appendix_A} we have
$$ M_R=\begin{psmallmatrix}
-14 & 0 & 0 & 0 & 0 & 0 & 0 & 0\\
0 & -2 & 0 & 0 & 0 & 0 & 0 & 0\\
0 & 0 & -2 & 1 & 0 & 0 & 0 & 0\\
0 & 0 & 1 & -2 & 1 & 0 & 0 & 0\\
0 & 0 & 0 & 1 & -2 & 1 & 0 & 0\\
0 & 0 & 0 & 0 & 1 & -2 & 1 & 0\\
0 & 0 & 0 & 0 & 0 & 1 & -2 & 1\\
0 & 0 & 0 & 0 & 0 & 0 & 1 & -2
\end{psmallmatrix}, \quad
A_R=\begin{psmallmatrix}
1&2&-1&-2&1&-2&-1&0&0\\
1&0&-1&0&-1&0&-1&0&0\\
1&0&-1&0&0&0&0&-1&-1\\
0&0&1&0&0&0&-1&0&0\\
1&-1&-1&-1&0&0&0&0&0\\
0&0&0&1&0&-1&0&0&0\\
1&0&0&-1&-1&0&0&-1&0\\
0&0&0&0&0&0&0&1&-1
\end{psmallmatrix}, $$
and hence,
$$ M_R^{-1}\cdot A_R=\textstyle\frac{1}{14}
\begin{psmallmatrix}
-1 & -2 & 1 & 2 & -1 & 2 & 1 & 0 & 0\\
-7 & 0 & 7 & 0 & 7 & 0 & 7 & 0 & 0\\
-24 & 8 & 10 & 6 & 4 & 6 & 10 & 14 & 14\\
-34 & 16 & 6 & 12 & 8 & 12 & 20 & 14 & 14\\
-44 & 24 & 16 & 18 & 12 & 18 & 16 & 14 & 14\\
-40 & 18 & 12 & 10 & 16 & 24 & 12 & 14 & 14\\
-36 & 12 & 8 & 16 & 20 & 16 & 8 & 14 & 14\\
-18 & 6 & 4 & 8 & 10 & 8 & 4 & 0 & 14
\end{psmallmatrix}. $$

Let $(a,b,c_1,\dots,c_6)\in\mathcal S_R(w)$, and denote
$$(x_0,\dots,x_8) \coloneqq (a,b,c_1,\dots,c_6)\cdot M_R^{-1}\cdot A_R.$$
We need to determine all $(a,b,c_1,\dots,c_6)$ such that $(x_0,\dots,x_8) \in\Z^9$. Note that $a\in\{-k,-k-1\}$ by \eqref{eq:Sw_A6}; for each of these two choices for $a$, we will show that there exists a unique $(a,b,c_1,\dots,c_6)$ such that $(x_0,\dots,x_8) \in\Z^9$, which will prove the lemma when $R=A_6^\smone$.

We see that $x_7$ and $x_8$ are always integral, and that
\begin{align*}
x_6-x_2&=c_2,\quad x_5-x_3=c_4,\\
x_1+2x_2&=b+2c_1+2c_2+4c_3+3c_4+2c_5+c_6,\\ 
x_0+x_6&=-c_1-c_2-2c_3-2c_4-2c_5-c_6,\\
x_4+x_6&=b+c_1+2c_2+2c_3+2c_4+2c_5+c_6,\\
x_1+x_3&=c_1+2c_2+3c_3+2c_4+2c_5+c_6.
\end{align*}
It follows from these equations that it suffices to determine all $(a,b,c_1,\dots,c_6)$ such that $x_2\in\Z$, or equivalently,
\begin{equation}\label{eq:A6}
\textstyle \frac{1}{14}(a+7b+10c_1+6c_2+16c_3+12c_4+8c_5+4c_6)\in\Z.
\end{equation}
This forces $a+7b$ to be an even number. Assume first that $a=2a'$ for some $a'\in\Z$. Since $b\in\{0,-1\}$ by \eqref{eq:Sw_A6}, we must have $b=0$. There exist unique integers $p$ and $0\leq q\leq6$ such that $a'=7p+q$. Then \eqref{eq:A6} is equivalent to
\begin{equation}\label{eq:A6_2}
\textstyle \frac17q+\frac57c_1+\frac37c_2+\frac17c_3+\frac67c_4+\frac47c_5+\frac27c_6\in\Z.
\end{equation}
Moreover, by \eqref{eq:Sw_A6} we have that either $c_i=0$ for all $i$, or there exists an index $i$ such that $c_i={-}1$ and $c_j=0$ for $j\neq i$. Since $q$ is determined uniquely by $a$, it is clear that there exists a unique $(c_1,\dots,c_6)$ such that \eqref{eq:A6_2} holds.

Now assume that $a=2a'+1$ for some $a'\in\Z$. Since $a+7b$ is even and since $b\in\{0,-1\}$ by \eqref{eq:Sw_A6}, we must have $b={-}1$. There exist unique integers $p$ and $0\leq q\leq6$ such that $a'=7p+q$. Then \eqref{eq:A6} is equivalent to
\begin{equation}\label{eq:A6_3}
\textstyle \frac17q+\frac47+\frac57c_1+\frac37c_2+\frac17c_3+\frac67c_4+\frac47c_5+\frac27c_6\in\Z.
\end{equation}
It follows as in the previous paragraph that there exists a unique $(c_1,\dots,c_6)$ such that \eqref{eq:A6_3} holds.

\medskip

\emph{Step 2.}
In this step we prove the lemma when $R=A_7^\smone$.

Recall from Step 2 of the proof of Proposition \ref{prop:finmanytranslat} that
\begin{equation}\label{eq:Sw_A7}
\mathcal S_R(w)=\{-k,-k-1\}\times\Big\{(c_1,\ldots,c_7)\in\Z_{\leq0}^7\ \big| \ \textstyle \sum\limits_{i=1}^7 c_i\in\{0,-1\}\Big\}.
\end{equation}
By Appendix \ref{sec:Sakai_appendix_A} we have
$$ M_R = \begin{psmallmatrix}
-8 & 0 & 0 & 0 & 0 & 0 & 0 & 0\\
0 & -2 & 1 & 0 & 0 & 0 & 0 & 0\\
0 & 1 & -2 & 1 & 0 & 0 & 0 & 0\\
0 & 0 & 1 & -2 & 1 & 0 & 0 & 0\\
0 & 0 & 0 & 1 & -2 & 1 & 0 & 0\\
0 & 0 & 0 & 0 & 1 & -2 & 1 & 0\\
0 & 0 & 0 & 0 & 0 & 1 & -2 & 1\\
0 & 0 & 0 & 0 & 0 & 0 & 1 & -2
\end{psmallmatrix}, \quad 
A_R=\begin{psmallmatrix}
2&1&-2&-1&-1&-1&-2&0&0\\
1&0&-1&0&0&0&0&-1&-1\\
0&0&1&0&0&0&-1&0&0\\
1&-1&-1&-1&0&0&0&0&0\\
0&0&0&1&-1&0&0&0&0\\
0&0&0&0&1&-1&0&0&0\\
1&0&0&-1&-1&0&0&-1&0\\
0&0&0&0&0&0&0&1&-1
\end{psmallmatrix}, $$
and hence,
$$M_R^{-1}\cdot A_R=\textstyle\frac18\begin{psmallmatrix}
-2 & -1 & 2 & 1 & 1 & 1 & 2 & 0 & 0 \\
-14 & 5 & 6 & 3 &  3 &  3 &  6 &  8 &  8\\
-20 &  10 &  4 &  6 &  6 &  6 &  12 &  8  & 8 \\
-26  & 15  & 10  & 9  & 9  & 9  & 10  & 8 &  8\\
-24  & 12  & 8  & 4  & 12  & 12  & 8  & 8 &  8\\
-22  & 9  & 6  & 7  & 7  & 15  & 6  & 8 &  8\\
-20  & 6  & 4  & 10  & 10  & 10  & 4  & 8 &  8\\
-10  & 3  & 2  & 5  & 5  & 5  & 2  & 0  & 8
\end{psmallmatrix}.$$

Let $(a,c_1,\dots,c_7)\in\mathcal S_R(w)$, and denote
$$(x_0,\dots,x_8) \coloneqq (a,c_1,\dots,c_7)\cdot M_R^{-1}\cdot A_R.$$
We need to determine all $(a,c_1,\dots,c_7)$ such that $(x_0,\dots,x_8) \in\Z^9$. Note that $a\in\{-k,-k-1\}$ by \eqref{eq:Sw_A7}; for each of these two choices for $a$, we will show that there exists a unique $(a,c_1,\dots,c_7)$ such that $(x_0,\dots,x_8) \in\Z^9$, which will prove the lemma when $R=A_7^\smone$.

We see that $x_7$ and $x_8$ are always integral, and that
\begin{align*}
x_4-x_3&=c_4,\quad x_5-x_4=c_5,\quad x_6-x_2=c_2,\\
-x_0-x_2&=c_1+2c_2+2c_3+2c_4+2c_5+2c_6+c_7,\\
x_1+x_3&=c_1+2c_2+3c_3+2c_4+2c_5+2c_6+c_7,\\
-x_0+2x_1&=3c_1+5c_2+7c_3+6c_4+5c_5+4c_6+2c_7.
\end{align*}
It follows from these equations that it suffices to determine all $(a,c_1,\dots,c_7)$ such that $x_1\in\Z$, or equivalently,
$$\textstyle -\frac18a+\frac58c_1+\frac54c_2+\frac{15}{8}c_3+\frac32c_4+\frac98c_5+\frac34c_6+\frac38c_7\in\Z.$$
There exist unique integers $p$ and $0\leq q\leq7$ such that $a=8p+q$, hence the condition above is equivalent to
\begin{equation}\label{eq:A7}
\textstyle -\frac{1}{8}q+\frac58c_1+\frac28c_2+\frac78c_3+\frac48c_4+\frac18c_5+\frac68c_6+\frac38c_7\in\Z.
\end{equation}
Moreover, by \eqref{eq:Sw_A7} we have that either $c_i=0$ for all $i$, or there exists an index $i$ such that $c_i={-}1$ and $c_j=0$ for $j\neq i$. Since $q$ is determined uniquely by $a$, it is clear that there exists a unique $(c_1,\dots,c_7)$ such that \eqref{eq:A7} holds.

\medskip

\emph{Step 3.}
In this step we prove the lemma when $R=D_7^\smone$.

Recall from Step 2 of the proof of Proposition \ref{prop:finmanytranslat} that we have
\begin{align}\label{eq:Sw_D7}
	\mathcal S_R(w):=\{&-k,-k-1\}\times\Big\{(c_1,\ldots,c_7)\in\Z_{\leq0}^7\ \big| \\
	&c_1+2c_2+2c_3+2c_4+2c_5+c_6+c_7\in\{0,-1\}\Big\}.\notag
	\end{align}
By Appendix \ref{sec:Sakai_appendix_A} we have
$$ M_R=\begin{psmallmatrix}
-4 & 0 & 0 & 0 & 0 & 0 & 0 & 0\\
0 & -2 & 1 & 0 & 0 & 0 & 0 & 0\\
0 & 1 & -2 & 1 & 0 & 0 & 0 & 0\\
0 & 0 & 1 & -2 & 1 & 0 & 0 & 0\\
0 & 0 & 0 & 1 & -2 & 1 & 0 & 0\\
0 & 0 & 0 & 0 & 1 & -2 & 1 & 1\\
0 & 0 & 0 & 0 & 0 & 1 & -2 & 0\\
0 & 0 & 0 & 0 & 0 & 1 & 0 & -2
\end{psmallmatrix},\quad
A_R=\begin{psmallmatrix}
0&0&0&0&1&-1&1&-1&0\\
1&-1&-1&-1&0&0&0&0&0\\
0&0&0&1&0&0&0&0&-1\\
0&0&1&-1&0&0&0&0&0\\
0&1&-1&0&0&0&0&0&0\\
1&-1&0&0&-1&-1&0&0&0\\
0&0&0&0&1&0&-1&0&0\\
0&0&0&0&0&1&0&-1&0
\end{psmallmatrix}, $$
and hence,
$$M_R^{-1}\cdot A_R=\textstyle\frac14\begin{psmallmatrix}
0 & 0 & 0 & 0 & -1 & 1 & -1 & 1 & 0\\
-8 & 4 & 4 & 4 & 2 & 2 & 2 & 2 & 4\\
-12 & 4 & 4 & 4 & 4 & 4 & 4 & 4 & 8\\
-16 & 4 & 4 & 8 & 6 & 6 & 6 & 6 & 8\\
-20 & 4 & 8 & 8 & 8 & 8 & 8 & 8 & 8\\
-24 & 8 & 8 & 8 & 10 & 10 & 10 & 10 & 8\\
-12 & 4 & 4 & 4 & 3 & 5 & 7 & 5 & 4\\
-12 & 4 & 4 & 4 & 5 & 3 & 5 & 7 & 4
\end{psmallmatrix}.$$

Let $(a,c_1,\dots,c_7)\in\mathcal S_R(w)$, and denote
$$(x_0,\dots,x_8) \coloneqq (a,c_1,\dots,c_7)\cdot M_R^{-1}\cdot A_R.$$
We need to determine all $(a,c_1,\dots,c_7)$ such that $(x_0,\dots,x_8) \in\Z^9$. Note that $a\in\{-k,-k-1\}$ by \eqref{eq:Sw_D7}; for each of these two choices for $a$, we will show that there exists a unique $(a,c_1,\dots,c_7)$ such that $(x_0,\dots,x_8) \in\Z^9$, which will prove the lemma when $R=D_7^\smone$.

We see that $x_0$, $x_1$, $x_2$, $x_3$ and $x_8$ are always integral, and that
\begin{align*}
x_4+x_5&=c_1+2c_2+3c_3+4c_4+5c_5+2c_6+2c_7,\\
x_6+x_7&= c_1+2c_2+3c_3+4c_4+5c_5+3c_6+3c_7,\\
x_5+x_6&= c_1+2c_2+3c_3+4c_4+5c_5+3c_6+2c_7.
\end{align*}
It follows from these equations that it suffices to determine all $(a,c_1,\dots,c_7)$ such that $x_5\in\Z$, or equivalently,
$$\textstyle \frac14a+\frac12c_1+c_2+\frac32c_3+2c_4+\frac52c_5+\frac54c_6+\frac34c_7\in\Z.$$
There exist unique integers $p$ and $0\leq q\leq3$ such that $a=4p+q$, and by \eqref{eq:Sw_D7} we necessarily have $c_2=c_3=c_4=c_5=0$. Thus, the condition above is equivalent to
\begin{equation}\label{eq:D7}
\textstyle \frac14q+\frac24c_1+\frac14c_6+\frac34c_7\in\Z.
\end{equation}
Moreover, by \eqref{eq:Sw_D7} we have that either $c_i=0$ for all $i$, or there exists an index $i$ such that $c_i={-}1$ and $c_j=0$ for $j\neq i$. Since $q$ is determined uniquely by $a$, it is clear that there exists a unique $(c_1,\dots,c_7)$ such that \eqref{eq:D7} holds. This concludes the proof.
\end{proof}

\section{Root bases of Sakai surfaces}\label{sec:Sakai_appendix_A}

Let $X$ be a Sakai surface of type $R$. In this section, we recall \cite[Appendix A]{Sak01} for the convenience of the reader, since we use it very often in the paper.

Our notation below differs slightly from that in \cite[Appendix A]{Sak01} when $R\in \{A_6^\smone, A_7^\smone, E_7^\smone\}$, since these changes are adapted to our computations. Concretely: (i) when $R=A^{(1)}_6$, we interchange $\alpha_0$ with $\alpha'_0$ and $\alpha_1$ with $\alpha'_1$; (ii) when $R=A^{(1)}_7$, we interchange $\alpha_0$ with $\alpha_1$; (iii) when $R=E^{(1)}_7$, we interchange $\mathcal{D}_0$ with $\mathcal{D}_6$.

With notation from \S\ref{subsec:root_system_R}, \S\ref{subsec:Rperp1}, and \S\ref{subsec:Rperp2}, recall that $(\mathcal{E}_0,\ldots,\mathcal{E}_9)$ denotes the tautological basis of $N^1(X)_\R $ defining the blowdown structure $X\to\mathbb{P}^2$, $D$ is the unique element in $|{-}K_X|$, and $\mathcal{D}_i$ are the numerical classes of irreducible components of $D$. In the list below, for each type $R$ we list the sets $\Pi(R)$ and $\Pi(R^{\smperp})$, as well as the imaginary root $\delta=[{-}K_X]$.

Note that if $R\in\{A_0^\smone,A_0^{\smone*},A_0^{\smone**}\}$, then $R$ has no real roots, and if $R\in\{A_8^{\smone},D_8^{\smone},E_8^{\smone}\}$, then $R^\smperp$ has no real roots.

As in \cite[Appendix A]{Sak01}, we use the notation $(A_2+A_1)^\smone$ (respectively $2A_1^\smone$) to denote sets of root data which decompose into two root systems of affine type $A_2^\smone$ and $A_1^\smone$ (respectively of affine type $A_1^\smone$ and $A_1^\smone$); see also our conventions in \S\ref{subsec:Rperp1} and \S\ref{subsec:Rperp2}.

{\small
\newpage

\begin{center}
$\boxed{R\in\{A_0^\smone,A_0^{\smone*},A_0^{\smone**}\},\qquad R^\smperp = E_8^\smone}$
\end{center}
\begin{flalign*}
    \underline{\Pi(R^\smperp)}: \  \alpha_0 &= \mathcal{E}_8-\mathcal{E}_9,\quad \alpha_1 = \mathcal{E}_1-\mathcal{E}_2,\quad \alpha_2 = \mathcal{E}_2-\mathcal{E}_3, \quad \alpha_3 = \mathcal{E}_3-\mathcal{E}_4, & \\
    \alpha_4 &= \mathcal{E}_4-\mathcal{E}_5,\quad \alpha_5 = \mathcal{E}_5-\mathcal{E}_6, \quad \alpha_6 = \mathcal{E}_6-\mathcal{E}_7,\quad \alpha_7 = \mathcal{E}_7-\mathcal{E}_8, \\
    \alpha_8 &= \mathcal{E}_0-\mathcal{E}_1-\mathcal{E}_2-\mathcal{E}_3
\end{flalign*}
$\delta = \alpha_0+2\alpha_1+4\alpha_2+6\alpha_3+5\alpha_4+4\alpha_5+3\alpha_6+2\alpha_7+3\alpha_8$

\noindent\hrulefill

\begin{center}
$\boxed{R\in\{A_1^{\smone},A_1^{\smone*}\},\qquad R^\smperp = E_7^\smone}$
\end{center}
\begin{flalign*}
    \underline{\Pi(R)}: \ \hspace{4,5pt}  \mathcal{D}_0&=2\mathcal{E}_0-\mathcal{E}_4-\mathcal{E}_5-\mathcal{E}_6-\mathcal{E}_7-\mathcal{E}_8-\mathcal{E}_9, \quad \mathcal{D}_1 =\mathcal{E}_0-\mathcal{E}_1-\mathcal{E}_2-\mathcal{E}_3 &
\end{flalign*}
\vspace{-15pt}
\begin{flalign*}
    \underline{\Pi(R^\smperp)}: \  \alpha_0 &= \mathcal{E}_8-\mathcal{E}_9,\quad \alpha_1 = \mathcal{E}_2-\mathcal{E}_3, \quad \alpha_2 = \mathcal{E}_1-\mathcal{E}_2, \quad \alpha_3 = \mathcal{E}_0-\mathcal{E}_1-\mathcal{E}_4-\mathcal{E}_5, &  \\
    \alpha_4 &= \mathcal{E}_5-\mathcal{E}_6, \quad \alpha_5 = \mathcal{E}_6-\mathcal{E}_7, \quad \alpha_6 = \mathcal{E}_7-\mathcal{E}_8, \quad \alpha_7 = \mathcal{E}_4-\mathcal{E}_5  
\end{flalign*}
$\delta = \mathcal{D}_0+\mathcal{D}_1 = \alpha_0 + \alpha_1 + 2\alpha_2 + 3\alpha_3 + 4\alpha_4 + 3\alpha_5 + 2\alpha_6 + 2\alpha_7$

\noindent\hrulefill

\begin{center}
$\boxed{R\in\{A_2^{\smone},A_2^{\smone*}\},\qquad R^\smperp = E_6^\smone}$
\end{center}
\begin{flalign*}
    \underline{\Pi(R)}: \ \hspace{4,5pt}  \mathcal{D}_0&=\mathcal{E}_0-\mathcal{E}_4-\mathcal{E}_5-\mathcal{E}_9,\quad \mathcal{D}_1=\mathcal{E}_0-\mathcal{E}_6-\mathcal{E}_7-\mathcal{E}_8, & \\
    \mathcal{D}_2 &= \mathcal{E}_0-\mathcal{E}_1-\mathcal{E}_2-\mathcal{E}_3
\end{flalign*}
\vspace{-15pt}
\begin{flalign*}
    \underline{\Pi(R^\smperp)}: \  \alpha_0 &= \mathcal{E}_5-\mathcal{E}_9,\quad \alpha_1 = \mathcal{E}_2-\mathcal{E}_3, \quad \alpha_2 = \mathcal{E}_1-\mathcal{E}_2, \quad \alpha_3 = \mathcal{E}_0-\mathcal{E}_1-\mathcal{E}_4-\mathcal{E}_6, &  \\
    \alpha_4 &= \mathcal{E}_6-\mathcal{E}_7, \quad \alpha_5 = \mathcal{E}_7-\mathcal{E}_8, \quad \alpha_6 = \mathcal{E}_4-\mathcal{E}_5
\end{flalign*}
$\delta = \mathcal{D}_0 + \mathcal{D}_1 + \mathcal{D}_2 = \alpha_0 + \alpha_1 + 2\alpha_2 + 3\alpha_3 + 2\alpha_4 + \alpha_5 + 2\alpha_6$

\noindent\hrulefill

\begin{center}
$\boxed{R=A_3^{\smone},\qquad R^\smperp = D_5^\smone}$
\end{center}
\begin{flalign*}
    \underline{\Pi(R)}: \ \hspace{4,5pt} \mathcal{D}_0 &= \mathcal{E}_8-\mathcal{E}_9,\quad \mathcal{D}_1=\mathcal{E}_0-\mathcal{E}_6-\mathcal{E}_7-\mathcal{E}_8, \quad \mathcal{D}_2 = \mathcal{E}_0-\mathcal{E}_1-\mathcal{E}_2-\mathcal{E}_3, & \\
    \mathcal{D}_3 &= \mathcal{E}_0-\mathcal{E}_4-\mathcal{E}_5-\mathcal{E}_8
\end{flalign*}
\vspace{-15pt}
\begin{flalign*}
    \underline{\Pi(R^\smperp)}: \  \alpha_0 &= \mathcal{E}_0-\mathcal{E}_1-\mathcal{E}_8-\mathcal{E}_9,\quad \alpha_1 = \mathcal{E}_2-\mathcal{E}_3, \quad \alpha_2 = \mathcal{E}_1-\mathcal{E}_2, & \\
    \alpha_3 &= \mathcal{E}_0-\mathcal{E}_1-\mathcal{E}_4-\mathcal{E}_6, \quad \alpha_4 = \mathcal{E}_6-\mathcal{E}_7, \quad \alpha_5 = \mathcal{E}_4-\mathcal{E}_5
\end{flalign*}
$\delta = \mathcal{D}_0 + \mathcal{D}_1 + \mathcal{D}_2 + \mathcal{D}_3 = \alpha_0 + \alpha_1 + 2\alpha_2 + 2\alpha_3 + \alpha_4 + \alpha_5$

\noindent\hrulefill

\begin{center}
$\boxed{R=A_4^{\smone},\qquad R^\smperp = A_4^\smone}$
\end{center}
\begin{flalign*}
    \underline{\Pi(R)}: \ \hspace{4,5pt} \mathcal{D}_0 &= \mathcal{E}_8-\mathcal{E}_9,\quad \mathcal{D}_1=\mathcal{E}_0-\mathcal{E}_6-\mathcal{E}_7-\mathcal{E}_8, \quad \mathcal{D}_2 = \mathcal{E}_0-\mathcal{E}_1-\mathcal{E}_2-\mathcal{E}_3, & \\
    \mathcal{D}_3 &= \mathcal{E}_0-\mathcal{E}_4-\mathcal{E}_5-\mathcal{E}_7, \quad \mathcal{D}_4 = \mathcal{E}_7-\mathcal{E}_8
\end{flalign*}
\vspace{-15pt}
\begin{flalign*}
    \underline{\Pi(R^\smperp)}: \  \alpha_0 &= 2\mathcal{E}_0-\mathcal{E}_1-\mathcal{E}_2-\mathcal{E}_4-\mathcal{E}_7-\mathcal{E}_8-\mathcal{E}_9, \quad \alpha_1 = \mathcal{E}_2-\mathcal{E}_3, \quad \alpha_2 = \mathcal{E}_1-\mathcal{E}_2, & \\
    \alpha_3 &= \mathcal{E}_0-\mathcal{E}_1-\mathcal{E}_4-\mathcal{E}_6, \quad \alpha_4 = \mathcal{E}_4-\mathcal{E}_5
\end{flalign*}
$\delta = \mathcal{D}_0 + \mathcal{D}_1 + \mathcal{D}_2 + \mathcal{D}_3 + \mathcal{D}_4 = \alpha_0 + \alpha_1 + \alpha_2 + \alpha_3 + \alpha_4$

\noindent\hrulefill
\newpage

\begin{center}
$\boxed{R=A_5^{\smone},\qquad R^\smperp = (A_2+A_1)^\smone}$
\end{center}
\begin{flalign*}
    \underline{\Pi(R)}: \ \hspace{4,5pt} \mathcal{D}_0 &= \mathcal{E}_8-\mathcal{E}_9, \quad \mathcal{D}_1=\mathcal{E}_0-\mathcal{E}_6-\mathcal{E}_7-\mathcal{E}_8, \quad \mathcal{D}_2 = \mathcal{E}_0-\mathcal{E}_1-\mathcal{E}_2-\mathcal{E}_3, & \\
    \mathcal{D}_3 &= \mathcal{E}_3-\mathcal{E}_5, \quad \mathcal{D}_4 =\mathcal{E}_0-\mathcal{E}_3- \mathcal{E}_4-\mathcal{E}_7,\quad \mathcal{D}_5 = \mathcal{E}_7-\mathcal{E}_8
\end{flalign*}
\vspace{-15pt}
\begin{flalign*}
    \underline{\Pi(R^\smperp)}: \  \alpha_0 &= 2\mathcal{E}_0-\mathcal{E}_1-\mathcal{E}_3-\mathcal{E}_5-\mathcal{E}_7-\mathcal{E}_8-\mathcal{E}_9, \quad \alpha_1 = \mathcal{E}_1-\mathcal{E}_2, & \\
    \alpha_2 &= \mathcal{E}_0-\mathcal{E}_1-\mathcal{E}_4-\mathcal{E}_6, \quad \alpha'_0 = 2\mathcal{E}_0-\mathcal{E}_1-\mathcal{E}_2-\mathcal{E}_4-\mathcal{E}_7-\mathcal{E}_8-\mathcal{E}_9, \\
    \alpha'_1 &= \mathcal{E}_0-\mathcal{E}_3-\mathcal{E}_5-\mathcal{E}_6
\end{flalign*}
$\delta = \mathcal{D}_0 + \mathcal{D}_1 + \mathcal{D}_2 + \mathcal{D}_3 + \mathcal{D}_4 + \mathcal{D}_5 = \alpha_0 + \alpha_1 + \alpha_2 = \alpha'_0 + \alpha'_1$

\noindent\hrulefill

\begin{center}
$\boxed{R=A_6^{\smone},\qquad R^\smperp = 2A_1^\smone}$
\end{center}
\begin{flalign*}
    \underline{\Pi(R)}: \ \hspace{4,5pt} \mathcal{D}_0 &= \mathcal{E}_8-\mathcal{E}_9,\quad \mathcal{D}_1=\mathcal{E}_0-\mathcal{E}_2-\mathcal{E}_7-\mathcal{E}_8, \quad \mathcal{D}_2 = \mathcal{E}_2-\mathcal{E}_6, & \\
    \mathcal{D}_3 &= \mathcal{E}_0-\mathcal{E}_1-\mathcal{E}_2-\mathcal{E}_3, \quad\mathcal{D}_4=\mathcal{E}_3-\mathcal{E}_5, \\
    \mathcal{D}_5 &=\mathcal{E}_0-\mathcal{E}_3- \mathcal{E}_4-\mathcal{E}_7, \quad \mathcal{D}_6 = \mathcal{E}_7-\mathcal{E}_8
\end{flalign*}
\vspace{-15pt}
\begin{flalign*}
    \underline{\Pi(R^\smperp)}: \  \alpha_0 &= 2\mathcal{E}_0-3\mathcal{E}_1+\mathcal{E}_3-2\mathcal{E}_4+\mathcal{E}_5-\mathcal{E}_7-\mathcal{E}_8-\mathcal{E}_9, & \\
    \alpha_1 &= \mathcal{E}_0+2\mathcal{E}_1-\mathcal{E}_2-2\mathcal{E}_3+\mathcal{E}_4-2\mathcal{E}_5-\mathcal{E}_6, \\
    \alpha'_0 &= 2\mathcal{E}_0-\mathcal{E}_1-\mathcal{E}_3-\mathcal{E}_5-\mathcal{E}_7-\mathcal{E}_8-\mathcal{E}_9, \quad \alpha'_1 = \mathcal{E}_0-\mathcal{E}_2-\mathcal{E}_4-\mathcal{E}_6
\end{flalign*}
$\delta = \mathcal{D}_0 + \mathcal{D}_1 + \mathcal{D}_2 + \mathcal{D}_3 + \mathcal{D}_4 + \mathcal{D}_5 + \mathcal{D}_6 = \alpha_0 + \alpha_1 = \alpha'_0 + \alpha'_1$

\noindent\hrulefill

\begin{center}
$\boxed{R=A_7^{\smone},\qquad R^\smperp = A_1^\smone}$
\end{center}
\begin{flalign*}
    \underline{\Pi(R)}: \ \hspace{4,5pt} \mathcal{D}_0 &= \mathcal{E}_8-\mathcal{E}_9, \quad \mathcal{D}_1=\mathcal{E}_0-\mathcal{E}_2-\mathcal{E}_7-\mathcal{E}_8, \quad \mathcal{D}_2 = \mathcal{E}_2-\mathcal{E}_6, & \\
    \mathcal{D}_3 &= \mathcal{E}_0-\mathcal{E}_1-\mathcal{E}_2-\mathcal{E}_3, \quad\mathcal{D}_4=\mathcal{E}_3-\mathcal{E}_4, \quad \mathcal{D}_5 = \mathcal{E}_4-\mathcal{E}_5, \\
    \mathcal{D}_6 &=\mathcal{E}_0-\mathcal{E}_3- \mathcal{E}_4-\mathcal{E}_7, \quad \mathcal{D}_7 = \mathcal{E}_7-\mathcal{E}_8
\end{flalign*}
\vspace{-15pt}
\begin{flalign*}
    \underline{\Pi(R^\smperp)}: \  \alpha_0 &= \mathcal{E}_0-2\mathcal{E}_1+\mathcal{E}_2+\mathcal{E}_6-\mathcal{E}_7-\mathcal{E}_8-\mathcal{E}_9, &\\
    \alpha_1 &= 2\mathcal{E}_0+\mathcal{E}_1-2\mathcal{E}_2-\mathcal{E}_3-\mathcal{E}_4-\mathcal{E}_5-2\mathcal{E}_6
\end{flalign*}
$\delta = \mathcal{D}_0 + \mathcal{D}_1 + \mathcal{D}_2 + \mathcal{D}_3 + \mathcal{D}_4 + \mathcal{D}_5 + \mathcal{D}_6 + \mathcal{D}_7 = \alpha_0 + \alpha_1$

\noindent\hrulefill

\begin{center}
$\boxed{R=A_7^{\smone'},\qquad R^\smperp = A_1^\smone}$
\end{center}
\begin{flalign*}
    \underline{\Pi(R)}: \ \hspace{4,5pt} \mathcal{D}_0 &= \mathcal{E}_8-\mathcal{E}_9,\quad \mathcal{D}_1=\mathcal{E}_0-\mathcal{E}_2-\mathcal{E}_7-\mathcal{E}_8, \quad \mathcal{D}_2 = \mathcal{E}_2-\mathcal{E}_6, & \\
    \mathcal{D}_3 &= \mathcal{E}_0-\mathcal{E}_1-\mathcal{E}_2-\mathcal{E}_3,\quad \mathcal{D}_4=\mathcal{E}_3-\mathcal{E}_5,\quad \mathcal{D}_5 = \mathcal{E}_1-\mathcal{E}_3, \\
    \mathcal{D}_6 &=\mathcal{E}_0-\mathcal{E}_1- \mathcal{E}_4-\mathcal{E}_7, \quad \mathcal{D}_7 = \mathcal{E}_7-\mathcal{E}_8
\end{flalign*}
\vspace{-15pt}
\begin{flalign*}
    \underline{\Pi(R^\smperp)}: \  \alpha_0 &= 2\mathcal{E}_0-\mathcal{E}_1-\mathcal{E}_3-\mathcal{E}_5-\mathcal{E}_7-\mathcal{E}_8-\mathcal{E}_9, \quad \alpha_1 = \mathcal{E}_0-\mathcal{E}_2-\mathcal{E}_4-\mathcal{E}_6 &
\end{flalign*}
$\delta = \mathcal{D}_0 + \mathcal{D}_1 + \mathcal{D}_2 + \mathcal{D}_3 + \mathcal{D}_4 + \mathcal{D}_5 + \mathcal{D}_6 + \mathcal{D}_7 = \alpha_0 + \alpha_1$

\noindent\hrulefill
\newpage

\begin{center}
$\boxed{R=A_8^{\smone},\qquad R^\smperp = A_0^\smone}$
\end{center}
\begin{flalign*}
    \underline{\Pi(R)}: \ \hspace{4,5pt} \mathcal{D}_0 &= \mathcal{E}_8-\mathcal{E}_9,\quad \mathcal{D}_1=\mathcal{E}_0-\mathcal{E}_1-\mathcal{E}_7-\mathcal{E}_8, \quad \mathcal{D}_2 = \mathcal{E}_1-\mathcal{E}_2, \quad \mathcal{D}_3 = \mathcal{E}_2-\mathcal{E}_6, & \\
    \mathcal{D}_4 &= \mathcal{E}_0-\mathcal{E}_1-\mathcal{E}_2-\mathcal{E}_3, \quad \mathcal{D}_5=\mathcal{E}_3-\mathcal{E}_4, \quad \mathcal{D}_6 = \mathcal{E}_4-\mathcal{E}_5, \\
    \mathcal{D}_7 &=\mathcal{E}_0-\mathcal{E}_3- \mathcal{E}_4-\mathcal{E}_7, \quad \mathcal{D}_8 = \mathcal{E}_7-\mathcal{E}_8
\end{flalign*}
$\delta = \mathcal{D}_0 + \mathcal{D}_1 + \mathcal{D}_2 + \mathcal{D}_3 + \mathcal{D}_4 + \mathcal{D}_5 + \mathcal{D}_6 + \mathcal{D}_7 + \mathcal{D}_8$

\noindent\hrulefill

\begin{center}
$\boxed{R=D_4^{\smone},\qquad R^\smperp = D_4^\smone}$
\end{center}
\begin{flalign*}
    \underline{\Pi(R)}: \ \hspace{4,5pt} \mathcal{D}_0 &= \mathcal{E}_8-\mathcal{E}_9, \quad \mathcal{D}_1=\mathcal{E}_0-\mathcal{E}_1-\mathcal{E}_2-\mathcal{E}_3, \quad \mathcal{D}_2 = \mathcal{E}_1 -\mathcal{E}_8, &\\
    \mathcal{D}_3 &= \mathcal{E}_0-\mathcal{E}_1-\mathcal{E}_4-\mathcal{E}_5, \quad \mathcal{D}_4=\mathcal{E}_0-\mathcal{E}_1-\mathcal{E}_6-\mathcal{E}_7
\end{flalign*}
\vspace{-15pt}
\begin{flalign*}
    \underline{\Pi(R^\smperp)}: \  \alpha_0 &= \mathcal{E}_0-\mathcal{E}_1-\mathcal{E}_8-\mathcal{E}_9,\quad \alpha_1 = \mathcal{E}_2-\mathcal{E}_3, \quad \alpha_2 = \mathcal{E}_0-\mathcal{E}_2-\mathcal{E}_4-\mathcal{E}_6, &\\
    \alpha_3 &= \mathcal{E}_4-\mathcal{E}_5, \quad \alpha_4 = \mathcal{E}_6-\mathcal{E}_7
\end{flalign*}
$\delta = \mathcal{D}_0 + \mathcal{D}_1 + 2\mathcal{D}_2 + \mathcal{D}_3 + \mathcal{D}_4 = \alpha_0 + \alpha_1 + 2\alpha_2 + \alpha_3 + \alpha_4$

\noindent\hrulefill

\begin{center}
$\boxed{R=D_5^{\smone},\qquad R^\smperp = A_3^\smone}$
\end{center}
\begin{flalign*}
    \underline{\Pi(R)}: \ \hspace{4,5pt} \mathcal{D}_0 &= \mathcal{E}_8-\mathcal{E}_9, \quad \mathcal{D}_1=\mathcal{E}_0-\mathcal{E}_1-\mathcal{E}_2-\mathcal{E}_3, \quad \mathcal{D}_2 = \mathcal{E}_2 -\mathcal{E}_8, \quad \mathcal{D}_3 = \mathcal{E}_1-\mathcal{E}_2, &\\
    \mathcal{D}_4 &= \mathcal{E}_0-\mathcal{E}_1-\mathcal{E}_4-\mathcal{E}_5, \quad \mathcal{D}_5 = \mathcal{E}_0-\mathcal{E}_1-\mathcal{E}_6-\mathcal{E}_7
\end{flalign*}
\vspace{-15pt}
\begin{flalign*}
    \underline{\Pi(R^\smperp)}: \  \alpha_0 &= 2\mathcal{E}_0-\mathcal{E}_1-\mathcal{E}_2-\mathcal{E}_4-\mathcal{E}_6-\mathcal{E}_8-\mathcal{E}_9, \quad \alpha_1 = \mathcal{E}_4-\mathcal{E}_5, &\\
    \alpha_2 &= \mathcal{E}_0-\mathcal{E}_3-\mathcal{E}_4-\mathcal{E}_6, \quad \alpha_3 = \mathcal{E}_6-\mathcal{E}_7
\end{flalign*}
$\delta = \mathcal{D}_0 + \mathcal{D}_1 + 2\mathcal{D}_2 + 2\mathcal{D}_3 + \mathcal{D}_4 + \mathcal{D}_5 = \alpha_0 + \alpha_1 + \alpha_2 + \alpha_3$

\noindent\hrulefill

\begin{center}
$\boxed{R=D_6^{\smone},\qquad R^\smperp = 2A_1^\smone}$
\end{center}
\begin{flalign*}
    \underline{\Pi(R)}: \ \hspace{4,5pt} \mathcal{D}_0 &= \mathcal{E}_8-\mathcal{E}_9, \quad \mathcal{D}_1=\mathcal{E}_0-\mathcal{E}_1-\mathcal{E}_2-\mathcal{E}_3, \quad \mathcal{D}_2 = \mathcal{E}_3 -\mathcal{E}_8, \quad \mathcal{D}_3 = \mathcal{E}_2 - \mathcal{E}_3, & \\
    \mathcal{D}_4 &= \mathcal{E}_1-\mathcal{E}_2,\quad \mathcal{D}_5 = \mathcal{E}_0-\mathcal{E}_1-\mathcal{E}_4-\mathcal{E}_5, \quad \mathcal{D}_6 = \mathcal{E}_0-\mathcal{E}_1-\mathcal{E}_6-\mathcal{E}_7 
\end{flalign*}
\vspace{-15pt}
\begin{flalign*}
    \underline{\Pi(R^\smperp)}: \ \alpha_0 &= 3\mathcal{E}_0-\mathcal{E}_1-\mathcal{E}_2-\mathcal{E}_3-2\mathcal{E}_4-\mathcal{E}_6-\mathcal{E}_7-\mathcal{E}_8-\mathcal{E}_9, \quad \alpha_1 = \mathcal{E}_4-\mathcal{E}_5, &\\
    \alpha'_0 &= 3\mathcal{E}_0-\mathcal{E}_1-\mathcal{E}_2-\mathcal{E}_3-\mathcal{E}_4-\mathcal{E}_5-2\mathcal{E}_6-\mathcal{E}_8-\mathcal{E}_9, \quad \alpha'_1 = \mathcal{E}_6-\mathcal{E}_7
\end{flalign*}
$\delta = \mathcal{D}_0 + \mathcal{D}_1 + 2\mathcal{D}_2 + 2\mathcal{D}_3 + 2\mathcal{D}_4 + \mathcal{D}_5 + \mathcal{D}_6 = \alpha_0 + \alpha_1 = \alpha'_0 + \alpha'_1$

\noindent\hrulefill

\begin{center}
$\boxed{R=D_7^{\smone},\qquad R^\smperp = A_1^\smone}$
\end{center}
\begin{flalign*}
    \underline{\Pi(R)}: \ \hspace{4,5pt} \mathcal{D}_0 &= \mathcal{E}_8-\mathcal{E}_9, \quad \mathcal{D}_1=\mathcal{E}_0-\mathcal{E}_1-\mathcal{E}_2-\mathcal{E}_3, \quad \mathcal{D}_2 = \mathcal{E}_3 -\mathcal{E}_8, \quad \mathcal{D}_3 = \mathcal{E}_2 - \mathcal{E}_3, & \\
    \mathcal{D}_4 &= \mathcal{E}_1-\mathcal{E}_2, \quad \mathcal{D}_5 = \mathcal{E}_0-\mathcal{E}_1-\mathcal{E}_4-\mathcal{E}_5, \quad \mathcal{D}_6 = \mathcal{E}_4-\mathcal{E}_6, \quad \mathcal{D}_7 = \mathcal{E}_5-\mathcal{E}_7
\end{flalign*}
\vspace{-15pt}
\begin{flalign*}
    \underline{\Pi(R^\smperp)}: \  \alpha_0 &= 3\mathcal{E}_0-\mathcal{E}_1-\mathcal{E}_2-\mathcal{E}_3-2\mathcal{E}_4-2\mathcal{E}_6-\mathcal{E}_8-\mathcal{E}_9, \quad \alpha_1 = \mathcal{E}_4-\mathcal{E}_5+\mathcal{E}_6-\mathcal{E}_7 &
\end{flalign*}
$\delta = \mathcal{D}_0 + \mathcal{D}_1 + 2\mathcal{D}_2 + 2\mathcal{D}_3 + 2\mathcal{D}_4 + 2\mathcal{D}_5 + \mathcal{D}_6 + \mathcal{D}_7 = \alpha_0 + \alpha_1$

\noindent\hrulefill
\newpage

\begin{center}
$\boxed{R=D_8^{\smone},\qquad R^\smperp = A_0^\smone}$
\end{center}
\begin{flalign*}
    \underline{\Pi(R)}: \ \hspace{4,5pt} \mathcal{D}_0 &= \mathcal{E}_8-\mathcal{E}_9, \quad \mathcal{D}_1=\mathcal{E}_0-\mathcal{E}_1-\mathcal{E}_2-\mathcal{E}_3, \quad \mathcal{D}_2 = \mathcal{E}_3 -\mathcal{E}_8, \quad \mathcal{D}_3 = \mathcal{E}_2 - \mathcal{E}_3, & \\
    \mathcal{D}_4 &= \mathcal{E}_1-\mathcal{E}_2, \quad \mathcal{D}_5 = \mathcal{E}_0-\mathcal{E}_1-\mathcal{E}_4-\mathcal{E}_5, \quad \mathcal{D}_6 = \mathcal{E}_5-\mathcal{E}_6, \quad \mathcal{D}_7 = \mathcal{E}_4-\mathcal{E}_5, \\
    \mathcal{D}_8 &= \mathcal{E}_6-\mathcal{E}_7
\end{flalign*}
$\delta = \mathcal{D}_0 + \mathcal{D}_1 + 2\mathcal{D}_2 + 2\mathcal{D}_3 + 2\mathcal{D}_4 + 2\mathcal{D}_5 + 2\mathcal{D}_6 + \mathcal{D}_7 + \mathcal{D}_8$

\noindent\hrulefill

\begin{center}
$\boxed{R=E_6^{\smone},\qquad R^\smperp = A_2^\smone}$
\end{center}
\begin{flalign*}
    \underline{\Pi(R)}: \ \hspace{4,5pt} \mathcal{D}_0 &= \mathcal{E}_8-\mathcal{E}_9, \quad \mathcal{D}_1=\mathcal{E}_0-\mathcal{E}_1-\mathcal{E}_4-\mathcal{E}_5, \quad \mathcal{D}_2 = \mathcal{E}_1 -\mathcal{E}_2, \quad \mathcal{D}_3 = \mathcal{E}_2 - \mathcal{E}_7, & \\
    \mathcal{D}_4 &= \mathcal{E}_0-\mathcal{E}_1-\mathcal{E}_2-\mathcal{E}_3, \quad \mathcal{D}_5=\mathcal{E}_3-\mathcal{E}_6, \quad \mathcal{D}_6 = \mathcal{E}_7-\mathcal{E}_8
\end{flalign*}
\vspace{-15pt}
\begin{flalign*}
    \underline{\Pi(R^\smperp)}: \  \alpha_0 &= 2\mathcal{E}_0-\mathcal{E}_1-\mathcal{E}_2-\mathcal{E}_4-\mathcal{E}_7-\mathcal{E}_8-\mathcal{E}_9, \quad \alpha_1 = \mathcal{E}_4-\mathcal{E}_5, & \\
    \alpha_2 &= \mathcal{E}_0-\mathcal{E}_3-\mathcal{E}_4-\mathcal{E}_6 
\end{flalign*}
$\delta = \mathcal{D}_0 + \mathcal{D}_1 + 2\mathcal{D}_2 + 3\mathcal{D}_3 + 2\mathcal{D}_4 + \mathcal{D}_5 + 2\mathcal{D}_6 = \alpha_0 + \alpha_1 + \alpha_2$

\noindent\hrulefill

\begin{center}
$\boxed{R=E_7^{\smone},\qquad R^\smperp = A_1^\smone}$
\end{center}
\begin{flalign*}
    \underline{\Pi(R)}: \ \hspace{4,5pt} \mathcal{D}_0 &= \mathcal{E}_8-\mathcal{E}_9, \quad \mathcal{D}_1 = \mathcal{E}_1 -\mathcal{E}_2, \quad \mathcal{D}_2 = \mathcal{E}_2 - \mathcal{E}_3, \quad \mathcal{D}_3 = \mathcal{E}_3-\mathcal{E}_6, & \\
    \mathcal{D}_4 &= \mathcal{E}_6-\mathcal{E}_7, \quad \mathcal{D}_5 = \mathcal{E}_7-\mathcal{E}_8, \quad \mathcal{D}_6 = \mathcal{E}_0-\mathcal{E}_1-\mathcal{E}_4-\mathcal{E}_5, \\
    \mathcal{D}_7 &= \mathcal{E}_0-\mathcal{E}_1-\mathcal{E}_2-\mathcal{E}_3 
\end{flalign*}
\vspace{-15pt}
\begin{flalign*}
    \underline{\Pi(R^\smperp)}: \  \alpha_0 &= 3\mathcal{E}_0-\mathcal{E}_1-\mathcal{E}_2-\mathcal{E}_3-2\mathcal{E}_4-\mathcal{E}_6-\mathcal{E}_7-\mathcal{E}_8-\mathcal{E}_9, \quad \alpha_1 = \mathcal{E}_4-\mathcal{E}_5 &
\end{flalign*}
$\delta = \mathcal{D}_0 + 2\mathcal{D}_1 + 3\mathcal{D}_2 + 4\mathcal{D}_3 + 3\mathcal{D}_4 + 2\mathcal{D}_5 + \mathcal{D}_6 + 2\mathcal{D}_7 = \alpha_0 + \alpha_1$

\noindent\hrulefill

\begin{center}
$\boxed{R=E_8^{\smone},\qquad R^\smperp = A_0^\smone}$
\end{center}
\begin{flalign*}
    \underline{\Pi(R)}: \ \hspace{4,5pt} \mathcal{D}_0 &= \mathcal{E}_8-\mathcal{E}_9, \quad \mathcal{D}_1=\mathcal{E}_1-\mathcal{E}_2, \quad \mathcal{D}_2=\mathcal{E}_2-\mathcal{E}_3, \quad \mathcal{D}_3 = \mathcal{E}_3-\mathcal{E}_4, & \\
    \mathcal{D}_4 &= \mathcal{E}_4-\mathcal{E}_5, \quad \mathcal{D}_5=\mathcal{E}_5-\mathcal{E}_6, \quad \mathcal{D}_6 = \mathcal{E}_6-\mathcal{E}_7, \quad \mathcal{D}_7=\mathcal{E}_7-\mathcal{E}_8, \\
    \mathcal{D}_8 &= \mathcal{E}_0-\mathcal{E}_1-\mathcal{E}_2-\mathcal{E}_3 
\end{flalign*}
$\delta = \mathcal{D}_0 + 2\mathcal{D}_1 + 4\mathcal{D}_2 + 6\mathcal{D}_3 + 5\mathcal{D}_4 + 4\mathcal{D}_5 + 3\mathcal{D}_6 + 2\mathcal{D}_7 + 3\mathcal{D}_8$}

	%Bibliography
	\bibliographystyle{amsalpha}
	\bibliography{biblio}
	
\end{document}